\renewcommand\l@subsection{\@tocline{2}{0pt}{2pc}{5pc}{}}
\newcommand{\vvcenteredinclude}[2]{\begingroup
	\setbox0=\hbox{\includegraphics[scale=#1]{#2}}%
	\parbox{\wd0}{\box0}\endgroup}
\newcommand{\R}{{\mathbb R}}
\newcommand{\C}{{\mathbb C}}
\newcommand{\Z}{{\mathbb Z}}
\newcommand{\Q}{{\mathbb Q}}
\newcommand{\Map}{\operatorname{Map}}
\newcommand{\Emb}{\operatorname{Emb}}
\newcommand{\BrEmb}{\operatorname{BrEmb}}
\newcommand{\Ch}{\operatorname{C}}
\newcommand{\Ba}{\operatorname{B}}
\newcommand{\ChdR}{\operatorname{C}_{dR}}
\newcommand{\Conf}{\operatorname{Conf}}
\newcommand{\Chen}{\operatorname{Ch}}
\newcommand{\D}{{\mathcal{D}}}
\newcommand{\Dm}{\overline{\mathcal{D}}}
\newcommand{\LDm}{\overline{\mathcal{L}\mathcal{D}}}
\newcommand{\TTheta}{\widetilde{\Theta}}
\newcommand{\LD}{{\mathcal{LD}}}
\newcommand{\x}{\times}
\newcommand{\no}{\noindent}
\newcommand{\lrarrow}{\leftrightarrow}
\newcommand{\del}{{\partial}}
\newtheorem*{rep@theorem}{\rep@title}
\newcommand{\newreptheorem}[2]{%
	\newenvironment{rep#1}[1]{%
		\def\rep@title{#2 \ref{##1}}%
		\begin{rep@theorem}}%
		{\end{rep@theorem}}}
\theoremstyle{plain}
\newtheorem{thm}{Theorem}[section]
\newtheorem{lem}[thm]{Lemma}
\newtheorem{cor}[thm]{Corollary}
\newtheorem{conj}[thm]{Conjecture}
\theoremstyle{definition}
\newtheorem{defin}[thm]{Definition}
\newtheorem{example}[thm]{Example}
\newtheorem{def/ex}[thm]{Definition/Example}
\theoremstyle{remark}
\newtheorem{rem}[thm]{Remark}
\newcommand{\refD}[1]{Definition~\ref{D:#1}}
\def\phi{\varphi}
   \def\MR#1{}
\begin{document}


\title[Diagrams for primitive cycles in spaces of pure braids]{Diagrams for primitive cycles in spaces of\\ pure braids and string links}



\author{Rafal Komendarczyk}
\address{Department of Mathematics, Tulane University, 6823 St. Charles Ave, New Orleans, LA 70118}
\email{rako@tulane.edu}
\urladdr{dauns01.math.tulane.edu/\textasciitilde rako}

\author{Robin Koytcheff}
\address{Department of Mathematics, University of Louisiana at Lafayette, PO Box 43568, Lafayette, LA 70504}
\email{koytcheff@louisiana.edu}
\urladdr{userweb.ucs.louisiana.edu/\textasciitilde C00401634}

\author{Ismar Voli\'c}
\address{Department of Mathematics, Wellesley College, 106 Central Street, Wellesley, MA 02481}
\email{ivolic@wellesley.edu}
\urladdr{ivolic.wellesley.edu}

\dedicatory{Dedicated to the memory of Fred Cohen (1945--2022)}

\makeatletter
\@namedef{subjclassname@2020}{%
  \textup{2020} Mathematics Subject Classification}
\makeatother
\subjclass[2020]{Primary: 55P35, 55R80, 57K45; Secondary:  20F36, 58D10,  81Q30}
\keywords{spaces of braids, loop spaces, bar and cobar constructions, configuration space integrals, Chen's iterated integrals, formality, graph complexes, spaces of high-dimensional string links, generalized Milnor invariants}


\begin{abstract}
The based loop space of a configuration space of points in a Euclidean space can be viewed as a space of pure braids in a Euclidean space of one dimension higher.  We continue our study of such spaces in terms of Kontsevich's CDGA of diagrams and Chen's iterated integrals.  We construct a power series connection which yields a Hopf algebra isomorphism between the homology of the space of pure braids and the cobar construction on diagrams.  It maps iterated Whitehead products to trivalent trees modulo the IHX relation.  As an application, we establish a correspondence between Milnor invariants of Brunnian spherical links and certain Chen integrals.  Finally we show that graphing induces injections of a certain submodule of the homotopy of configuration spaces into the homotopy of many spaces of high-dimensional string links.  We conjecture that graphing is injective on all rational homotopy classes.
\end{abstract}



\maketitle

\vspace{-1pt}

\tableofcontents

\newpage

\section{Introduction}\label{S:Intro}

We study the space $\Omega \Conf(m,\R^n)$ of smooth based loops
in the space of $m$-point configurations in $\R^n$ for $n \geq 3$.  One may view this as the space of pure braids in $\R^{n+1}$.
Its integral homology $H_\ast(\Omega \Conf(m,\R^n); \Z)$ and its rational homotopy groups 
$\pi_\ast(\Omega \Conf(m,\R^n))\otimes \Q$ were determined by Cohen and Gitler \cite{Cohen-Gitler}. 
For $n=2$, i.e.~the setting of classical pure braids, an analogous result was obtained by  Kohno \cite{Kohno:1985, Kohno:2000} using a result of Arnold \cite{Arnold:1969} and Chen's iterated integrals \cite{Chen:1973}.  
In \cite{KKV:2020} we studied an isomorphism of algebras $\Phi$ given by composing the formality integration map of Kontsevich \cite{Kontsevich:1999, Lambrechts-Volic:2014} with Chen's integrals, given as follows.
Let $(\D(m),\delta)$ be Kontsevich's (graded-)commutative differential graded algebra (CDGA) of diagrams, which we consider as an algebra over $\R$.  
It fits into a zig-zag of quasi-isomorphisms 
\[
H^\ast_{dR} (\Conf(m,\R^n)) \overset{\overline{I}}{\longleftarrow} \D(m) \overset{I}{\longrightarrow} \Ch^*_{dR}(\Conf(m,\R^n)) 
\]
that establishes the formality of $\Conf(m,\R^n)$, in the sense of Sullivan \cite{Deligne-Griffiths:1975, Sullivan:1977}, over 
$\R$.
 (It also leads to the formality of the little $n$-disks operad.) 
Let $(\Ba(\D(m)),d_{\Ba})$ be the bar complex on $\D(m)$, and let $H^\ast_{dR}(\Omega \Conf(m,\R^n))$ be the analogue of de Rham cohomology defined via Chen's iterated integrals. 
Then the composition of formality integration with Chen's integrals is 
\begin{equation}\label{eq:Phi-cohomology-iso-D(m)}
\Phi = \int_{\mathrm{Chen}} \circ \Ba(I): H^\ast(\Ba(\D(m)))\longrightarrow H^\ast_{dR}(\Omega \Conf(m,\R^n)),
\end{equation}
where we use the same symbol for the map on cochains and the induced map on cohomology.  
Here and throughout, we work over $\R$ unless specified otherwise.  
In this paper, we continue to study spaces of pure braids and this map $\Phi$, but we focus on homology and homotopy.

 Briefly, our main results can be organized as follows:
\begin{itemize}
\item[$(i)$]
We describe the isomorphism\footnote{In more detail, we construct 
$\Theta: H_\ast(\Omega \Conf(m,\R^n)) 
\longrightarrow H_\ast(\Ba^\ast(\Dm(m)))$ such that $\Theta=\Phi^\ast\circ i$, where $i$ is the dual of the de Rham homomorphism.  See Diagram \eqref{eq:Theta-Phi-dual-diagram}.}
\[
\Phi^* \colon H_\ast(\Omega \Conf(m,\R^n)) \longrightarrow H_\ast(\Ba^\ast(\Dm(m)))
\] 
in terms of Chen's formal power series connection (Theorem \ref{thm:Phi-pwr-series}), where $\Dm(m)$ is a slightly bigger CDGA than $\D(m)$.
\item[$(ii)$]
We recursively describe $\Theta$ on iterated Samelson (or Whitehead) products, and we show that $\Theta$ induces an injection of $\pi_{*}(\Omega \Conf(m,\R^n)) \otimes \R \cong \pi_{*+1}(\Conf(m,\R^n))\otimes \R$ into the space $\mathcal{T}^n(m)$ of  trivalent trees with leaves labeled by $1,\dots,m$, modulo the graded AS and IHX relations (Theorem \ref{thm:theta-trees}); the correspondence is particularly simple for non-repeating monomials.
This leads to an injection of link homotopy classes of certain Brunnian spherical links in $\R^n$ into $\mathcal{T}^n(m)$ (Corollary \ref{cor:brunnian-spherical-links}).
\item[$(iii)$]
We produce an injection of the subspace of $\pi_*(\Conf(m, \R^n))$ corresponding to trees with distinctly labeled leaves
into homotopy groups $\pi_\ell$ of spaces of $m$-component, $k$-dimensional string links in $\R^{n+k}$ for various values of $n \geq 3$ and $k \geq 1$ (Theorem \ref{T:braids-as-links}).  Certain values of $m$, $k$, and $n$ yield $\ell=0$, i.e.~isotopy classes of high-dimensional string links.  We conjecture that this result can be extended to all of $\pi_*(\Conf(m, \R^n))\otimes \Q$.
\end{itemize}

Parts (i) and (ii) are higher-dimensional analogues of results of Kohno \cite{Kohno:2000} and Habegger and Masbaum \cite{Habegger-Masbaum:2000} on Vassiliev invariants and the Kontsevich integral for classical braids and string links.  Our power series connection is an alternative to one provided by Kohno \cite{Kohno:2010} for pure braids in $\R^{n+1}$. 
Part (iii) generalizes a previous result of ours \cite{KKV:2020} for $k=1$ and $n \geq 3$, which in turn generalizes a result of Artin for $k=1$ and $n=2$ on isotopy classes of classical braids and string links \cite{Artin:Braids}.
A more detailed summary, including some auxiliary results, is given below.

\subsection{Overview of results}\label{S:summary}

In this paper we primarily study the dual to the map $\Phi$ in \eqref{eq:Phi-cohomology-iso-D(m)}.
To study homotopy groups, it is convenient to use the CDGA $\Dm(m)$ which is obtained from $\D(m)$ by allowing diagrams with multiple edges.  The quotient $\Dm(m) \twoheadrightarrow \D(m)$ by diagrams with multiple edges is a quasi-isomorphism.
The map $\Phi$ in \eqref{eq:Phi-cohomology-iso-D(m)} trivially extends to $H^\ast(\Ba(\Dm(m)))$, because the extension of the formality integration map to $\Dm(m)$ vanishes on diagrams with multiple edges.  Therefore the dual $\Theta=\Phi^\ast$ is valued in $H_\ast(\Ba^\ast(\Dm(m)))\cong H_\ast(\Ba^\ast(\D(m)))$, and we can write the dual map to \eqref{eq:Phi-cohomology-iso-D(m)} as
\begin{equation}\label{eq:Theta-homology-iso-D(m)}
\Theta: H_\ast(\Omega \Conf(m,\R^n))\longrightarrow 
H_\ast(\Ba^\ast(\Dm(m))),
\end{equation}
where $(\Ba^\ast(\Dm(m)),d^\ast_{\Ba})$ is the cobar complex of the dual coalgebra $(\Dm(m)^\ast,\delta^\ast)$.

We picture monomials $\gamma \in \Ba(\Dm(m))$ as diagrams $\Gamma$ stacked horizontally from left to right, based on $m$ strands with free vertices of valence $\geq 3$.  
There are functionals $\Gamma^*$ indexed by diagrams $\Gamma$ which are defined by the pairing $\langle \Gamma_1, \Gamma_2^*\rangle = \pm|\mathrm{Aut}(\Gamma_1)| \delta_{\Gamma_1, \Gamma_2}$ where $\mathrm{Aut}(\Gamma)$ is the group of automorphisms of $\Gamma$ and the sign depends on certain orientations on the $\Gamma_i$; see Section \ref{S:Diagrams}.
We picture dual diagrams $\Gamma^*$ and dual monomials $\gamma^*\in\Ba^\ast(\Dm(m))$ by  ``transposing'' the picture of $\gamma\in \Ba(\Dm(m))$ so that the monomial $\gamma^*$ is read from bottom to top.
For example
\begin{equation}\label{eq:sum-in-BD(m)}
	\begin{split}
  \gamma_1 & =\raisebox{-1.6pc}{\includegraphics[scale=0.12]{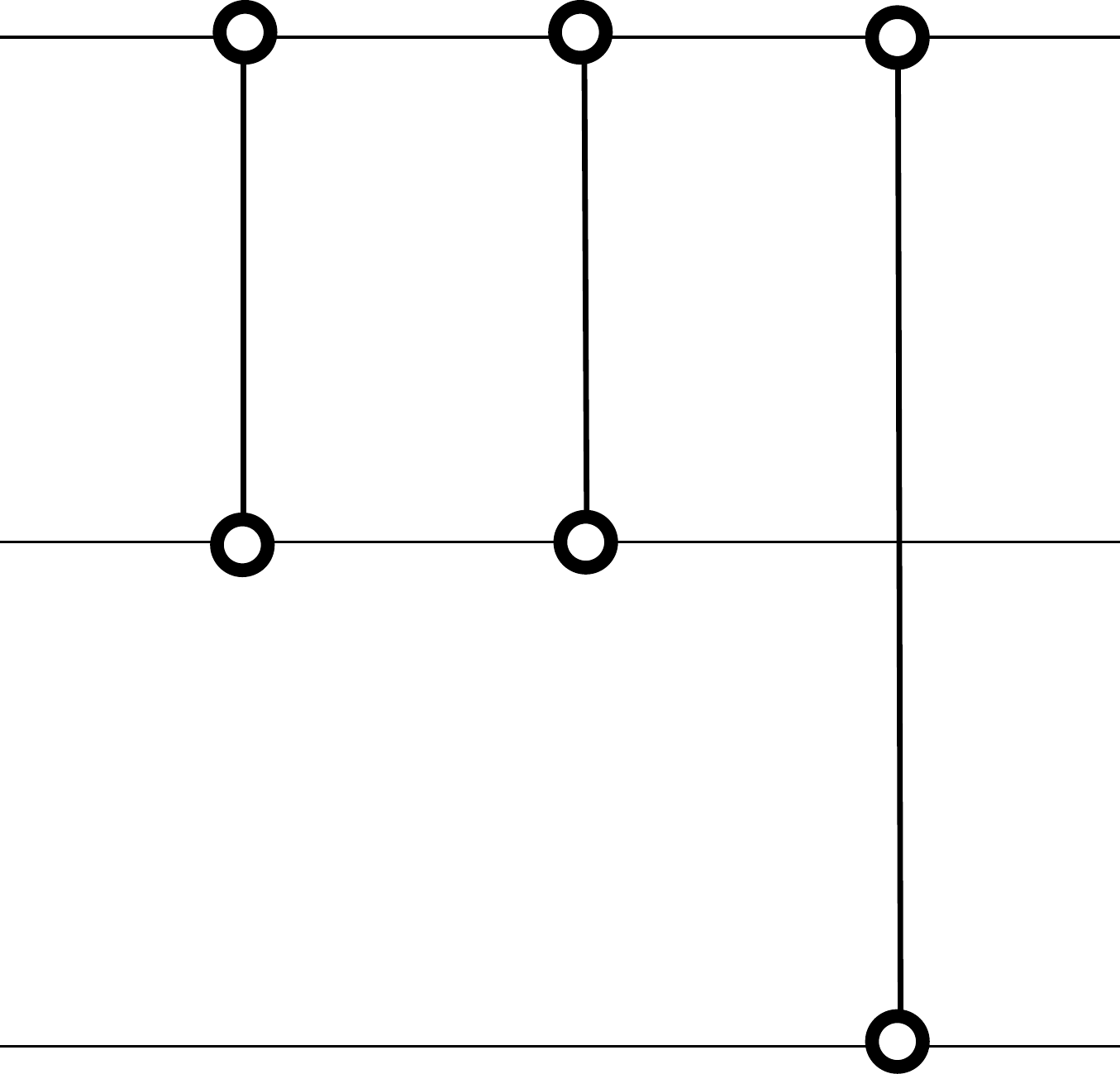}}, \quad
\gamma_1^\ast  =\raisebox{-1.6pc}{\includegraphics[scale=0.12]{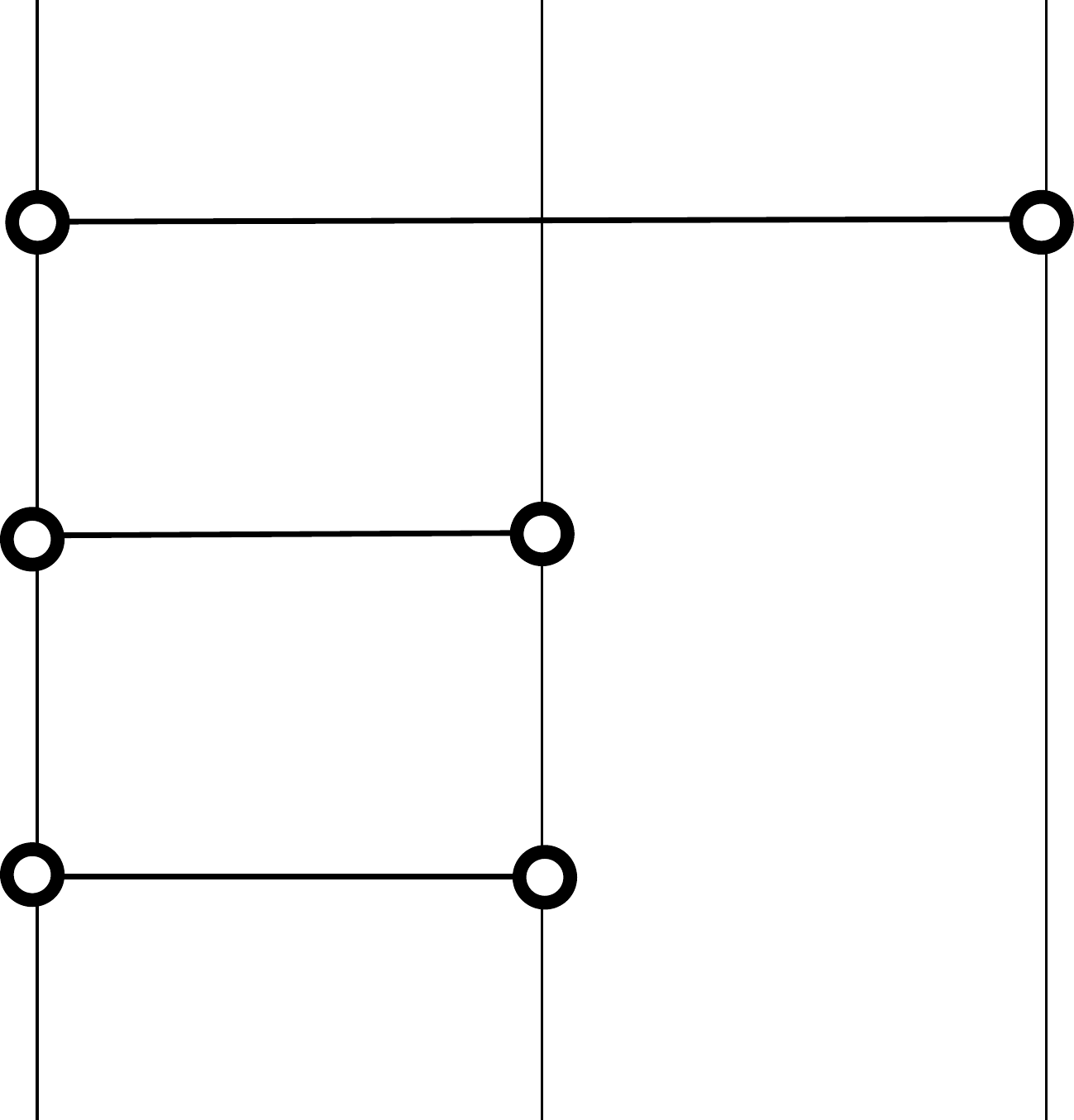}},  \quad 
  \gamma_2  =\raisebox{-1.6pc}{\includegraphics[scale=0.12]{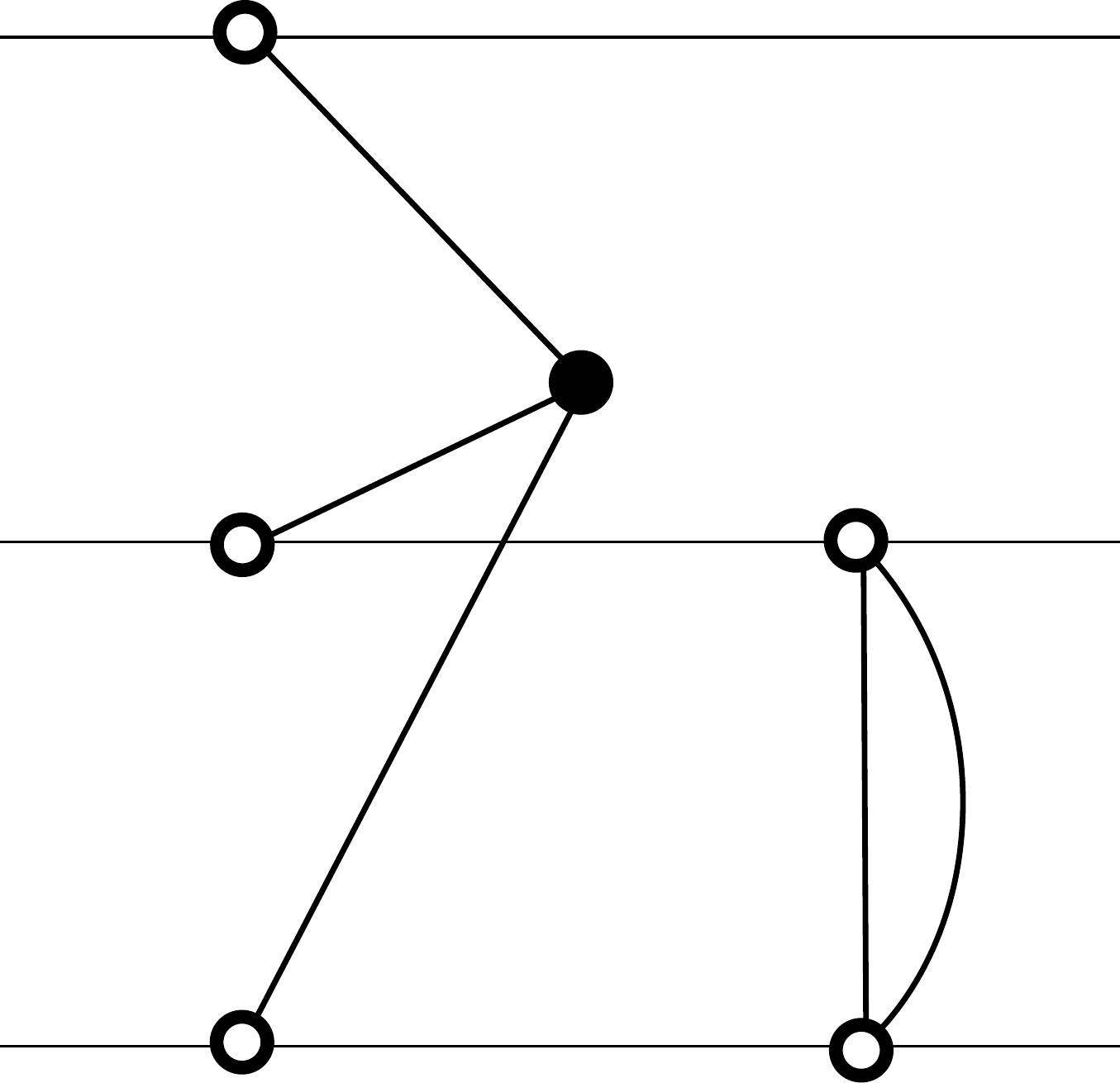}}, \quad
    \gamma_2^*  =\raisebox{-1.6pc}{\includegraphics[scale=0.12]{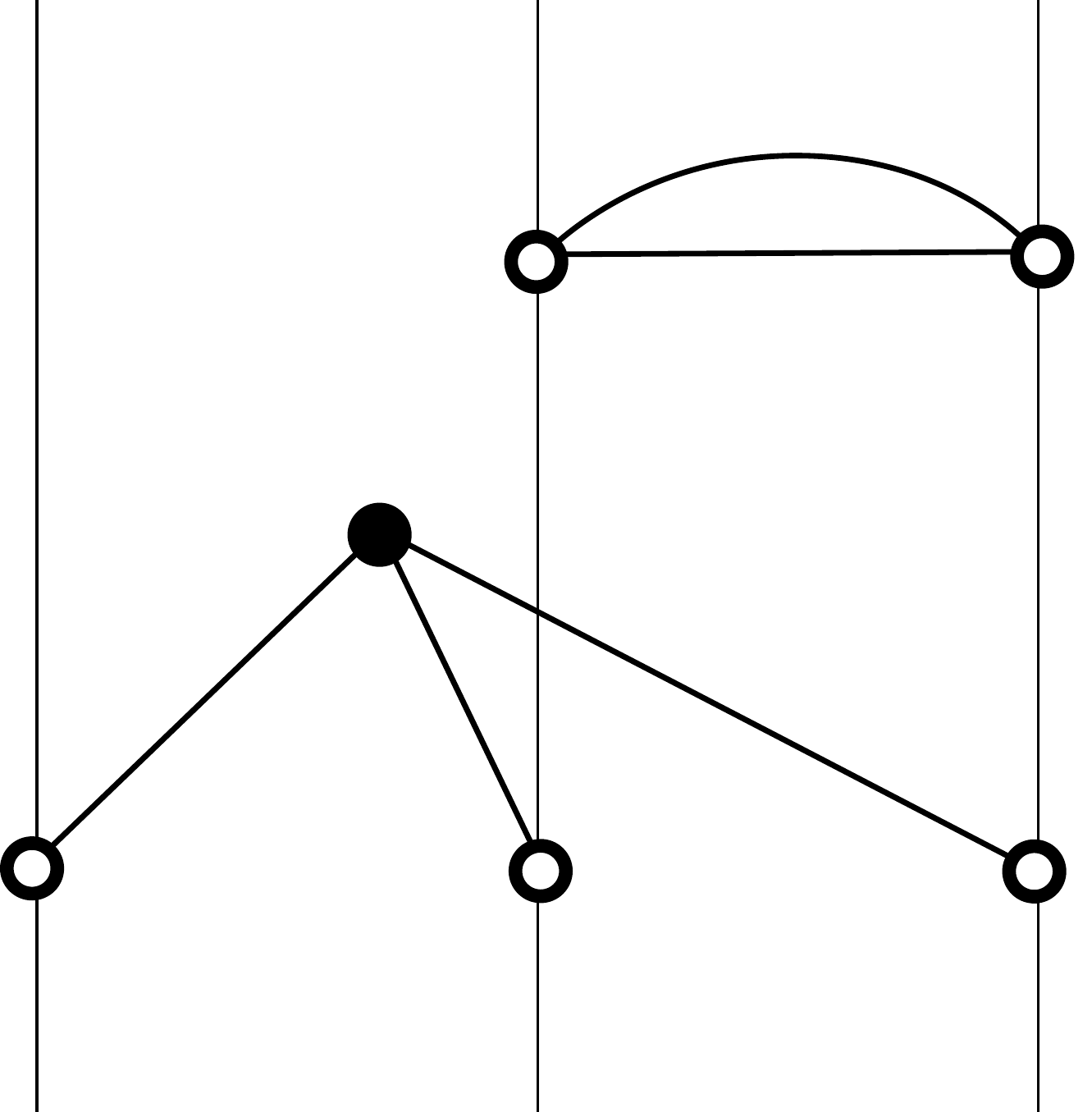}},\\
    \notag  
   \gamma_3 & = \raisebox{-1.4pc}{\includegraphics[scale=0.12]{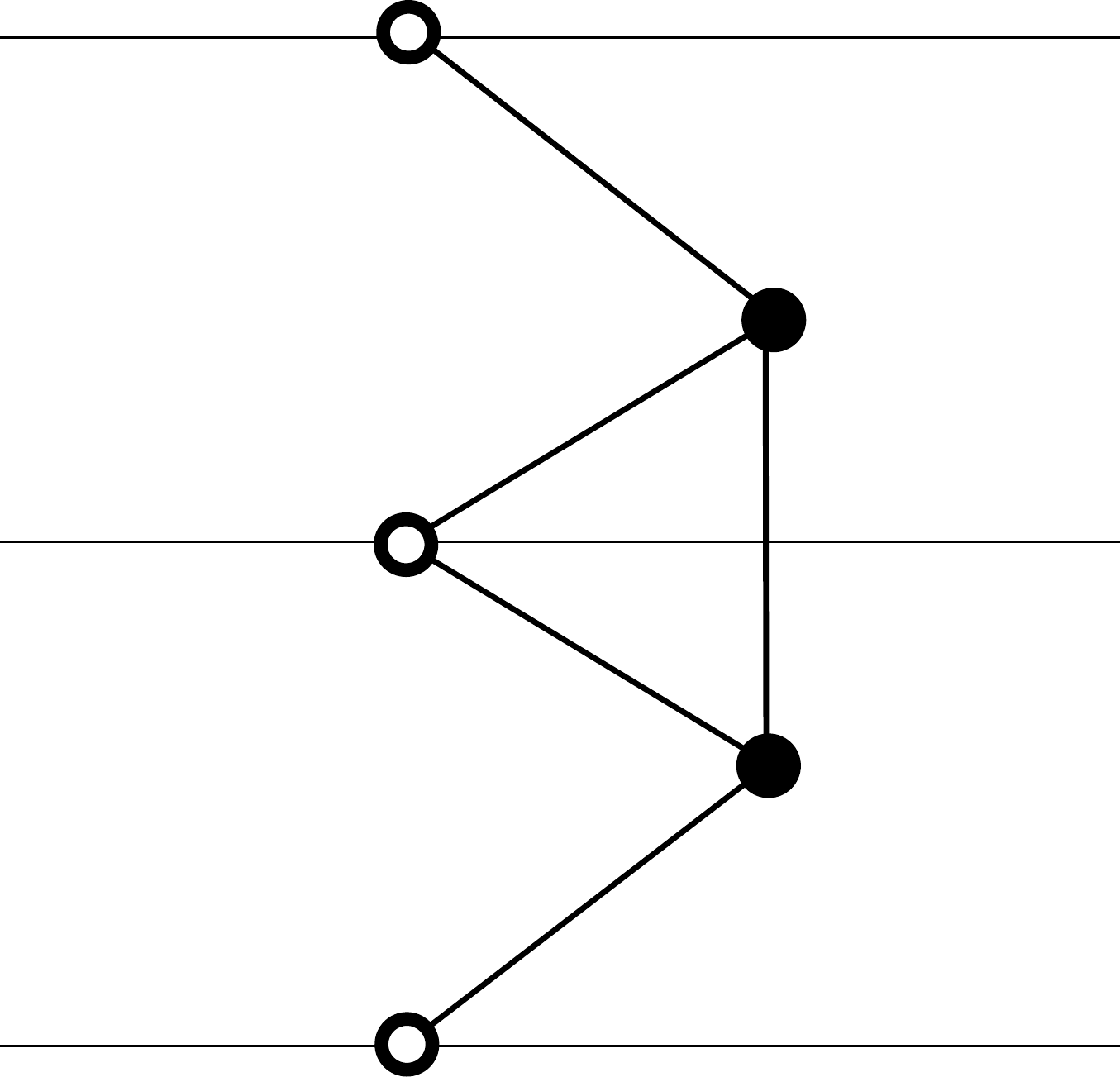}},   \quad
   \gamma_3^*  = \raisebox{-1.4pc}{\includegraphics[scale=0.12]{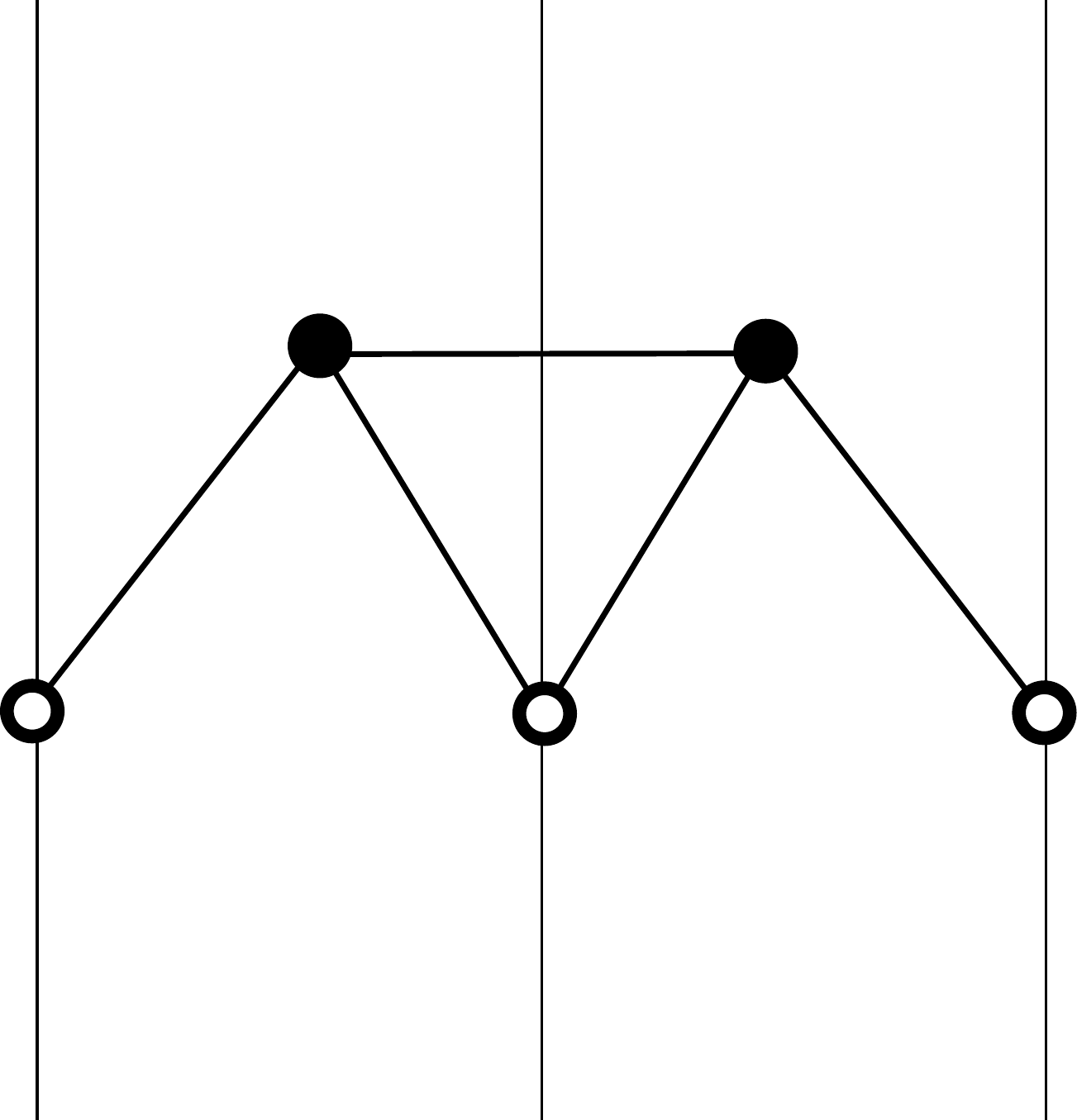}}\ .
   \end{split}
\end{equation}

The first improvement on our previous work \cite{KKV:2020} is to show that  \eqref{eq:Theta-homology-iso-D(m)} can be obtained via the power series connection method  \cite{Chen:1973}.  By work of Hain \cite{Hain:1984}, this illuminates the fact that $\Phi$ (or $\Theta$) is a map of Hopf algebras, which was mentioned in \cite{KKV:2020} but not explained in detail.
\begin{thm}\label{thm:Phi-pwr-series} Let $n \geq 3$.
	\begin{itemize}
	\item[(a)]  
	Let
	$\Theta:C^\textrm{sing}_\ast(\Omega \Conf(m,\R^n)) \longrightarrow \Ba^\ast(\Dm(m))$
	be the map induced by the transport of the formal power series connection $\omega$, valued in $\Ba^\ast(\D(m))\subset \Ba^\ast(\Dm(m))$ and defined by 
	\begin{equation}\label{eq:omega-form-0}
	\omega=\sum_{\Gamma\in \mathcal{B}(m)} I(\Gamma)\otimes \frac{[\Gamma^\ast]}{|\mathrm{Aut}(\Gamma)|} \ \in \ \Ch^*_{dR}(\Conf(m,\R^n)) \otimes \Dm(m)^*,
	\end{equation}
	where $I$ is the formality integration map and $\mathcal{B}(m)$ is the basis of diagrams  (which is well defined up to signs) for the subspace of $\D(m)$ spanned by nonempty diagrams.  
	Then at the level of homology, $\Theta$ agrees with the map $\Phi^\ast$ in \eqref{eq:Theta-homology-iso-D(m)}.
	\item[(b)] At the level of homology,  $\Theta$ is a Hopf algebra isomorphism.
	\end{itemize}
\end{thm}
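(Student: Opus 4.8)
The plan is to prove (a) by identifying the holonomy transport of $\omega$ with the iterated-integral pairing that defines $\Phi$, and to prove (b) by exploiting the multiplicativity and group-likeness of that transport in the framework of Chen and Hain. For (a) I would first check that $\omega$ is a genuine flat (integrable) formal power series connection, i.e.\ a Maurer--Cartan element in the completed tensor algebra on $\Dm(m)^*$. Since the formality integration map $I$ is a morphism of CDGAs, one has $dI(\Gamma)=I(\delta\Gamma)$; this chain-map property, together with the multiplicativity $I(\Gamma_1\cdot\Gamma_2)=I(\Gamma_1)\wedge I(\Gamma_2)$, translates directly into the flatness of $\omega$: the de Rham differential of the linear part reproduces the part of the cobar differential $d^*_{\Ba}$ dual to $\delta$, while the quadratic term $\omega\wedge\omega$ reproduces, via multiplicativity, the part dual to the diagram product. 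The automorphism factors $|\mathrm{Aut}(\Gamma)|$ in \eqref{eq:omega-form-0} are exactly those required for compatibility with the pairing $\langle\Gamma_1,\Gamma_2^*\rangle=\pm|\mathrm{Aut}(\Gamma_1)|\,\delta_{\Gamma_1,\Gamma_2}$.

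Once flatness is in place, Chen's transport $T(\sigma)=1+\int_\sigma\omega+\int_\sigma\omega\omega+\cdots$ assigns to a singular chain $\sigma$ the cobar element whose coefficient on a word $[\Gamma_1^*|\cdots|\Gamma_k^*]$ is the iterated integral $\int_\sigma I(\Gamma_1)\cdots I(\Gamma_k)$. By the definition $\Phi=\int_{\mathrm{Chen}}\circ\,\Ba(I)$, adjointness of this pairing gives $\langle\Theta(\sigma),[\Gamma_1|\cdots|\Gamma_k]\rangle=\langle\sigma,\Phi([\Gamma_1|\cdots|\Gamma_k])\rangle$, so $\Theta$ and $\Phi^*$ are the two sides of one and the same pairing. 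Passing to homology through the de Rham--singular comparison $i$ of \eqref{eq:Theta-Phi-dual-diagram}, and using that $\Phi$ is an isomorphism by \cite{KKV:2020}, yields (a) and in particular shows that $\Theta$ is an isomorphism.

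For part (b), the relevant Hopf structures are: on $H_\ast(\Omega\Conf(m,\R^n))$, the Pontryagin product from loop concatenation and the coproduct from the diagonal; on $H_\ast(\Ba^*(\Dm(m)))$, the concatenation product of cobar words and the coproduct induced by the cocommutativity of $\Dm(m)^*$ (equivalently the commutativity of $\Dm(m)$). The algebra-map property is exactly the multiplicativity of holonomy, $T(\sigma_1*\sigma_2)=T(\sigma_1)\,T(\sigma_2)$, which matches loop concatenation with cobar concatenation. The coalgebra-map property follows from the fact that $T(\sigma)$ is group-like in the completed Hopf algebra; this is the content of Hain's analysis \cite{Hain:1984} of the bar construction on a connection, and is the conceptual source of the Hopf-algebra statement recorded in \cite{KKV:2020}. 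Combined with the isomorphism from (a), this gives that $\Theta$ is a Hopf algebra isomorphism.

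The step I expect to be the main obstacle is the coalgebra compatibility in (b): matching the geometric diagonal coproduct with the coproduct on the cobar requires verifying group-likeness of $T$ in the \emph{graded} setting, where the connection $\omega$ has components $I(\Gamma)$ of all de Rham degrees (not merely $1$-forms, as in Chen's classical $\pi_1$ theory), and checking that the orientation signs and the automorphism factors $|\mathrm{Aut}(\Gamma)|$ do not obstruct comultiplicativity. By contrast, the flatness verification in (a) is essentially forced once the cobar differential is written out explicitly, the only real care being the signs dictated by the grading conventions on diagrams and in the bar/cobar differentials.
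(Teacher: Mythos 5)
Your proposal is correct and follows essentially the same route as the paper: the flatness/Maurer--Cartan verification is the paper's twisted-cochain Lemma (proved exactly as you describe, from $dI(\Gamma)=I(\delta\Gamma)$ and $I(\Gamma_1\cdot\Gamma_2)=I(\Gamma_1)\wedge I(\Gamma_2)$ with the $|\mathrm{Aut}(\Gamma)|$ factors tracked through the pairing), the identification $\Theta=\Phi^*\circ i$ is the paper's commutative-diagram argument, and the Hopf-algebra statement is obtained from multiplicativity of the transport plus Hain's lemma on comultiplicativity, just as you outline.
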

\no Since $I(\Gamma)=0$ for every $\Gamma$ with multiple edges, the sum in \eqref{eq:omega-form-0} is really just over the basis $\mathcal{B}(m)$ of the subspace of nonempty diagrams in $\D(m)$. The general construction of the power series connection for the loop space $\Omega M$ (where $M$ is a manifold) proposed in \cite{Chen:1973, Hain:1984} involves an inductive procedure yielding a connection form $\omega$ which is valued in the tensor algebra of $H_{\ast-1}(M;\R)$. As a result, $\omega$ is generally not available via a direct formula. In Theorem \ref{thm:Phi-pwr-series},  we obtain such a formula thanks to the existence of the formality integration map $I$.  See Section \ref{S:power-series} for further information on Chen's formal power series connections.

Using just the fact that $\Theta$ preserves the Hopf algebra structure, we deduce that it maps the real homotopy groups 
$\pi_\ast(\Omega \Conf(m,\R^n))\otimes \R$ to primitive diagrams.  This result is similar to one obtained by Lambrechts and Turchin \cite{Lambrechts-Turchin}; the work of  Conant \cite{Conant:AJM} is also related.

\newpage

\begin{thm}
\label{thm:primitive-diagrams} \ 
\begin{itemize}
\item[(a)]
	The map $\Theta$ restricts to an isomorphism 
	\[
	\pi_\ast(\Omega \Conf(m,\R^n)) \otimes \R \overset{\Theta}{\longrightarrow} 
	PH_\ast(\Ba^\ast(\D(m)))
	\cong H_\ast(P\Dm(m)^\ast,\delta^\ast),
	\]
	where $P(-)$ denotes the subspace of primitive elements in a coalgebra.
\item[(b)]
The dual $\Phi$ of $\Theta$ induces an isomorphism 
\[
H^\ast(I\Dm(m),\widetilde{\delta}) 
\cong IH^\ast(\Ba(\D(m))) \overset{\Phi}{\longrightarrow} 
\mathrm{Hom}(\pi_\ast(\Omega \Conf(m,\R^n)), \, \R), 
\]
\end{itemize}
where $I(-)$ denotes the subspace of indecomposable elements in an algebra, $\widetilde{\delta}=\pi_{I\Dm}\circ\delta$, and $\pi_{I\Dm}$ is the projection onto $I\Dm$.
\end{thm}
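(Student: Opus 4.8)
The plan is to derive both statements from Theorem~\ref{thm:Phi-pwr-series}(b), which already provides that $\Theta$ is an isomorphism of Hopf algebras, combined with two classical inputs: the Milnor--Moore/Cartan--Serre identification of rational homotopy with primitives, and an interchange of the ``primitives'' functor with passage to homology. I would prove (a) first and obtain (b) from it by $\R$-linear duality.

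For (a), note that since $n \geq 3$ the configuration space $\Conf(m,\R^n)$ is simply connected, so $\Omega\Conf(m,\R^n)$ is a connected $H$-space and $H_\ast(\Omega\Conf(m,\R^n);\R)$ is a connected cocommutative Hopf algebra over the characteristic-zero field $\R$. First I would invoke the real Hurewicz theorem (Milnor--Moore together with Cartan--Serre): the Hurewicz map identifies $\pi_\ast(\Omega\Conf(m,\R^n))\otimes\R$ with the subspace $PH_\ast(\Omega\Conf(m,\R^n);\R)$ of primitives. Since $\Theta$ is a Hopf algebra isomorphism by Theorem~\ref{thm:Phi-pwr-series}(b), it carries primitives isomorphically to primitives, giving
\[
PH_\ast(\Omega\Conf(m,\R^n);\R)\ \overset{\Theta}{\cong}\ PH_\ast(\Ba^\ast(\Dm(m)))\ \cong\ PH_\ast(\Ba^\ast(\D(m))),
\]
where the final isomorphism comes from the quasi-isomorphism $\Dm(m)\twoheadrightarrow\D(m)$ recorded above.

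The remaining and most delicate step is the identification $PH_\ast(\Ba^\ast(\D(m)))\cong H_\ast(P\Dm(m)^\ast,\delta^\ast)$, that is, the claim that taking primitives commutes with passing to homology and that, on primitives, the cobar differential reduces to the internal differential $\delta^\ast$. Here I would exploit the fact that the product on $\D(m)$ is the superposition (disjoint union) of diagrams, so that $\D(m)$ is the free graded-commutative algebra on the subspace $V$ of connected diagrams; dually $\Dm(m)^\ast$ is cofree cocommutative, whence its primitives are exactly $V^\ast$, the connected diagrams, and $\delta^\ast$ restricts to them since the dual differential preserves connectivity. With this structure in hand, the length filtration on the cobar complex $\Ba^\ast(\Dm(m))=T(s^{-1}\,\overline{\Dm(m)^\ast})$ lets one compare the primitives of the homology with the homology of the primitives, the length-increasing part of the cobar differential contributing nothing new; the essential point is that in characteristic zero the primitive functor on conilpotent cocommutative coalgebras is well behaved enough that no higher correction terms survive. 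I expect this interchange to be the main obstacle, since it requires controlling the interaction of the cobar differential with the coproduct precisely on primitive elements.

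Finally, (b) is the $\R$-linear dual of (a). Indecomposables are dual to primitives and $\mathrm{Hom}(\pi_\ast(\Omega\Conf(m,\R^n)),\R)$ is dual to $\pi_\ast(\Omega\Conf(m,\R^n))\otimes\R$, so dualizing the chain of isomorphisms in (a) yields
\[
H^\ast(I\Dm(m),\widetilde{\delta})\cong IH^\ast(\Ba(\D(m)))\overset{\Phi}{\cong}\mathrm{Hom}(\pi_\ast(\Omega\Conf(m,\R^n)),\R).
\]
To finish I would check that $\widetilde{\delta}=\pi_{I\Dm}\circ\delta$ is precisely the differential dual to $\delta^\ast$ on $P\Dm(m)^\ast$, and that $\Phi=\Theta^\ast$ carries the indecomposables $IH^\ast(\Ba(\D(m)))$ to the dual homotopy, which is automatic once (a) is established.
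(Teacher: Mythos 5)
Your overall architecture matches the paper's: both proofs start from Theorem \ref{thm:Phi-pwr-series}(b) plus Milnor--Moore to identify $\pi_\ast(\Omega\Conf(m,\R^n))\otimes\R$ with $PH_\ast(\Ba^\ast(\Dm(m)))$, reduce everything to the single identity $PH_\ast(\Ba^\ast(\Dm(m)))=H_\ast(P\Dm(m)^\ast)$, and obtain (b) by dualizing (the paper via the universal coefficient theorem). You also correctly locate where the content is: the containment $H_\ast(P\Dm(m)^\ast)\subset PH_\ast(\Ba^\ast(\Dm(m)))$ is formal (a length-one primitive cycle is a primitive cobar cycle), and the reverse containment is the real issue.

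Where you diverge is in how that reverse containment is handled, and this is also where your proposal has a gap. The paper does not argue abstractly that ``primitives commute with homology'' for the cobar construction; instead it uses a geometric input: by Theorem \ref{thm:cohen-gitler} and Lemma \ref{lem:left-normed-span-conf}, the primitives of $H_\ast(\Ba^\ast(\Dm(m)))$ are exactly the images under $\Theta$ of iterated Samelson products of the generators $B_{j,i}$, and Lemma \ref{lem:bracket-recursive} shows by an explicit recursion --- $\Gamma^\ast_{[A,B]}=(-1)^{a}\delta^\ast((\Gamma_A\cdot\Gamma_B)^\ast)$, i.e.\ the graded commutator $[\Gamma_A^\ast,\Gamma_B^\ast]$ is homologous in the cobar complex to a length-one monomial on a primitive cycle --- that every such bracket lands in $H_\ast(P\Dm(m)^\ast)$. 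Your proposed replacement (length filtration on $\Ba^\ast(\Dm(m))$, ``no higher correction terms survive in characteristic zero'') is exactly the step you yourself flag as the main obstacle, and as written it is not a proof: for a general CDGC the primitives of the cobar homology need not equal the homology of the primitives. What saves your route is a fact you state but do not exploit: $\Dm(m)$ is \emph{free} graded-commutative on the span $W$ of internally connected diagrams, and the differential $\delta$ preserves $W$ (edge contraction preserves internal connectivity), so $(\Dm(m),\delta)\cong S(W,\delta|_W)$ as a CDGA and the bar/cobar (in)decomposables can be computed generator-by-generator. If you make that precise, your argument closes and is in some ways cleaner (it avoids the geometric spanning set entirely); note this freeness fails for $\D(m)$, which is why the paper works with $\Dm(m)$ and treats $\D(m)$ separately in Corollary \ref{cor:PDm(m)-to-PD(m)}. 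As it stands, though, the decisive step is asserted rather than proved, whereas the paper supplies it concretely via Lemma \ref{lem:bracket-recursive}.
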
	

We prove this result using an explicit formula relating the Samelson product to the boundary operator on $\Dm(m)^\ast$ (Lemma \ref{lem:bracket-recursive}).  We also use that formula to deduce part of our next main result, Theorem \ref{thm:theta-trees}. 
Before stating it, we provide some context.

Theorem \ref{thm:primitive-diagrams} and the Milnor--Moore Theorem \cite{Milnor-Moore:1965} lead to a diagrammatic description of homology: 

\begin{cor}\label{thm:homology-prim}
The homology  $H_\ast(\Omega \Conf(m,\R^n))$ is generated by diagrams in $H_\ast(P\Dm(m)^\ast)$, i.e.,
  \[
   H_\ast(\Omega \Conf(m,\R^n))\cong  U(H_\ast(P\Dm(m)^\ast)),
  \]
  where $U$ is the universal enveloping algebra with the bracket on 
  $H_\ast(P\Dm(m)^\ast)$ as defined in \eqref{eq:PD-bracket}.
  \qed
\end{cor}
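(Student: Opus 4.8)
The plan is to realize this statement as a formal consequence of the Milnor--Moore theorem together with Theorems \ref{thm:Phi-pwr-series} and \ref{thm:primitive-diagrams}, so that the only genuinely new verification concerns compatibility of the Lie brackets. First I would observe that $H_\ast(\Omega \Conf(m,\R^n);\R)$ is a connected, graded, cocommutative Hopf algebra over $\R$. It is connected because $\Conf(m,\R^n)$ is simply connected for $n\geq 3$, so that $\Omega\Conf(m,\R^n)$ is path-connected; its product is the Pontryagin product coming from loop concatenation; and its coproduct, induced by the diagonal, is automatically cocommutative since the diagonal is symmetric. As we work over a field of characteristic zero, the Milnor--Moore theorem \cite{Milnor-Moore:1965} then supplies a natural isomorphism of Hopf algebras
\[
U\bigl(PH_\ast(\Omega\Conf(m,\R^n))\bigr)\;\overset{\cong}{\longrightarrow}\;H_\ast(\Omega\Conf(m,\R^n)),
\]
where $P(-)$ denotes the primitives and $U$ the universal enveloping algebra of the graded Lie algebra of primitives under the commutator bracket.

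Next I would identify these primitives. Because $\Conf(m,\R^n)$ is simply connected for $n\geq 3$, the Cartan--Serre part of the Milnor--Moore theory identifies the Hurewicz image with the primitives, yielding an isomorphism of graded Lie algebras
\[
\pi_\ast(\Omega\Conf(m,\R^n))\otimes\R\;\overset{\cong}{\longrightarrow}\;PH_\ast(\Omega\Conf(m,\R^n)),
\]
under which the Samelson product corresponds to the commutator bracket. Composing with the isomorphism of Theorem \ref{thm:primitive-diagrams}(a) then produces
\[
PH_\ast(\Omega\Conf(m,\R^n))\;\cong\;\pi_\ast(\Omega\Conf(m,\R^n))\otimes\R\;\overset{\Theta}{\cong}\;H_\ast(P\Dm(m)^\ast,\delta^\ast).
\]

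Finally I would check that this chain respects the Lie structure, which is the one step requiring care rather than mere citation. Since $\Theta$ is an isomorphism of Hopf algebras at the level of homology (Theorem \ref{thm:Phi-pwr-series}(b)), it carries primitives to primitives and intertwines the two commutator brackets; by construction the bracket \eqref{eq:PD-bracket} on $H_\ast(P\Dm(m)^\ast)$ is precisely the transport of the Samelson product along $\Theta$. Hence the displayed maps are isomorphisms of graded Lie algebras, not merely of graded vector spaces. Applying the functor $U$ and composing with the Milnor--Moore isomorphism then gives
\[
H_\ast(\Omega\Conf(m,\R^n))\;\cong\;U\bigl(PH_\ast(\Omega\Conf(m,\R^n))\bigr)\;\cong\;U\bigl(H_\ast(P\Dm(m)^\ast)\bigr),
\]
as claimed. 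The substantive content is already carried by Theorems \ref{thm:Phi-pwr-series} and \ref{thm:primitive-diagrams}; the main point I expect to be delicate is ensuring that every identification above is made as graded Lie algebras, which is exactly what the Hopf-algebra statement of Theorem \ref{thm:Phi-pwr-series}(b) guarantees and which is built into the definition \eqref{eq:PD-bracket} of the bracket.
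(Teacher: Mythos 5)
Your proposal is correct and follows exactly the route the paper intends: the paper states this corollary with only the one-line justification that it follows from Theorem \ref{thm:primitive-diagrams} together with the Milnor--Moore theorem, and your argument simply fills in that outline (Milnor--Moore plus Cartan--Serre to identify primitives with $\pi_\ast\otimes\R$, then Theorem \ref{thm:primitive-diagrams}(a) and the Hopf-algebra property of $\Theta$ from Theorem \ref{thm:Phi-pwr-series}(b) to transport the bracket, which is how \eqref{eq:PD-bracket} is defined). Nothing is missing.
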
	

Corollary \ref{thm:homology-prim} can be viewed as a diagrammatic description of $H_\ast(\Omega \Conf(m,\R^n))$, alternative to the horizontal chord diagram algebra of Cohen and Gitler \cite{Cohen-Gitler} and Kohno \cite{Kohno:2002}
(see also Fadell and Husseini \cite{Fadell-Husseini:2001}). To explain, let $\mathcal{L}_{m}(n-2)$ be the graded Lie algebra 
over $\R$ 
on generators $B_{j,i}, 1\leq i<j\leq m$, each of degree $n-2$, modulo the {\em Yang--Baxter relations} \cite[p.~1711]{Cohen-Gitler}:
\begin{equation}\label{eq:YB-relations}
	\begin{split}
[ B_{j,i},B_{k,\ell} ] & =0\ \ \text{for}\ i, j, k, \ell \ \text{distinct},\\
 B_{i,j} & = (-1)^n B_{j,i},  \\
[ B_{i,j},B_{i,t}+(-1)^n B_{t,j}] & =0 \ \ \text{for}\ 1\leq j < t < i\leq m, \\
[ B_{t,j},B_{i,j}+B_{i,t}] & =0\ \ \text{for}\ 1\leq j<t<i\leq m.
	\end{split}
\end{equation}
Note that the last identity follows from the previous two, and is stated only for convenience. For brevity, we sometimes denote $\mathcal{L}_{m}(n-2)$ simply by $\mathcal{L}_m$ when the ambient dimension $n$ is understood.

\begin{thm}[Cohen and Gitler \cite{Cohen-Gitler}]\label{thm:cohen-gitler}
	If $n \geq 3$, the integral homology of $\Omega \Conf(m,\R^n)$ is torsion-free, and there is an isomorphism of Lie algebras:
	\begin{equation}\label{eq:L_m}
	\mathcal{L}_{m}(n-2)\longrightarrow PH_\ast(\Omega \Conf(m,\R^n))\cong \pi_\ast(\Omega \Conf(m,\R^n))\otimes \R.
	\end{equation}
	Furthermore, the induced map on the level of universal enveloping algebras,
	\begin{equation}
	U\mathcal{L}_{m}(n-2)\longrightarrow H_\ast(\Omega \Conf(m,\R^n)),
	\end{equation}
	is an isomorphism of Hopf algebras.\footnote{Cohen and Gitler actually prove the analogous result over $\Z$ \cite{Cohen-Gitler}, though in that setting the primitives in homology cannot be identified with a homotopy group.	We instead work over $\R$.} 
\end{thm}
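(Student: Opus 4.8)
The plan is to build $\Omega\Conf(m,\R^n)$ up from the Fadell--Neuwirth fibrations and then combine a homology computation with the Milnor--Moore theorem. First I would recall the fibration $\Conf(m,\R^n)\to \Conf(m-1,\R^n)$ that forgets the last point; its fiber is $\R^n$ with $m-1$ points removed, hence homotopy equivalent to a wedge $\bigvee_{m-1}S^{n-1}$. This fibration admits a section (send the last point off to infinity in a fixed direction), so after applying $\Omega$ the resulting fibration of loop spaces is split. Since all three spaces are connected $H$-spaces and the fiber homology is torsion-free --- indeed $H_\ast(\Omega(\bigvee_{m-1}S^{n-1}))$ is the tensor algebra on $m-1$ generators of degree $n-2$ by the Bott--Samelson theorem, using $n\geq 3$ so that $\bigvee_{m-1}S^{n-1}$ is a suspension of a connected space --- an induction on $m$ with the (split, hence collapsing) Serre spectral sequence shows that $H_\ast(\Omega\Conf(m,\R^n);\Z)$ is torsion-free and, as a coalgebra, is the tensor product of fiber and base homologies. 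In particular its Poincar\'e series is $\prod_{j=1}^{m-1}(1-j\,t^{n-2})^{-1}$.

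Next I would identify the target Lie algebra. Over $\R$, $H_\ast(\Omega\Conf(m,\R^n))$ is a connected graded-cocommutative Hopf algebra, so by Milnor--Moore it is the universal enveloping algebra of its primitives, and the primitives are identified with the Hurewicz image of $\pi_\ast(\Omega\Conf(m,\R^n))\otimes\R$. This yields the identification $PH_\ast\cong \pi_\ast(\Omega\Conf(m,\R^n))\otimes\R$ in the statement and reduces the first claim to a statement about the Lie algebra of primitives. I would then define the generators geometrically: $B_{j,i}\in\pi_{n-1}(\Conf(m,\R^n))=\pi_{n-2}(\Omega\Conf(m,\R^n))$ is the class of the map $S^{n-1}\to\Conf(m,\R^n)$ moving the $j$-th point once around the $i$-th along a small sphere, with the remaining points fixed. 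Pulling back the generator of $H^{n-1}(\Conf(2,\R^n))\cong H^{n-1}(S^{n-1})$ along the projections onto pairs of points detects these classes, and dualizing the Arnold (Orlik--Solomon) presentation of $H^\ast(\Conf(m,\R^n))$ produces exactly the Yang--Baxter relations \eqref{eq:YB-relations} among the $B_{j,i}$ in $PH_\ast$. This gives a well-defined Lie-algebra map $\mathcal{L}_m(n-2)\to PH_\ast(\Omega\Conf(m,\R^n))$, and the induced algebra map $U\mathcal{L}_m(n-2)\to H_\ast$ sends generators to primitives, hence is a map of Hopf algebras.

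Finally I would show this map is an isomorphism by a Poincar\'e-series count. Surjectivity follows because the fibration splitting exhibits $H_\ast(\Omega\Conf(m,\R^n))$ as algebra-generated by the fiber classes in degree $n-2$, which are precisely the $B_{m,i}$; inducting on $m$ shows the $B_{j,i}$ generate the Hopf algebra, hence (by Milnor--Moore) generate the primitives as a Lie algebra. For injectivity I would establish that $\mathcal{L}_m(n-2)$ splits as a semidirect product $\mathcal{L}_{m-1}(n-2)\ltimes \mathbb{L}_{m-1}$, where $\mathbb{L}_{m-1}$ is the free graded Lie algebra on the $m-1$ generators $B_{m,1},\dots,B_{m,m-1}$; this decomposition mirrors the fibration and is exactly the content of the Yang--Baxter relations. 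Passing to enveloping algebras gives $U\mathcal{L}_m(n-2)\cong U\mathcal{L}_{m-1}(n-2)\otimes U\mathbb{L}_{m-1}$, and since $U$ of a free Lie algebra on $k$ generators of degree $d$ is the tensor algebra with series $(1-k\,t^d)^{-1}$, induction yields Poincar\'e series $\prod_{j=1}^{m-1}(1-j\,t^{n-2})^{-1}$ for $U\mathcal{L}_m(n-2)$, matching that of $H_\ast$. A surjection of graded vector spaces with equal finite-dimensional pieces in each degree is an isomorphism, giving the Hopf-algebra isomorphism; restricting to primitives gives the Lie-algebra isomorphism, and torsion-freeness upgrades this to the integral statement. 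The main obstacle is precisely this injectivity step: showing the Yang--Baxter relations form a \emph{complete} set of relations, equivalently that $\mathcal{L}_m(n-2)$ admits the semidirect-product (PBW-compatible) structure above, so that no additional relations shrink $U\mathcal{L}_m(n-2)$ below the homology.
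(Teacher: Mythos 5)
The paper does not actually prove this statement: Theorem \ref{thm:cohen-gitler} is imported verbatim from Cohen and Gitler \cite{Cohen-Gitler}, so there is no in-paper argument to compare yours against. That said, your outline is essentially the classical proof (and close to Cohen--Gitler's own): split Fadell--Neuwirth fibrations with fiber $\bigvee_{m-1}S^{n-1}$, Bott--Samelson for the looped fibers, a collapsing spectral sequence (or directly $\Omega E\simeq \Omega F\times\Omega B$ via the section and the $H$-space structure) to get torsion-freeness and the Poincar\'e series $\prod_{j=1}^{m-1}(1-j\,t^{n-2})^{-1}$, Milnor--Moore over $\R$ to identify $PH_\ast$ with $\pi_\ast\otimes\R$, geometric verification of the relations \eqref{eq:YB-relations}, and a generators-plus-dimension-count argument for the isomorphism. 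All of these steps are sound, and the splitting you invoke is exactly the one the paper records (as a vector-space decomposition) in \eqref{eq:L_m(n-2)}, attributed there to Fadell and Husseini.

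The one genuinely incomplete step --- which you yourself flag as ``the main obstacle'' --- is the completeness of the Yang--Baxter relations. To run the Poincar\'e-series comparison you must first know that $U\mathcal{L}_m(n-2)$ is no \emph{larger} than the claimed series, and for that the semidirect-product decomposition $\mathcal{L}_m(n-2)\cong\mathcal{L}_{m-1}(n-2)\ltimes\mathbb{L}_{m-1}$ cannot merely ``mirror the fibration'': one must check that the formulas forced by \eqref{eq:YB-relations} actually define an action of $\mathcal{L}_{m-1}(n-2)$ on the free graded Lie algebra $\mathbb{L}_{m-1}$ by derivations (i.e., that the relations of $\mathcal{L}_{m-1}$ act trivially), that the resulting semidirect product itself satisfies the Yang--Baxter relations so that the natural surjection $\mathcal{L}_m(n-2)\to\mathcal{L}_{m-1}(n-2)\ltimes\mathbb{L}_{m-1}$ admits an inverse, and only then apply Poincar\'e--Birkhoff--Witt. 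Your outline names this correctly but does not supply the verification; as a proof of the cited theorem it would need that computation (or a Koszul-duality argument against the Arnold presentation) spelled out.
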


\begin{rem}
\label{R:classical-lie-alg}
For $n=2$, the resulting Lie algebra $\mathcal{L}_m(0)$  is called the Drinfeld--Kohno Lie algebra.  
Kohno \cite{Kohno:1985} showed that it is the associated graded Lie algebra of the pure braid group $\mathcal{PB}_m=\pi_0 (\Omega\Conf(m,\R^2))$ and that the degree completion of $\mathcal{L}_m(0)$ is the Malcev Lie algebra of $\mathcal{PB}_m$.  
See for example the work of Suciu and Wang \cite{Suciu-Wang:Forum2019, Suciu-Wang:2020} for definitions of these Lie algebras associated to groups.
The resulting filtration agrees with the Vassiliev filtration on braids, and Kohno's application of power series connections in \cite{Kohno:1987} shows that Vassiliev invariants separate braids.
Similar algebras of diagrams appear for Vassiliev invariants of knots in $\R^3$, though it is not known whether they separate knots.  
Bar-Natan \cite{Bar-Natan:1995} described these invariants as dual both to spaces of chord diagrams, as in Theorem \ref{thm:cohen-gitler}, and to spaces of (trivalent) graphs with free vertices, as in Corollary \ref{thm:homology-prim}.
The Kontsevich integral is the analogue of the power series connection for knots \cite{Bar-Natan:1995, Lin:PowerSeries}.
\end{rem}

\begin{rem}\label{rem:B_ji}
	The generators $B_{j,i}$ of $\mathcal{L}_m(n-2)$ are represented by spherical cycles \cite{Fadell-Husseini:2001}
	\begin{equation}\label{eq:B_ji}
	B_{j,i}: {S}^{n-2}\longrightarrow \Omega\Conf(m, \mathbb{R}^n),\qquad 1\leq j<i\leq m,
	\end{equation}
	which are the adjoints of 
	\[
	\begin{split}
	b_{j,i}: {S}^{n-1} & \longrightarrow \Conf(m, \mathbb{R}^n),\qquad b_{j,i}(\xi) = (y_1,\ldots,y_{i-1},y_j-\xi,y_{i+1},\ldots, y_m).
	\end{split}
	\]
	Here $(y_1,..., y_m)$ is the basepoint of $\Conf(m, \mathbb{R}^n)$.
\end{rem}

We will use the fact that as a vector space, $\mathcal{L}_m \cong \pi_* (\Omega\Conf(m,\R^n))\otimes \R$ is the direct sum \cite{Fadell-Husseini:2001}
\begin{equation}\label{eq:L_m(n-2)}
 \mathcal{L}_m\cong \bigoplus^m_{j=2} \mathcal{L}(B_{j,1},B_{j,2},\ldots, B_{j,j-1}), 
\end{equation}
where $\mathcal{L}(B_{j,1},B_{j,2},\ldots, B_{j,j-1})$ is the free graded Lie algebra generated by $B_{j,1},B_{j,2},\ldots, B_{j,j-1}$.
This decomposition holds because the projections which forget points have sections up to homotopy.
It is analogous to the splitting of the pure braid group as an iterated semi-direct product of free groups.


Our next main result is Theorem \ref{thm:theta-trees}, in which we show that $\Theta$ maps iterated Samelson products in $\pi_* (\Omega\Conf(m,\R^n))\otimes \R \cong  \mathcal{L}_m$ 
to certain trivalent trees.  
First, we find it convenient to use a spanning set of $\mathcal{L}_m$ consisting of certain bracket expressions. 
Namely, any graded Lie algebra is spanned by left-normed monomials in its generators (Lemma \ref{lem:left-normed-span}).  For 
$\mathcal{L}_m$, these are of the form 
\begin{equation}
\label{eq:left-normed-brackets}
\begin{split}
B_{j;I}&:=[ B_{j,i_1},B_{j,i_2},B_{j,i_3}\ldots, B_{j,i_k} ] :=[[\ldots [[B_{j,i_1},B_{j,i_2}],B_{j,i_3}],\ldots ], B_{j,i_k}],\\
& 2 \leq j \leq m, \qquad I=(i_1,\ldots,i_k), \qquad i_1, \dots, i_k <j.
 \end{split}
\end{equation}
Our choice to use left-normed rather than right-normed monomials is arbitrary.
Paring this spanning set down to a basis is not straightforward,\footnote{The monomials in the well known Lyndon basis \cite{Chen-Fox-Lyndon:1958} or Hall bases are generally not left-normed, though there is relatively recent work on bases of left-normed \cite{Walter:2010arXiv} and right-normed \cite{Chibrikov:2005} monomials for a free Lie algebra.}
but a certain subspace that will be important for us admits an easily described basis of left-normed monomials.

Namely, recall that the \emph{Lie module} $Lie(m-1)$ is the submodule of the free (graded) Lie algebra on $m-1$ generators spanned by brackets where each generator appears exactly once. 
It is well known that $\dim Lie(m-1) = (m-2)!$, with a basis of left-normed monomials in which the leftmost generator is  the first generator \cite{Cohen:2016aa, Sinha:Graphs-Trees}.  In our setting, these are the monomials 
\begin{equation}\label{eq:Lie(m-1)-basis}
 B_{m;1,\sigma(2),\ldots,\sigma(m-1)}=[ B_{m,1}, B_{m,\sigma(2)},\ldots  B_{m,\sigma(m-1)} ]
\end{equation}
 where $\sigma$ is a permutation of $\{2,\dots, m-1\}$.
(Koschorke \cite{Koschorke:1997} observed that geometrically, $Lie(m-1)$ is isomorphic to the subspace of $m$-component Brunnian spherical links in the space of spherical link maps; see Section \ref{S:brunnian-spherical-links} for further details.)

Any monomial $B$ in generators of degree $n$ of a graded Lie algebra can be represented by a planar rooted trivalent tree with leaves labeled by the generators; see for example the first pictures in \eqref{eq:bracket-to-trees} and \eqref{eq:bracket-to-trees-2}.
In the setting of such trees, the anti-symmetry and Jacobi relations are often called the (graded) AS and IHX relations.  
The planar embedding of this tree can be used to determine an equivalence class of labeling called an \emph{orientation}, defined essentially as in Definition \ref{D:DmOrientations}.  For $n$ even, a labeling consists of an ordering of its edges, while for $n$ odd, it consists of an orientation of each edge and an ordering of the non-leaf vertices.  A tree with an orientation is called an \emph{oriented tree}.
In either parity of $n$, the orientation relations that yield equivalences of labelings encode the (graded) AS relations.

Now let $B=B_{j;I}$ be a monomial in $\pi_*(\Omega \Conf(m,\R^n))\otimes \R$ as in \eqref{eq:left-normed-brackets}.
Let $T(B)$ be the result of replacing each leaf label $B_{j,i}$ by $i$ and labeling the root by $j$.
Via the decomposition \eqref{eq:L_m(n-2)}, the assignment $B \mapsto T(B)$ determines an injection $T$ from $\pi_*(\Omega\Conf(m,\R^n))\otimes \R$ to the $\R$-vector space $\mathcal{T}^n(m)$ of (\emph{unrooted}) oriented trivalent trees with leaves labeled by $\{1,...,m\}$ modulo the (graded) IHX relations.  

Further, a representative of any element of $\mathcal{T}^n(m)$ can be viewed as an element of $P\overline{\D}(m)^*$, by identifying leaves with the same label.  
Below are examples with $m=4$ of the injection $B \mapsto T(B)$ and the identification of the given representative of $T(B)$ with an element of $P\overline{\D}(m)^*$.  The labelings which determine orientations are not shown.
\begin{equation}\label{eq:bracket-to-trees}
B=[[B_{4,1},B_{4,2}],B_{4,3}]
\lrarrow \vvcenteredinclude{0.15}{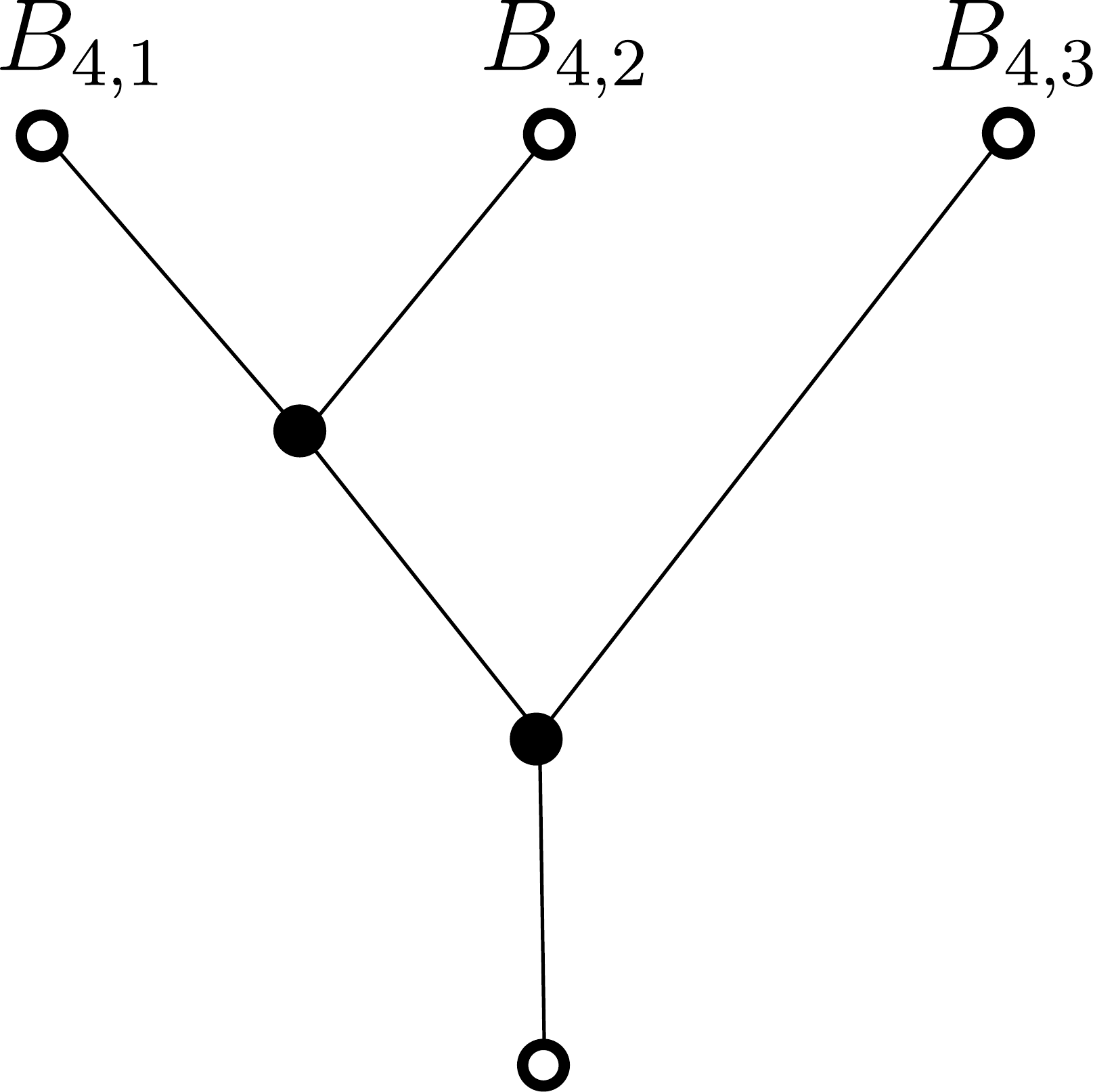}
\mapsto T(B)=\vvcenteredinclude{0.15}{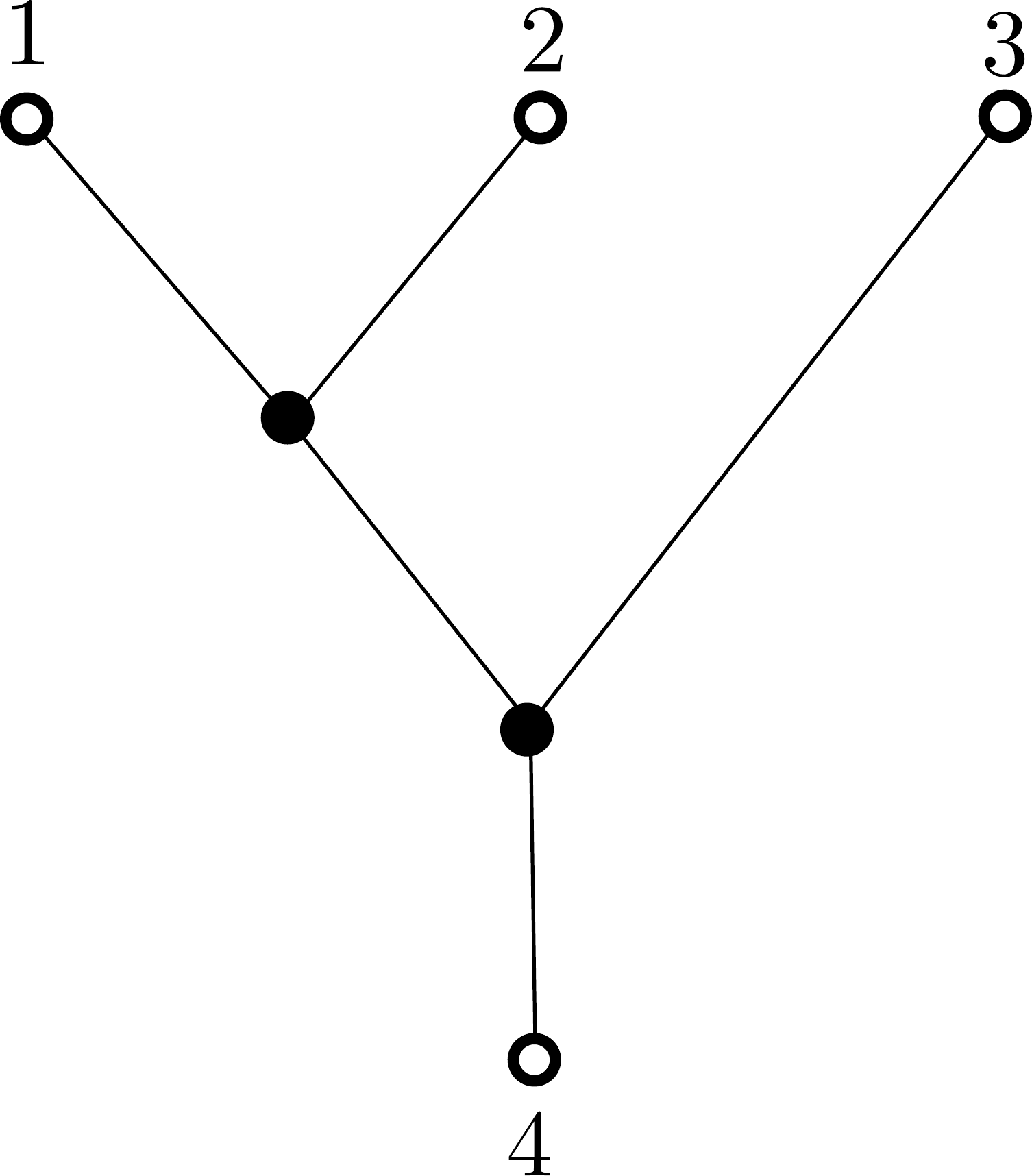} 
\lrarrow  \vvcenteredinclude{0.15}{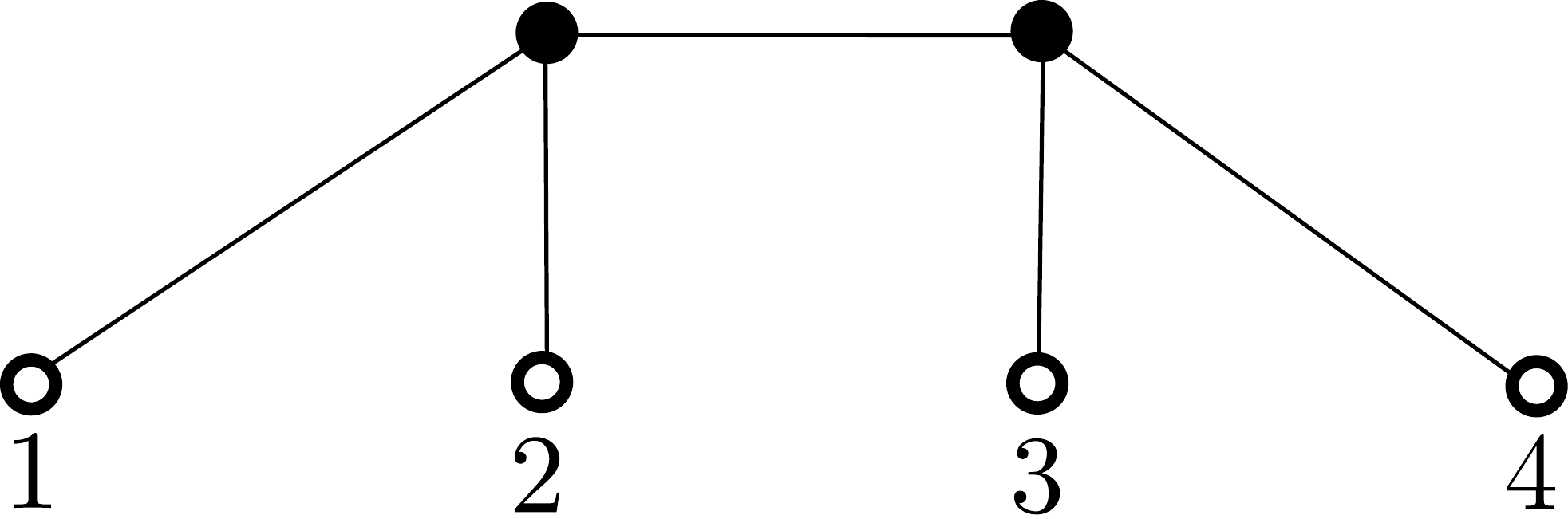} 
\end{equation}
 \begin{equation}\label{eq:bracket-to-trees-2}
 B=[[B_{4,2},B_{4,1}],B_{4,2}]
 \lrarrow \vvcenteredinclude{0.15}{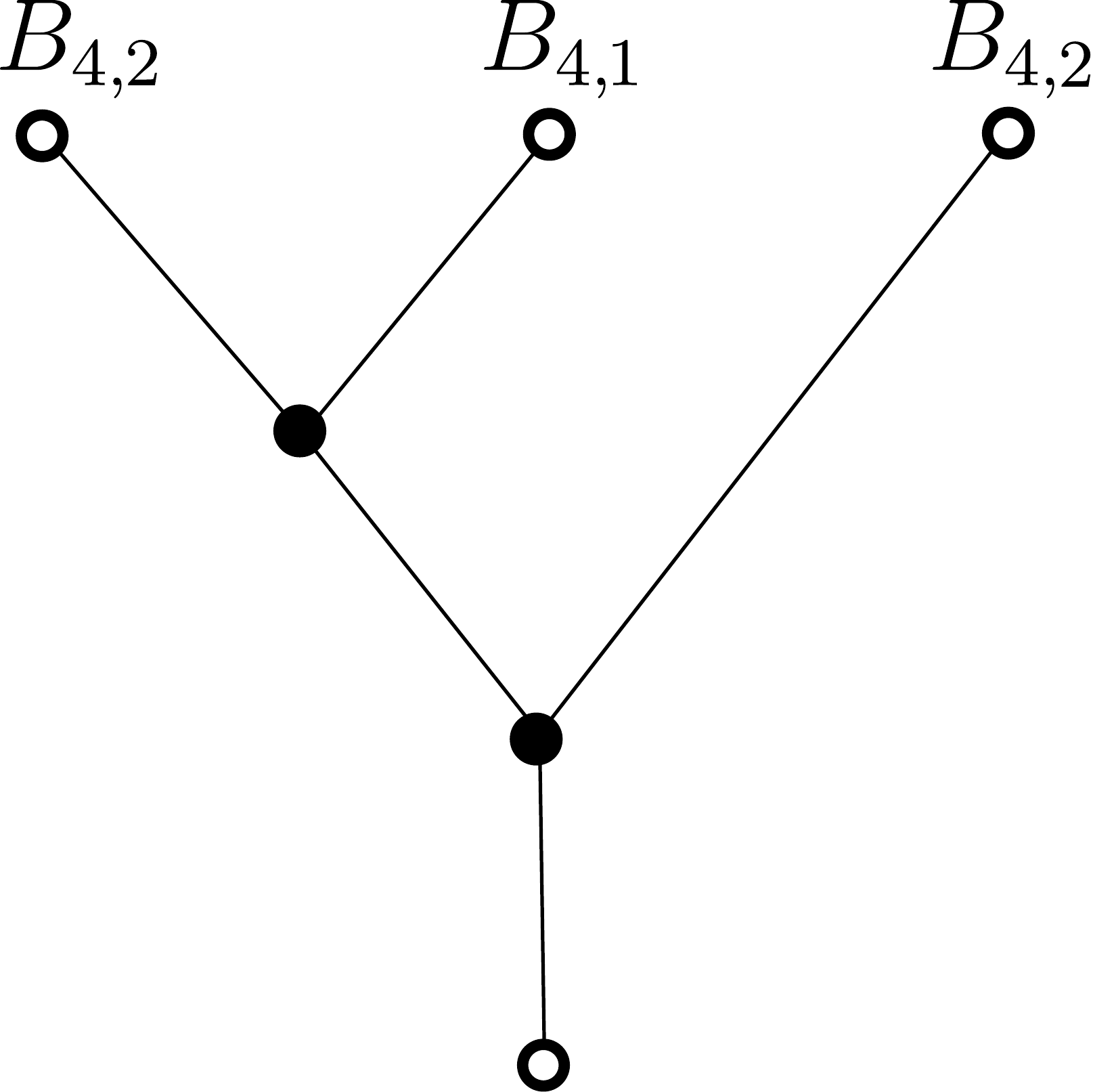}
 \mapsto T(B) =\vvcenteredinclude{0.15}{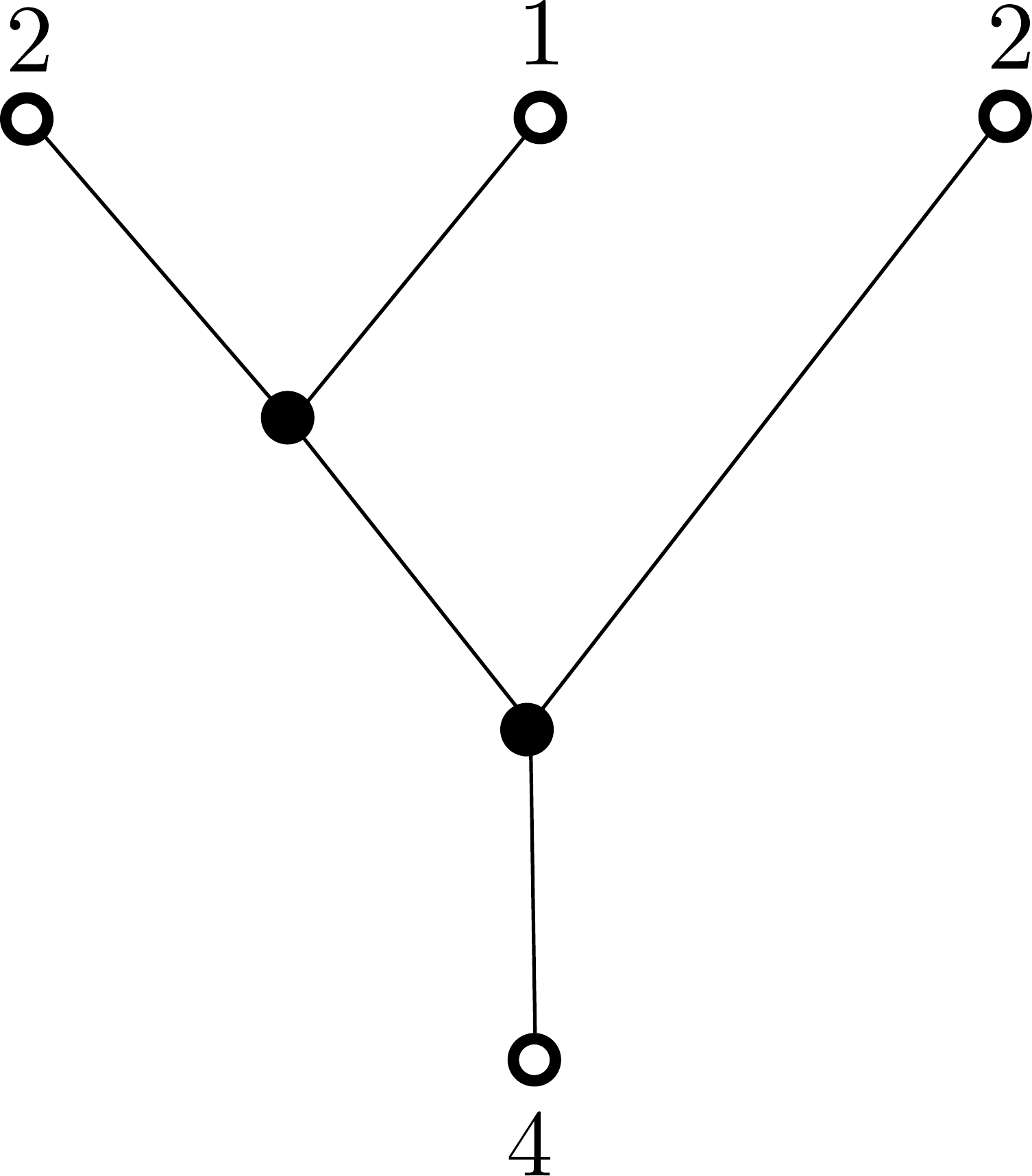}
 \lrarrow \vvcenteredinclude{0.15}{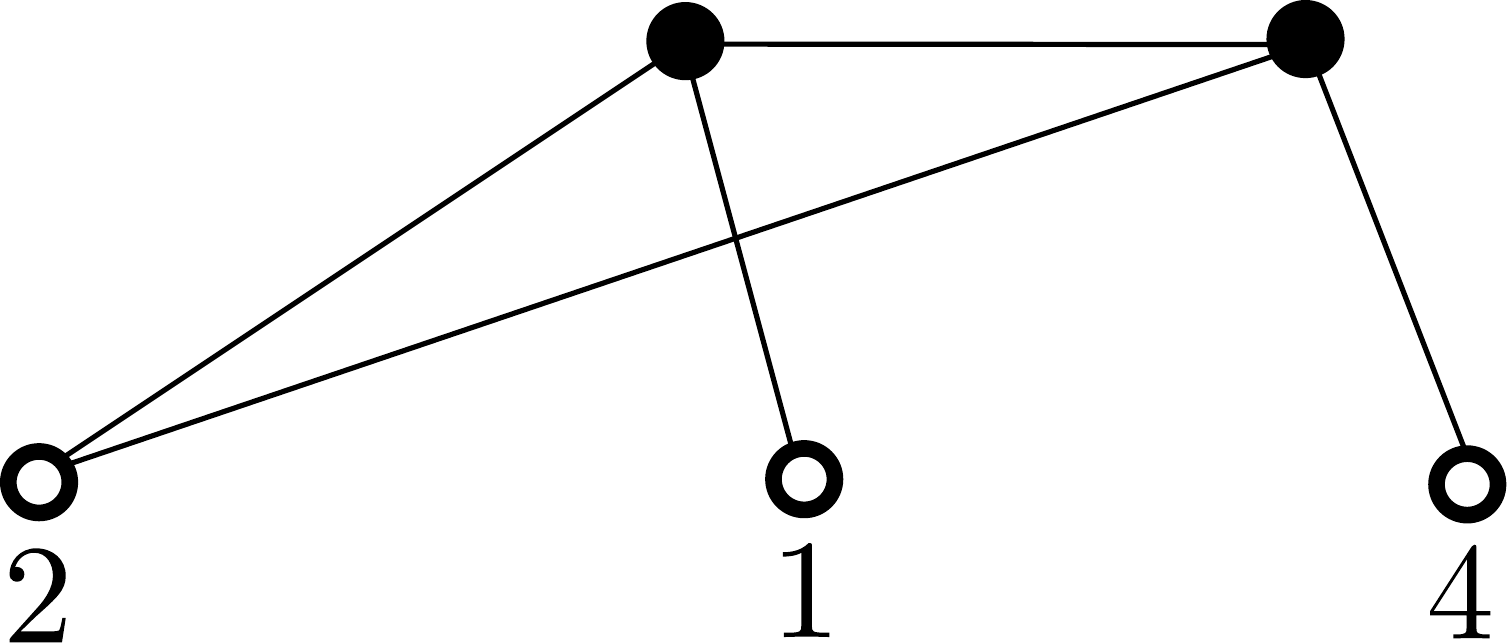}.
 \end{equation}
If $B$ is a left- or right-normed monomial, then the class of $T(B)$ can be represented by trees of the type as in the examples above, sometimes called ``tall trees'' or ``caterpillar trees.''

Part (b) of Theorem \ref{thm:theta-trees} below says that there is also a map $\widetilde{\Theta}: \pi_*(\Omega \Conf(m,\R^n))\otimes \R \to \mathcal{T}^n(m)$ induced by $\Theta$.
Although $\TTheta(B)$ is not always equal to $T(B)$, part (d) says that if the monomial $B$ has no repeated indices, then it is, at least up to sign.  For example, the last picture in \eqref{eq:bracket-to-trees} is (up to a sign) $\Theta(B_{4;1,2,3})$.   
Part (c) says that in general $\TTheta(B)$ is obtained from $T(B)$ by adding some extra terms.  
For example, the last picture in \eqref{eq:bracket-to-trees-2} is (up to a sign) one of the terms in $\Theta(B_{4;2,1,2})$.

\begin{thm}\label{thm:theta-trees} Below, assume $j, j_1, \dots, j_k \leq m$ and $I = (i_1, i_2, \dots, i_k)$ with $1 \leq i_1, \dots, i_k < j$.
	\begin{itemize}
		\item[(a)] If $C$ is a bracket expression containing generators $B_{j_1,i_1}, B_{j_2,i_2},\ldots, B_{j_k,i_k}$ 
		(with possible repeats), 
		then $\Theta(C)$ is represented in $H_\ast (P\Dm(m)^\ast)$ by a linear combination with $\pm 1$
		coefficients of all successive vertex blow-ups of the diagram 
		$(\Gamma_{j,i_1}\cdot \Gamma_{j,i_2}\cdot\ldots  \cdot \Gamma_{j,i_k})^*$,
		performed in the order given by the parenthesization in $C$, from innermost to outermost brackets.
		\item[(b)]  The map $\Theta$ gives rise to an injection $\widetilde{\Theta}:\pi_* (\Omega \Conf(m,\R^n)) \otimes \R$ into the space $\mathcal{T}^n(m)$ of trivalent trees with leaves labeled by $\{1,\dots, m\}$, modulo the (graded) AS and IHX relations.
		
		\item[(c)]  For any such multi-index $I$  (with repeated indices allowed), 
		$\langle \TTheta(B_{j;I}), T(B_{j;I}) \rangle =\pm 1$, where $\langle \cdot, \cdot \rangle$ is any Kronecker pairing determined by a basis of trees for $\mathcal{T}^n(m)$.
		
		\item[(d)] 
		If $I$ has no repeated indices, then $\TTheta(B_{j;I})=\pm T(B_{j;I})$.
		In particular, for any $k\leq j$,
		\begin{equation}\label{eq:borromean-tree}
		B_{j;1,2,\ldots,k-1} \overset{\Theta}{\longmapsto} \pm\vvcenteredinclude{0.12}{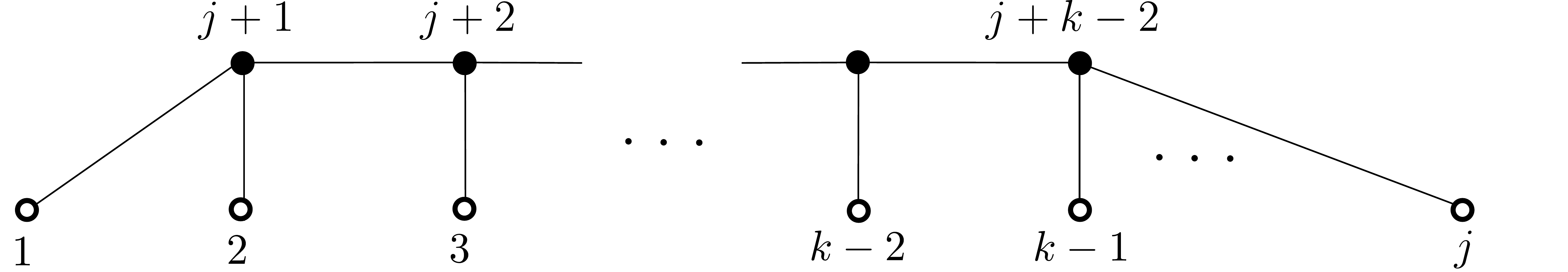} \quad \in \quad H_\ast(P\Dm(m)^\ast).
		\end{equation}
		Thus $\Theta$ maps a basis element \eqref{eq:Lie(m-1)-basis} of $Lie(m-1)$ to the diagram obtained from \eqref{eq:borromean-tree} by setting $k=j=m$ and permuting the segment vertices by $\sigma\in \Sigma(2,\ldots,m-1)$.  
		
	\end{itemize}
\end{thm}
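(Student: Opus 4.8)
The plan is to establish part (a) first, since it is the computational core from which (b)--(d) follow, and to prove it by induction on the bracket length $k$ using Lemma~\ref{lem:bracket-recursive}. For the base case $k=1$, the power series connection formula \eqref{eq:omega-form-0} shows directly that $\Theta(B_{j,i})=[\Gamma_{j,i}^\ast]$, the class of the single chord joining strands $i$ and $j$, so there is nothing to blow up. For the inductive step I would write $C=[C_1,C_2]$ and use that $\Theta$ is a Hopf algebra isomorphism (Theorem~\ref{thm:Phi-pwr-series}(b)), so it sends the Samelson product to the commutator \eqref{eq:PD-bracket} on primitives: $\Theta(C)=[\Theta(C_1),\Theta(C_2)]$. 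The decisive input is Lemma~\ref{lem:bracket-recursive}, which I would invoke to rewrite this commutator as $\pm\delta^\ast$ applied to the concatenation $\Theta(C_1)\cdot\Theta(C_2)$. Concatenation stacks the two stacked-chord diagrams along strand $j$, and $\delta^\ast$, being dual to edge contraction, blows up the junction vertex thereby created. Combined with the inductive hypothesis, this single extra blow-up turns the two sub-expansions into the full collection of successive blow-ups of $(\Gamma_{j,i_1}\cdot\Gamma_{j,i_2}\cdots\Gamma_{j,i_k})^\ast$, performed in the innermost-to-outermost order prescribed by the parenthesization.

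For part (b), I would combine part (a) with Theorem~\ref{thm:primitive-diagrams}(a), which already gives that $\Theta$ restricts to an isomorphism $\pi_\ast(\Omega\Conf(m,\R^n))\otimes\R\to H_\ast(P\Dm(m)^\ast,\delta^\ast)$, and in particular is injective. Since left-normed monomials span $\mathcal{L}_m$ (Lemma~\ref{lem:left-normed-span}), part (a) shows that the entire image consists of \emph{trivalent} primitive tree diagrams. Identifying such diagrams with labeled trivalent trees---so that the $\delta^\ast$-boundaries become the IHX relations and the orientation relations become the graded AS relations---exhibits $\TTheta$ as the composite of this identification with $\Theta|_{\pi_\ast}$. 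Injectivity of $\TTheta$ into $\mathcal{T}^n(m)$ then follows from injectivity of $\Theta|_{\pi_\ast}$.

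Parts (c) and (d) I would read off from the explicit expansion of part (a). For (d), when $I$ has no repeated indices the leaves of $T(B_{j;I})$ carry distinct labels, so at each stage of the induction there is no freedom to redistribute identically labeled leaves: the blow-up is rigid, and every term of the expansion collapses under AS and IHX to the single caterpillar tree $T(B_{j;I})$, giving $\TTheta(B_{j;I})=\pm T(B_{j;I})$. Specializing to $I=(1,\dots,k-1)$ yields \eqref{eq:borromean-tree}, and setting $k=j=m$ while permuting the segment vertices by $\sigma$ recovers the images of the basis \eqref{eq:Lie(m-1)-basis} of $Lie(m-1)$. For (c), with repeats allowed, I would run a leading-term argument: the left-normed caterpillar $T(B_{j;I})$ is the canonical tree obtained by performing every blow-up in the innermost-to-outermost order, and I would order the blow-up trees so that $T(B_{j;I})$ is the unique maximal one. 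Pairing against a basis of $\mathcal{T}^n(m)$ containing $T(B_{j;I})$ then extracts its coefficient, which by part (a) is $\pm1$.

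The main obstacle I anticipate is the sign and orientation bookkeeping inside part (a): one must verify, using the conventions of Section~\ref{S:Diagrams}, that the $\delta^\ast$-blow-up supplied by Lemma~\ref{lem:bracket-recursive} reproduces \emph{all} of the claimed successive blow-ups in exactly the parenthesization order, and that the Koszul signs arising from the degree-$(n-2)$ generators match the graded AS relations at each vertex. A secondary difficulty, specific to part (c), is non-cancellation of the leading term: even though repeated indices force several a priori distinct blow-up trees to become equal in $\mathcal{T}^n(m)$, I must check that their contributions to $T(B_{j;I})$ reinforce rather than cancel, so that the Kronecker pairing is genuinely $\pm1$.
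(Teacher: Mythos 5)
Your parts (a), (c), and (d) follow essentially the paper's own route: (a) is exactly the translation of formula \eqref{eq:Gamma_C} from Lemma \ref{lem:bracket-recursive} into iterated blow-ups, (d) is the same rigidity-of-the-blow-up induction, and for (c) the paper's cleaner version of your ``leading term'' argument is that only the blow-up sequence which always blows up at the root $j$ can produce $T(B_{j;I})$ (any other sequence leaves some segment vertex with too few incident edges, and IHX preserves leaf labels, so no equivalent combination can contribute to that coefficient).

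The genuine gap is in part (b). Your key claim --- that by part (a) ``the entire image consists of trivalent primitive tree diagrams,'' so that $\TTheta$ is just $\Theta|_{\pi_*}$ followed by a harmless relabeling --- is false. The image of $\Theta$ on homotopy lands in $H_*(P\Dm(m)^*)$, whose representatives have segment vertices of arbitrary valence and need not be trees at all: Example \ref{Ex:B31222} exhibits terms of $\Theta(B_{3;1,2,2,2})$ that are not trees even after splitting multivalent segment vertices into leaves. Passing to $\mathcal{T}^n(m)$ therefore requires a further, nontrivial map (kill diagrams with loops of free vertices, split apart edges meeting at a segment vertex), and injectivity of the composite does \emph{not} follow from injectivity of $\Theta$; a priori that quotient could collapse distinct classes. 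The paper closes this gap by factoring $\TTheta$ through the dual $\varphi^*$ of a map $\Ba(\Dm(m)) \to \LDm^{n+1}_{\mathrm{forest}}(m)$, whose injectivity on homology rests on the surjectivity of $\varphi$ in cohomology (proved in \cite{KKV:2020} via Habegger--Masbaum), together with Bar-Natan's identification of $\mathcal{T}^n(m)$ with $PH_{0,*}(\LDm^{n+1}_{\mathrm{forest}}(m)^*)$ by averaging over attachments of leaves to segments. Without some substitute for this input, your part (b) does not go through.
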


Part (a) is a translation of  formula \eqref{eq:Gamma_C} from Lemma \ref{lem:bracket-recursive} into a diagrammatic procedure.
Examples \ref{Ex:B3121} and \ref{Ex:B31222} illustrate this procedure in the less straightforward setting of brackets with repeated indices.
Part (d) implies that if $B$ is a monomial without repeated indices, and $\gamma \in \Ba(\Dm(m))$ is any cocycle, then the evaluation of the integral associated to $\gamma$ on $B$ can be computed purely graphically.  Indeed, 
$\langle \Phi(\gamma), B \rangle=
\langle \Theta(B), \gamma \rangle$, and $\Theta(B)$ is essentially the tree $T(B)$ that can be written down from the bracket expression.
The full statement of Theorem \ref{thm:theta-trees} in Section \ref{S:brackets-primitives-trees} includes a final part (e), which gives the sign on the diagram in part (d) corresponding to a basis element of $Lie(m-1)$.

Theorem \ref{thm:theta-trees}  resembles Habegger and Masbaum's result on Milnor invariants of long links in $\R^3$ via the Kontsevich integral \cite{Habegger-Masbaum:2000}.\footnote{Habegger and Masbaum use $C^t(m)$ to denote the space that we call $\mathcal{T}^\mathrm{even}(m)$.}  It also relates to the last two authors' work on Milnor homotopy invariants via configuration space integrals and the associated cocycles of diagrams \cite{Koytcheff-Volic:2019}.

A high-dimensional $m$-component Brunnian link $L$ in $\R^n$ gives rise to class in $\pi_*(\Conf(m,\R^n))$ and hence a class in the homotopy groups $\pi_*(\Omega \Conf(m,\R^n))$ of the space of braids in one dimension higher.  
Using Theorem \ref{thm:Phi-pwr-series} and Theorem \ref{thm:theta-trees}, we deduce in the following Corollary that the Milnor invariants of $L$ can be recovered by applying $\Theta$ to the corresponding class in $\pi_*(\Omega \Conf(m,\R^n))$.

\begin{cor}\label{cor:brunnian-spherical-links}
Let $L: S^{p_1} \sqcup \dots \sqcup S^{p_m} \to \R^n$ be a Brunnian link such that $\sum_{i=1}^m p_i= mn-2m-n+3$.  
Let  $\overline{\kappa}(L)$ be the adjoint of the map $\widetilde{\kappa}(L)$ defined in \eqref{eq:borr-diagram}. 
Then 
\[
\Theta(\overline{\kappa}(L)) =\sum_{I \in \Sigma_{m-2}} \mu_{I;m}(L)\Gamma_{m;I}^*\qquad  \in Lie(m-1)\subset H_*(P\D(m)^*),
\]
where  $\Gamma_{m;I}^*=\Theta(B_{m;I})$ can be viewed as a trivalent tree and each $\mu_{I;m}(L)$ is a Milnor invariant of $L$ in the sense of Koschorke \cite{Koschorke:1997}.
 \qed
\end{cor}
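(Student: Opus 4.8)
The plan is to reduce the statement to a single linear computation with $\Theta$, once the geometric class $\overline{\kappa}(L)$ has been located inside $\mathcal{L}_m \cong \pi_*(\Omega\Conf(m,\R^n))\otimes\R$. I would begin with the degree bookkeeping, which dictates the hypothesis on $\sum_i p_i$. Each generator $B_{m,i}$ has degree $n-2$, so a bracket of the $m-1$ distinct generators $B_{m,1},\dots,B_{m,m-1}$ has degree $(m-1)(n-2) = mn-2m-n+2$ in $\pi_*(\Omega\Conf(m,\R^n))$, and its adjoint has degree $mn-2m-n+3 = \sum_i p_i$ in $\pi_*(\Conf(m,\R^n))$. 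This is exactly the dimension hypothesis, so $\overline{\kappa}(L)$ sits in precisely the degree occupied by the multilinear submodule $Lie(m-1)$ sitting inside the top summand $\mathcal{L}(B_{m,1},\dots,B_{m,m-1})$ of the decomposition \eqref{eq:L_m(n-2)}.

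Next I would show that $\overline{\kappa}(L)$ actually lies in that $Lie(m-1)$ summand, with coordinates in the basis \eqref{eq:Lie(m-1)-basis} equal to Koschorke's Milnor invariants $\mu_{I;m}(L)$. The Brunnian hypothesis is what forces this: forgetting any single component of $L$ produces a trivial sublink, and under the projections $\Conf(m,\R^n)\to\Conf(m-1,\R^n)$ (which admit homotopy sections, as used for \eqref{eq:L_m(n-2)}) this kills every bracket in which some index fails to appear. Hence only brackets in which each of $B_{m,1},\dots,B_{m,m-1}$ occurs exactly once can survive, placing the class in $Lie(m-1)$. This is Koschorke's identification of $Lie(m-1)$ with the module of $m$-component Brunnian spherical links \cite{Koschorke:1997} recalled in Section \ref{S:brunnian-spherical-links}; under it one has $\overline{\kappa}(L) = \sum_{I\in\Sigma_{m-2}} \mu_{I;m}(L)\,B_{m;I}$, since the invariants $\mu_{I;m}(L)$ are by definition the coordinates of $L$ in this basis.

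Finally I would apply $\Theta$ term by term. By Theorem \ref{thm:Phi-pwr-series} the map $\Theta$ is linear (indeed a Hopf algebra isomorphism on homology), and by Theorem \ref{thm:theta-trees}(d), each basis multi-index $I=(1,\sigma(2),\dots,\sigma(m-1))$ has no repeated entries, so $\Theta(B_{m;I}) = \pm\,\Gamma_{m;I}^*$ is the corresponding trivalent tree. Linearity then gives $\Theta(\overline{\kappa}(L)) = \sum_{I\in\Sigma_{m-2}} \mu_{I;m}(L)\,\Gamma_{m;I}^*$, landing in the copy of $Lie(m-1)$ inside $H_*(P\D(m)^*)$ (the trees having no multiple edges). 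The main obstacle I anticipate is not the algebra but the bookkeeping of conventions: one must verify that the adjunction $\pi_{*+1}(\Conf)\cong\pi_*(\Omega\Conf)$, the Cohen--Gitler isomorphism of Theorem \ref{thm:cohen-gitler}, and Koschorke's geometric identification all normalize the generators compatibly, so that the surviving coefficients are exactly the $\mu_{I;m}(L)$ and the residual signs from Theorem \ref{thm:theta-trees}(d) (pinned down in its part (e)) match. Controlling these normalizations and signs is the delicate step; the rest is formal.
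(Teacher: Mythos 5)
Your proposal is correct and follows exactly the route the paper intends: the paper gives no separate argument for this corollary beyond the sentence ``Combining the results of Koschorke and Theorem \ref{thm:theta-trees} yields the following,'' and your expansion --- locating $\overline{\kappa}(L)$ in $Lie(m-1)$ via the Brunnian property and Koschorke's isomorphism \eqref{eq:hopf-invt-iso}, reading off its coordinates as the $\mu_{I;m}(L)$, and applying linearity of $\Theta$ together with Theorem \ref{thm:theta-trees}(d) --- is precisely that combination. Your closing caveat about normalizations and signs is the only substantive content the paper leaves implicit (it is absorbed by defining $\Gamma_{m;I}^* := \Theta(B_{m;I})$ and $\mu_{I;m}(L) := h_I(\widetilde{\kappa}(L))$), so nothing is missing.
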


We may regard the above result as an analogue of Habegger and Masbaum's result \cite{Habegger-Masbaum:2000} for spherical links because $\Theta$ is a transport of the power series connection in \eqref{eq:omega-form-0}, which is an analogue of the partition function in \cite{Habegger-Masbaum:2000}. 
Consequently, $\mu_{I;j}(L)$ can be obtained as Chen's  iterated integral over $\overline{\kappa}(L)$, which ties into the results of \cite{DeTurck:2013,Komendarczyk:2009, Komendarczyk:2010}.

For context for our last main result, recall that a pure braid in $\R^{n+1}$ can be viewed as a long (a.k.a.~string) 1-dimensional link by graphing the loop of configurations in $\R^n$.  
We previously showed \cite[Corollary 5.21]{KKV:2020} that this inclusion is surjective on real cohomology and hence  injective on real homology for any $n\geq 3$.  Since the real homotopy of the space of pure braids injects into its real homology, this inclusion is also injective on real homotopy.
Our last main result concerns $\Omega^k \Conf(m, \R^n)$, which can be viewed as the space of $k$-dimensional braids in $\R^{n+k}$.  
Indeed, there is a graphing map 
\begin{align*}
	G: \Omega^k \Conf(m,\R^n) &\longrightarrow \Emb_c \left(\coprod_m \R^k, \ \R^{n+k}\right) \\
	f &\longmapsto G(f)
\end{align*}
where $\Emb_c(-)$ stands for the space of embeddings that are fixed outside the unit box and, for each $i=1, \dots, m$, the component $G(f)_i$ is given by 
\[
(G(f)_i)(t_1, \dots, t_k) := (f_i(t_1, \dots, t_k), t_1, \dots, t_k) \in\R^{n+k}.
\]
The codomain is called the space of $k$-dimensional string links in $\R^{n+k}$.  Using Koschorke's map $\kappa$ and his generalized Hopf invariants \cite{Koschorke:1997}, we show that $G$ induces a monomorphism  on the subspace of homotopy consisting of bracket expressions where each generator appears at most once.  

\begin{thm} 
	\label{T:braids-as-links}
	Fix any $m\geq 2$, $k\geq 1$, and $n\geq 3$.  Let $\mathcal{H}_{m}^{n,k}$ be the submodule	 of $\pi_{*}(\Conf(m,\R^n))$ spanned by iterated Whitehead brackets of distinct generators $b_{j,i_i}, \dots, b_{j,i_p}$ where $2\leq j \leq m$ and where $p (n-2) \geq {n+k-3}$.  Then the following composition is injective:
		\[
		\mathcal{H}_{m}^{n,k} \hookrightarrow \pi_{\ast+k} (\Conf(m,\R^n)) \overset{\cong}{\to} \pi_{\ast} (\Omega^k \Conf(m,\R^n)) \overset{G_*}{\longrightarrow} \pi_{\ast} \Emb_c \left(\coprod_m \R^k, \ \R^{n+k}\right).
		\]
\end{thm}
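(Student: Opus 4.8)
The plan is to detect the image of every basis element of $\mathcal{H}_m^{n,k}$ by a nonzero generalized Hopf (Milnor) invariant of Koschorke, and to identify these invariants with the linearly independent trees produced by Theorem \ref{thm:theta-trees}. Since that theorem already shows the tree assignment $T$ is injective on distinct-generator brackets, it will suffice to prove that Koschorke's invariants, pulled back along $G_*$, reproduce $T$.

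First I would fix a convenient basis. Under the single-loop adjunction $\pi_{*+1}(\Conf(m,\R^n)) \cong \pi_*(\Omega\Conf(m,\R^n))\otimes\R \cong \mathcal{L}_m$, the Whitehead brackets spanning $\mathcal{H}_m^{n,k}$ correspond to the Samelson brackets $B_{j;I}$ with $I$ a tuple of distinct indices $<j$; via the splitting \eqref{eq:L_m(n-2)} into free graded Lie algebras indexed by $j=2,\dots,m$, these give a spanning set that restricts to a basis within each summand. The degree hypothesis $p(n-2)\geq n+k-3$ is precisely the condition that a $p$-fold such bracket, after the $k$-fold adjunction $\pi_{*+k}(\Conf(m,\R^n))\cong\pi_*(\Omega^k\Conf(m,\R^n))$, lands in $\pi_\ell\Emb_c$ with $\ell=p(n-2)+1-k\geq n-2$.

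Next I would interpret $G_*$ through Koschorke's construction \cite{Koschorke:1997}. His map $\kappa$ sends a string link in $\Emb_c(\coprod_m\R^k,\R^{n+k})$ to linking data valued in iterated loops of a configuration space, from which generalized Hopf invariants are extracted. The key geometric input is that graphing is compatible with $\kappa$: for a $k$-braid $f$, the configuration at each point of the box is literally recorded by $G(f)$, so $\kappa(G(f))$ should recover the class of $f$ in the range above. I would package this as a homotopy-commutative triangle asserting that the composite of $G_*$ with Koschorke's invariant map agrees, after the adjunction identifications, with the tree map $\TTheta$ of Theorem \ref{thm:theta-trees}. This is the string-link counterpart of Corollary \ref{cor:brunnian-spherical-links}, which records the same identification for Brunnian spherical links, namely that $\Theta$ of a link class equals its Milnor invariants.

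With this in hand the conclusion is immediate: on a distinct-index bracket, Theorem \ref{thm:theta-trees}(d) gives $\TTheta(B_{j;I})=\pm T(B_{j;I})$, and Theorem \ref{thm:theta-trees}(b) shows $T$ is injective into $\mathcal{T}^n(m)$, so the basis elements of $\mathcal{H}_m^{n,k}$ have linearly independent images and the composition is injective. The hard part will be the compatibility of $G$ with $\kappa$, together with the verification that the inequality $p(n-2)\geq n+k-3$ places the class in the range $\ell\geq n-2$ where Koschorke's generalized Hopf invariant faithfully detects the leading Milnor invariant. Since $\Conf(m,\R^n)$ is $(n-2)$-connected, below this range the relevant homotopy either vanishes or the invariant acquires indeterminacy from higher Hilton--Milnor terms; the inequality is exactly what guarantees that the leading term dominates and that $\kappa\circ G$ reproduces the bracket without correction.
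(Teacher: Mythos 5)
Your high-level strategy coincides with the paper's: detect $G_*$ of a distinct-index bracket by Koschorke's generalized Hopf invariants via the evaluation map $\kappa$, and conclude from linear independence of the resulting invariants. But the step you defer as ``the hard part'' --- that $\kappa$ composed with graphing reproduces the bracket --- is not a compatibility one can wave at; it is essentially the entire content of the theorem, and your heuristic for it (``the configuration at each point of the box is literally recorded by $G(f)$'') does not survive inspection. The map $\kappa$ evaluates the $m$ components at $m$ \emph{independent} parameters $(s_1,\dots,s_m)\in (S^k)^{\times m}$, whereas the braid $f$ only records configurations at a single common parameter; so $\kappa(\widehat{G}(f))$ lives in $\pi_{\ell+mk}\Conf(m,\R^{n+k})$, not $\pi_{\ell+k}$, and one must first pass to the closure $\widehat{G}$ and use Brunnian-ness so that $\kappa$ even descends to a map out of $S^{\ell+mk}$, and then prove $\kappa_*\circ\widehat{G}_*[b^n_{m,i_1},\dots,b^n_{m,i_{m-1}}]=\pm[b^{n+k}_{m,i_1},\dots,b^{n+k}_{m,i_{m-1}}]$. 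The paper does this by an explicit framed-bordism computation: the preimages of regular values under $\phi^{n+k}_{m,i}\circ\kappa\circ\widehat{G}(b)$ are of the form $pr_{i,m}^{-1}\Delta_2(P_i)$ (diagonal thickenings by factors of $D^k$ of the preimage manifolds $P_i$ for the original bracket), and the iterated intersections $Q_i\cap P_{i+1}$ still collapse to a single point or to $\varnothing$, so $h_J(f)=\pm\delta_{JI}$. Without some version of this computation your triangle is an assertion, not a proof.

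Two further gaps. First, a general element of $\mathcal{H}^{n,k}_m$ is a linear combination of brackets with \emph{different index sets and different lengths}, and a single evaluation map cannot separate these; the paper introduces, for each subset $S\subseteq\{1,\dots,m\}$, a map $\kappa_S$ restricting to the components in $S$, and shows $\sum_S(\kappa_S)_*\circ\widehat{G}_*$ is injective. Your proposal only addresses the Brunnian case where all generators appear. Second, the detour through $\widetilde{\Theta}$ and $\mathcal{T}^n(m)$ buys you nothing: once $\kappa_*\circ\widehat{G}_*$ is shown to carry brackets to the corresponding brackets, injectivity follows directly from Koschorke's isomorphism \eqref{eq:hopf-invt-iso} (equivalently, from the freeness of the summands in \eqref{eq:L_m(n-2)}); invoking Theorem \ref{thm:theta-trees} here only restates that linear independence in different clothing, and $\widetilde{\Theta}$ is in any case defined on $\pi_*(\Omega\Conf(m,\R^n))$, not on the string-link space, so it cannot by itself certify anything about $G_*$.
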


The assumptions on $p, m, n,$ and $k$ imply that the subspace $Lie(m-1)$ of length-$(m-1)$ brackets on distinct generators lies inside $\mathcal{H}_{m}^{n,k}$.  
In accordance with our strategy of proof, we divide the full statement of Theorem \ref{T:braids-as-links} in Section \ref{S:high-dim-braids} into part (a) on $Lie(m-1)$ and part (b) on $\mathcal{H}_{m}^{n,k}$.
	
The map $\kappa$ is essentially the map to the $(1,1,\dots,1)$-stage of the multivariable Taylor tower for link maps, and the subspace of brackets without repeats is a higher-dimensional analogue of braids up to link homotopy.  We conjecture that the graphing map is injective on all of real homotopy and real homology and that this injectivity can be detected by higher stages of the Taylor tower.

\subsection{Organization of the paper}
In Section \ref{S:diagrams-bar}, we review the definition of the diagram complex $(\D(m),\delta)$ from \cite{Lambrechts-Volic:2014}, the bar (respectively cobar) complex on a CDGA (respectively CDGC), and the resulting Hopf algebra structure. 

In Section \ref{S:power-series}, we start with the isomorphism $\Phi$ shown in formula \eqref{eq:Phi-cohomology-iso-D(m)} and use Chen's power series connection method \cite{Chen:1973} to construct the dual map $\Theta$ shown in formula \eqref{eq:Theta-homology-iso-D(m)}.

In Section \ref{S:brackets-primitives-trees}, we prove Theorems \ref{thm:primitive-diagrams} and \ref{thm:theta-trees}, which give diagrammatic descriptions of homotopy classes.  
We then provide the connection the module $Lie(m-1)$ in the context of Brunnian spherical link maps. 
The only fact from Section \ref{S:power-series} needed in Section \ref{S:brackets-primitives-trees} is that $\Theta$ is a Hopf algebra map; its explicit formula is not needed.

In Section \ref{S:high-dim-braids}, we prove Theorem \ref{T:braids-as-links}, which says that the subspace of non-repeating bracket expressions injects into the homotopy of many spaces of high-dimensional (equidimensional) string links.  
Section \ref{S:high-dim-braids} is somewhat independent of the previous sections.  The main connection is that  $\Theta$ provides a somewhat geometric description of the bracket expressions for those high-dimensional braids in terms of trivalent trees with non-repeating leaf labels.  The proof also uses maps introduced at the end of Section \ref{S:brackets-primitives-trees}.

Appendix \ref{apx:brackets} reviews some basic facts about the Whitehead and Samelson products.  
In Appendix \ref{A:cocycles}, we remark on methods of computing cocycles for iterated integrals, which can be applied to computations of the Milnor invariants as stated in Corollary \ref{cor:brunnian-spherical-links}.
\subsection{Acknowledgments}
The first author acknowledges the partial support by Louisiana Board of Regents Targeted Enhancement Grant 090ENH-21.
The second author was supported by the Louisiana Board of Regents grant LEQSF(2019-22)-RD-A-22. 
The third author was supported by the Simons Foundation.
We thank Victor Turchin for valuable input on a preprint of this article, and we thank the referee for their report and comments.  

\section{The CDGA of diagrams and the bar construction}\label{S:diagrams-bar}

In Section \ref{S:Diagrams}, we review Kontsevich's CDGA of diagrams.  In Section \ref{S:bar-and-cobar}, we review the bar construction on a differential graded algebra and its dual, the cobar construction on the dual differential graded coalgebra, as well as the Hopf algebra structure on these objects.

\subsection{The CDGA of diagrams}
\label{S:Diagrams}
The commutative differential graded algebra (CDGA) of \emph{admissible diagrams} $\D(m)$
originally appears in \cite[Section 3.3.3]{Kontsevich:1999} and is discussed in detail in \cite[Chapter 6]{Lambrechts-Volic:2014}.  
Here we also consider a larger algebra $\Dm(m)$ of diagrams where we do not quotient by multiple edges, defined in the following paragraphs.
The quasi-isomorphism $\Dm(m) \to C^*(\Conf(m,\R^n))$ given by the formality integration map is reviewed in Section \ref{S:power-series}.

\begin{defin}
	\label{D:Diagrams}
	Fix $n\geq 2$ and $m\geq 1$.  A \emph{diagram $\Gamma$ (in $\R^n$ on $m$ vertices)} consists of 
	\begin{itemize}
		\item
		a set of vertices $V(\Gamma)$, partitioned into $m$ \emph{segment vertices} $V_{\mathrm{seg}}(\Gamma)$ with distinct labels from $\{1,...,m\}$, and any number of \emph{free vertices} $V_{\mathrm{free}}(\Gamma)$.  Thus 
		\[
		V(\Gamma) = V_{\mathrm{seg}}(\Gamma) \sqcup V_{\mathrm{free}}(\Gamma);
		\]
		\item
		a set of edges $E(\Gamma)$ joining vertices of $\Gamma$, where an edge between two segment vertices is called a \emph{chord}. 
		
	\end{itemize}
	satisfying the following conditions:
	\begin{itemize}
		\item
		each free vertex of $\Gamma$ has valence at least 3;
		\item
		each free vertex of $\Gamma$ is joined to some segment vertex by a path of edges.
	\end{itemize}
\end{defin}

In \cite{Lambrechts-Volic:2014}, segment vertices are called \emph{external vertices}, and free vertices are called \emph{internal vertices}.  Six examples of diagrams in $\Dm(3)$ are shown below.  
A segment vertex is drawn as a hollow vertex, whereas a free vertex is drawn as a solid vertex. We align the segment vertices vertically rather than horizontally, as was done in \cite{Lambrechts-Volic:2014}, since this will be more convenient when we  picture elements of the bar construction on $\Dm(m)$ later.
\[
\includegraphics[scale=0.16]{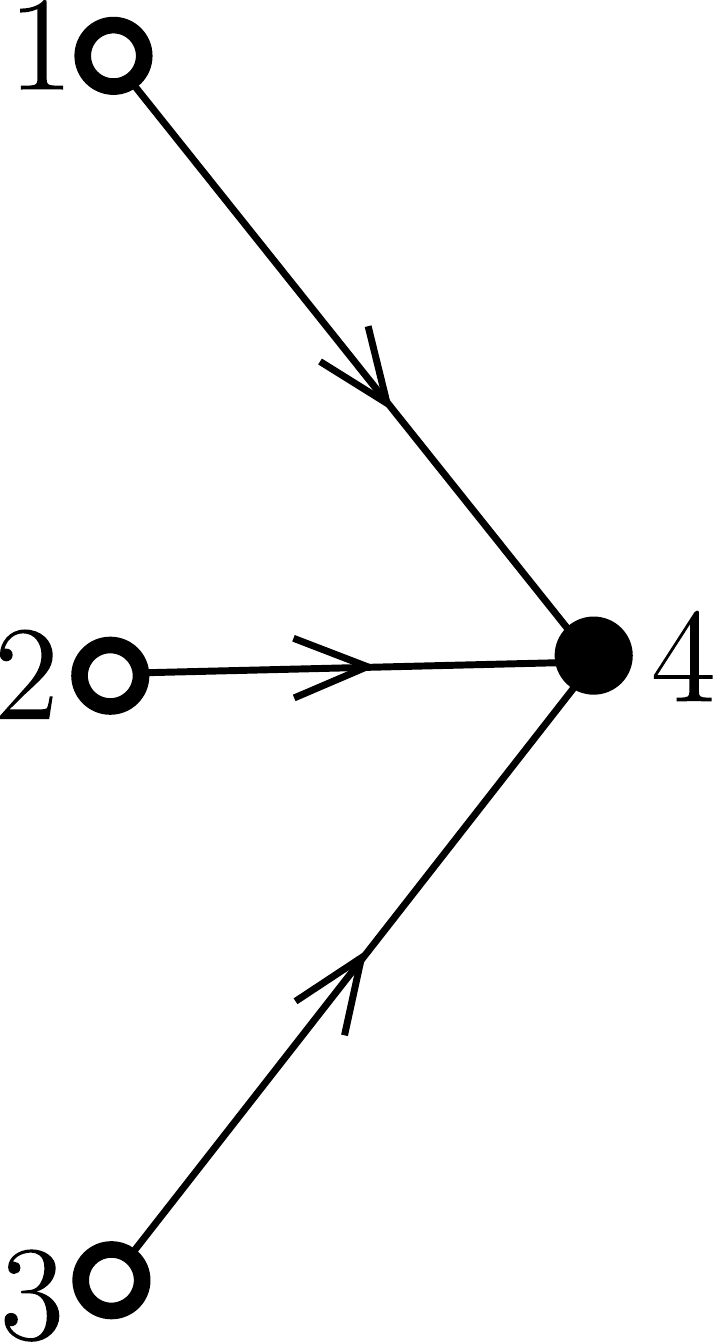}
\qquad \quad
\includegraphics[scale=0.16]{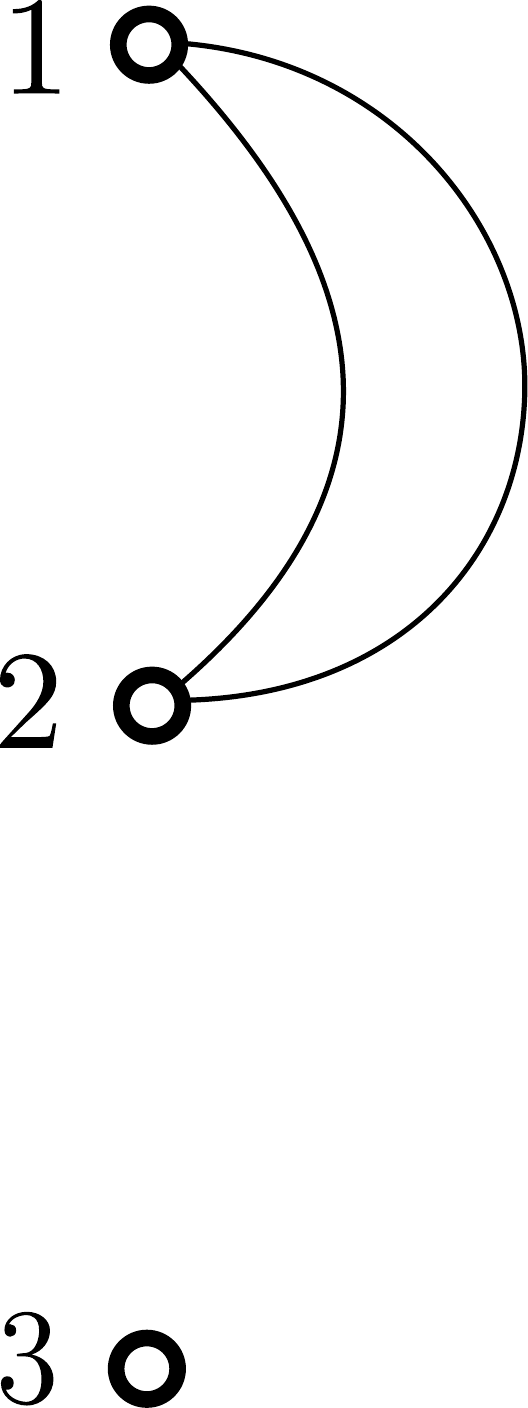}
\qquad \quad
\includegraphics[scale=0.16]{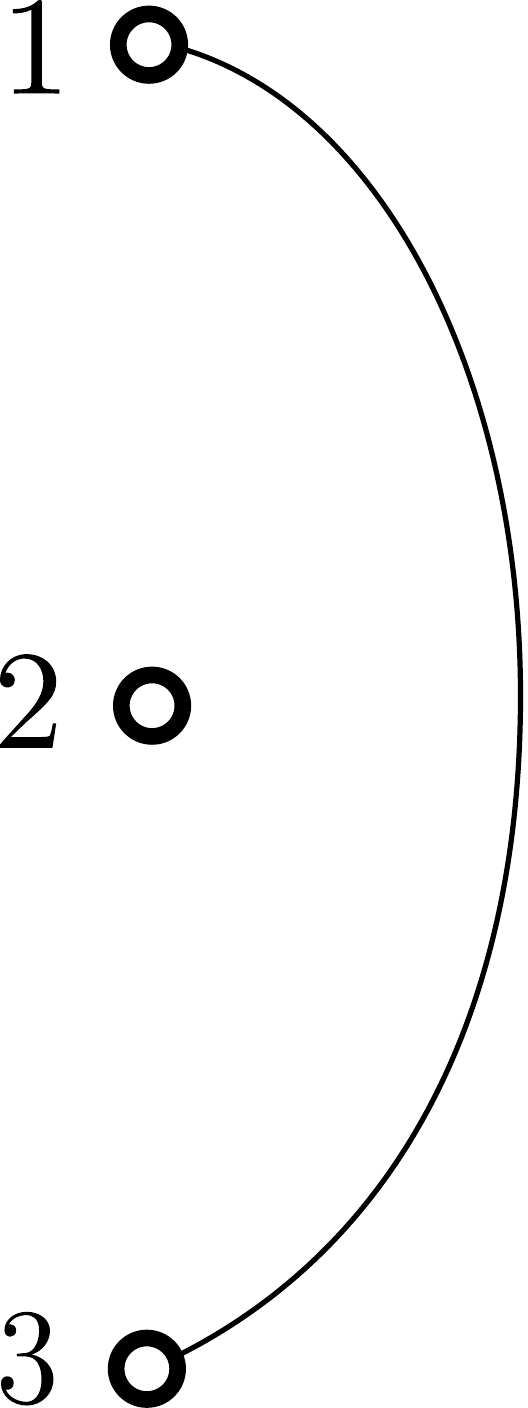}
\qquad \quad
\includegraphics[scale=0.16]{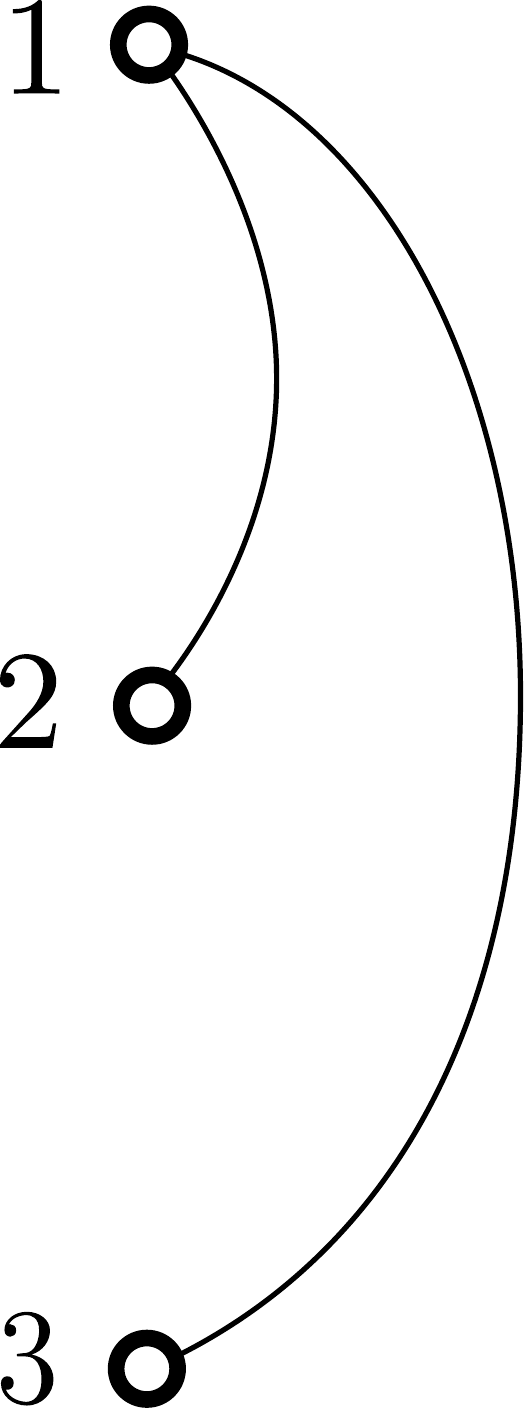}
\qquad \quad
\includegraphics[scale=0.16]{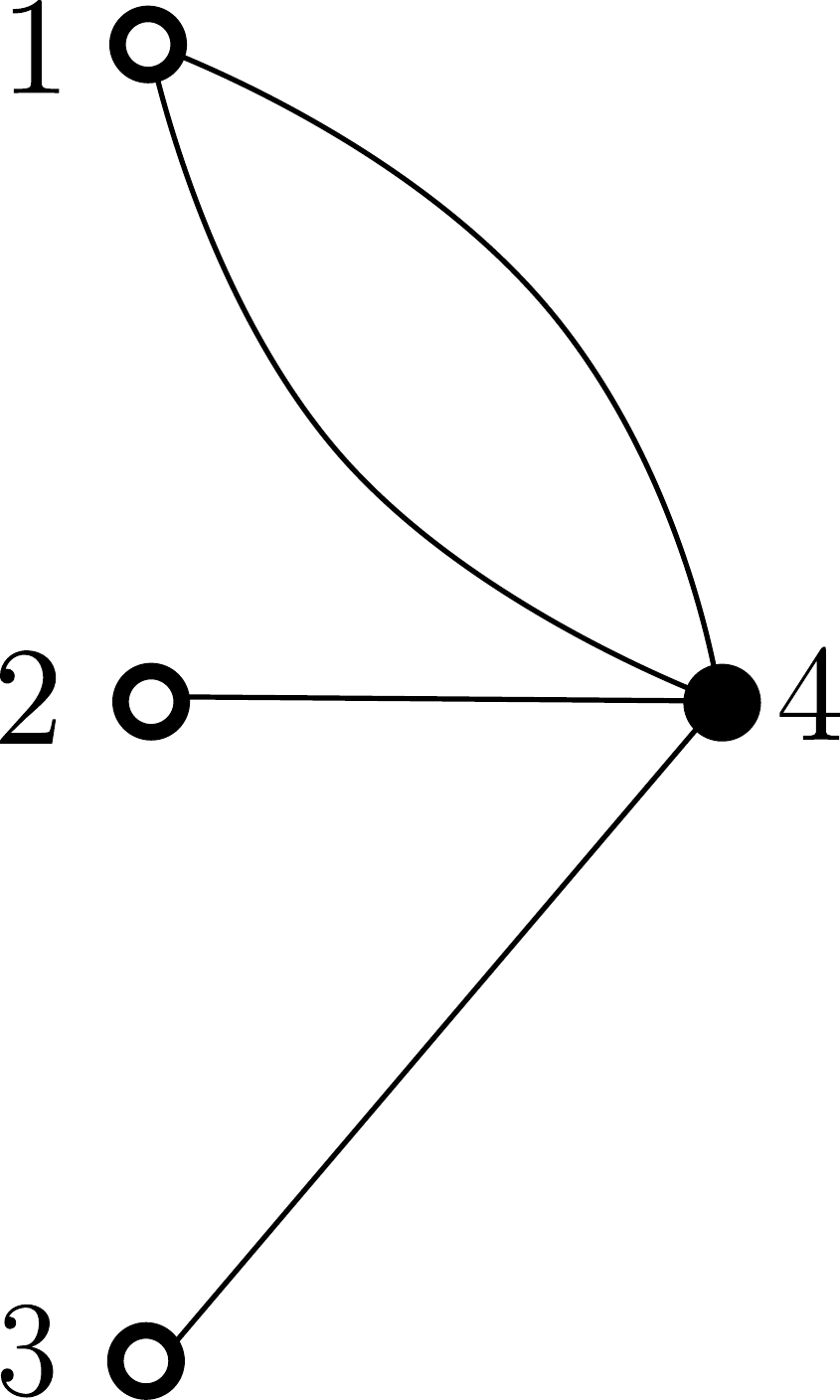}
\qquad \quad
\includegraphics[scale=0.16]{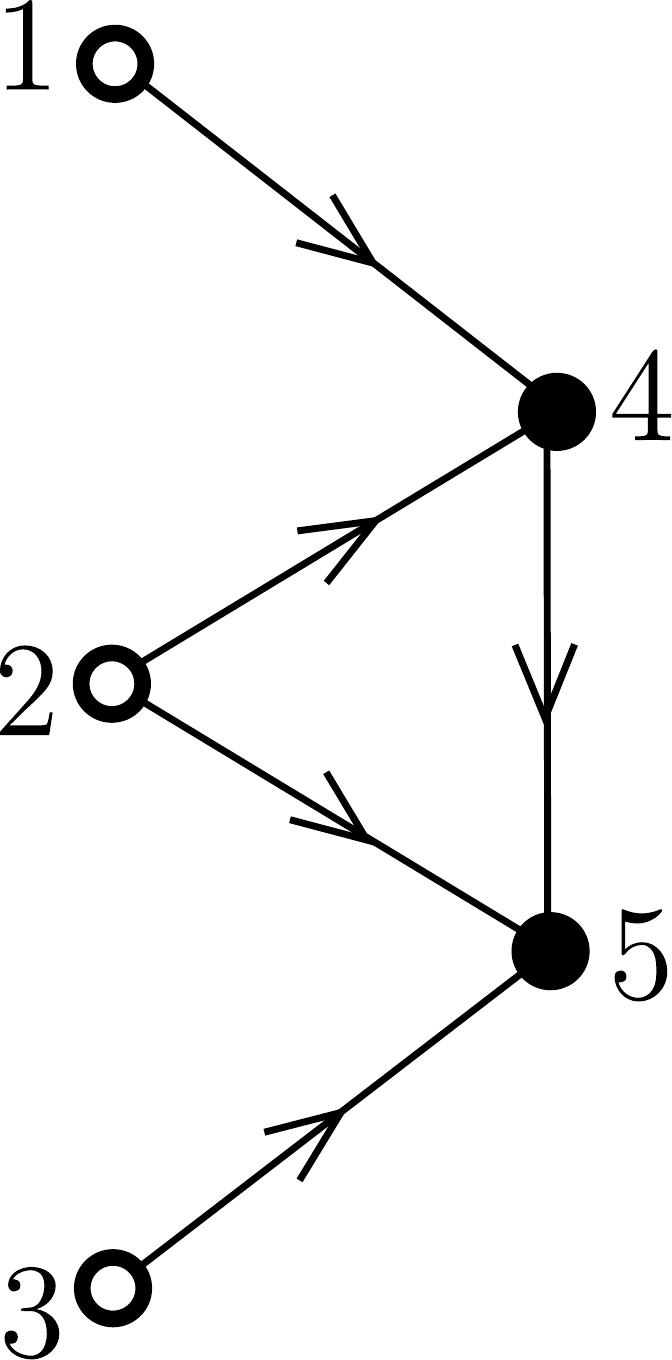}
\]

\begin{defin}
	\label{D:DmOrientations}
	A \emph{labeling} of a diagram $\Gamma$ consists of the following data:
	\begin{itemize}
		\item
		for odd $n$, a labeling of $V_{\mathrm{free}}(\Gamma)$ by integers $\{m+1,\ldots,|V(\Gamma)|\}$ and an orientation of each edge;
		\item 
		for even $n$, an ordering of the set of edges.
	\end{itemize}
	We call a diagram $\Gamma$ together with a labeling a \emph{labeled diagram}; these are considered up to graph isomorphisms which respect the labelings.   
	We then define \emph{orientation relations} on labeled diagrams:
	\begin{itemize}
	\item
	for odd $n$, $\Gamma \sim - \Gamma'$ if $\Gamma$ and  $\Gamma'$ differ by a transposition of two free vertex labels;
	\item
	for odd $n$, $\Gamma \sim -\Gamma'$ if $\Gamma$ and $\Gamma'$ differ by an orientation-reversal of an edge;
	\item
	for even $n$, $\Gamma \sim - \Gamma'$ if $\Gamma$ and  $\Gamma'$ differ by a transposition of two edge labels.
\end{itemize}
	The resulting equivalence class of a labeled diagram is called an \emph{oriented diagram}.
Let $\Dm(m)$ be the $\R$-vector space of oriented diagrams, modulo the relation that any diagram with a self-loop is zero.
\end{defin}

The space $\D(m)$, which we used in our previous work \cite{KKV:2020}, is the quotient of $\Dm(m)$ obtained by further setting to zero any diagram $\Gamma$ that has more than one edge between a pair of vertices.

We view  $\Dm(m)$ as a graded vector space where the \emph{degree} of a diagram $\Gamma$ is
\begin{equation}\label{eq:diag-degree}
|\Gamma| = (n-1)|E(\Gamma)| - n |V_{\mathrm{free}}(\Gamma)|.
\end{equation}
When we consider the bar complex on $\Dm(m)$, this will be the internal degree.

There is a differential $\delta$ on $\Dm(m)$ given by 
\begin{equation}\label{eq:d_D(m)}
\delta\Gamma  = \sum \varepsilon(\Gamma, e) \Gamma/e
\end{equation}
where the sum is taken over {\em contractible} edges $e$ in $\Gamma$, and $\Gamma/e$ is the result of contracting $e$ to a point.  Recall that a contractible edge is an edge that is not a chord.
Write $e=i \longrightarrow j$ to indicate that $e$ is an edge with endpoints $i$ and $j$, oriented from $i$ to $j$.
  If $e$ has endpoints $i<j$, the sign $\varepsilon(\Gamma, e)$ is 
\begin{equation}\label{eq:edge-sign}
\begin{split}
\text{for $n$ odd,}\ \varepsilon(e,\Gamma) & =\begin{cases}
(-1)^{j-m} 
& 
\text{if}\ e=i \longrightarrow j \\
(-1)^{j-m+1}
& 
\text{if}\ e=i \longleftarrow j \\
\end{cases}\\
\quad \text{and for $n$ even,}\ \varepsilon(e,\Gamma) & =(-1)^{e},
\end{split}
\end{equation}
where for $n$ even, $e$ also denotes the label on this edge.
This differential $\delta$ makes $\Dm(m)$ into a cochain complex.  For further details on the differential, see \cite[Section 6.4]{Lambrechts-Volic:2014}.

There is also a product of diagrams 
\begin{equation}\label{eq:diag-product}
\begin{split}
\Dm(m) \otimes \Dm(m) &\longrightarrow \Dm(m)\\
(\Gamma_1, \Gamma_2) & \longmapsto \Gamma_1 \cdot \Gamma_2
\end{split}
\end{equation}  
given by superposition of the two diagrams along segment vertices in the following sense: $\Gamma_1 \cdot \Gamma_2$ has $|V_{\mathrm{free}}(\Gamma_1 \cdot \Gamma_2)| = |V_{\mathrm{free}}(\Gamma_1)| + |V_{\mathrm{free}}(\Gamma_2)|$ and 
$|E(\Gamma_1 \cdot \Gamma_2)| = |E(\Gamma_1)| + |E(\Gamma_2)|$, but it still $m$ segment vertices.  
The orientation is given by appropriately raising the labels of vertices or edges from $\Gamma_2$.
The degree of a product is the sum of degrees,
\[
|\Gamma_1 \cdot \Gamma_2|=|\Gamma_1|+ |\Gamma_2|,
\]
and the product is graded-commutative:  
\begin{equation}\label{eq:diag-product-graded-commutative}
\Gamma_1 \cdot \Gamma_2=(-1)^{|\Gamma_1| |\Gamma_2|}\Gamma_2 \cdot \Gamma_1.
\end{equation}
This product makes $\Dm(m)$ into a CDGA, where the diagram with no edges and no free vertices is the unity 1.  
More details on this product can be found in \cite[Sections 6.3 and 6.5]{Lambrechts-Volic:2014}.
The quotient $\Dm(m) \to \D(m)$ is a CDGA map.
Moreover, it is a quasi-isomorphism because all the arguments used to prove Theorem 8.1 in \cite{Lambrechts-Volic:2014} apply just as well to $\Dm(m)$ as to $\D(m)$.

The graded dual $\Dm(m)^*$ of $\Dm(m)$ is a (graded-)cocommutative differential graded coalgebra (CDGC).  
The space $\Dm(m)$ has a basis of isomorphism classes of diagrams, which is canonical up to the orientation and therefore the sign of each diagram.
For a diagram $\Gamma \in \Dm(m)$, define an element $\Gamma^*\in \Dm(m)^*$ via the pairing 
\begin{equation}
\label{eq:Gamma-Gamma*-pairing}
\langle \Gamma_1, \Gamma_2^* \rangle = 
\left\{
\begin{array}{ll}
\pm |\mathrm{Aut}(\Gamma_1)| & \text{ if } \Gamma_1 \cong \Gamma_2, \\
0 & \text{ otherwise },
\end{array}
\right.
\end{equation}
where the sign is determined by whether the isomorphism between $\Gamma_1$ and $\Gamma_2$ is orientation-preserving or orientation-reversing.
We may write this more concisely as $\langle \Gamma_1, \Gamma_2^* \rangle  = \delta_{\Gamma_1, \Gamma_2} |\mathrm{Aut}(\Gamma_1)|$, by a mild abuse of the Kronecker delta.  
The factor $|\mathrm{Aut}(\Gamma_1)|$ is the size of the group of automorphisms of $\Gamma_1$, meaning automorphisms of the underlying undirected, unlabeled graph which fix the $m$ segment vertices.
It simplifies both the diagrammatic description of the dual boundary map $\delta^*$ and formula \eqref{eq:Gamma_C} which relates $\delta^*$ to the Samelson product on $\Omega \Conf(m, \R^n)$.

We draw a diagram $\Gamma^*$ as the rotation of the diagram $\Gamma \in \Dm(m)$ by $90^\circ$ counter-clockwise.
Six examples of diagrams in $\Dm(3)^\ast$ are shown below. 
\[
\begin{split}
\includegraphics[scale=0.16]{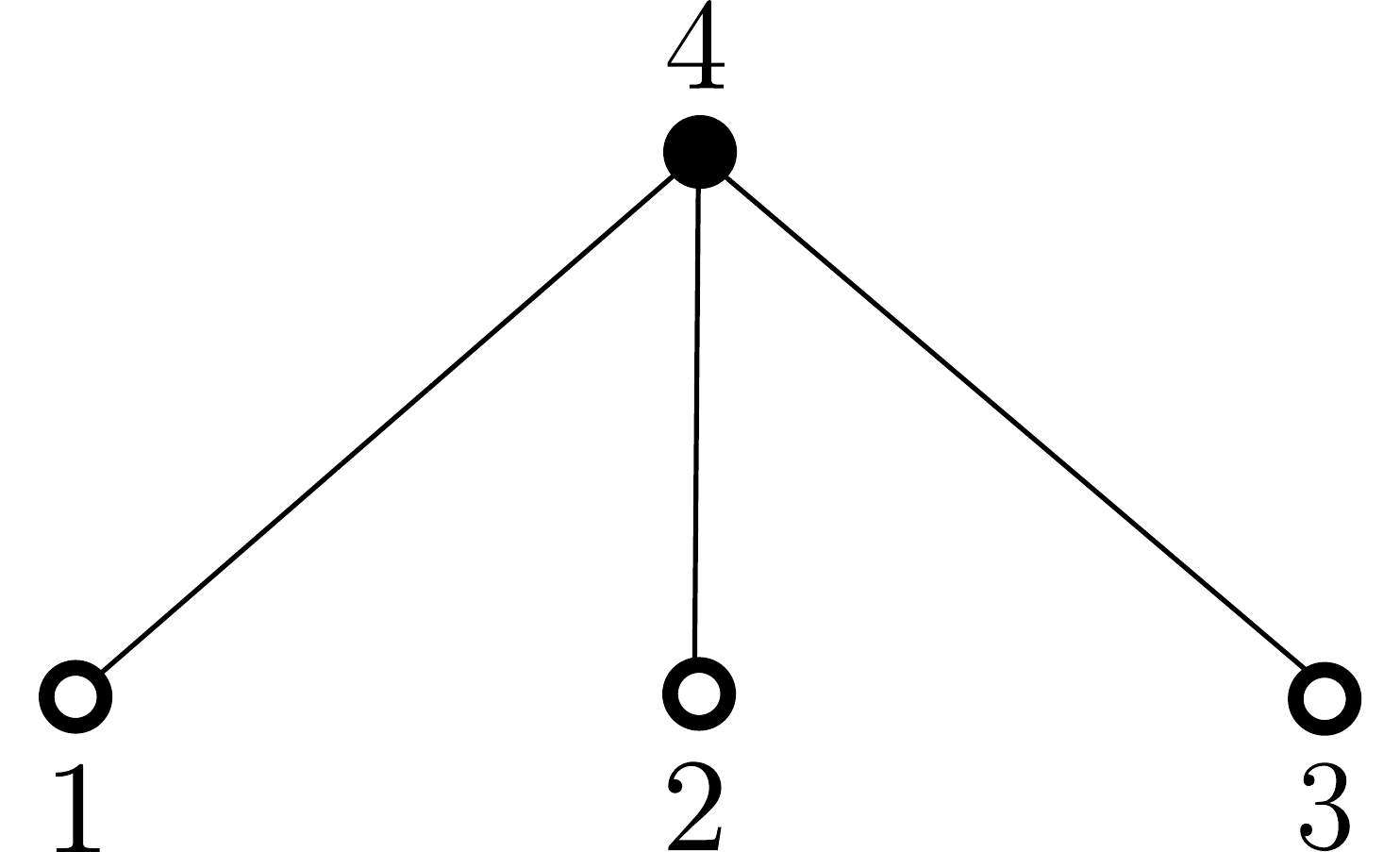}
\qquad \quad
\includegraphics[scale=0.16]{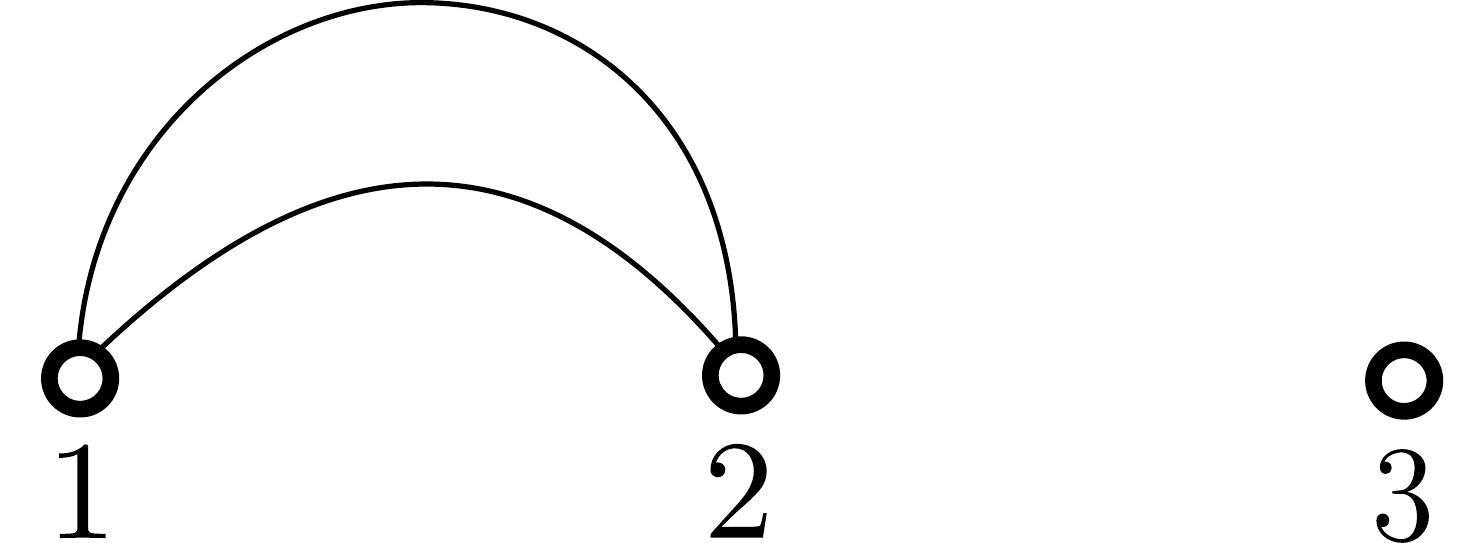}
\qquad \quad
\includegraphics[scale=0.16]{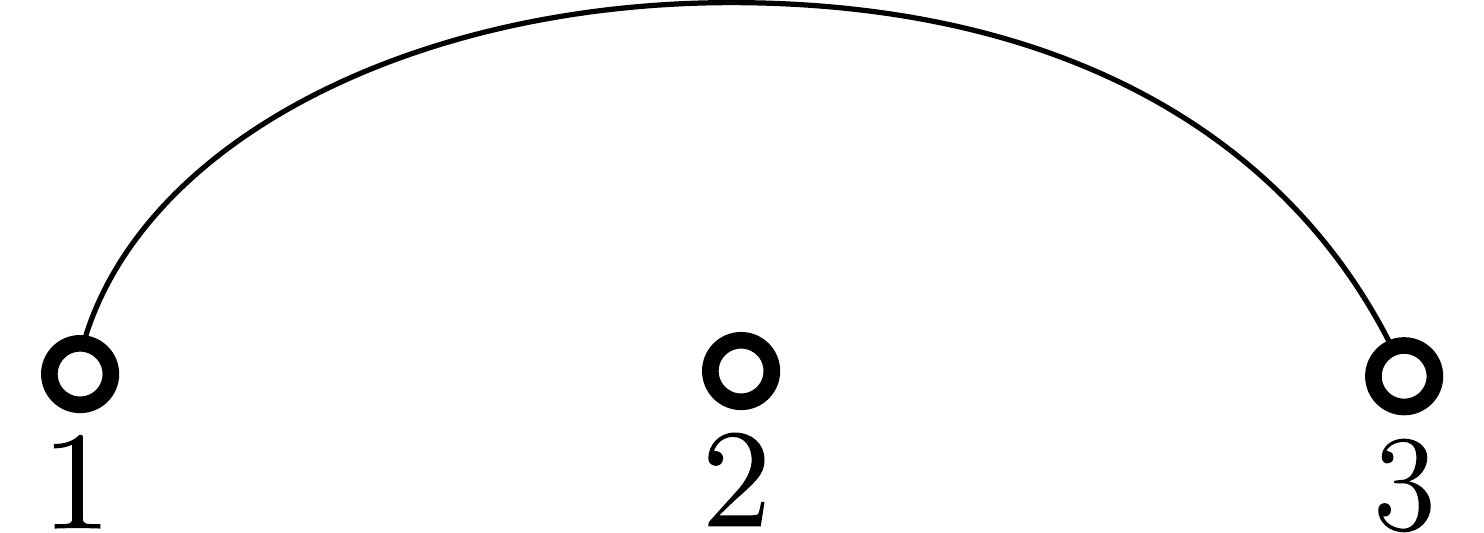}
\\
\includegraphics[scale=0.16]{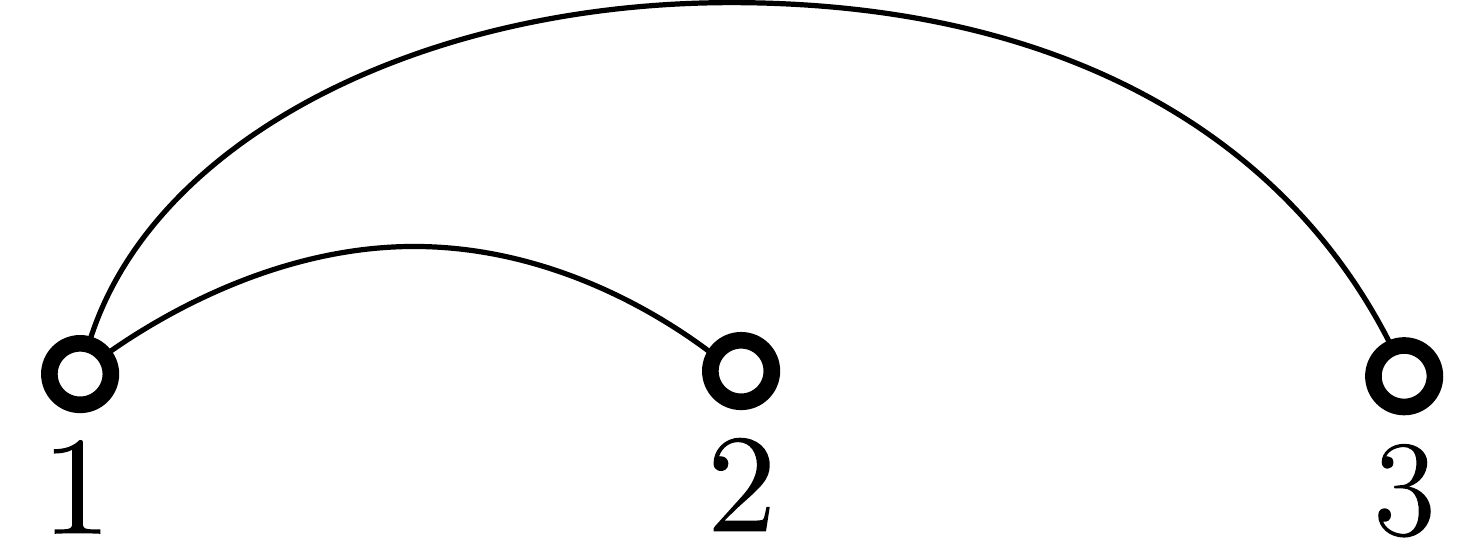}
\qquad \quad
\includegraphics[scale=0.16]{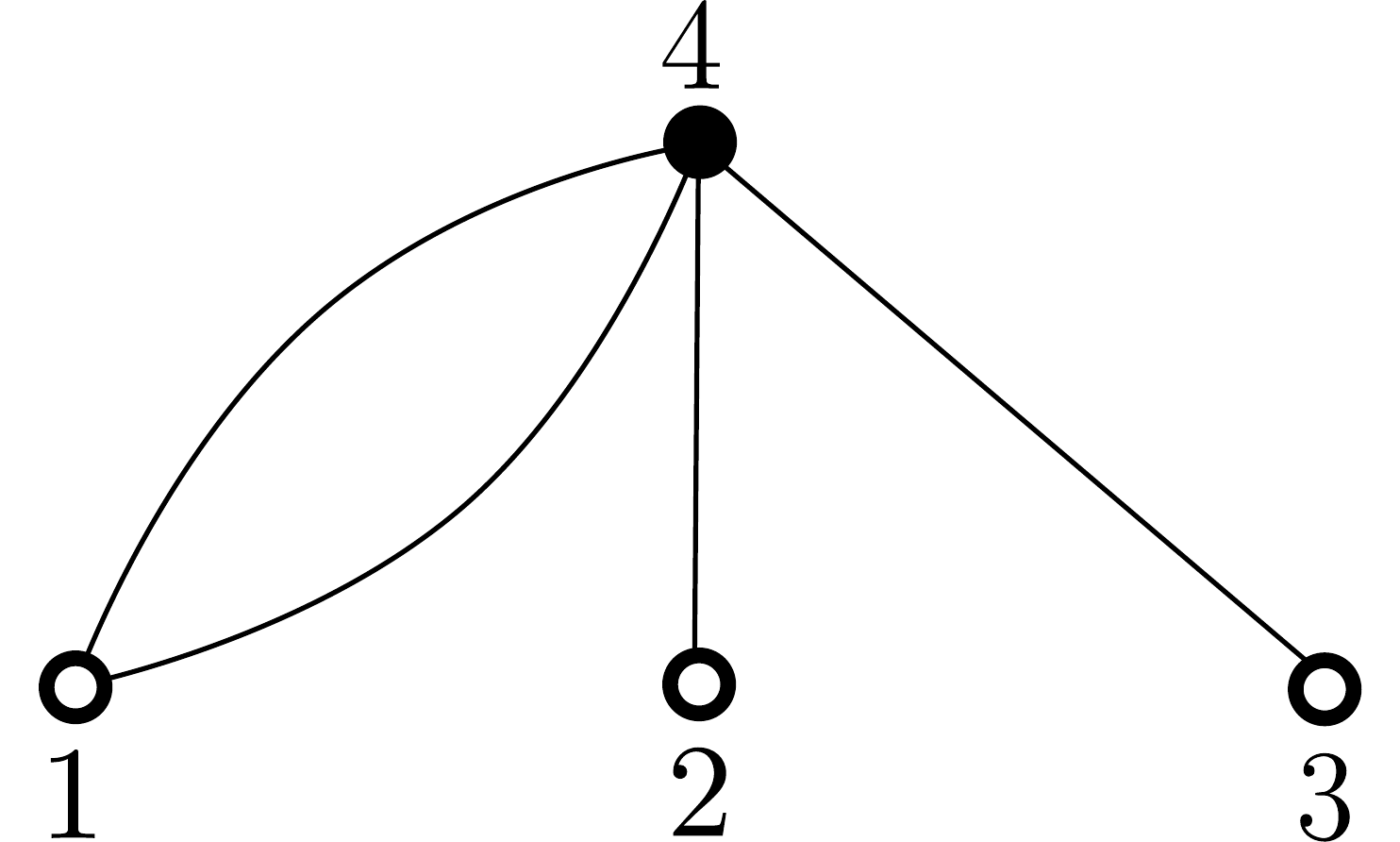}
\qquad \quad
\includegraphics[scale=0.16]{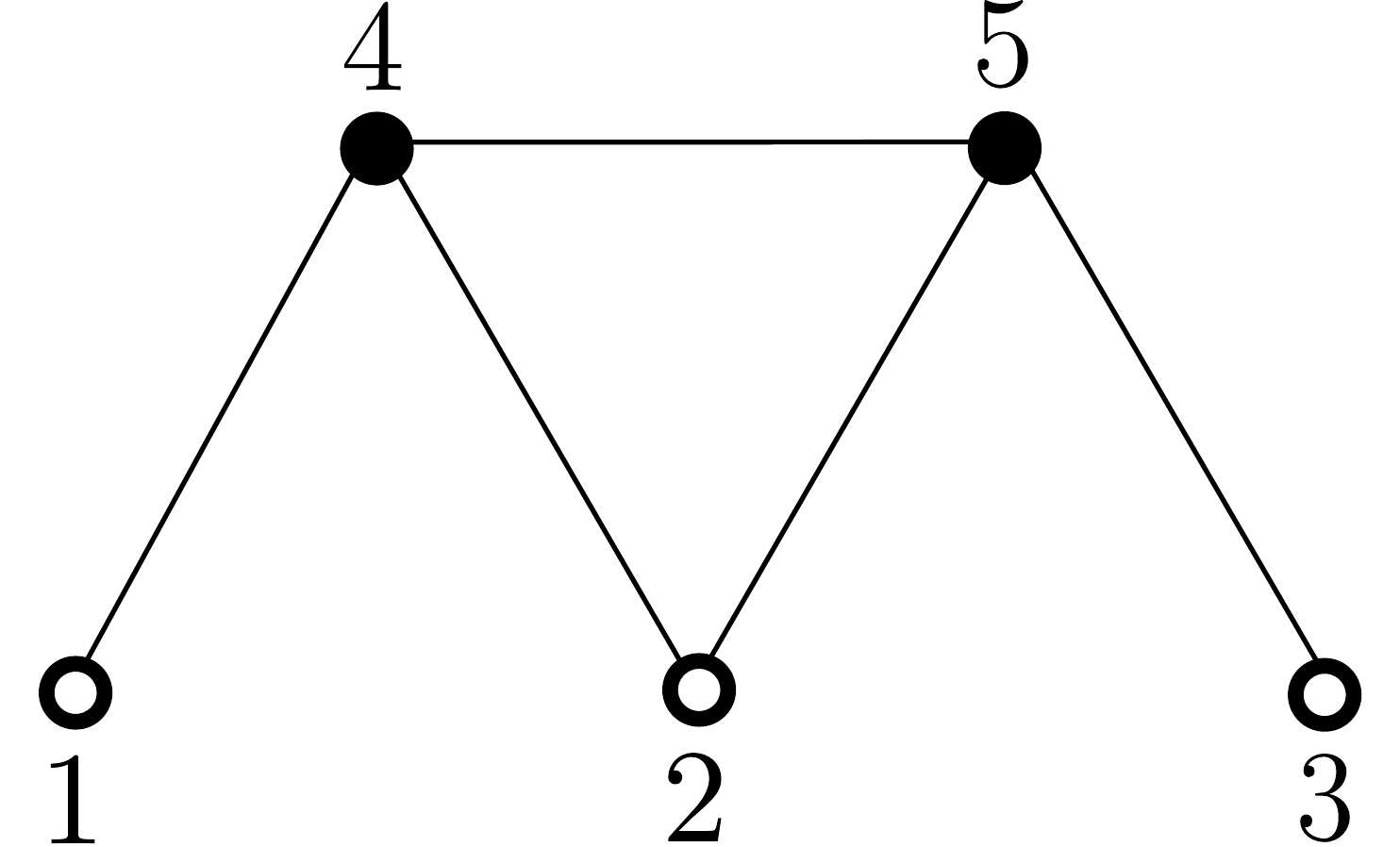}
\end{split}
\]

The differential $\delta^*$ dual to $\delta$ lowers degree by 1. On a basis element $\Gamma^*$ it is given by the signed sum of diagrams 
\[
\delta^\ast(\Gamma^*)  = \sum_{(\Gamma',e) \, : \ \Gamma'/e \cong \Gamma } \varepsilon(\Gamma', e) (\Gamma')^\ast\ .
\]  
The operation yielding such a graph $\Gamma'$ from $\Gamma$ is called the \emph{blowing up} or \emph{blow-up} of the basepoint vertex in $\Gamma'/e$; it replaces it by two vertices joined by an edge.
The above sum is taken over all segment vertices of valence $\geq 2$ and all free vertices of valence $\geq 4$.  For each such segment vertex $v$, one obtains a diagram for every ordered partition into two parts of the half-edges incident to $v$, 
such that the second part has cardinality $\geq 2$; the first part corresponds to the half-edges still incident to the segment vertex.  For each such free vertex $v$, one obtains a term for every unordered partition of the half-edges incident to $v$ into two parts such that each part has cardinality $\geq 2$. The blow-up of a $3$-valent segment vertex is illustrated below:
\[
\vvcenteredinclude{0.2}{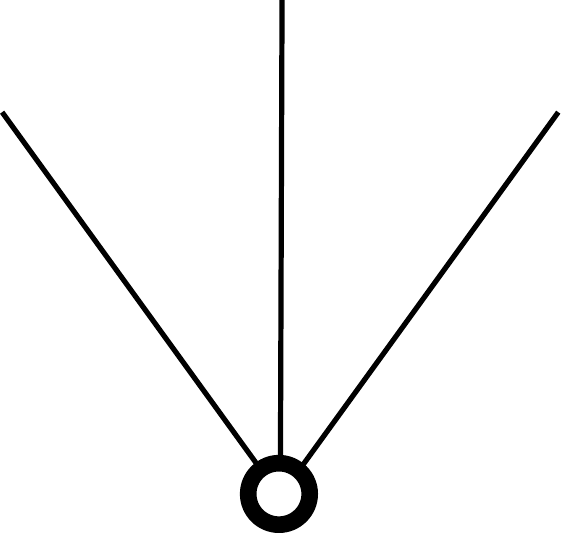}\quad \rightsquigarrow\quad \pm\vvcenteredinclude{0.2}{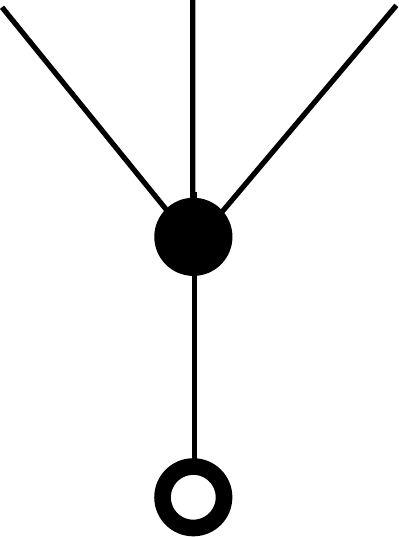}\quad+\quad\pm\vvcenteredinclude{0.2}{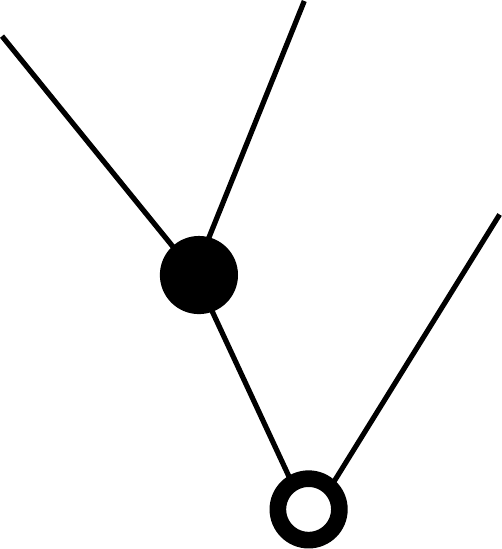}\quad+\quad\pm\vvcenteredinclude{0.2}{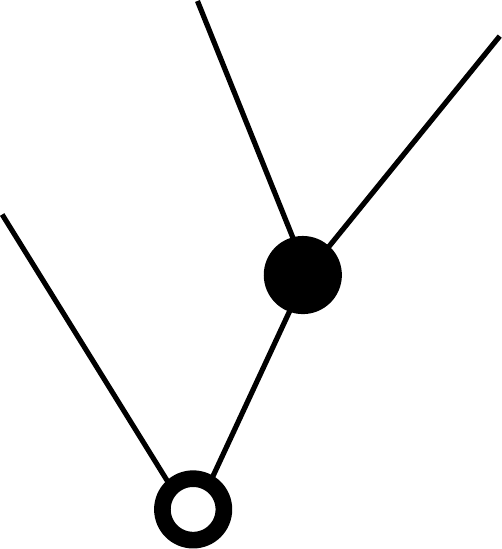}+\quad\pm\vvcenteredinclude{0.2}{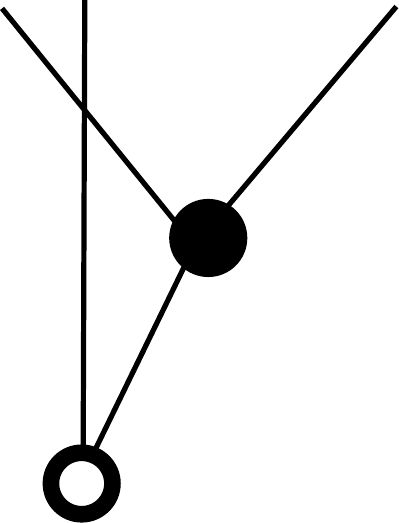}.
\]
As a special case, the sum of diagrams resulting from a $4$-valent free vertex is the well known IHX relation.
The factor $|\mathrm{Aut}(\Gamma_1)|$ in \eqref{eq:Gamma-Gamma*-pairing} ensures that $\delta^*$ of a $4$-valent vertex is always the IHX relation.

\subsection{Bar and cobar complexes and Hopf algebra structure}
\label{S:bar-and-cobar}

Recall that given a connected, augmented CDGA $(\mathcal{A}, \delta)$ over a field $k$, the bar construction $\Ba(\mathcal{A})$ on $\mathcal{A}$ is the tensor 
algebra $T(J\mathcal{A})$ on the augmentation ideal $J\mathcal{A}$ of $\mathcal{A}$. For elements $a_1$, $a_2$,\ldots, $a_r\in J\mathcal{A}$ of degrees $p_1$, $p_2$,\ldots, $p_r$, the element $a_1\otimes a_2\otimes \ldots\otimes a_r$ is denoted by $[a_1 | a_2 | \ldots | a_r]$ and has degree $-r+p_1+p_2+\cdots+p_r$.  
Because $\mathcal{A}$ is (graded-)commutative, $\Ba(\mathcal{A})$ is not just a differential graded coalgebra, but a differential graded Hopf algebra in the following way:
\begin{itemize}
	\item[(a)]  The product $\wedge$ (shuffle) is given by 
	\[
	[a_1 | \ldots | a_r]\wedge [a_{r+1} | \ldots | a_{r+s}]:=\sum_{\sigma\in \operatorname{Sh}(r,s)} \varepsilon(\sigma; p_1-1,p_2-1,\ldots, p_{r+s}-1)\, [a_{\sigma(1)} | a_{\sigma(2)} | \ldots | a_{\sigma(r+s)}]
	\]
	where $\sigma$ ranges over the shuffles $\operatorname{Sh}(r,s)$ of type $(r,s)$ of $\{1,2,\ldots,r+s\}$, and $\varepsilon(\sigma; p_1-1,p_2-1,\ldots, p_{r+s}-1)$ is the sign defined by
	\[
	a_{1}\wedge a_{2}\wedge \ldots \wedge a_{r+s}=\varepsilon(\sigma; p_1-1,p_2-1,\ldots, p_{r+s}-1)\, a_{\sigma(1)}\wedge a_{\sigma(2)}\wedge \ldots \wedge a_{\sigma(r+s)}.
	\]
	\item[(b)] The coproduct $\vartriangle$ (de-concatenation) is given by 
	\[
	\vartriangle([a_1 | a_2 | \ldots | a_r]):=\sum^{r}_{i=0} [a_1 | \ldots | a_i]\otimes [a_{i+1} | \ldots | a_r].
	\] 
	\item[(c)] The differential $d_{\Ba}$ is the (signed) sum of an internal differential $\delta_{\Ba}$ and a homological differential $D_{\Ba}$:\footnote{Here $\Ba(\mathcal{A})$ is essentially the same differential graded Hopf algebra as in our previous work \cite{KKV:2020}.  The only difference is that in \cite{KKV:2020} we defined $d_{\Ba}$ as $\delta_{\Ba} + D_{\Ba}$ rather than $\delta_{\Ba} - D_{\Ba}$.  In particular, formula (19) in \cite{KKV:2020} can be read as the sum from $1$ to $p-1$ rather than from $0$ to $p$ because our bar complex is the normalized one.  }
	\begin{align}
	\label{eq:bar-differential}
	\begin{split}
	d_{\Ba}\bigl([a_1 | a_2 | \ldots | a_r]\bigr)&:= (\delta_{\Ba} - D_{\Ba}) \bigl([a_1 | a_2 | \ldots | a_r]\bigr)\\
	& :=\sum^r_{i=1} (-1)^i [\varepsilon(a_1) a_1|\ldots|\varepsilon(a_{i-1}) a_{i-1}|\delta a_i| a_{i+1}|\ldots|a_r]\\
	&\quad - \sum^{r-1}_{i=1} (-1)^i [\varepsilon(a_1) a_1|\ldots|\varepsilon(a_{i-1}) a_{i-1}| \varepsilon(a_i) a_i\cdot a_{i+1}|a_{i+2}|\ldots|a_r]
	\end{split}
	\end{align}
where $\varepsilon(a_i)=(-1)^{p_i}$.
\end{itemize}
Furthermore, the product and coproduct descend to a Hopf algebra structure on cohomology $H^\ast(\Ba(\mathcal{A});k)$.
The graded dual $(\Ba(\mathcal{A}))^\ast$ is (canonically) isomorphic as a Hopf algebra to the cobar construction on $\mathcal{A}^*$.
Many authors denote this dual by $\Omega(A^\ast)$, but we denote it by $\Ba^\ast(\mathcal{A})$ to avoid overloading the notation $\Omega$.
Its Hopf algebra structure is as follows:
\begin{itemize}
	\item[(d)] The product $\cdot=\vartriangle^\ast$ (concatenation) is given by 
	\[
	[a^\ast_1 | \cdots | a^\ast_r]\cdot[a^\ast_{r+1} | \cdots | a^\ast_{r+s}]:=[a^\ast_1 | \cdots | a^\ast_r | a^\ast_{r+1} | \cdots | a^\ast_{r+s}].
	\]
	\item[(e)] The coproduct $\Delta=\wedge^\ast$ (co-shuffle) is given by 
	\begin{equation}\label{eq:Delta}
	\Delta([a^\ast_1 | \cdots | a^\ast_r]):=\sum^{r}_{i=0}\sum_{\sigma\in \operatorname{Sh}(i,r-i)} \varepsilon(\sigma; p_1-1,\ldots, p_{r+s}-1) [a^\ast_{\sigma(1)} | \cdots | a^\ast_{\sigma(i)}]\otimes [a^\ast_{\sigma(i+1)} | \cdots | a^\ast_{\sigma(r)}].
	\end{equation}
\end{itemize}	
The differential on $\Ba^\ast(\mathcal{A})$ is given by the dual $d^\ast_{\Ba}$. The product (d) and coproduct (e) again yield a Hopf algebra structure on homology $H_\ast(\Ba^\ast(\mathcal{A});k)$.

The above definitions applied to the algebras $\mathcal{A}=\Dm(m)$ and $\mathcal{A}=\ChdR^\ast(\Conf(m, \R^n))$ yield the bar complexes $\Ba(\Dm(m))$ and $\Ba(\ChdR^{\ast}(\Conf(m, \R^n)))$ and  their cohomology. In addition,  the coalgebra $\mathcal{A}^*$ gives the cobar complex $\Ba^\ast(\Dm(m))$ and its homology. These objects are of primary interest in the following sections.  Also keep in mind that the isomorphism $\Dm(m) \cong \Dm(m)^*$ coming from the basis of diagrams determines the module isomorphism $\Ba(\Dm(m)) \cong   \Ba^\ast(\Dm(m))$.

Recall that in an algebra $\mathcal{A}$ (with a product $\cdot\,$), an element $a \in J\mathcal{A}$ is \emph{indecomposable} if $a=b\cdot c$ implies $b=1$ or $c=1$.  We denote the submodule of indecomposables by $I\mathcal{A}$.  An element $x$ in the coalgebra $\mathcal{A}^*$ (with the coproduct $s=(\,\cdot\,)^\ast$) is \emph{primitive} if $s(x)=x \otimes 1 + 1 \otimes x$.  We denote the submodule of primitives by $P\mathcal{A}^\ast$. 
 The indecomposable elements of $\mathcal{A}$ are dual to the primitive elements of $\mathcal{A}^*$.
We will mainly consider these modules for the algebras $\Dm(m)$ and $H^*(\Ba(\Dm(m)))$ and the coalgebras $\Dm(m)^*$ and $H_*(\Ba^*(\Dm(m)))$.

In $\Dm(m)$, the indecomposable elements and the corresponding primitive elements in $\Dm(m)^*$ are precisely the linear combinations of nonempty diagrams which are connected after all the segment vertices are removed.  In \cite{KKV:2020}, we called such diagrams \emph{internally connected}. 
Finally recall that, for a monomial $[a^*] \in \Ba^*(\mathcal{A})$ of length 1, the homological differential on $[a^*]$ is zero if and only if $a^*$ is primitive in $\mathcal{A}^*$.

\section{The power series connection from the Kontsevich formality integral}\label{S:power-series}
Our main purpose now is to prove Theorem \ref{thm:Phi-pwr-series}.  In Section \ref{S:chen-formality-int}, we review Chen's iterated integrals and Kontsevich's formality integration map.  In Section \ref{S:connections}, we review Chen's method of formal power series connections, which informs our approach to Theorem \ref{thm:Phi-pwr-series}.  Its proof is given in Section \ref{S:proof-connection}

\subsection{Chen's integrals and the formality integration map} 
\label{S:chen-formality-int}
In \cite[Theorem 3.7, Theorem 4.1]{KKV:2020}, we showed that the map
\begin{equation}\label{eq:cohomology-iso-D(m)-2}
\Phi: H^\ast(\Ba(\Dm(m)))\longrightarrow H^\ast_{dR}(\Omega \Conf(m,\R^n)),\qquad \Phi=\varint_{\mathrm{Chen}}\circ \ \Ba(I).
\end{equation}
induces an isomorphism on cohomology additively.
On the cochain level, $\Phi$  is a composition of Chen's integration $\varint_{\mathrm{Chen}}$ and the homomorphism $\Ba(I)$ induced on the bar complex by the formality integration map $I$ of \cite{Kontsevich:1999, Lambrechts-Volic:2014}.  Next, we briefly explain these maps. 

Chen \cite{Chen:1973} defined his integration map by first defining de Rham cohomology of a loop space $\Omega M$  in the general setting of a differentiable space $M$.  More precisely, he constructed a double complex
\[
\Chen^{*,*}(M)=\bigoplus_{p,q\geq 0} \Chen^{-p,q}(M),\quad \Chen^{-p,q}(M)=\big\langle \varint \omega_1 \omega_2 \cdots \omega_p\ |\ \omega_i\in \ChdR^{\ast}(M)\big\rangle
\]
of so called {\em iterated integrals} which can be formally viewed as a subcomplex of differential forms $\ChdR^\ast(\Omega M)$ on $\Omega M$. Further, in \cite{Chen:1973, Hain:1984} he defined a natural chain map, which we call {\em Chen's integration map}, from the bar complex of $\ChdR^\ast(M)$ to $\Chen^{*,*}(M)$, as follows.
First, there are maps 
\[
M^p \overset{ev_p}{\longleftarrow} \Delta^p \x \Omega M \overset{pr}{\longrightarrow} \Omega M
\]
where $ev_p$ evaluates a loop at times $t_1 \leq \dots \leq t_p$ and $pr$ is the projection to $\Omega M$.
With $\pi_k: M^p \to M$ as the projection to the $k$-th factor, one writes $\omega_1 \omega_2 \cdots \omega_p$ as shorthand for $ev_p^*(\pi_1^*\omega_1 \wedge \dots \wedge \pi_p^*\omega_p)$.  One then defines
\begin{equation}\label{E:ChenIntegrationMap}
\varint_{\mathrm{Chen}}\colon\Ba(\ChdR^{\ast}(M)) \longrightarrow \Chen^{*,*}(M)\subset \ChdR^\ast(\Omega M) \quad \text{by} \quad
[\omega_1 | \omega_2 | \cdots | \omega_p] \longmapsto \varint \omega_1 \omega_2 \cdots \omega_p
\end{equation}
where the integration is along the fiber of the trivial bundle over $\Omega M$ given by $pr$.
This map is a quasi-isomorphism for simply connected $M$, as proved in \cite{Chen:1973, Hain:1984}. In our setting, the configuration space $\Conf(m,\R^n)$ is simply connected for $n\geq 3$, and the quasi-isomorphism \eqref{E:ChenIntegrationMap} thus becomes the quasi-isomorphism
\begin{equation}\label{E:ChenIntegrationMapBraids}
\varint_{\mathrm{Chen}}:\Ba(\ChdR^{\ast}(\Conf(m,\R^n))) \longrightarrow \ChdR^*(\Omega\Conf(m,\R^n)).
\end{equation}

The second map   in \eqref{eq:cohomology-iso-D(m)-2}, $\Ba(I)$, is induced by the formality integration map 
\begin{equation}\label{eq:I-formality-map}
I\colon \Dm(m) \longrightarrow \ChdR^\ast(\Conf(m,\R^n)), 
\end{equation}
which factors through the projection $p:\Dm(m)\longrightarrow \D(m)$; i.e. we can define 
\begin{equation}\label{eq:I'-formality-map}
I'\colon \D(m) \longrightarrow \ChdR^\ast(\Conf(m,\R^n)) 
\end{equation}
such that 
\begin{equation}\label{eq:I=I'p}
 I=I'\circ p.
\end{equation}

We now review the definition of $I$.
Given a diagram $\Gamma\in\Dm(m)$ with $v$ free vertices and $e$ edges, there are classical Gauss maps
\begin{equation}\label{eq:phi_ji}
\phi_{j,i} \colon \Conf(m+v, \R^n)\longrightarrow (S^{n-1})^{E(\Gamma)}
\end{equation}
given by 
\[
\phi_{j,i}\colon (x_1, x_2, ..., x_{m+v})\longmapsto \frac{x_j-x_i}{\vert  x_j-x_i\vert}.
\]
Each such map can be used to pull back the rotation-invariant unit volume form $\nu$ on the sphere $S^{n-1}$.  Denote this pullback by $\alpha_{j,i} := \phi_{j,i}^*(\nu)$, and let
\[
\alpha_\Gamma = \bigwedge_{\text{edges $i \to j$ of $\Gamma$}}\alpha_{j,i}.
\]
By the notation above, we mean that for $n$ odd, the order of the indices $i$ and $j$ is determined by the orientation of the edge.   (For even, the two orders gives cohomologous forms.)
Now let
\[
\pi\colon \Conf(m+v, \R^n)\longrightarrow \Conf(m, \R^n)
\]
be the projection onto the first $m$ configuration points.
Then we set, for $\Gamma\in \D(m)$ (or $\Dm(m)$):
\begin{equation}\label{E:FormalityIntegral}
I(\Gamma)\ (\text{or}\ I'(\Gamma)):=\pi_{*} (\alpha_\Gamma)\in \ChdR^*(\Conf(m, \R^n)).
\end{equation}
Here $\pi_{*}$ denotes the pushforward, or integration along the fiber, of the projection $\pi$.  Thus, for $(x_1,...,x_m)\in \Conf(m, \R^n)$, 
\[
I(\Gamma)(x_1,...,x_m)=\int_{\pi^{-1}(x_1,...,x_m)} \alpha_\Gamma.
\]
The degree of the form obtained this way is precisely the degree of $\Gamma$, as defined just after \refD{DmOrientations}.
Note that for a diagram $\Gamma$ with multiple edges, $\alpha_\Gamma=0$. 
Thus $I$ is simply a trivial extension of $I'$ to $\Dm(m)$, i.e. \eqref{eq:I=I'p} holds. 
\begin{rem}
 From this point on, we will suppress distinct notations $I$ and $I'$ and simply use $I$ for both, since the choice should be clear from the context.
\end{rem}
 To show that $I$ is a cochain map, one proves that the form $I(\Gamma)$ is closed using Stokes' Theorem, which amounts to checking that the restrictions of integrals to the codimension one boundary components of the compactified configuration space vanish.  For details, see \cite[Chapter 9]{Lambrechts-Volic:2014}.

Combining the above constructions, we conclude that $\Phi$ is induced from the quasi-isomorphism defined on monomials of $\Ba(\Dm(m))$ by 
\begin{equation}\label{eq:Phi-monomials}
 \Ba(\Dm(m))\ni [\Gamma_1 | \Gamma_2 | \ldots |\Gamma_k] \xrightarrow{\quad \Phi \quad } \varint I(\Gamma_1)I(\Gamma_2)\cdots I(\Gamma_k)\in \Chen^{*,*}(M).
\end{equation}

\subsection{Chen's power series connections} 
\label{S:connections}
Our Theorem \ref{thm:Phi-pwr-series} is loosely based on Chen's method of formal power series connections.  
We explain the original method  developed in \cite{Chen:1973, Chen:1977, Hain:1984} before describing the modification needed for our Theorem.  
Power series connections were first introduced in \cite[p. 223]{Chen:1973} for any manifold\footnote{More generally, $M$ can be any differentiable space in the sense of \cite{Chen:1973}.} $M$, simply as a formal power series 
\begin{equation}\label{eq:formal-connection}
 w=\sum w_i X_i+\sum w_{i,j} X_i X_j + \sum w_{i,j,k} X_i X_j X_k+\cdots \quad \in \quad \ChdR^{\ast}(M)\otimes \overline{T(V)}
\end{equation}
where $w_i$, $w_{i,j}$, $w_{i,j,k}$, \ldots $\in \ChdR^{\ast}(M)$ are differential forms on $M$ and $X=\{X_i\}_{1\leq i\leq m}$, is a basis of a real vector space $V$.\footnote{We work over $\R$, but Chen's method equally applies to complex-valued forms.} 
Here $\overline{T(V)}$ is the completion of the tensor algebra $T(V)$ of $V$, which can also be viewed as the (associative) non-commutative power series algebra in $X$ over $\R$.  

The {\em transport} $\mathcal{T}$ of the formal power series connection $w$ in \eqref{eq:formal-connection} is then defined as a formal power series:
\begin{equation}\label{eq:formal-transport}
\begin{split}
	\mathcal{T} & =1+\varint w+\varint w w+\varint w w w +\ldots\ \\
	& = 1 + \sum \varint w_i X_i + \sum \varint (w_{i,j} + w_i w_j) X_i X_j +\ldots\  \in \ \Chen^\ast(M)\otimes \overline{T(V)}.
	\end{split}
\end{equation}

In \cite{Chen:1977}, Chen sets $V=H_{\ast-1}(M;\R)$ for a simply connected $M$, with the $X_i$ a basis of $V$, and he assumes that the $w_i$ in \eqref{eq:formal-connection} are closed forms representing cohomology classes dual to the $X_i$. 
A graded derivation\footnote{Explicitly, $\partial (X\otimes Y)=(\partial X)\otimes Y + \varepsilon(X) X\otimes (\partial Y)$, where $\varepsilon(X)=(-1)^{|X|}$.} $\partial$ of degree $-1$ on $\overline{T(V)}$ is said to satisfy the {\em twisted cochain condition} if
\begin{equation}\label{eq:twisted-cochain-partial}
\partial w+d w-\varepsilon(w)\wedge w=0
\end{equation}
where $\partial w$ and $dw$ are shorthand for $(1 \otimes \partial)w$ and $(d \otimes 1)w$ respectively, and where $\wedge$ is the product on the algebra $\ChdR^{\ast}(M)\otimes \overline{T(V)}$.  The existence of such a $\partial$ guarantees the following result, where the transport map $\theta$ lands in $T(V)$ rather than $\overline{T(V)}$ because the degrees of the differential forms in \eqref{eq:formal-connection} are all greater than $1$; see \cite[p. 229]{Chen:1973}, \cite{Chen-2:1977}. 

\begin{thm}[Chen \cite{Chen:1977}]
\label{thm:chen-pwr-connection}
	With $M$, $w$, $\partial$, and $\mathcal{T}$ as above, 
	$(T H_{\ast-1}(M;\R),\partial)$ is a CDGA and we have an isomorphism $H_\ast(\Omega M;\R)\cong H_\ast(T H_{\ast-1}(M;\R),\partial)$ induced by
	the transport $\mathcal{T}$,
\begin{equation}\label{eq:general-theta}
\theta:C^\textrm{sing}_\ast(\Omega M;\R) \longrightarrow T H_{\ast-1}(M;\R),\qquad  c\longrightarrow \langle \mathcal{T}, c\rangle=\delta_{0,n}+\sum_r\langle\varint w^{(r)}, c\rangle,
\end{equation}
for any $c\in H_n(\Omega M;\R)$, where $\delta_{0,n}$ is the Kronecker delta, $w^{(r)}=\underbrace{ww\cdots w}_r$ and $C^{\operatorname{sing}}_\ast(\Omega M;\R)$ is the space of smooth singular chains on $\Omega  M$.
\end{thm}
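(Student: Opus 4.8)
The plan is to follow Chen's strategy in three stages. First I would check that the twisted cochain condition makes $(T H_{\ast-1}(M;\R),\partial)$ into a differential graded algebra. Second, I would show that the transport $\mathcal{T}$ induces a chain map $\theta$ from the singular chains on $\Omega M$ to $(T H_{\ast-1}(M;\R),\partial)$. Third, I would show $\theta$ is a quasi-isomorphism by comparing it with the iterated-integral model for $H^\ast(\Omega M)$ furnished by Chen's integration map. The algebra map statement then follows from compatibility of $\theta$ with the concatenation product.

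For the first stage, since $\partial$ is a graded derivation of degree $-1$ on the free algebra $T(V)$, so is $\partial^2$ (of degree $-2$), and hence it suffices to verify $\partial^2=0$ on the generators $V$. I would apply the derivation $(1\otimes\partial)$ to the twisted cochain condition $\partial w + dw - \varepsilon(w)\wedge w = 0$, rewritten as $\partial w = \varepsilon(w)\wedge w - dw$, and substitute this relation back into itself. Using $d^2=0$, the (anti)commutation of $d\otimes 1$ with $1\otimes\partial$ on the two tensor factors, and the Leibniz rule for $\wedge$, the $d$-contributions and the $\wedge$-contributions cancel in pairs, leaving $\partial^2 w = 0$. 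Because the linear part $\sum_i w_i X_i$ of $w$ runs over a basis $\{X_i\}$ of $V$ with the $w_i$ linearly independent in cohomology, matching coefficients of the $X_i$ gives $\partial^2=0$ on $V$, and the derivation property propagates this to all of $T(V)$. This is the standard integrability argument for a flat (Maurer--Cartan) connection.

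For the second stage, the engine is the transport equation together with Stokes' theorem for iterated integrals. Integrating $w$ over the ordered simplex and applying Stokes' theorem factor by factor, the de Rham differential of $\int w^{(r)}$ splits into interior faces $t_i=t_{i+1}$, which produce the concatenation terms $\varepsilon(w)\wedge w$, and the factorwise differentials $dw$, with the outer faces pairing against $\partial_{\mathrm{chain}}c$. The twisted cochain condition is precisely what replaces the combination $dw-\varepsilon(w)\wedge w$ by $-\partial w$, so Stokes' theorem reorganizes into the identity $\langle \mathcal{T},\partial_{\mathrm{chain}}c\rangle = \pm\,\partial\langle \mathcal{T},c\rangle$, i.e.\ $\theta$ is a chain map up to the sign dictated by the grading. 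This is the step in which essentially all of the hypotheses are used, and I expect the sign bookkeeping in the iterated-integral Stokes formula to be the main technical obstacle.

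For the third stage, I would invoke the fact recalled in \eqref{E:ChenIntegrationMap} that for simply connected $M$ the Chen integration map $\int_{\mathrm{Chen}}\colon \Ba(\ChdR^\ast(M)) \to \ChdR^\ast(\Omega M)$ is a quasi-isomorphism, so iterated integrals compute $H^\ast(\Omega M)$. Dually, $\theta$ is assembled from the transport, and I would filter $T(V)$ by word length (equivalently, filter the relevant bar-type complex by length) and compare associated graded pieces. On the first page the only surviving contribution is the pairing of the linear forms $w_i$ against $H_{\ast-1}(M;\R)=V$, which is the identity by the choice of $w_i$ as a cohomology basis dual to $\{X_i\}$; the higher transport terms and the internal differential strictly raise filtration and so vanish there. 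Simple connectivity forces each generator $X_i$ into degree $\geq 1$, so the filtration is finite in each total degree, and a standard comparison theorem for filtered complexes upgrades the isomorphism on the associated graded to the desired isomorphism $H_\ast(\Omega M;\R)\cong H_\ast(T H_{\ast-1}(M;\R),\partial)$. The conceptual weight here rests on Chen's theorem, which I am taking as given.
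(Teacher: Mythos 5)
This theorem is quoted from Chen \cite{Chen:1977}; the paper supplies no proof of it, so there is no in-paper argument to compare against directly. Your three-stage outline --- integrability of $\partial$ from the twisted cochain condition, the chain-map property of the transport via Stokes' theorem for iterated integrals, and the quasi-isomorphism via the word-length filtration compared against Chen's bar-complex model --- is the standard route, and it is precisely the strategy the paper adapts when proving its own analogue, Theorem \ref{thm:Phi-pwr-series}: Lemma \ref{lem:twisted-cochain} and Lemma \ref{lem:connection-chain-map} play the roles of your first two stages, and the quasi-isomorphism there is delegated to the earlier formality results. The one step I would tighten is the deduction of $\partial^2=0$ on generators from $\partial^2 w=0$: since $\partial$ raises word length, $\partial^2(X_I)$ for longer monomials can contribute to the coefficient of a given word, so ``matching coefficients of the $X_i$'' should really be an induction on word length using the linear independence of the forms $w_I$; this is exactly what Chen's inductive construction of the pair $(w,\partial)$ in \cite{Chen-2:1977} arranges.
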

Call a pair $(w,\partial)$ satisfying the assumptions of the above theorem a {\em homological power series connection}.
For any $M$, $(w,\partial)$ is uniquely determined by the first-order term in \eqref{eq:formal-connection} and can be constructed by an inductive procedure \cite[Theorem 1.3.1]{Chen-2:1977}. 
Chen \cite{Chen:1977} also showed that the chain map $\theta$ in \eqref{eq:general-theta} is multiplicative with respect to the Pontryagin product, and Hain \cite{Hain:1984} showed further that $\theta$ induces a Hopf algebra isomorphism.  
See Appendix \ref{apx:brackets} for more details on the Pontryagin product.

\begin{rem}
\label{R:kohno-connection}
	In the context of $\Conf(m,\C)$, Chen's power series connections were studied by Kohno in the series of works \cite{Kohno:1987,Kohno:2000,Kohno:2002} in relation to Vassiliev invariants of pure braids. In more recent work, Kohno  \cite[Theorem 6.2]{Kohno:2010} argued for $\Conf(m,\R^n)$, $n\geq 3$, that there is a homological power series connection $\partial$ as in Theorem \ref{thm:chen-pwr-connection} that is {\em quadratic}, i.e.~satisfies $\partial X_i = \sum_{k,l} c^i_{k,l} X_k X_l$.  This implies the formality of $\Conf(m,\R^n)$ for $n\geq 3$.  We instead use the formality integration map $I$ to determine a $\D(m)$-valued connection, which may be more useful for explicit geometric computations, while still yielding a Hopf algebra isomorphism.
\end{rem}

\subsection{Proof of Theorem \ref{thm:Phi-pwr-series} (power series connection)}
\label{S:proof-connection}
Our goal for this Section is to define an algebra quasi-isomorphism  
\begin{equation}\label{eq:Theta}
\Theta:C^\textrm{sing}_\ast(\Omega M) \longrightarrow \Ba^\ast(\Dm(m)),\qquad M=\Conf(m,\R^n),
\end{equation}
via a modification of the method of power series connections and thus establish the following result.

\begin{repthm}{thm:Phi-pwr-series}
Let $n \geq 3$.
	\begin{itemize}
	\item[(a)]  
	Let
	$\Theta:C^\textrm{sing}_\ast(\Omega \Conf(m,\R^n)) \longrightarrow \Ba^\ast(\Dm(m))$
	be the map induced by the transport of the formal power series connection $\omega$, valued in $\Ba^\ast(\D(m))\subset \Ba^\ast(\Dm(m))$ and defined by 
	\begin{equation}\label{eq:omega-form-0}
	\omega=\sum_{\Gamma\in \mathcal{B}(m)} I(\Gamma)\otimes \frac{[\Gamma^\ast]}{|\mathrm{Aut}(\Gamma)|} \ \in \ \Ch^*_{dR}(\Conf(m,\R^n)) \otimes \Dm(m)^*,
	\end{equation}
	where $I$ is the formality integration map and $\mathcal{B}(m)$ is the basis of diagrams  (which is well defined up to signs) for the subspace of $\D(m)$ spanned by nonempty diagrams.  
	Then at the level of homology, $\Theta$ agrees with the map $\Phi^\ast$ in \eqref{eq:Theta-homology-iso-D(m)}.
	\item[(b)] At the level of homology,  $\Theta$ is a Hopf algebra isomorphism.
	\end{itemize}
\end{repthm}
\begin{rem}
If $n \geq 4$, one can easily show that $\D(m)$ is of finite type \cite[Remark 6.21]{Lambrechts-Volic:2014}.  Thus for $n \geq 4$, 
the sum in \eqref{eq:omega-form-0} is finite, since $\Conf(m,\R^n)$ is finite-dimensional and thus supports differential forms of degree at most $\dim(\Conf(m,\R^n))=m n$.
\end{rem}

Our notation so far suggests that, in the right setting, the isomorphism $\Theta$ in \eqref{eq:Theta-homology-iso-D(m)} is defined much like $\theta$ in \eqref{eq:general-theta}, with $M=\Conf(m,\R^n)$. Indeed this is the case, but there are some important differences discussed next.

We begin by setting the vector space $V=\D(m)^\ast$ instead of $H_{\ast-1}(M;\R)$; note that this $V$ is not isomorphic to $H_{\ast-1}(M;\R)$. Consequently, in place of the basis $X$, we choose the canonical basis $\mathcal{B}(m)=\{\Gamma\}$ of diagrams for the subspace of $\D(m)$ spanned by nonempty diagrams. (This basis is well defined up to signs.)  The corresponding tensor algebra $T(V)=T(\D(m)^\ast)$ can be now identified with $\Ba^\ast(\D(m))$.

Consider the definition of our connection $\omega$ in \eqref{eq:omega-form-0}.
In contrast to the general formula \eqref{eq:formal-connection} (and to Kohno's connection in Remark \ref{R:kohno-connection}), it contains only the first-order part (i.e. $\sum w_i X_i$). One of the assumptions for Theorem \ref{thm:chen-pwr-connection} is that each $w_i$ is a closed form representing the cohomology class dual to $X_i$.  This assumption does not hold for $\omega$, since $I(\Gamma)$ is not a closed form in general (for instance when $\Gamma$ has free vertices). Therefore, we cannot apply Theorem \ref{thm:chen-pwr-connection} directly.  In addition, we need to define a derivation $\partial$ satisfisfying \eqref{eq:twisted-cochain-partial}. We will show in Lemma \ref{lem:twisted-cochain} that, not suprisingly, the dual differential $d^\ast_{\Ba}$ on $\Ba^\ast(\Dm(m))$ plays the role of $\partial$ in our setting.  This will allow us to show in Lemma \ref{lem:connection-chain-map} that the transport of $\omega$ induces a chain map.  To prove Theorem \ref{thm:chen-pwr-connection}, it will only remain to check that $\Theta$ induces a quasi-isomorphism and is a map of Hopf algebras.

\medskip

For $\omega$ in \eqref{eq:omega-form-0}, we introduce the following notation:
\begin{align*}
	\varepsilon(\omega) & =\sum_{\Gamma\in \mathcal{B}(m)}  \frac{1}{|\mathrm{Aut}(\Gamma)|} I(\Gamma)\otimes [\varepsilon(\Gamma)\Gamma^\ast], \\
	\varepsilon(\Gamma)  =(-1)^{|\Gamma|},\qquad |\Gamma| & = (n-1)|E(\Gamma)| - n |V_{\mathrm{free}}(\Gamma)|,\qquad \varepsilon(I(\Gamma))=\varepsilon(\Gamma).
\end{align*}
Following \cite{Hain:1984}, we write for any $\sum_i a_i\otimes A_i$ and $\sum_k b_k\otimes B_k$  in $\ChdR^{\ast}(\Conf(m,\R^n))\otimes \Ba^\ast(\Dm(m))$,
\begin{align}
	d\bigl(\sum_i a_i\otimes A_i\bigr) & = \sum_i da_i\otimes A_i, \notag\\
	d^\ast_{\Ba}\bigl(\sum_i a_i\otimes A_i\bigr) & = \sum_i a_i\otimes d^\ast_{\Ba} A_i,  \label{eq:tensor-differentials}\\
	\bigl(\sum_i a_i\otimes A_i\bigr)\wedge \bigl(\sum_k b_k\otimes B_k\bigr)& =\sum_{i,k} a_i\wedge b_k\otimes [A_i|B_k]. \notag
\end{align}
We interpret $\omega \wedge \omega$ by regarding $\Dm(m)^*$ as the subspace of length-one monomials in $\Ba^*(\Dm(m))$.

\begin{lem} 
\label{lem:twisted-cochain}
The connection form $\omega$ in \eqref{eq:omega-form-0} satisfies the twisted cochain condition
	\begin{equation}\label{eq:twisted-cochain}
		d^\ast_{\Ba} \omega+d\omega-\varepsilon(\omega)\wedge\omega=0.
	\end{equation}
\end{lem}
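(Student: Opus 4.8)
The plan is to verify \eqref{eq:twisted-cochain} by separating it according to the length of monomials in $\Ba^\ast(\Dm(m))$. Since $\omega$ in \eqref{eq:omega-form-0} is purely first order (a sum of length-one monomials $[\Gamma^\ast]$), the de Rham term $d\omega$ and the internal part of $d^\ast_{\Ba}\omega$ land in length one, whereas $\varepsilon(\omega)\wedge\omega$ and the homological part of $d^\ast_{\Ba}\omega$ land in length two. Thus \eqref{eq:twisted-cochain} is equivalent to the two separate identities
\begin{equation*}
d\omega + \bigl(d^\ast_{\Ba}\omega\bigr)_{\mathrm{length}\,1} = 0, \qquad \bigl(d^\ast_{\Ba}\omega\bigr)_{\mathrm{length}\,2} - \varepsilon(\omega)\wedge\omega = 0.
\end{equation*}
The organizing device for both is the coevaluation element $C=\sum_{\Gamma\in\mathcal{B}(m)}\frac{\Gamma\otimes\Gamma^\ast}{|\mathrm{Aut}(\Gamma)|}\in\Dm(m)\otimes\Dm(m)^\ast$, for which $\omega=(I\otimes\mathrm{id})(C)$. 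The key property, which follows directly from the $|\mathrm{Aut}|$-normalized pairing \eqref{eq:Gamma-Gamma*-pairing}, is that for any graded map $f\colon\Dm(m)\to\Dm(m)$ with graded adjoint $f^\dagger$ one has $(f\otimes\mathrm{id})(C)=(\mathrm{id}\otimes f^\dagger)(C)$ up to the Koszul sign; this is precisely where the normalization by $|\mathrm{Aut}(\Gamma)|$ is used, since it makes the coefficients on both sides match without stray automorphism factors.

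For the length-one identity I would take $f=\delta$, whose adjoint is $\pm\delta^\ast$. Applying $I\otimes\mathrm{id}$ to $(\delta\otimes\mathrm{id})(C)=\pm(\mathrm{id}\otimes\delta^\ast)(C)$ and using that $I$ is a cochain map, i.e. $dI(\Gamma)=I(\delta\Gamma)$, turns the left side into $d\omega$ and the right side into $(I\otimes\delta^\ast)(C)$, which is the length-one (internal) part of $d^\ast_{\Ba}\omega$ by the definition of $\delta_{\Ba}$ in \eqref{eq:bar-differential}. This gives the first identity once the sign is pinned down.

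For the length-two identity I would instead use the product $\mu\colon\Gamma_1\otimes\Gamma_2\mapsto\Gamma_1\cdot\Gamma_2$, whose adjoint $\mu^\dagger$ is precisely the dual coproduct that defines the homological part of $d^\ast_{\Ba}$ on a length-one monomial. The same coevaluation computation gives $\sum_{\Gamma_1,\Gamma_2}\frac{(\Gamma_1\cdot\Gamma_2)\otimes\Gamma_1^\ast\otimes\Gamma_2^\ast}{|\mathrm{Aut}(\Gamma_1)||\mathrm{Aut}(\Gamma_2)|}=\sum_{\Gamma}\frac{\Gamma\otimes\mu^\dagger(\Gamma^\ast)}{|\mathrm{Aut}(\Gamma)|}$. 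Applying $I$ to the $\Dm(m)$-factor and invoking that $I$ is an algebra map, $I(\Gamma_1\cdot\Gamma_2)=I(\Gamma_1)\wedge I(\Gamma_2)$ (a consequence of Fubini for the pushforward $\pi_\ast$, since the free vertices of $\Gamma_1$ and $\Gamma_2$ are integrated over disjoint fiber coordinates), converts the left side into $\varepsilon(\omega)\wedge\omega$ via the wedge rule \eqref{eq:tensor-differentials}, and the right side into the length-two part of $d^\ast_{\Ba}\omega$. Here the sign $\varepsilon(\Gamma_1)=(-1)^{|\Gamma_1|}$ built into $\varepsilon(\omega)$ is exactly the sign appearing in the homological summand $D_{\Ba}[\Gamma_1|\Gamma_2]=\varepsilon(\Gamma_1)[\Gamma_1\cdot\Gamma_2]$ of \eqref{eq:bar-differential}, so the two sides match term by term.

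I expect the main obstacle to be bookkeeping of signs rather than anything structural: the Koszul signs in the graded adjoints $\delta^\dagger=\pm\delta^\ast$ and $\mu^\dagger$, the explicit $-$ sign in $d_{\Ba}=\delta_{\Ba}-D_{\Ba}$, and the sign $\varepsilon(\Gamma_1)$ must all be reconciled to produce the combination $d^\ast_{\Ba}\omega+d\omega-\varepsilon(\omega)\wedge\omega$ with the correct relative signs, rather than some other linear combination. The coevaluation identities themselves are clean consequences of the normalized pairing, so once a consistent set of degree conventions is fixed (matching those in \eqref{eq:bar-differential} and \eqref{eq:tensor-differentials}) the verification reduces to checking these signs in the two length-sectors separately.
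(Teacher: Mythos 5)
Your proposal is correct and follows essentially the same route as the paper's proof: the paper likewise splits $d^\ast_{\Ba}\omega$ into its length-one (blow-up) and length-two (deconcatenation) parts, matches the former with $d\omega$ using that $I$ is a cochain map, and matches the latter with $\varepsilon(\omega)\wedge\omega$ using that $I$ is multiplicative, with the $|\mathrm{Aut}(\Gamma)|$ normalization doing exactly the bookkeeping you describe (including the factor-of-two issue when $\Gamma_a\cong\Gamma_b$). Your coevaluation/adjoint packaging is just a tidier phrasing of the explicit reindexing the paper carries out.
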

\begin{proof}
	Let us compute the differential $d^\ast_{\Ba}([\Gamma^\ast])$ of a length-one monomial $[\Gamma^\ast]$ in $(\Ba^\ast(\D(m)),d^\ast_{\Ba})$. 
	By directly dualizing the definition \eqref{eq:bar-differential} of $d_{\Ba}$,  we obtain the following (cf.~\cite[p.~307]{Felix-Halperin-Thomas}), where $(\Gamma_a, \Gamma_b)$ denotes an ordered pair in $\mathcal{B}(m) \x \mathcal{B}(m)$:
	\begin{equation}\label{eq:d-ast-B-gamma}
	d^\ast_{\Ba}\left( \frac{[\Gamma^\ast]}{|\mathrm{Aut}(\Gamma)|}  \right)  =
	\sum_{\substack{ (\Gamma_a,\Gamma_b) \\ \Gamma_a\cdot\Gamma_b \cong \Gamma}} 
	\frac{\frac{1}{2} \bigl([\varepsilon(\Gamma_a)\Gamma^\ast_a | \Gamma^\ast_b ]+(-1)^{|\Gamma_a||\Gamma_b|}[\varepsilon(\Gamma_b)\Gamma^\ast_b | \Gamma^\ast_a ]\bigr)}{|\mathrm{Aut}(\Gamma_a)| |\mathrm{Aut}(\Gamma_b)|}  - 
	\sum_{\Gamma':\,  \Gamma'/e=\Gamma} \frac{\varepsilon(\Gamma',e) [(\Gamma')^\ast]}{|\mathrm{Aut}(\Gamma')|}
	\end{equation}
	The above equality is most easily verified by recalling that $\Gamma^*/|\mathrm{Aut}(\Gamma)|$ is the functional which sends $\Gamma$ to 1, and thus by grouping together any $|\mathrm{Aut}(-)|$ factors together with the corresponding diagrams.
	This yields
	\begin{align}
	d^\ast_{\Ba}\omega 
	& =\sum_{\Gamma \in \mathcal{B}(m)} I(\Gamma)\otimes d^\ast_{\Ba}\left( \frac{[\Gamma^\ast]}{|\mathrm{Aut}(\Gamma)|}\right) \notag\\
	&  = \sum_{\Gamma \in \mathcal{B}(m)}  \sum_{\substack{ (\Gamma_a,\Gamma_b) \\ \Gamma_a\cdot\Gamma_b\cong\Gamma}} I(\Gamma)\otimes  \frac{\frac{1}{2} \bigl([\varepsilon(\Gamma_a)\Gamma^\ast_a | \Gamma^\ast_b ]+(-1)^{|\Gamma_a||\Gamma_b|}[\varepsilon(\Gamma_b)\Gamma^\ast_b | \Gamma^\ast_a ]\bigr)}{|\mathrm{Aut}(\Gamma_a)| |\mathrm{Aut}(\Gamma_b)|}  \label{eq:d^ast-omega-2-terms}\\
	&\qquad - \sum_{\Gamma \in \mathcal{B}(m)} \sum_{\substack{\Gamma' \\  \Gamma'/e=\Gamma}} \varepsilon(\Gamma',e)\, I(\Gamma)\otimes \frac{[(\Gamma')^\ast]}{|\mathrm{Aut}(\Gamma')|}. \notag
	\end{align}
	\no From \eqref{eq:omega-form-0}, $d\omega$ is the sum
	\begin{equation}\label{eq:d-omega-sum}
	\begin{split}
	d\omega
	& =\sum_{\Gamma \in \mathcal{B}(m)} d(I(\Gamma))\otimes \frac{[\Gamma^\ast]}{|\mathrm{Aut}(\Gamma)|}
	=\sum_{\Gamma \in \mathcal{B}(m)} I(d\Gamma)\otimes \frac{[\Gamma^\ast]}{|\mathrm{Aut}(\Gamma)|}\\
	& =\sum_{\Gamma \in \mathcal{B}(m)} \ \sum_{e\in E(\Gamma)} \varepsilon(\Gamma,e) I(\Gamma/e)\otimes \frac{[\Gamma^\ast]}{|\mathrm{Aut}(\Gamma)|}.
	\end{split}
	\end{equation}
	Rearranging \eqref{eq:d-omega-sum}, we obtain 
	\[
	d\omega=\sum_{\Gamma \in \mathcal{B}(m)} \sum_{\Gamma':\, \Gamma'/e=\Gamma} \varepsilon(\Gamma',e)\,  I(\Gamma)\otimes \frac{[(\Gamma')^\ast]}{|\mathrm{Aut}(\Gamma')|},
	\]
	which is the same as the second term in \eqref{eq:d^ast-omega-2-terms}. Next
	\[
	\begin{split}
	\varepsilon(\omega)\wedge\omega & =
	\sum_{(\Gamma_a, \Gamma_b)} I(\Gamma_a)\wedge I(\Gamma_b) \otimes \frac{[\varepsilon(\Gamma_a) \Gamma^\ast_a|\Gamma^\ast_b]}{|\mathrm{Aut}(\Gamma_a)| |\mathrm{Aut}(\Gamma_b)|}
	=\sum_{(\Gamma_a, \Gamma_b)} I(\Gamma_a\cdot\Gamma_b) \otimes \frac{ [\varepsilon(\Gamma_a) \Gamma^\ast_a|\Gamma^\ast_b]}{|\mathrm{Aut}(\Gamma_a)| |\mathrm{Aut}(\Gamma_b)|}\\
	& = \sum_{\Gamma \in \mathcal{B}(m)} \sum_{\substack{ (\Gamma_a, \Gamma_b) \\ \Gamma_a\cdot\Gamma_b \cong \Gamma}} I(\Gamma)\otimes  
	\frac{\frac{1}{2} \bigl([\varepsilon(\Gamma_a) \Gamma^\ast_a|\Gamma^\ast_b]+(-1)^{|\Gamma_a||\Gamma_b|}[\varepsilon(\Gamma_b)\Gamma^\ast_b | \Gamma^\ast_a ]\bigr)}{|\mathrm{Aut}(\Gamma_a)| |\mathrm{Aut}(\Gamma_b)|},
	\end{split}
	\]
	which is the same as the first term in \eqref{eq:d^ast-omega-2-terms}.  These computations yield \eqref{eq:twisted-cochain}.	 
\end{proof}

\begin{lem}\label{lem:connection-chain-map}
	The formal transport of $\omega$, defined as\footnote{The symbol 1 here is the 0-cochain that is the unity in $\Chen^\ast(M)\otimes \Ba^*(\Dm(m))$.  In particular, it evaluates to 0 on chains of positive dimension.}
	\begin{equation}\label{eq:T}
	\mathcal{T}=1+\varint \omega+\varint \omega \omega+\varint \omega  \omega \omega+\ldots 
	\ \in \ \Chen^\ast(M)\otimes \Ba^*(\Dm(m)),
	\end{equation}
	satisfies
	\begin{equation}\label{eq:T-d-commute}
	d_{\Chen}\mathcal{T}=-d^\ast_{\Ba}\mathcal{T}
	\end{equation}
	and yields a chain map
	\begin{align}
	\label{eq:Theta-via-T}
	\begin{split}
	\Theta: C^{\operatorname{sing}}_\ast(\Omega \Conf(m,\R^n)) &\longrightarrow \Ba^\ast(\D(m)), \\
	c &\longmapsto \langle  \mathcal{T}, c\rangle = \delta_{0,n}+\sum_r\langle\varint \omega^{(r)}, c\rangle.	
	\end{split}
	\end{align}
\end{lem}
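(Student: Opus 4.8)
The plan is to follow the strategy of Chen \cite{Chen:1973} and Hain \cite{Hain:1984}, in which the flatness identity \eqref{eq:T-d-commute} for the transport is a formal consequence of the twisted cochain condition already established in Lemma \ref{lem:twisted-cochain}. The essential analytic input is Chen's formula for the exterior derivative of an iterated integral, which for forms $\omega_1, \ldots, \omega_r$ reads, up to the graded signs encoded by the $\varepsilon$ factors,
\[
d_{\Chen}\int\omega_1 \cdots \omega_r = -\sum_{i=1}^r \int \omega_1 \cdots (d\omega_i) \cdots \omega_r + \sum_{i=1}^{r-1} \int \omega_1 \cdots (\omega_i \wedge \omega_{i+1}) \cdots \omega_r.
\]
Before invoking this, I would record that the transport \eqref{eq:T} is well defined when paired against a fixed singular chain: since every diagram in $\mathcal{B}(m)$ has at least one edge, each form $I(\Gamma)$ appearing in $\omega$ has degree at least $n-1$, so the form part of $\int\omega^{(r)}$ has degree at least $r(n-2)\geq r$; hence on an $n$-dimensional chain only finitely many summands of $\mathcal{T}$ contribute, and $\Theta(c)$ is a finite expression.

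For the identity \eqref{eq:T-d-commute}, I would apply Chen's formula to each summand $\int\omega^{(r)}$ of $\mathcal{T}$. Because every slot carries the same form $\omega$, the differentiate-a-slot sum contributes $d\omega$ in each of the $r$ positions, and the wedge-adjacent-slots sum contributes $\omega\wedge\omega$. The key move is to substitute the twisted cochain relation of Lemma \ref{lem:twisted-cochain}, rewriting $d\omega = -d^\ast_{\Ba}\omega + \varepsilon(\omega)\wedge\omega$ in every position. The $\varepsilon(\omega)\wedge\omega$ terms produced this way are designed to cancel, position by position, against the wedge-adjacent-slots terms; this cancellation is exactly where the $\varepsilon$-bookkeeping of \eqref{eq:twisted-cochain} is used. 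What survives is the collection of terms in which $d^\ast_{\Ba}\omega$ has replaced $\omega$ in a single slot. Using that $d^\ast_{\Ba}$ is a derivation with respect to the concatenation product on $\Ba^\ast(\Dm(m))$, and recalling from \eqref{eq:d-ast-B-gamma} that $d^\ast_{\Ba}$ on a length-one monomial produces both a blow-up (length-preserving) term and a product-splitting (length-raising) term, these surviving terms reassemble precisely into $-\sum_r \int d^\ast_{\Ba}\bigl(\omega^{(r)}\bigr) = -d^\ast_{\Ba}\mathcal{T}$, which is \eqref{eq:T-d-commute}.

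Granting \eqref{eq:T-d-commute}, the chain map property of $\Theta$ in \eqref{eq:Theta-via-T} is a short formal consequence. Writing $\langle -, - \rangle$ for the evaluation of the $\Chen^\ast(M)$-factor on a singular chain, the operator $d_{\Chen}$ is adjoint to the boundary $\del$ on $C^{\operatorname{sing}}_\ast(\Omega\Conf(m,\R^n))$, so for any chain $c$ one has
\[
d^\ast_{\Ba}\,\Theta(c) = d^\ast_{\Ba}\langle \mathcal{T}, c\rangle = \langle d^\ast_{\Ba}\mathcal{T}, c\rangle = -\langle d_{\Chen}\mathcal{T}, c\rangle = -\langle \mathcal{T}, \del c\rangle = -\Theta(\del c),
\]
where the second equality uses that $d^\ast_{\Ba}$ acts only on the cobar factor and the fourth uses \eqref{eq:T-d-commute}. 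Thus $\Theta$ intertwines the two differentials up to the overall sign fixed by our conventions, and so descends to a chain map; its image lies in $\Ba^\ast(\D(m))$ because $\omega$ is valued there and $\Ba^\ast(\D(m))$ is a subcomplex of $\Ba^\ast(\Dm(m))$.

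I expect the main obstacle to be the sign bookkeeping in the two differential formulas: matching the graded signs in Chen's iterated-integral derivative with the signs $\varepsilon(\Gamma_a)$, $(-1)^{|\Gamma_a||\Gamma_b|}$, and $\varepsilon(\Gamma',e)$ appearing in the dual bar differential \eqref{eq:d-ast-B-gamma} and in the twisted cochain condition \eqref{eq:twisted-cochain}. The factors $\varepsilon(\omega)$ and $\varepsilon(\Gamma)$ were introduced precisely to make the cancellation of the $\varepsilon(\omega)\wedge\omega$ terms against the wedge-adjacent terms exact, so that after this cancellation the argument reduces to the purely formal reassembly described above, exactly as in \cite{Hain:1984}.
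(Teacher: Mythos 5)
Your proposal is correct and follows essentially the same route as the paper: expand $d_{\Chen}$ on each $\varint\omega^{(r)}$ via the bar differential, substitute the twisted cochain condition of Lemma \ref{lem:twisted-cochain} so that $d\omega-\varepsilon(\omega)\wedge\omega$ becomes $-d^\ast_{\Ba}\omega$ slot by slot, reassemble using the signed derivation property of $d^\ast_{\Ba}$ to get $-d^\ast_{\Ba}\mathcal{T}$, and then deduce the chain map property by adjointness of $d_{\Chen}$ with $\partial_{\mathrm{sing}}$. Your explicit remarks on convergence of the pairing and on the residual sign in $d^\ast_{\Ba}\Theta(c)=-\Theta(\partial c)$ are points the paper leaves implicit, but they do not change the argument.
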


\begin{proof}
    Using the bar differential  $d_{\Chen}$ of the iterated integral complex  $\Chen^\ast(\Conf(m,\R^n))$ defined in \eqref{eq:bar-differential}, we have
	\[
	\begin{split}
	d_{\Chen}\bigl(\varint \omega^{(r)}\bigr)   & = \sum^r_{i=1} (-1)^{i}\varint\varepsilon(\omega)^{(i-1)} (d\omega) \omega^{(r-i)} + \sum^{r-1}_{i=1} (-1)^{i+1}\varint\varepsilon(\omega)^{(i-1)}(\varepsilon(\omega) \wedge \omega)\omega^{(r-i-1)}.
	\end{split}
	\]
	Arranging the summation properly, we get
	\[
	\begin{split}
	d_{\Chen}\bigl(\mathcal{T}\bigr)=d_{\Chen}\bigl(\sum_{r\geq 0} \varint \omega^{(r)}\bigr) & = \sum_{r\geq 0}\sum^r_{i=1} (-1)^{i}\varint\varepsilon(\omega)^{(i-1)} (d\omega-\varepsilon(\omega) \wedge \omega) \omega^{(r-i)}\\
	& = \sum_{r\geq 0}\sum^r_{i=1} (-1)^{i}\varint\varepsilon(\omega)^{(i-1)} (-d^\ast_{\Ba}\omega) \omega^{(r-i)}=-d^\ast_{\Ba} \bigl(\mathcal{T}\bigr).
	\end{split}
	\]
	We used Lemma \ref{lem:twisted-cochain} in the third equality above.  
	The last equality holds because, by using \eqref{eq:d-ast-B-gamma}, we may write
	\[
	\begin{split}
	d^\ast_{\Ba}\bigl([\Gamma^\ast_1\, |\, \Gamma^\ast_2\, |\, \cdots \, |\, \Gamma^\ast_m]\bigr)   & =\sum^m_{i=1} (-1)^{i}[\varepsilon(\Gamma_1)\Gamma^\ast_1\, |\, \cdots \,| \varepsilon(\Gamma_{i-1})\Gamma^\ast_{i-1}\,| d^\ast_{\Ba} ([\Gamma^\ast_i]) |\, \Gamma^\ast_{i+1}\,|\, \cdots\, |\, \Gamma^\ast_m].
	\end{split}
	\]
	Since $\langle d_{\Chen} \mathcal{T}, c\rangle = \langle \mathcal{T}, \partial_{\mathrm{sing}} c \rangle$ and $ d_{\Ba}^*\langle \mathcal{T}, c \rangle = \langle d_{\Ba}^* \mathcal{T}, c \rangle$, formula \eqref{eq:T-d-commute} implies that $\Theta$ as defined above is a chain map.
\end{proof}

\begin{proof}[Proof of Theorem \ref{thm:Phi-pwr-series}]
		It remains only to check that $\Theta$ is dual to $\Phi$ and that $\Theta$ is a map of Hopf algebras.  
	Indeed, $\Theta$ is a chain map which fits in the commutative diagram of chain maps (which are also quasi-isomorphisms)
	\begin{equation}\label{eq:Theta-Phi-dual-diagram}
	\begin{tikzcd}[column sep=small]
	C^\textrm{sing}_\ast(\Omega \Conf(m,\R^n)) \arrow{rrrr}{i}\arrow{rrrrd}{\Theta} & & & & \operatorname{Hom}\bigl(\Ch^*_{dR}(\Omega \Conf(m,\R^n)), \R\bigr)\arrow{d}{\Phi^\ast},\\
	& & & &	\Ba^\ast(\Dm(m))	
	\end{tikzcd}
	\end{equation}
	i.e. $\Theta=\Phi^\ast\circ i$,  where 
	\[
	i(c)(\beta)=\langle \beta , c\rangle,\qquad \beta \in \Ch^*_{dR}(\Conf(m,\R^n)).
	\]
	Note  that $\Phi^\ast$ is a quasi-isomorphism by our previous work \cite{KKV:2020} and $i$ is a quasi-isomorphism by a result of Chen in \cite[p. 859]{Chen:1977}.
	 We next check that $\Theta$ is multiplicative with respect to the Pontryagin product, i.e., 
	\begin{equation}\label{eq:Theta-multiplicative}
	\Theta(a\cdot b)=\Theta(a) \Theta(b).
	\end{equation}
	We use the identity from \cite[p. 843]{Chen:1977} 
	\[
	\begin{split}
	\langle\varint \alpha_1\alpha_2\cdots\alpha_m, a\cdot b\rangle & =\sum^m_{i=0} \langle\varint \alpha_1\alpha_2\cdots\alpha_i, a\rangle\langle \varint \alpha_{i+1}\alpha_{i+2}\cdots\alpha_m, b\rangle.
	\end{split}
	\]
	Since
	\[
	\varint \omega^{(r)} =\sum_{\Gamma_1,\ldots, \Gamma_r} \varint I({\Gamma_1})\cdots I({\Gamma_r})\otimes [\Gamma_1 |\, \ldots | \Gamma_r], \]
	we have
\begin{align*}
\langle\varint \omega^{(r)},c\rangle & = \sum_{\Gamma_1,\ldots, \Gamma_r} \langle\varint I({\Gamma_1})\cdots I({\Gamma_r}),c\rangle \otimes [\Gamma_1 |\, \ldots | \Gamma_r],\\
\langle\varint \omega^{(t)},a\rangle \langle\varint \omega^{(r)},b\rangle  & = \sum_{\Gamma_1,\ldots, \Gamma_t,\Gamma'_1,\ldots, \Gamma'_r} \langle\varint I({\Gamma_1})\cdots I({\Gamma_t}),a\rangle   \langle\varint I({\Gamma'_1})\cdots I({\Gamma'_r}),b\rangle\otimes [\Gamma_1 |\, \ldots | \Gamma_t\, |\, \Gamma'_1 |\, \ldots | \Gamma'_r],\\
\langle\varint \omega^{(r)},a\cdot b\rangle & = \sum_{\Gamma_1,\ldots, \Gamma_r} \langle\varint I({\Gamma_1})\cdots I({\Gamma_r}),a\cdot b\rangle  \otimes  [\Gamma_1 |\, \ldots | \Gamma_r]\\
	& =  \sum_{\Gamma_1,\ldots, \Gamma_r}  \sum^r_{i=1}\langle\varint I({\Gamma_1})\cdots I({\Gamma_i}),a\rangle    \langle\varint I({\Gamma_{i+1}})\cdots I({\Gamma_r}),b\rangle  \otimes   [\Gamma_1 |\, \ldots | \Gamma_r],
\end{align*}	
and we obtain $\langle \mathcal{T},  a\cdot b\rangle = \langle \mathcal{T},  a\rangle \langle \mathcal{T}, b\rangle$ and \eqref{eq:Theta-multiplicative} as a result.

Finally, since $\Theta$ is defined via the power series connection, we can apply \cite[Lemma 6.17]{Hain:1984} to establish that $\Theta$ is a map of coalgebras.  The key point is that the map $\Phi=\Theta^*$ is an algebra map because it takes the shuffle product on $\Ba(\Dm(m))$ to the wedge product of differential forms.
\end{proof}

\begin{rem}	
The above modification of the power series technique applies to any situation where one has a quasi-isomorphism 
$I:(\mathcal{A},\delta)\longrightarrow (\ChdR^\ast(M),d)$ of CDGA's. Then a connection form $\omega$ may be defined as in \eqref{eq:omega-form-0} for a choice of a basis $\mathcal{B}(\mathcal{A})$ of the CDGA $\mathcal{A}$.
\end{rem}

\section{Samelson products, primitive diagrams, and trees}
\label{S:brackets-primitives-trees}

We will now prove Theorems \ref{thm:primitive-diagrams} and \ref{thm:theta-trees}, which describe homotopy classes in configuration space in terms of diagrams.  The main point of Section \ref{S:SamelsonBlowup} is Lemma \ref{lem:bracket-recursive}, which describes  Samelson products in terms of blow-ups of diagrams.  We use this fact in Section \ref{S:proof-primitives} to prove Theorem \ref{thm:primitive-diagrams}, which says that the map $\Theta$ induced by the transport of the power series connection takes real homotopy classes in $\Omega\Conf(m,\R^n)$ to primitive diagrams in $\D(m)^*$.  
In Section \ref{S:theta-trees-proof}, we prove Theorem \ref{thm:theta-trees}, which recasts these elements as trivalent trees modulo the IHX relations; we also obtain an explicit formula that completely describes $\Theta$ for non-repeating monomials.  In Section \ref{S:brunnian-spherical-links}, we observe a connection to Brunnian spherical links and provide examples of Theorem \ref{thm:theta-trees}.

\subsection{Samelson products and blow-ups of diagrams}
\label{S:SamelsonBlowup}

Recall that the Samelson product $[ - , - ]$ on $\pi_*(\Omega X)$ makes $\pi_*(\Omega X) \otimes \R$ into a graded Lie algebra over $\R$.  It differs only by a sign from the Whitehead product  $[ - , - ]_W$ of the corresponding elements in $\pi_*(X)$.  
Let $h: \pi_{*}(\Omega X)\otimes \R \to H_*(\Omega X; \R)$ be the Hurewicz map.
Restricting its codomain to the subspace of primitive elements gives an isomorphism of graded Lie algebras.
See Appendix \ref{apx:brackets} for further details.

The next Lemma is proven in Appendix \ref{apx:brackets}.
Lemma \ref{lem:left-normed-span-conf} below is its specialization to the graded Lie algebra $\mathcal{L}=\pi_*(\Omega \Conf(m,\R^n)) \otimes \R$.
The latter lemma helps us show in Corollary \ref{cor:PDm(m)-to-PD(m)} that our proof of Theorem \ref{thm:primitive-diagrams} can be adapted to $\D(m)$ rather than $\Dm(m)$ with only a little modification.
The spanning set obtained in Lemma \ref{lem:left-normed-span-conf} will also be used in Section \ref{S:high-dim-braids}.

\begin{replem}{lem:left-normed-span}
	Let $\mathcal{L}$ be a graded Lie algebra over $\R$, generated by elements $X_i$, $i\in \mathcal{I}$, of degrees $|X_i|$.  
	\begin{enumerate}
		\item[(a)]  The left-normed monomials in the $X_i$ span $\mathcal{L}$.  
		\item[(b)]  Powers $[ X_i, \dots, X_i ]$ of length greater than 2 vanish. 
		\item[(c)]  Squares $[X_i,X_i]$ are nontrivial only on generators in even grading.
		\item[(d)]  If $|X_i|$ and $|X_h|$ are odd, then $[[X_i,X_i],X_h] = - 2 [[X_i,X_h],X_i]$.
	\end{enumerate}
\end{replem}

Recall that additively, $\mathcal{L}_{m}(n-2)=\pi_* (\Omega \Conf(m,\R^n))$ is a direct sum of free Lie algebras \eqref{eq:L_m(n-2)}, with the $j$-th summand generated by the elements $B_{j,1},B_{j,2},\ldots, B_{j,j-1}$.
We thus deduce the following.

\begin{lem}\label{lem:left-normed-span-conf}
	  For any $n\geq 3$ and any $m\geq 1$, $\mathcal{L}_{m}(n-2)=\pi_*(\Omega \Conf(m,\R^n)) \otimes \R$	is additively generated by left-normed Samelson products
	  \begin{equation}\label{eq:left-normed-span-conf}
	   [ B_{j,i_1},B_{j,i_2},B_{j,i_3}\ldots, B_{j,i_k} ]=[[\ldots [[B_{j,i_1},B_{j,i_2}],B_{j,i_3}],\ldots ], B_{j,i_k}],
	  \end{equation}
	  for $2\leq j\leq m$ and $1\leq i_1, i_2, \ldots, i_k \leq j-1$.
	  For any $j$ with $2\leq j\leq m$ and any $i,h < j$, the identities in Lemma \ref{lem:left-normed-span}, parts (b), (c), and (d), hold with $X_i=B_{j,i}$ and $X_h=B_{j,h}$.
\qed
\end{lem}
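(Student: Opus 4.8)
The plan is to obtain Lemma \ref{lem:left-normed-span-conf} as a direct specialization of the general Lemma \ref{lem:left-normed-span}, using the vector-space decomposition \eqref{eq:L_m(n-2)} of $\mathcal{L}_m(n-2)$ into the free graded Lie subalgebras $\mathcal{L}(B_{j,1},\ldots,B_{j,j-1})$ for $2\le j\le m$. First I would record that every generator $B_{j,i}$ lies in degree $n-2$ (from \eqref{eq:B_ji}), so that all generators appearing in a single summand share a common parity, determined by the parity of $n$.

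For the spanning statement, I would apply Lemma \ref{lem:left-normed-span}(a) to each summand $\mathcal{L}(B_{j,1},\ldots,B_{j,j-1})$ \emph{separately}. Since this summand is the free graded Lie algebra on $B_{j,1},\ldots,B_{j,j-1}$, part (a) shows it is spanned by the left-normed monomials in exactly these generators, which are precisely the expressions \eqref{eq:left-normed-span-conf} for the given $j$. Taking the union over $2\le j\le m$ and invoking \eqref{eq:L_m(n-2)} then produces a spanning set for all of $\mathcal{L}_m(n-2)$. The content here beyond merely applying part (a) to the whole algebra (which would give left-normed monomials in \emph{all} the $B_{j,i}$, with mixed first indices) is the refinement that no monomial mixing distinct first indices $j$ is ever needed; this is exactly what the direct-sum decomposition buys us.

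The identities in parts (b), (c), and (d) require no argument specific to configuration spaces: they are universal consequences of graded anti-symmetry and the graded Jacobi identity, valid in any graded Lie algebra. Since $\mathcal{L}_m(n-2)$ is such an algebra and each $B_{j,i}$ has degree $n-2$, substituting $X_i=B_{j,i}$ and $X_h=B_{j,h}$ (with $i,h<j$) into the corresponding identities of Lemma \ref{lem:left-normed-span} yields the claim, the relevant parity case of (c) and (d) being selected uniformly by the parity of $n$.

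The only point genuinely requiring care — and where I expect the argument to be most delicate — is justifying that each $\mathcal{L}(B_{j,1},\ldots,B_{j,j-1})$ sits inside $\mathcal{L}_m(n-2)$ as a sub-Lie-algebra, not merely as a sub-vector-space, so that a left-normed bracket of generators all carrying the common first index $j$ indeed lands back in the $j$-th summand and Lemma \ref{lem:left-normed-span}(a) may legitimately be applied summand-by-summand. This is exactly what the iterated semidirect-product structure behind \eqref{eq:L_m(n-2)} provides: the rational homotopy of the based loop space of the fiber $\R^n\setminus\{j-1\text{ points}\}\simeq\bigvee_{j-1}S^{n-1}$ of the point-forgetting fibration $\Conf(j,\R^n)\to\Conf(j-1,\R^n)$ is the free graded Lie algebra on $B_{j,1},\ldots,B_{j,j-1}$, realized as an ideal of $\mathcal{L}_j(n-2)$, which is in turn a subalgebra of $\mathcal{L}_m(n-2)$ via the homotopy section; closure under brackets of same-$j$ generators follows. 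With this structural fact in place, the remainder is bookkeeping.
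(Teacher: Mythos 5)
Your proposal is correct and follows exactly the paper's (essentially one-line) argument: the lemma is deduced by applying Lemma \ref{lem:left-normed-span} to each free graded Lie algebra summand of the decomposition \eqref{eq:L_m(n-2)}, with parts (b), (c), (d) holding as universal graded Lie algebra identities since each $B_{j,i}$ has degree $n-2$. Your extra care about each summand being a sub-Lie-algebra (via the sections of the point-forgetting fibrations) is a point the paper leaves implicit, but it is the same route.
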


We base the proofs of Theorem \ref{thm:primitive-diagrams} and part of Theorem \ref{thm:theta-trees} on the next Lemma, which says that we can represent an iterated bracket in the $B_{j,i}$ as an element in $P\Dm(m)^\ast$.

\begin{lem}\label{lem:bracket-recursive} Let $C$ be any iterated Samelson product (with repeats allowed) in the generators $B_{j,i}$ of length $k$. 
\begin{itemize}
	\item[(i)]  Then $\Theta\circ h (C) \in \Ba^\ast(\Dm(m))$ is homologous to a length-1 monomial 
	$[\Gamma^\ast_C]$ where $\Gamma^\ast_C$ is a cycle in $P\Dm(m)^\ast$.  
	\item[(ii)]  With $C$ as above, write $C=[A,B]$, where $A$ and $B$ are iterated Samelson products of length less than $k$.  Then $\Gamma^\ast_C$ can be determined recursively by using $\Gamma_{A}$ and $\Gamma_{B}$:
	\begin{equation}\label{eq:Gamma_C}
	\Gamma^\ast_{[A,B]}=
	(-1)^{a} \delta^\ast((\Gamma_A \cdot \Gamma_{B})^*)  \ \in \ P\Dm(m)^\ast, \qquad
	 a=|\Gamma^\ast_A|=|A|+1.
	\end{equation}
	\end{itemize}
\end{lem}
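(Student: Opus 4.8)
The plan is to prove both parts simultaneously by induction on the length $k$ of the Samelson product $C$, using only the Hopf-algebra structure established in Theorem \ref{thm:Phi-pwr-series} together with the explicit dual differential \eqref{eq:d-ast-B-gamma}. For the base case $k=1$, where $C=B_{j,i}$ is a single generator represented by the spherical cycle of \eqref{eq:B_ji}, I would evaluate the transport $\mathcal{T}$ of \eqref{eq:T} directly on the fundamental class of $S^{n-2}$. Degree considerations kill every term $\varint\omega^{(r)}$ with $r\geq 2$, and the linear term $\varint\omega$ pairs nontrivially only with the chord $\Gamma_{j,i}$ joining segment vertices $i$ and $j$, since $I(\Gamma_{j,i})$ is the pullback of the unit volume form under the Gauss map and integrates to $1$ over $b_{j,i}$. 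Thus $\Theta\circ h(B_{j,i})=[\Gamma_{j,i}^\ast]$, and $\Gamma_{j,i}^\ast$ is manifestly a primitive (internally connected) $\delta^\ast$-cycle, as both of its segment vertices have valence one and admit no blow-up.

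For the inductive step, write $C=[A,B]$ with $A,B$ of length $<k$. First I would invoke the standard identification of the Samelson product with the graded commutator in the Pontryagin ring (Appendix \ref{apx:brackets}), so that $h([A,B])=h(A)\cdot h(B)-(-1)^{|A||B|}h(B)\cdot h(A)$. Since $\Theta$ is multiplicative with respect to the Pontryagin product on the source and concatenation on the target (shown in the proof of Theorem \ref{thm:Phi-pwr-series}), and since by the inductive hypothesis $\Theta\circ h(A)$ and $\Theta\circ h(B)$ are represented by length-$1$ monomials $[\Gamma_A^\ast]$ and $[\Gamma_B^\ast]$, concatenation yields
\[
\Theta\circ h(C)=[\Gamma_A^\ast\,|\,\Gamma_B^\ast]-(-1)^{|A||B|}[\Gamma_B^\ast\,|\,\Gamma_A^\ast]
\]
as a homology class. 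The crux is to recognize this length-$2$ cycle as the length-$2$ part of $d_{\Ba}^\ast$ applied to the single length-$1$ monomial $[(\Gamma_A\cdot\Gamma_B)^\ast]$. Here I would use that $\Gamma_A$ and $\Gamma_B$ are internally connected, so that the only factorizations of $\Gamma_A\cdot\Gamma_B$ contributing to the first (deconcatenation) sum of \eqref{eq:d-ast-B-gamma} are the two orderings $(\Gamma_A,\Gamma_B)$ and $(\Gamma_B,\Gamma_A)$; the $|\mathrm{Aut}|$ normalizations are precisely what make this sum equal the displayed commutator up to an overall sign. Since $d_{\Ba}^\ast[(\Gamma_A\cdot\Gamma_B)^\ast]$ is a boundary, $\Theta\circ h(C)$ is therefore homologous to the length-$1$ remainder, namely $(-1)^a\delta^\ast((\Gamma_A\cdot\Gamma_B)^\ast)$, which is exactly \eqref{eq:Gamma_C}.

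It then remains to check that $\Gamma_C^\ast:=(-1)^a\delta^\ast((\Gamma_A\cdot\Gamma_B)^\ast)$ lies in $P\Dm(m)^\ast$ and is a cycle. The cycle condition is immediate from $(\delta^\ast)^2=0$. For primitivity I would use that $\delta^\ast$ is a coderivation of the coalgebra $\Dm(m)^\ast$ and that the reduced coproduct of $(\Gamma_A\cdot\Gamma_B)^\ast$ is $\pm\,\Gamma_A^\ast\otimes\Gamma_B^\ast\pm\Gamma_B^\ast\otimes\Gamma_A^\ast$; applying the coderivation identity together with the inductive fact that $\Gamma_A^\ast$ and $\Gamma_B^\ast$ are primitive $\delta^\ast$-cycles forces the reduced coproduct of $\delta^\ast((\Gamma_A\cdot\Gamma_B)^\ast)$ to vanish. (Equivalently, since $\Theta\circ h(C)$ is represented by the length-$1$ cycle $[\Gamma_C^\ast]$, its homological-differential component $D_{\Ba}^\ast[\Gamma_C^\ast]$ must vanish, and for a length-$1$ monomial this is exactly primitivity of $\Gamma_C^\ast$.)

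The main obstacle throughout is the sign bookkeeping: reconciling the Koszul signs of the graded commutator with the signs $\varepsilon(\Gamma_a)$ and $(-1)^{|\Gamma_a||\Gamma_b|}$ appearing in \eqref{eq:d-ast-B-gamma}, and matching these against the degree shift $a=|\Gamma_A^\ast|=|A|+1$ (which comes from the length-$1$ grading shift in the cobar complex). A secondary care point is the bookkeeping of the automorphism factors $|\mathrm{Aut}(-)|$ in the factorization sum, especially in the degenerate case $\Gamma_A\cong\Gamma_B$, where the two orderings coincide and the counting must be adjusted so that the normalized expression still matches the commutator.
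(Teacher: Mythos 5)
Your proposal is correct and follows essentially the same route as the paper's proof: induction on length, the base case via the single chord $\Gamma_{j,i}^*$, the inductive step by recognizing $(-1)^a[\Gamma_A^*,\Gamma_B^*]$ and $[\delta^*((\Gamma_A\cdot\Gamma_B)^*)]$ as the two parts of $d_{\Ba}^*[(\Gamma_A\cdot\Gamma_B)^*]$ (including the $|\mathrm{Aut}|$ normalization and the special case $\Gamma_A\cong\Gamma_B$), and primitivity deduced from the fact that a length-$1$ cobar cycle has vanishing homological differential. The only caveat is the one you already flag: your displayed formula for $\Theta\circ h(C)$ omits the overall sign $(-1)^{a}$ coming from \eqref{eq:hurewicz-bracket}, which must be tracked to land exactly on \eqref{eq:Gamma_C}.
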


\begin{proof}[Proof of Lemma \ref{lem:bracket-recursive}] 
	We proceed by induction on $k$.  For $k=1$, $\Theta(B_{j,i})$ is the class $[\Gamma_{j,i}^*]$ of a diagram with a single chord between vertices $i$ to $j$ (oriented from $i$ to $j$ if $n$ is odd), which lies in $P\Dm(m)^*$. This proves the base case for statement (i).
	
	Now suppose that $A,B,$ and $C$ are as above, where $C$ has length $k\geq 2$, and that (i) holds for $A$ and $B$; that is $\Theta\circ h (A)$ and $\Theta\circ h (B)$ are homologous to $[\Gamma_A^*]$ and $[\Gamma_B^*]$ for some cycles $\Gamma_A^*, \Gamma_B^* \in P\Dm(m)$.  
	Let $a=|\Gamma^\ast_A|$ and $b=|\Gamma^\ast_B|$ be the degrees in $\Dm(m)^\ast$.  Note that $\Gamma_A\cdot \Gamma_B\neq 0$, since we do not quotient by diagrams with multiple edges in $\Dm(m)$.
	(More generally, $\Dm(m)$ is the free CDGA on its indecomposable elements, as observed in \cite{Lambrechts-Turchin}.)
	In general, the homological differential on length-1 monomials in the cobar complex on a CDGC is a sum of graded commutators \cite[p.~307]{Felix-Halperin-Thomas}.  We apply this to the product $\Gamma_A\cdot \Gamma_B$ of two indecomposable elements.  For the moment, suppose $\Gamma_A \not\cong \Gamma_B$.  We then have 
	\begin{equation}\label{eq:d_B(A.B)}
	\begin{split}
	d^\ast_{\Ba}[(\Gamma_A\cdot \Gamma_B)^*] & =  [(-1)^{a}\Gamma^\ast_A | \Gamma^*_B]+(-1)^{a b}[(-1)^b \Gamma^*_B | \Gamma^\ast_A]-[\delta^\ast((\Gamma_A\cdot \Gamma_B)^*)]\\
	& = (-1)^{a}\bigl([\Gamma^\ast_A | \Gamma^*_B]-(-1)^{(a-1)(b-1)}[ \Gamma^*_B | \Gamma^\ast_A]\bigr)-[\delta^\ast((\Gamma_A\cdot \Gamma_B)^*)]\\
	& = (-1)^{a}[ \Gamma^\ast_A, \Gamma^*_B ]-[\delta^\ast((\Gamma_A\cdot \Gamma_B)^*)].
	\end{split}
	\end{equation}
	The first equality implicitly uses the fact that $\Gamma^*$ is defined so that $\langle \Gamma_1^*, \Gamma_2\rangle = \delta_{\Gamma_1, \Gamma_2} |\mathrm{Aut}(\Gamma_1)|$, together with the fact that $|\mathrm{Aut}(\Gamma_A \cdot \Gamma_B)| = |\mathrm{Aut}(\Gamma_A)| |\mathrm{Aut}(\Gamma_B)|$ if  $\Gamma_A \not\cong \Gamma_B$. 
	In the last line above, $[ - , - ]$ denotes the graded commutator in the cobar complex, and we recall that there are suppressed desuspensions $\partial^{-1}_\ast$ throughout to get the last equality. 
	
	Now if instead $\Gamma_A \cong \Gamma_B$ (and $n$ is odd), then the homological differential gives only one term $(-1)^a[\Gamma_A^* | \Gamma_A^*]$, which can be rewritten as $\frac{1}{2}(-1)^a[\Gamma_A, \Gamma_A]$.  However, $|\mathrm{Aut}(\Gamma_A \cdot \Gamma_A)| = 2 |\mathrm{Aut}(\Gamma_A)|^2$, so the other two terms in \eqref{eq:d_B(A.B)} are also essentially multiplied by $\frac{1}{2}$, and \eqref{eq:d_B(A.B)} still holds.  
	
	Thus  $[\delta^\ast((\Gamma_A\cdot \Gamma_B)^*)]$ is always homologous to $(-1)^{a}[\Gamma^\ast_A, \Gamma^*_B]$ in $H_\ast(\Ba^\ast(\Dm(m)))$.  
	 Now $h$ sends a Samelson product $[A,B]$ to 
	 $(-1)^a[ h(A), h (B)]$.  Since $\Theta$ is a map of graded algebras, 
	 $\Theta\circ h [ A, B]$ is homologous to $(-1)^{a}[ \Gamma^\ast_A, \Gamma^*_B ]$, and 
	 statement (ii), i.e.~formula \eqref{eq:Gamma_C}, is proven.
	 	
	It remains to check that $\Gamma^\ast_{[A,B]}$ is a primitive cycle in $\Dm(m)^\ast$.  
	Indeed, $[\Gamma_{[A,B]}^*]$ is homologous to $\Theta\circ h [ A, B]$ and hence is a cycle in $\Ba^*(\Dm(m))$.  Since $[\Gamma_{[A,B]}^*]$ is a cycle of length 1 in $\Ba^*(\Dm(m))$, $\Gamma_{[A,B]}^*$ must be primitive and a cycle in $\Dm(m)^*$.
	So statement (i) follows as well.
\end{proof} 

\subsection{Proof of Theorem \ref{thm:primitive-diagrams} (homotopy classes as primitive diagrams)}
\label{S:proof-primitives} 
By a mild abuse of notation, we will sometimes write $\Theta$ to mean the composition $\Theta \circ h$ with the Hurewicz map.
\begin{repthm}{thm:primitive-diagrams}
	(a) Let $P\Dm(m)^\ast$ be the primitives of the coalgebra $\Dm(m)^\ast$.  Then $\Theta$ induces an isomorphism 
	\[
	\pi_\ast(\Omega \Conf(m,\R^n)) \otimes \R \overset{\cong}{\longrightarrow} 	H_\ast(P\Dm(m)^\ast, \delta^\ast|_{P\Dm(m)^*}).
	\]
	(b) The dual $\Phi$ of $\Theta$ induces an isomorphism 
	\[
	\mathrm{Hom}(\pi_\ast(\Omega \Conf(m,\R^n)), \, \R) \overset{\cong}{\longleftarrow} 
	H^\ast(I\Dm(m),\widetilde{\delta}),
	\]
	where $\widetilde{\delta}=\pi_{I\Dm}\circ\delta$, and $\pi_{I\Dm}$ is the projection onto the indecomposables $I\Dm(m)$ of $\Dm(m)$.
\end{repthm}

\begin{proof}
Pre-composing the map $\Theta$ in \eqref{eq:Theta-homology-iso-D(m)} with the Hurewicz map gives an isomorphism of graded Lie algebras 
	\[
	 \pi_\ast(\Omega \Conf(m,\R^n)) \otimes \R\longrightarrow PH_\ast(\Ba^\ast(\Dm(m)))
	\]
onto the space of primitive elements of the Hopf algebra $H_\ast(\Ba^\ast(\Dm(m)))$.
The primitive elements in a CDGC always form a subcomplex, so we consider $(P\Dm(m)^*, \delta^*|_{P\Dm(m)^*})$ as a subcomplex of $(\Dm(m)^*, \delta^*)$.
We just have to check that
\begin{equation}
\label{eq:prim-homology-vs-homology-prim}
PH_\ast(\Ba^\ast(\Dm(m))) = H_*(P\Dm(m)^*).
\end{equation}

The containment ``$\supset$'' in \eqref{eq:prim-homology-vs-homology-prim} holds even if $\Dm(m)$ is replaced by an arbitrary CDGA.
Indeed, if $\Gamma^* \in P\Dm(m)^*$ is a cycle in $(P\Dm(m)^*, \delta^*|_{P\Dm(m)^*})$, then $\Gamma$ is indecomposable and $\delta^*(\Gamma^*)=0$, so $d_B^*[\Gamma^*]=0$.  
Furthermore, $\Gamma^\ast_1$ and $\Gamma^\ast_2$ are homologous in  $(P\D(m)^\ast,\delta^\ast)$ if and only if $[\Gamma^\ast_1]$ and $[\Gamma^\ast_2]$ are homologous in $(\Ba^\ast(\Dm(m)),d^\ast_{\Ba})$.
Finally $[\Gamma^\ast]$ is primitive in $\Ba^\ast(\Dm(m))$ as a cobar cycle of length 1.

To show the inclusion ``$\subset$'' in \eqref{eq:prim-homology-vs-homology-prim}, note that by 
Lemma \ref{lem:bracket-recursive}, any bracket expression in the $B_{j,i}$ is homologous in $\Ba^\ast(\Dm(m))$ to a length-1 monomial $[\Gamma^*]$ such that $\delta^*\Gamma^*=0$,  which thus represents an element of $H_\ast(P\Dm(m)^\ast)$.  
So part (a) is proven.

For part (b), we dualize the composite given by $\Theta$ followed by the identification 
$PH_*(\Ba^*(\Dm(m))) \overset{\cong}{\to} H_*(P(\Dm(m)^*))$ to obtain an isomorphism 
\[
 \operatorname{Hom}(H_\ast(P\Dm(m)^\ast),\R) \overset{\cong}{\longrightarrow} \operatorname{Hom}(\pi_\ast(\Omega \Conf(m,\R^n)), \R).
 \]
By the universal coefficient theorem for cohomology \cite{Hatcher-book:2002} applied to the complex $(I\Dm(m),\widetilde{\delta})$,
\[
 \operatorname{Hom}(H^\ast(I\Dm(m)), \R)\cong H_\ast(P\Dm(m)^{\ast}, \delta^{\ast}|_{P\Dm(m)^*}).
\]
In each degree, this is an isomorphism of finite-dimensional vector spaces, so dualizing it gives an isomorphism $H^\ast(I\Dm(m)) \cong  \operatorname{Hom}(H_\ast(P\Dm(m)^{\ast}), \R)$.
So part (b) of Theorem \ref{thm:primitive-diagrams} is proven.
\end{proof}
 We can now ``diagrammatically'' define the bracket via the identity \eqref{eq:Gamma_C} in Lemma \ref{lem:bracket-recursive}:
\begin{equation}\label{eq:PD-bracket}
[ - , - ]: H_\ast(P\Dm(m)^\ast)\otimes H_\ast(P\Dm(m)^\ast)\longrightarrow H_\ast(P\Dm(m)^\ast).
\end{equation}
\begin{example}
\label{Ex:2HopfMap}
If $n=3$, 
$\Conf(2,\R^3)\simeq S^{2}$, and
$[B_{2,1},B_{2,1}] \in \pi_{3}(\Conf(2,\R^3))$ is twice the Hopf map \cite{Hatcher-book:2002}.  In this case the corresponding diagram is obtained as follows:
\begin{equation}\label{eq:diagram-2hopf}
\begin{split}
\Theta([B_{2,1},B_{2,1}])=[\Gamma_{2,1},\Gamma_{2,1}] & =	
	\Bigl[\vvcenteredinclude{0.15}{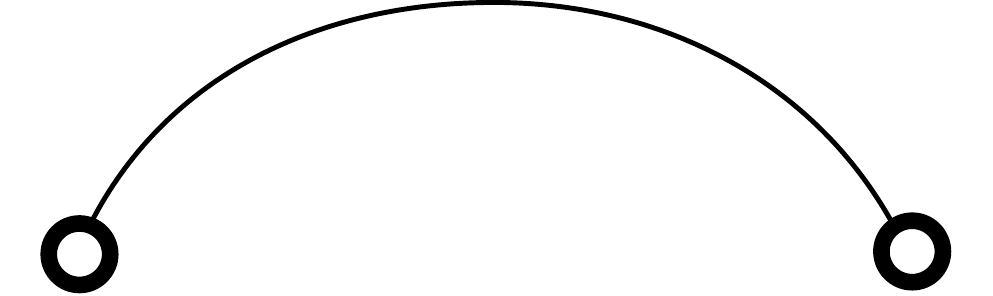},\vvcenteredinclude{0.15}{Dm-chord-G21.pdf}\Bigr]=(-1)^{2}\delta^\ast\Bigl(\vvcenteredinclude{0.15}{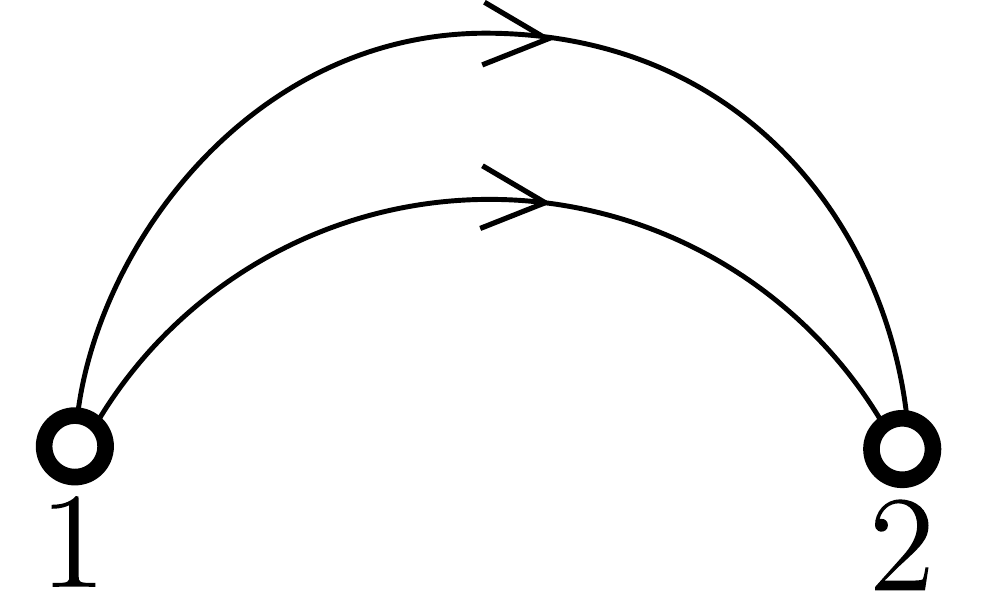}\Bigr)\\
	& = -\vvcenteredinclude{0.15}{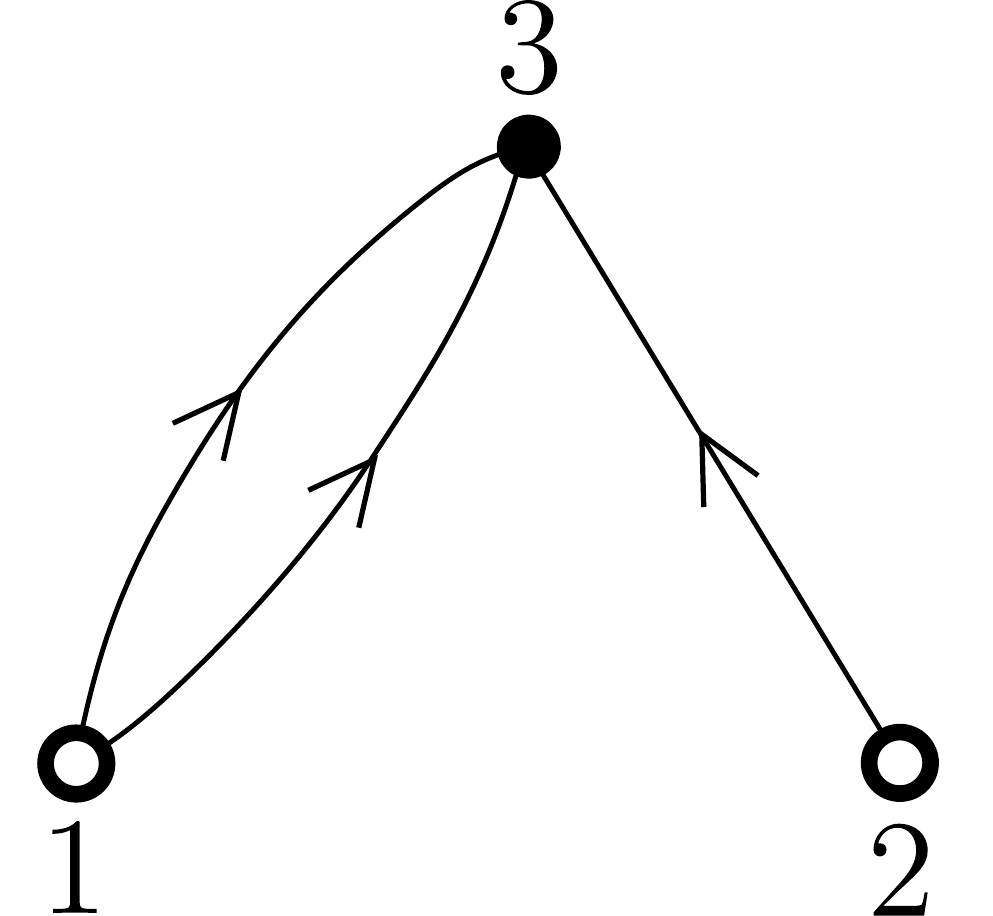}-\vvcenteredinclude{0.15}{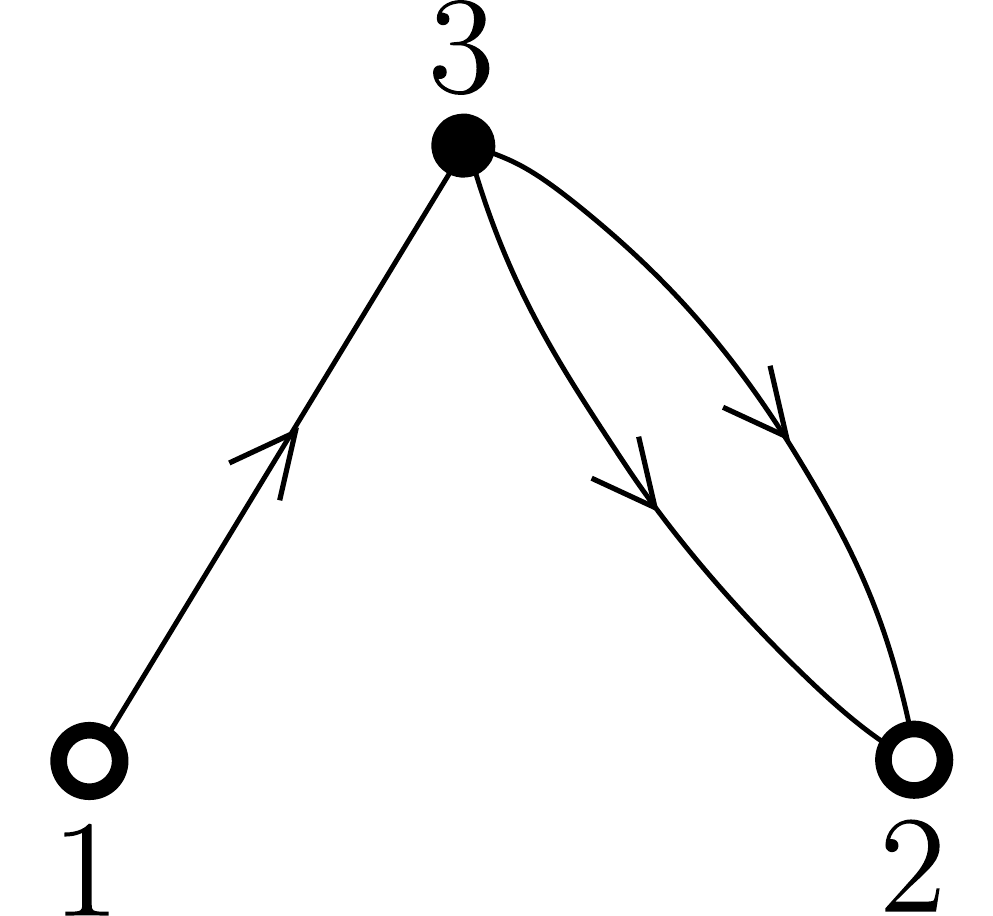},
	\end{split}
\end{equation}
where the sign comes from \eqref{eq:edge-sign}, applied to the choice of labels and arrows shown.  
If we call the above sum $-\Gamma_1^* - \Gamma_2^*$, then $\langle \Gamma_i, \Gamma_i^* \rangle=2$ for $i=1,2$ because each $\Gamma_i$ has 2 automorphisms.  The element in $\mathcal{T}^n(m)$ corresponding to each $\Gamma_i$ is a tripod (a uni-trivalent tree with 3 leaves), with leaves labeled 1,1,2 (for $i=1$) and 1,2,2 (for $i=2$).
\end{example}

We now describe the relationship between $H_\ast(P\Dm(m)^\ast)$ and $H_\ast(P\D(m)^\ast)$.
\begin{cor}\label{cor:PDm(m)-to-PD(m)}
	There are isomorphisms
	\begin{equation}\label{eq:to-H(PD(m))}
	H_\ast(P\Dm(m)^\ast)\cong\begin{cases}
	H_\ast(P\D(m)^\ast), & \text{for $n$ even},\\
	H_\ast(P\D(m)^\ast)\oplus \R^N, & \text{for $n$ odd}.
	\end{cases},\quad N={m\choose 2}.
	\end{equation}
\end{cor}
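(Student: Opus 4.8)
The plan is to compare the primitive subcomplexes $(P\Dm(m)^*,\delta^*)$ and $(P\D(m)^*,\delta^*)$ directly, using Theorem~\ref{thm:primitive-diagrams} to identify $H_*(P\Dm(m)^*)\cong\pi_*(\Omega\Conf(m,\R^n))\otimes\R\cong\mathcal{L}_m$. The even case is immediate: by Definition~\ref{D:DmOrientations}, for $n$ even a labeling is an ordering of edges, so the automorphism transposing two parallel edges acts by $-1$ and every diagram with a multiple edge is its own negative, hence zero; thus $\Dm(m)=\D(m)$ and there is nothing to prove. For the remainder assume $n$ odd.

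First I would check that $P\D(m)^*$ is a \emph{subcomplex} of $P\Dm(m)^*$: a blow-up of a multiplicity-free internally connected diagram is again multiplicity-free, because the new edge meets a brand-new free vertex and splitting the half-edges at a simple vertex cannot double an existing edge. Writing $Q:=P\Dm(m)^*/P\D(m)^*$, spanned by duals of the internally connected diagrams that carry a multiple edge, I would analyze $0\to P\D(m)^*\xrightarrow{\iota}P\Dm(m)^*\to Q\to0$ through its long exact sequence. The extra classes should be exactly the Samelson squares $[B_{j,i},B_{j,i}]$, $1\le i<j\le m$, which are nonzero precisely for $n$ odd (Example~\ref{Ex:2HopfMap} and Lemma~\ref{lem:left-normed-span}(c)); by \eqref{eq:L_m(n-2)} there are $\sum_{j=2}^{m}(j-1)=\binom{m}{2}=N$ of them, all in homological degree $2(n-2)$. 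By Lemma~\ref{lem:bracket-recursive} the class of $[B_{j,i},B_{j,i}]$ is $\pm\delta^*\big((\Gamma_{j,i}\cdot\Gamma_{j,i})^*\big)$, a sum of double-edge tripods, which vanishes in $\D(m)$. Conversely Lemma~\ref{lem:left-normed-span}(b),(d) should show that modulo these squares every left-normed bracket is representable by a simple diagram: a product $\Gamma_{j,i_1}\cdots\Gamma_{j,i_k}$ acquires a multiple edge only from a repeated index, and the relation $[[B_{j,i},B_{j,i}],B_{j,h}]=-2[[B_{j,i},B_{j,h}],B_{j,i}]$ together with the vanishing of length-$\geq 3$ powers lets me spread each repeat onto distinct internal vertices, so that only the bare length-two square resists being drawn as a tree. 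Hence $\mathcal{L}_m=\operatorname{im}\iota_*+\mathcal{S}$ with $\mathcal{S}=\langle[B_{j,i},B_{j,i}]\rangle$.

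To see the squares are genuinely lost, i.e.\ $\operatorname{im}\iota_*\cap\mathcal{S}=0$, I would use the multidegree of $\mathcal{L}_m$ recording how many times each $B_{j,i}$ occurs: in degree $2(n-2)$ the only internally connected diagrams are tripods, the simple ones join three \emph{distinct} segments and so occupy multidegrees ``$B_{j,a}$ and $B_{j,b}$ once each'' with $a\neq b$, whereas $[B_{j,i},B_{j,i}]$ occupies multidegree ``$B_{j,i}$ twice''; thus no combination of simple cycles has a nonzero square-component. For $H_*(Q)$ itself, the smallest internally connected multiple-edge diagram is the double-edge tripod, so $Q$ vanishes below degree $2(n-2)$ and $\iota_*$ is an isomorphism there. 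In degree $2(n-2)$ a direct calculation gives $\delta^*(T_{a;b})=\pm D_{\{a,b\}}$, where $T_{a;b}$ is the tripod with a double edge to $a$ and a single edge to $b$ and $D_{\{a,b\}}$ is the two-free-vertex diagram with a doubled internal edge and legs to $a,b$; since $D_{\{a,b\}}$ depends only on the unordered pair (the triple-edge tripods map to disjoint basis elements and are not cycles), the kernel is the $\binom{m}{2}$-dimensional span of the combinations $T_{a;b}\mp T_{b;a}$, whence $H_{2(n-2)}(Q)\cong\R^N$.

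The hard part will be the vanishing $H_{d}(Q)=0$ for $d>2(n-2)$, which is equivalent to the injectivity of $\iota_*$ and the vanishing of the connecting homomorphisms, and which is exactly what promotes the long exact sequence to the split short exact sequence $0\to H_*(P\D(m)^*)\to H_*(P\Dm(m)^*)\to\R^N\to 0$ that yields the claim. The key resource is that the ideal $J\subset\Dm(m)$ of multiple-edge diagrams is \emph{acyclic}, since $\Dm(m)\twoheadrightarrow\D(m)$ is a quasi-isomorphism; the task is to transfer this acyclicity to the primitive quotient $Q$. I would attempt this by filtering $Q$ by excess edge-multiplicity, or equivalently by matching generators of $\mathcal{L}_m$ against representatives: the Lie-algebraic reduction above already accounts for all of $\mathcal{L}_m$ as simple-representable classes plus the $N$ squares, which should leave no room for homology of $Q$ above degree $2(n-2)$. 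Reconciling this bookkeeping with the acyclicity of $J$ --- i.e.\ proving there is no higher ``secondary'' primitive homology hidden in the multiple-edge diagrams --- is the real content of the proof, and the degree-wise dimension count is what any such argument must reproduce.
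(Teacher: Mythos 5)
Your proposal has several sound ingredients --- the observation that for $n$ even a transposition of the two labels on a pair of parallel edges is realized by a graph automorphism, so multiple-edge diagrams already vanish and $\Dm(m)\to\D(m)$ is an isomorphism; the identification of the $\binom{m}{2}$ Samelson squares as the discrepancy for $n$ odd; and the use of Lemma \ref{lem:left-normed-span}(b),(d) to reduce every left-normed monomial other than a bare square to one whose first two entries are distinct, hence representable by a cycle in $P\D(m)^*$. This much reproduces the ``surjectivity'' half of the paper's argument. But there is a genuine gap, and you name it yourself: nothing in the proposal establishes that $H_d(Q)=0$ for $d>2(n-2)$, equivalently that $\iota_*\colon H_*(P\D(m)^*)\to H_*(P\Dm(m)^*)$ is injective and the long exact sequence degenerates. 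Your suggestion that the Lie-algebraic bookkeeping ``leaves no room'' for higher homology of $Q$ does not follow from the long exact sequence: knowing that $\operatorname{im}\iota_*$ contains the span $W$ of non-square monomials only forces the map $H_d(P\Dm(m)^*)\to H_d(Q)$ to vanish for $d\neq 2(n-2)$, which makes the connecting map $H_d(Q)\to H_{d-1}(P\D(m)^*)$ \emph{injective} there --- precisely leaving open the possibility that $H_d(Q)\neq 0$ and that $\iota_*$ has a kernel in degree $d-1$. Transferring the acyclicity of the multiple-edge ideal $J\subset\Dm(m)$ to the primitive quotient $Q$ is not automatic (passing to primitives/indecomposables is not exact in any obvious sense), so the ``real content'' you defer is genuinely missing.

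The paper sidesteps the quotient complex $Q$ entirely. It reruns the argument of Theorem \ref{thm:primitive-diagrams} with $\D(m)$ in place of $\Dm(m)$: the general fact used there --- that cycles in the primitives of a CDGC are homologous in the primitive subcomplex if and only if they are homologous in the cobar complex --- gives an injection $H_*(P\D(m)^*)\hookrightarrow PH_*(\Ba^*(\D(m)))\cong\mathcal{L}_m$ with no reference to $Q$. Then the recursion of Lemma \ref{lem:bracket-recursive}, repeated inside $\D(m)$ for left-normed brackets with distinct first two entries, shows the image contains every generator of Lemma \ref{lem:left-normed-span-conf} except the squares, and a count of internally connected simple diagrams in the tripod degree shows it contains nothing more; the claim follows from $H_*(P\Dm(m)^*)\cong\mathcal{L}_m$. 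If you want to complete your route, you would need either to import that injectivity statement (at which point the long exact sequence is unnecessary) or to prove the acyclicity of $Q$ above degree $2(n-2)$ by hand, e.g.\ by an explicit contracting homotopy on multiple-edge primitive diagrams --- neither of which is sketched in your proposal.
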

\begin{proof}
	The proof of Lemma \ref{lem:bracket-recursive} can be repeated for $\D(m)$ and 
	for left-normed bracket expressions with 
	distinct first two terms, i.e., for $C=[ B_{j,i_1,},B_{j,i_2},\ldots, B_{j,i_k} ]$ with $i_1\neq i_2$.  
	Then $\Theta\circ h (C) \in \Ba^\ast(\D(m))$ is homologous to a length-1 monomial 
	$[\Gamma^\ast_C]$ where $\Gamma^\ast_C \in P\D(m)^\ast$. In addition, \eqref{eq:d_B(A.B)} holds for these brackets. As in the proof of Theorem \ref{thm:primitive-diagrams}, each of the generators of Lemma \ref{lem:left-normed-span-conf} can be represented by an element of $H_\ast(P\D(m)^\ast)$, with the exception of the squares $[B_{j,i},B_{j,i}]$ for $n$ odd. We conclude that $\mathcal{L}_m$ is isomorphic to $H_\ast(P\D(m)^\ast)$ for $n$ even and $H_\ast(P\D(m)^\ast)\oplus \R^N$ for $n$ odd, where the $\R^N$ factor is spanned by the square brackets. Since $H_\ast(P\Dm(m)^\ast)\cong \mathcal{L}_m$ the claim follows.
\end{proof}

\subsection{Proof of Theorem \ref{thm:theta-trees} (homotopy classes as trees)}
\label{S:theta-trees-proof}

Recall from the Introduction that $T: \pi_*(\Omega \Conf(m,\R^n)) \otimes \R \to \mathcal{T}^n(m)$ is the map defined using the direct-sum decomposition \eqref{eq:L_m(n-2)} that sends a bracket expression $B$ to the corresponding tree.

\begin{repthm}{thm:theta-trees} 
Below, assume $j, j_1, \dots, j_k \leq m$ and $I = (i_1, i_2, \dots, i_k)$ with $1 \leq i_1, \dots, i_k < j$.
	\begin{itemize}
		\item[(a)] If 
		 $C$ is a length-$k$ Samelson product on the generators $B_{j_1,i_1}, B_{j_2,i_2},\ldots, B_{j_k,i_k}$  (with possible repeats), then $\Theta(C)$ is represented in $H_\ast (P\Dm(m)^\ast)$ by a linear combination, with $\pm 1$ coefficients,  of all successive vertex blow-ups of the diagram $(\Gamma_{j,i_1}\cdot \Gamma_{j,i_2}\cdot\ldots  \cdot \Gamma_{j,i_k})^*$,  performed in the order given by the parenthesization in $C$, from innermost to outermost brackets. 
				
		\item[(b)]  The map $\Theta$ gives rise to an injection $\TTheta$ of $\pi_* (\Conf(m,\R^n))\otimes \R$ into the space $\mathcal{T}^n(m)$ of trivalent trees with leaves labeled by $\{1,\dots, m\}$, modulo the (graded) AS and IHX relations.
		
		\item[(c)] For any such multi-index $I$  (with repeats of indices allowed), $\langle \TTheta(B_{j;I}), T(B_{j;I}) \rangle = \pm 1$, where $\langle \cdot, \cdot \rangle$ is any Kronecker pairing determined by a basis of trees for $\mathcal{T}^n(m)$.

		\item[(d)] If $I$ has no repeated indices, then $\TTheta(B_{j;I})= \pm T(B_{j;I})$.
		In particular, for any $k\leq j$,
		\begin{equation}\tag{\ref{eq:borromean-tree}}
		B_{j;1,2,\ldots,k-1} \overset{\Theta}{\longmapsto} \pm\vvcenteredinclude{0.15}{Dm-Gj123dotsk.pdf} \quad \in \quad H_\ast(P\Dm(m)^\ast).
		\end{equation}
		Thus $\Theta$ maps a basis element \eqref{eq:Lie(m-1)-basis} of $Lie(m-1)$ to the diagram obtained from \eqref{eq:borromean-tree} by setting $k=j=m$ and permuting the segment vertices by $\sigma\in \Sigma(2,\ldots,m-1)$.  

		\item[(e)]  (Signs)  If $I$ is a multi-index of length $m-1$ with no repeats, then the sign of the diagram \eqref{eq:borromean-tree} corresponding to a basis element \eqref{eq:Lie(m-1)-basis} of $Lie(m-1)$ is as follows.  
		For $n$ odd, orient this diagram by labeling the vertices as in \eqref{eq:borromean-tree} and orienting each edge from smaller to larger vertex label.  
		For $n$ even, orient this diagram by using the vertex labels shown to order the edges first by smallest endpoint label, then by largest endpoint label, i.e., $\{1, m+1\}, \{2, m+1\}, \{3, m+2\}, \{4, m+3\}, \dots,  \{m,2m-2\}, \{2m-1, 2m-2\}$.  Then the sign $\varepsilon(n,m)$ is given by 
		\[
		\varepsilon(n,m) = 
		\left\{ 
		\begin{array}{ll}
		+1 & \text{ if \quad $n$ is even and $m\equiv 0, 1 \ \mathrm{mod} \ 4$ \quad or \quad $n$ is odd and $m$ is odd}\\
		-1 & \text{ if \quad $n$ is even and $m\equiv 2,3 \ \mathrm{mod} \ 4$ \quad or \quad $n$ is odd and $m$ is even.}\\
		\end{array}
		\right.
		\]

	\end{itemize}
\end{repthm}

\begin{proof}
	Part (a) follows directly from the identity \eqref{eq:Gamma_C} in Lemma \ref{lem:bracket-recursive}. 

	\medskip 

For part (b), the desired injection $\TTheta$ can be obtained as the composition in the top row of the following commutative diagram, explained below.
\begin{equation}
\label{E:LD-diagram}
\xymatrix@R1pc@C0.5pc{
\pi_*(\Omega\Conf(m,\R^n))\otimes \R \cong PH_*(\Omega\Conf(m,\R^n)) \ar[rr] \ar@{^(->}[d]& & 
PH_{0,*} (\LDm^{n+1}_{\mathrm{forest}}(m)^*)  \cong \mathcal{T}^n(m) \ar@{^(->}[d] \\
H_*(\Omega \Conf(m,\R^n)) \ar[r]^-{\Theta}_-\cong & 
H_*(\Ba^* (\Dm(m))) \ar@{^(->}[r]^-{\varphi^*} & 
H_{0,*} (\LDm^{n+1}_{\mathrm{forest}}(m)^*) 
}
\end{equation}
There is a differential graded Hopf algebra $\LDm^{n+1}(m)$ of \emph{link diagrams} for long links in $\R^{n+1}$.  A link diagram is a graph with some vertices lying on $m$ line segments and some free vertices, together with an orientation depending on the parity of $n+1$, much like in Definition \ref{D:DmOrientations}.
The subspace $\LDm^{n+1}_{\mathrm{forest}}(m)$, which is also a differential graded Hopf algebra, consists of forests whose leaves lie on the segments.  
Link diagrams of bidegree $(0,*)$ are precisely uni-trivalent graphs.  
The CDGAs $\LDm^{n+1}(m)$ and $\LDm^{n+1}_{\mathrm{forest}}(m)$ differ from the CDGAs $\LD^{n+1}(m)$ and $\LD^{n+1}_{\mathrm{forest}}(m)$ that we studied in \cite{KKV:2020} only in that link diagrams with multiple edges are not set to zero.  
Imposing this relation gives maps $\LDm(m) \twoheadrightarrow \LD(m)$ and $\LDm_{\mathrm{forest}}(m) \twoheadrightarrow \LD_{\mathrm{forest}}(m)$ of differential graded Hopf algebras.

The map $\varphi^*$ is dual to a map $\varphi$ defined at the level of cochains in \cite{KKV:2020}, where we showed that $\varphi$ is surjective in cohomology.
Although in that paper we defined $\varphi$ as a map $\Ba(\D(m)) \to \LD_{\mathrm{forest}}(m)$, one can easily modify that definition to get a map  $\Ba(\Dm(m)) \to \LDm_{\mathrm{forest}}(m)$.
Our proof of the surjectivity of $\varphi$ relied on work of Habegger and Masbaum,\footnote{Habegger and Masbaum's $C^t(m)$ is our $\mathcal{T}^2(m)$ ($\cong \mathcal{T}^n(m)$ for any even $n$), and their $A_k(m)$ is our $H_{0,k}(\LDm_{\mathrm{forest}}(m))$.} 
which like various other authors' work on Vassiliev invariants of knots links in $\R^3$ actually used (the uni-trivalent part of) $\LDm_{\mathrm{forest}}(m)$.  
So more precisely, our application of their work yields surjectivity of the composition $H^{0,*}(\LDm_{\mathrm{forest}}(m)) \to H^{0,*}(\LD_{\mathrm{forest}}(m)) \overset{\varphi}{\to} H^*(\Ba(\D(m)))$.  Thus the dual $\phi^*$ in \eqref{E:LD-diagram} is indeed injective.

Moreover, $\varphi$ is a map of Hopf algebras.  
Thus $\Theta \circ \varphi^*$ is also a map of Hopf algebras, and
the top horizontal arrow can be defined as the restriction of $\varphi^* \circ \Theta$ to the primitive elements.

Finally, the isomorphism $\mathcal{T}^n(m) \to PH_{0,*} (\LDm^{n+1}_{\mathrm{forest}}(m)^*)$ is given by averaging over all the possible ways of attaching the leaves labeled $i$ to segment $i$, for $i=1,\dots, m$.  The case where $n$ is even is proven in \cite[Theorem 8]{Bar-Natan:1995}, and the proof for $n$ odd is similar.\footnote{One needs to use $\LDm_{\mathrm{forest}}(m)$ rather than $\LD_{\mathrm{forest}}(m)$ to get an isomorphism to $\mathcal{T}^n(m)$.  Indeed, $PH_{0,*}(\LD_{\mathrm{forest}}(m))$ has elements corresponding to connected graphs that are not trees, such as a graph in which a pair of edges joins two (free) vertices, each of which is connected to a univalent (segment) vertex.}
The right-hand vertical inclusion is thus essentially an inclusion of trees on $m$ line segments into forests on $m$ line segments.

By Theorem \ref{thm:primitive-diagrams}, $\Theta$ sends the image of the Hurewicz map to linear combinations of (length-1 monomials of) connected diagrams.  On such elements, the composition of $\phi^*$ followed by the isomorphism to $\mathcal{T}^n(m)$ is given by first killing diagrams with loops of free vertices and then separating any edges which meet at the same segment vertex, as one would do to get from the fourth picture to the third picture in \eqref{eq:bracket-to-trees-2}.
In summary, the desired injection $\TTheta: \pi_*(\Conf(m,\R^n)) \hookrightarrow \mathcal{T}^n(m)$ is given by the Hurewicz map, followed by $\Theta$, followed the quotient by diagrams with loops of free vertices and the splitting apart of all edge-ends which meet at multivalent segment vertices.

\medskip
	
	For part (c), if $I$ is any multi-index we want to show that the successive blow-ups for $B_{j;I}$ determined by formula \eqref{eq:Gamma_C} produce the diagram $T(B_{j;I})$ with coefficient $\pm 1$.  
We get a diagram for every sequence of blow-ups, where at each step we may have a choice of multivalent vertex to blow up at because repeated indices are possible.
The sequence where we blow up at the root $j$ at every step produces precisely this diagram.  Consider any other sequence, which at some step has a blow-up at some other segment vertex, say $v$.  The result of such a sequence of blow-ups cannot be $T(B_{j;I})$ because segment vertex $v$ will have fewer edges incident to it than it does in $T(B_{j;I})$.  Moreover, any linear combination of trees equivalent to it cannot contain a term $T(B_{j;I})$ because the IHX relations preserve leaf labels.
	
\medskip

	For part (d), write $\Gamma_{j;I}^*$ for $\Theta(B_{j;I})$, as in Lemma \ref{lem:bracket-recursive}.  
	First suppose $I=(1,2,\dots, k)$.  
	We proceed by induction on $k$.  We know the statement is true for $k=1$.  
	Suppose $2 \leq k <j$ and that $\Gamma_{j;1,\dots,k-1}^*= \pm T(B_{j;1,\dots,k-1})$, 
	which is the diagram below (where the free vertex labels are shown only to determine the sign in part (e)):
	\[
	\vvcenteredinclude{0.15}{Dm-Gj123dotsk.pdf}
	\]
	By Lemma \ref{lem:bracket-recursive} and the induction hypothesis, 
	\[
	\Theta ([ B_{j,1}, \dots, B_{j,k} ])  = \pm\delta^*((\Gamma_{j;1,\dots,{k-1}} \cdot \Gamma_{j;k})^*) 
	= \pm T(B_{j;1,\dots,k}).
	\]
	The case of an arbitrary non-repeating multi-index $I$ can be treated by applying the same argument but with $(1,\dots,k)$ replaced by $(i_1, \dots, i_{k-1}, i_k)$.  

\medskip	

	To get the signs claimed in part (e), consider the iterated blow-ups of  $(\Gamma_{m,1} \cdot \Gamma_{m,2} \cdots \Gamma_{m,m-1})^*$. One adds up the signs from \eqref{eq:edge-sign} in those blow-ups, the signs from Definition \ref{D:DmOrientations} incurred by relabeling the tree, and for $n$ odd, the signs from \eqref{eq:Gamma_C} coming from the Samelson product.  
	For odd $n$, we incur the same number of minus signs, modulo 2, from the blow-ups and the Samelson product, namely $\frac{1}{2}(m-1)(m-2)$, while we incur $m-3$ minus signs from edge reversals.  For even $n$, we incur $\frac{1}{2}(m-1)(m-2)$ minus signs from the blow-ups and $m-3$ minus signs from permuting the edge labels.  
\end{proof}

\begin{rem}
\label{R:signs-of-trees-whitehead}		
If one uses the Whitehead product instead of the Samelson product, the sign $(-1)^a$ in equation \eqref{eq:Gamma_C} in Lemma \ref{lem:bracket-recursive} disappears, and the sign for $n$ odd above becomes $-1$ if $m \equiv 0,3 \mod 4$, while the sign for $n$ even is unchanged.
\end{rem}

We postpone our Examples to the end of Section \ref{S:brunnian-spherical-links} below for the purpose of connecting them to Brunnian links.

\subsection{The Lie module
and Brunnian spherical links.}
\label{S:brunnian-spherical-links}  
A spherical $m$-component Brunnian link $L$ in $\R^n$ gives rise to a class in $\pi_*(\Conf(m,\R^n))$ and hence a class in the homotopy groups $\pi_*(\Omega \Conf(m,\R^n))$ of the space of braids in one dimension higher.  
Using Theorem \ref{thm:Phi-pwr-series} and Theorem \ref{thm:theta-trees}, we will deduce that the Milnor invariants of $L$ in the sense of Koschorke \cite{Koschorke:1997} can be recovered by applying $\Theta$ to the corresponding class in $\pi_*(\Omega \Conf(m,\R^n))$ in Corollary \ref{cor:brunnian-spherical-links}.
In more detail, a spherical $m$-component \emph{link map} is a smooth map 
\begin{equation}\label{eq:spherical-link-map}
	L:S^{p_1}\sqcup S^{p_2} \sqcup \ldots \sqcup  S^{p_m}\longrightarrow \R^n, \ n\geq 3 \qquad  L(S^{p_i})\cap L(S^{p_j})=\varnothing,\ i\neq j.
\end{equation}
A \emph{link homotopy} is a homotopy through link maps.
If each $(m-1)$-component sublink of $L$ is link homotopic to a trivial link, $L$ is called \emph{Brunnian} or \emph{Borromean}. 
Let $p=\sum^m_{i=1} p_i$.  We say $L$ is $\kappa$-\emph{Brunnian}, following \cite{Koschorke:1997}, if whenever $p\leq m(n-2)$ the evaluation map
\begin{equation}
\label{eq:kappa(L)}
\kappa(L): \prod^m_{i=1} S^{p_i}\longrightarrow \Conf(m,\R^n),\qquad \kappa(L)(t_1,\ldots t_m)=(L_1(t_1),\ldots,L_m(t_m)),\quad L_i=L\bigr|_{S^{p_i}},
\end{equation}
factors via the projection $\pi:\prod^m_{i=1} S^{p_i}\longrightarrow S^{p}$, which collapses all faces of $\prod^m_{i=1} S^{p_i}$ to a point. That is, $L$ is $\kappa$-Brunnian if there exists a map $\widetilde{\kappa}(L):S^{|p|}\longrightarrow \Conf(m,\R^n)$ completing the diagram
\begin{equation}\label{eq:borr-diagram}
	\begin{tikzcd}[column sep=small]
		\prod^m_{i=1} S^{p_i} \arrow{drr}{\pi} \arrow{rrrr}{\kappa(L)} & & & & \Conf(m,\R^n).\\
		& &  S^{p} \arrow[dashed]{urr}{\widetilde{\kappa}(L)} & &		
	\end{tikzcd}
\end{equation}
It is easy to show that if $L$ is Brunnian, then it is also $\kappa$-Brunnian.  
Moreover, Koschorke \cite{Koschorke:1997} defines a map $h =\bigoplus_{I \in \Sigma_{m-2}} h_I$ which for a certain value of $p$ (specified below) induces an isomorphism from link homotopy classes of $m$-component Brunnian spherical link maps to 
$\bigoplus_{(m-2)!} \Z$.
One can then define the Milnor invariants $\{\mu_{I;j}(L)\}_I$ of $L$ as the components of the resulting element in $\bigoplus_{(m-2)!} \Z$. 

We now digress to define the generalized Hopf invariants $h_I$.  Although we do not need it to deduce Corollary \ref{cor:brunnian-spherical-links}, it is essential for our proof of Theorem \ref{T:braids-as-links}.
First, $\Conf(m,\R^n)$ contains a subspace $\bigvee_{m-1} S^{n-1}$ which is homotopy equivalent to the fiber of the projection that forgets one configuration point.
Let $pr_{\, \widehat{i}}: \bigvee_{m-1} S^{n-1} \to \bigvee_{m-2} S^{n-1}$ be the map that collapses the $i$-th summand to the basepoint.  Define the \emph{reduced homotopy groups} of a wedge of spheres by
$\widetilde{\pi}_*\left( \bigvee_{m-1} S^{n-1} \right):= \bigcap_{i=1}^{m-1} \ker (pr_{\, \widehat{i}})$.  
Because $L$ is Brunnian, $\widetilde{\kappa}(L)$ is not only homotopic to a map with image image in $\bigvee_{m-1} S^{n-1}$, but also represents a class in the reduced homotopy groups.
This definition is due to Koschorke \cite{Koschorke:1997}, but it is also closely related to earlier work of Boardman and Steer \cite{Boardman-Steer:1967}:

\begin{defin}
\label{D:hopf-invt}
Let $I=(i_1, \dots, i_{m-1})$ be a permutation of $\{1, \dots, m-1\}$ which fixes 1, i.e., such that $i_1=1$.\footnote{Compared to Koschorke's conventions, we reverse the order of each multi-index because we consider left-normed iterated Whitehead products, whereas he considers the right-normed ones.}  
For each such multi-index $I$, we now define a 
{\em generalized Hopf invariant}
\[
h_I: \widetilde{\pi}_p\left( \bigvee_{m-1} S^{n-1} \right) \longrightarrow \pi_{p-(m-1)(n-1)+m-2}^s
\]
where the target is the stable homotopy groups of spheres.  Given a smooth map $f$ representing an element of the domain, take the preimages under $f$ of $(m-1)$ points, one in each wedge summand.  The result is an $(m-1)$ component link $P_1 \sqcup \dots \sqcup P_{m-1}$ in $S^{p}$.  Find a manifold $Q_1$ bounding $P_1$, then replace $P_1 \sqcup P_{i_2}$ by $Q_1 \cap P_{i_2}$.  Then find a manifold $Q_2$ bounding $Q_1 \cap P_{i_2}$, and replace $Q_1 \cap P_{i_2} \sqcup P_{i_3}$ by $Q_2 \cap P_{i_3}$.  Continue this procedure until $Q_{m-3} \cap P_{i_{m-2}} \sqcup P_{i_{m-1}}$ is replaced by $Q_{m-2} \cap P_{i_{m-1}}$.  Consider the result as a framed bordism class, which by the Pontryagin--Thom construction can be identified with a class in the stable homotopy groups of spheres.  
\end{defin}

The specialization of a result of Koschorke \cite[Theorem 3.1]{Koschorke:1997} to the case of a wedge of equidimensional spheres gives an isomorphism 
\begin{equation}
\label{eq:hopf-invt-iso}
h:=\bigoplus_{I \, \leftrightarrow\,  \sigma \in \Sigma_{m-2}} h_I: \widetilde{\pi}_{p}\left( \bigvee_{m-1} S^{n-1} \right)
\overset{\cong}{\longrightarrow} 
\bigoplus_{(m-2)!}  \pi^s_{p-(m-1)(n-1)+m-2}
\end{equation}
provided that $(m-1)(n-1)-(m-2) \leq p \leq m(n-1)-m$.  
Of particular interest to us, both in Corollary \ref{cor:brunnian-spherical-links} and Theorem \ref{T:braids-as-links}, is the case of the smallest possible value of $p$, namely  
$p=(m-1)(n-1)-m+2$.  
For this $p$, the target is $\bigoplus_{(m-2)!}  \pi_0^s$, and $h=\bigoplus h_I$ gives an isomorphism 
$\widetilde{\pi}_{p}\left( \bigvee_{m-1} S^{n-1} \right) \overset{\cong}{\longrightarrow} \bigoplus_{(m-2)!}  \Z$.
The domain is spanned by iterated Whitehead brackets $[\iota_{1}, \iota_{i_2} \dots \iota_{i_{m-1}}]$, where $\iota_j$ is the projection to the $j$-th wedge summand.
This allows us to explicitly identify it with $Lie(m-1)$.  

\begin{defin}[Koschorke \cite{Koschorke:1997}]
\label{D:milnor-invt}
Let $L: S^{p_1} \sqcup \dots \sqcup S^{p_m} \to \R^n$ be a Brunnian link, with $p:= \sum_{i=1}^m p_i$, and suppose that $p= mn-2m-n+3$.  Define the \emph{Milnor invariant} $\mu_{I;j}(L)$ as the Hopf invariant $h_I$ of $\widetilde{\kappa}(L) \in \widetilde{\pi}_{p}\left( \bigvee_{m-1} S^{n-1} \right)$.
 \end{defin}
 
Combining the results of \cite{Koschorke:1997} and Theorem \ref{thm:theta-trees} yields the following:

\begin{repcor}{cor:brunnian-spherical-links}
	Let $L: S^{p_1} \sqcup \dots \sqcup S^{p_m} \to \R^n$ be a Brunnian link such that $\sum_{i=1}^m p_i= mn-2m-n+3$.  
	Let  $\overline{\kappa}(L)$ be the adjoint of the map $\widetilde{\kappa}(L)$ defined in \eqref{eq:borr-diagram}. 
	Then 
	\[
	\Theta(\overline{\kappa}(L)) =\sum_{I \in \Sigma_{m-2}} \mu_{I;m}(L)\Gamma_{m;I}^*\qquad  \in Lie(m-1)\subset H_*(P\Dm(m)^*),
	\]
	where  $\Gamma_{m;I}^*=\Theta(B_{m;I})$ can be viewed as a trivalent tree.
	\qed
\end{repcor}

  We will use the maps $\kappa$ and $h$ to prove our last main result, Theorem \ref{T:braids-as-links}.  There we will apply them to basepoint-preserving embeddings (or basepoint-preserving link maps, as in Remark \ref{R:variations-of-braids-as-links}) rather than arbitrary link maps.  The definition of $h$ is given just after formula \eqref{eq:hopf-invt-iso}.

\begin{example}
	\label{Ex:B312}
	The basic case of $\Theta([B_{3,1}, B_{3,2}])$ is obtained by a single vertex blow-up of the product
	$\Gamma_{3,1}\cdot \Gamma_{3,2}$, which gives a tripod diagram:
	\begin{equation}\label{eq:G_[B31,B32]}
	(\Gamma_{3,1}\cdot \Gamma_{3,2})\quad \rightsquigarrow\quad \vvcenteredinclude{0.15}{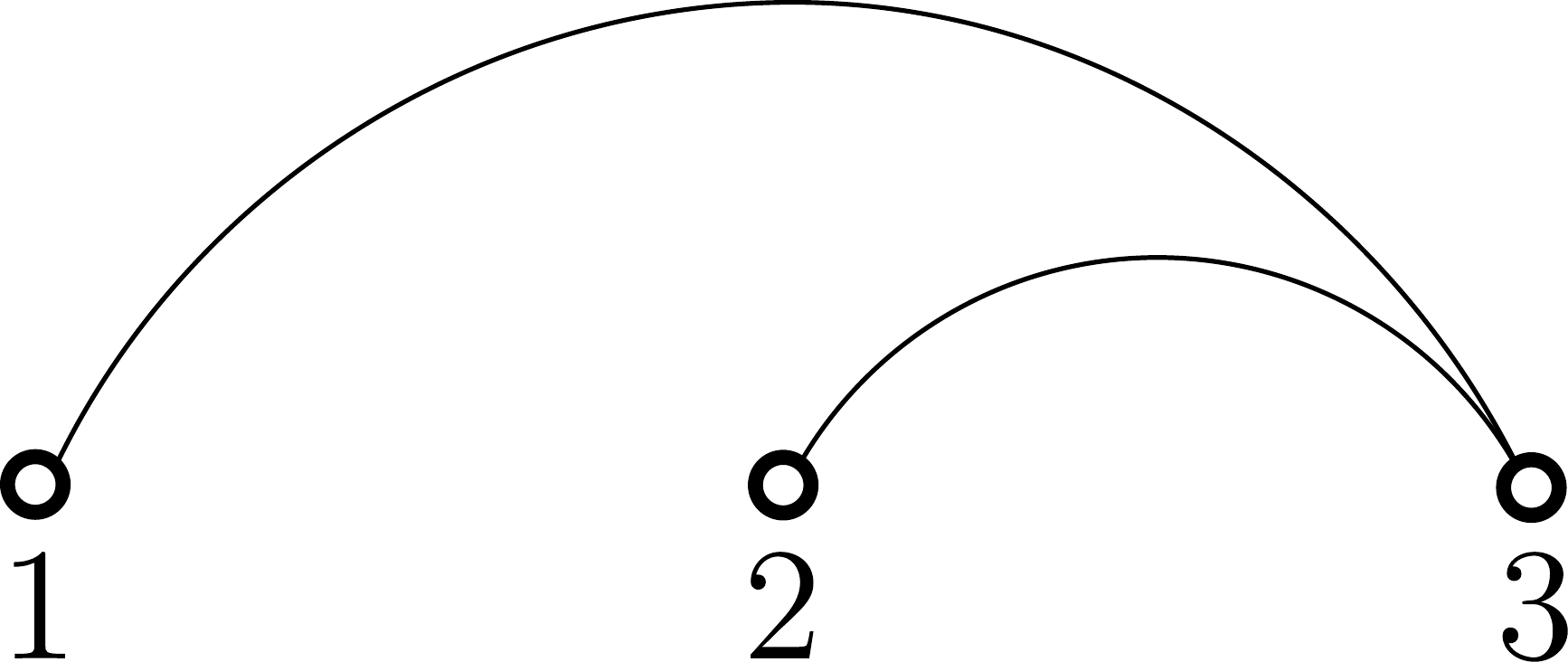}\quad\rightsquigarrow\quad \pm \vvcenteredinclude{0.15}{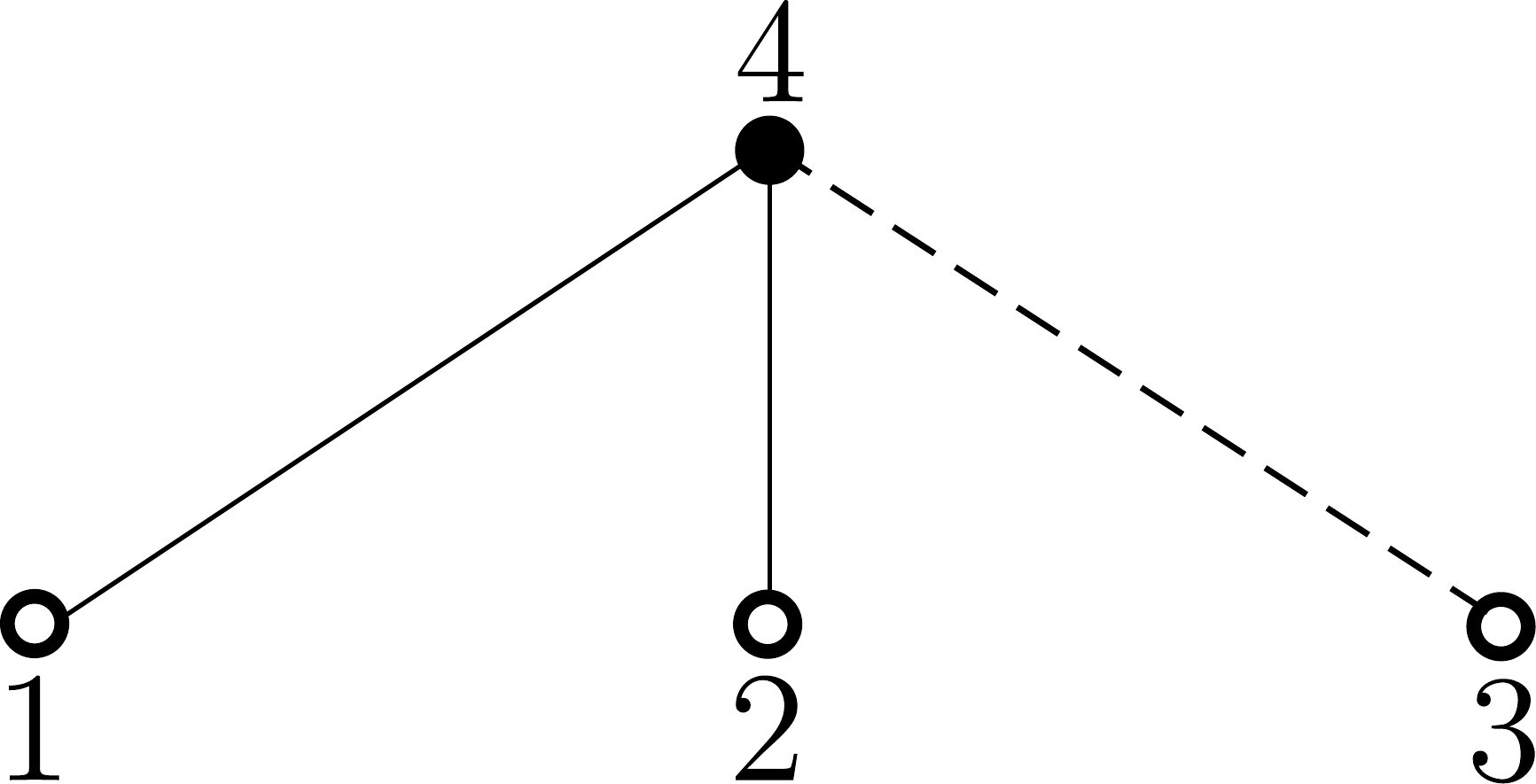}.
	\end{equation}
	The dashed decoration on an edge shows the edge resulting from a blow-up and is not part of the data of the diagram. 
	Since $[B_{3,1}, B_{3,2}]\in Lie(2)\cong\Z$, 
	any Brunnian spherical link map $L:S^{n-2}\sqcup S^{n-2}\sqcup S^{n-2}\longrightarrow \R^n$ satisfies $\overline{\kappa}(L)=\mu_{1,2;3} [B_{3,1}, B_{3,2}]$, in $\pi_{3(n-2)}(\Omega\Conf(3,\R^n))$ for some integer $\mu_{1,2;3}$. By Corollary \ref{cor:brunnian-spherical-links}, we have
	\[
	 \Theta(\overline{\kappa}(L))=1\pm\mu_{1,2;3} \vvcenteredinclude{0.15}{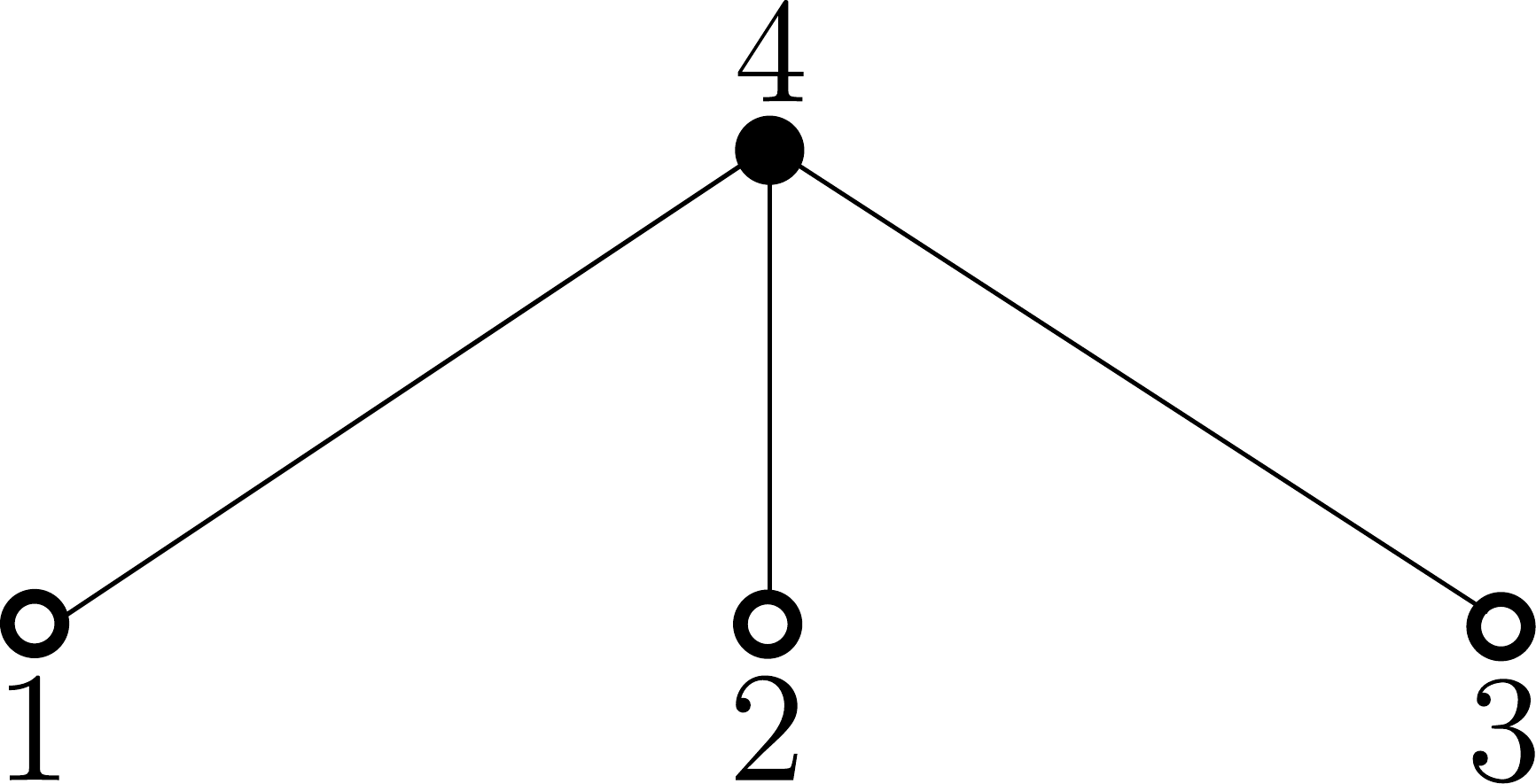}.
	\]
	Therefore, $\mu_{1,2;3}=\mu_{1,2;3}(L)$ is (up to sign) the triple linking number of $L$ (cf.~\cite{DeTurck:2013} for the classical case of $n=3$). Taking this one step further, we may easily generalize this example to any $m$-component Brunnian spherical link $L:S^{n-2}\sqcup \ldots \sqcup S^{n-2}\longrightarrow \R^n$, and by part (d) of Theorem \ref{thm:theta-trees} write 
	\[
	\Theta(\overline{\kappa}(L))=1+\sum_{\sigma\in \Sigma(2,\ldots,m-1)}\pm\mu_{1,\sigma(2),\ldots,\sigma(m-1);m}(L) \vvcenteredinclude{0.15}{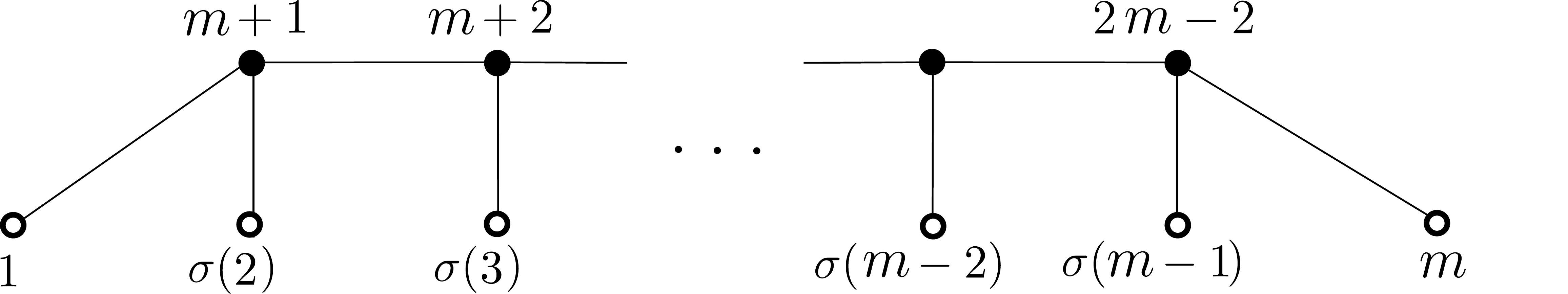},
	\]
	where $\mu_{1,\sigma(2),\ldots,\sigma(m-1);m}(L)$ are the generalized Milnor invariants of Definition \ref{D:milnor-invt}
\end{example}
\begin{example}
\label{Ex:B3121}
As in the previous example we compute $\Theta([ B_{3,1}, B_{3,2}, B_{3,1} ])$ via successive vertex blow-ups of the product
 $\Gamma_{3,1}\cdot \Gamma_{3,2} \cdot \Gamma_{3,1}$, parenthesized as $((\Gamma_{3,1}\cdot \Gamma_{3,2})\cdot \Gamma_{3,1})$, where the dashed decoration on some edges is not part of the data of the diagram, but is simply given to indicate that those edges result from a blow-up:
\begin{equation}\label{eq:G_[B31,B32,B31]}
\begin{split}
((\Gamma_{3,1}\cdot \Gamma_{3,2})\cdot \Gamma_{3,1})\quad & \rightsquigarrow\quad \vvcenteredinclude{0.15}{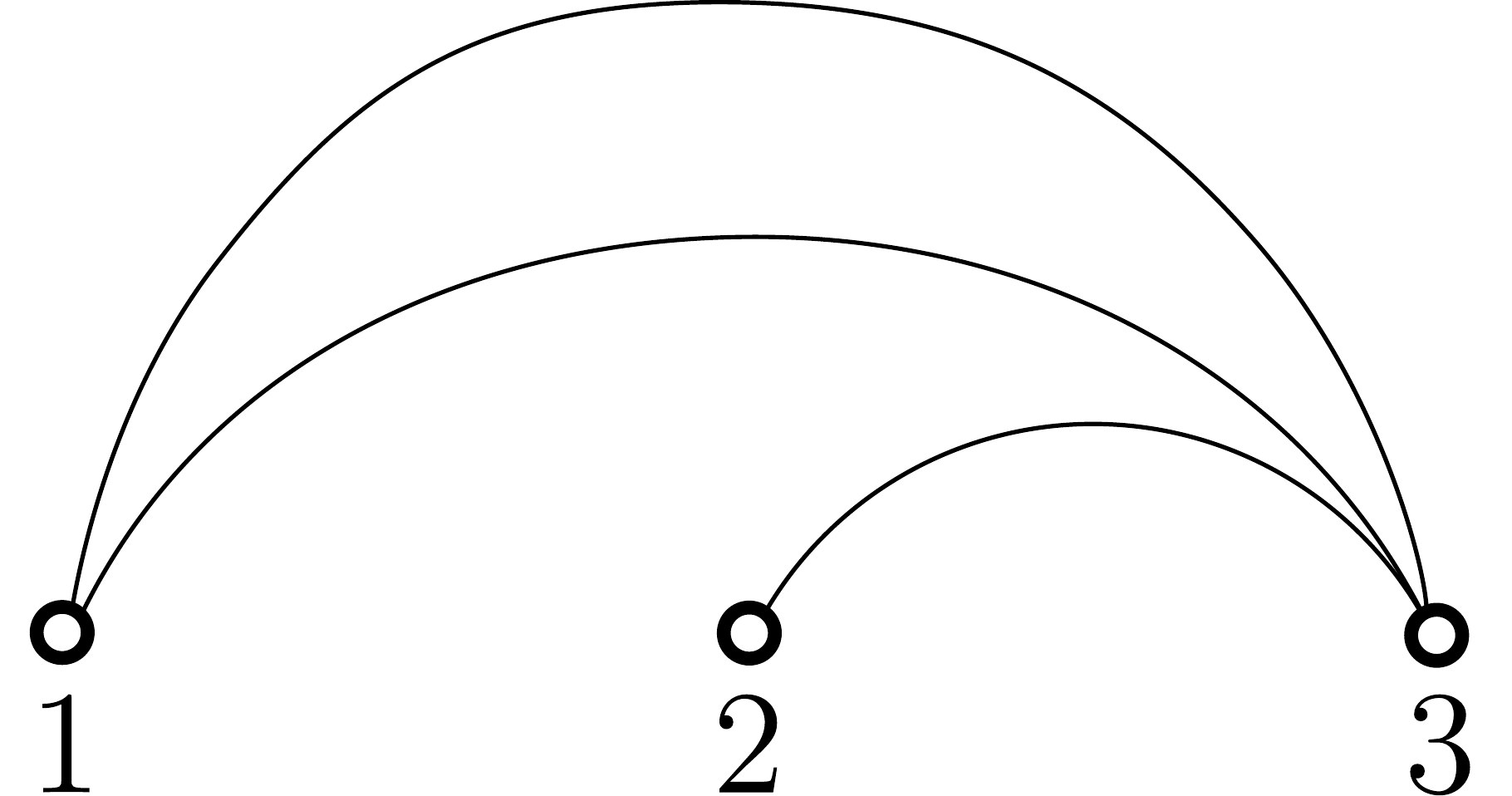}\quad\rightsquigarrow\quad \pm \vvcenteredinclude{0.15}{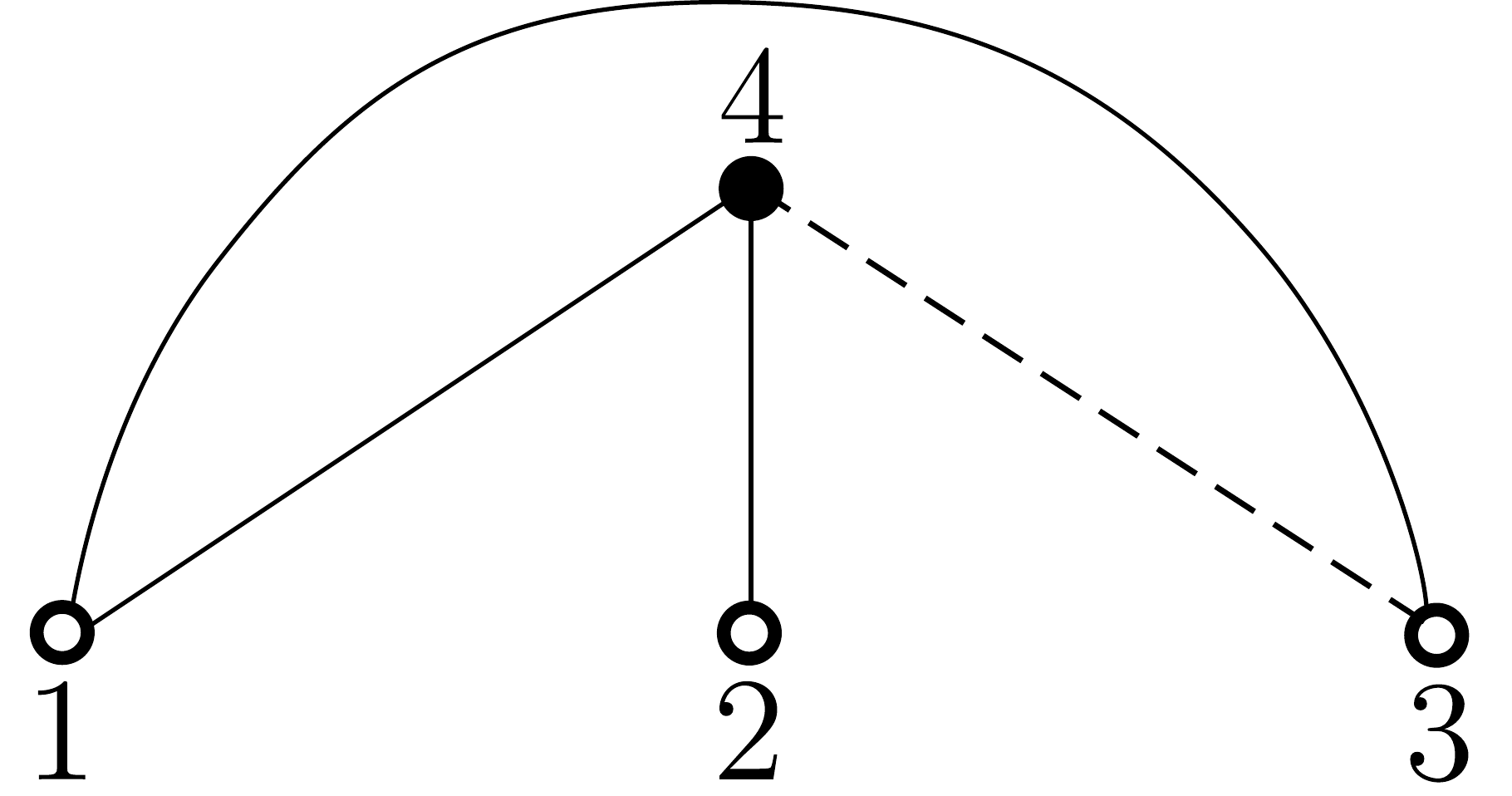}\\
& \rightsquigarrow\quad \pm \vvcenteredinclude{0.13}{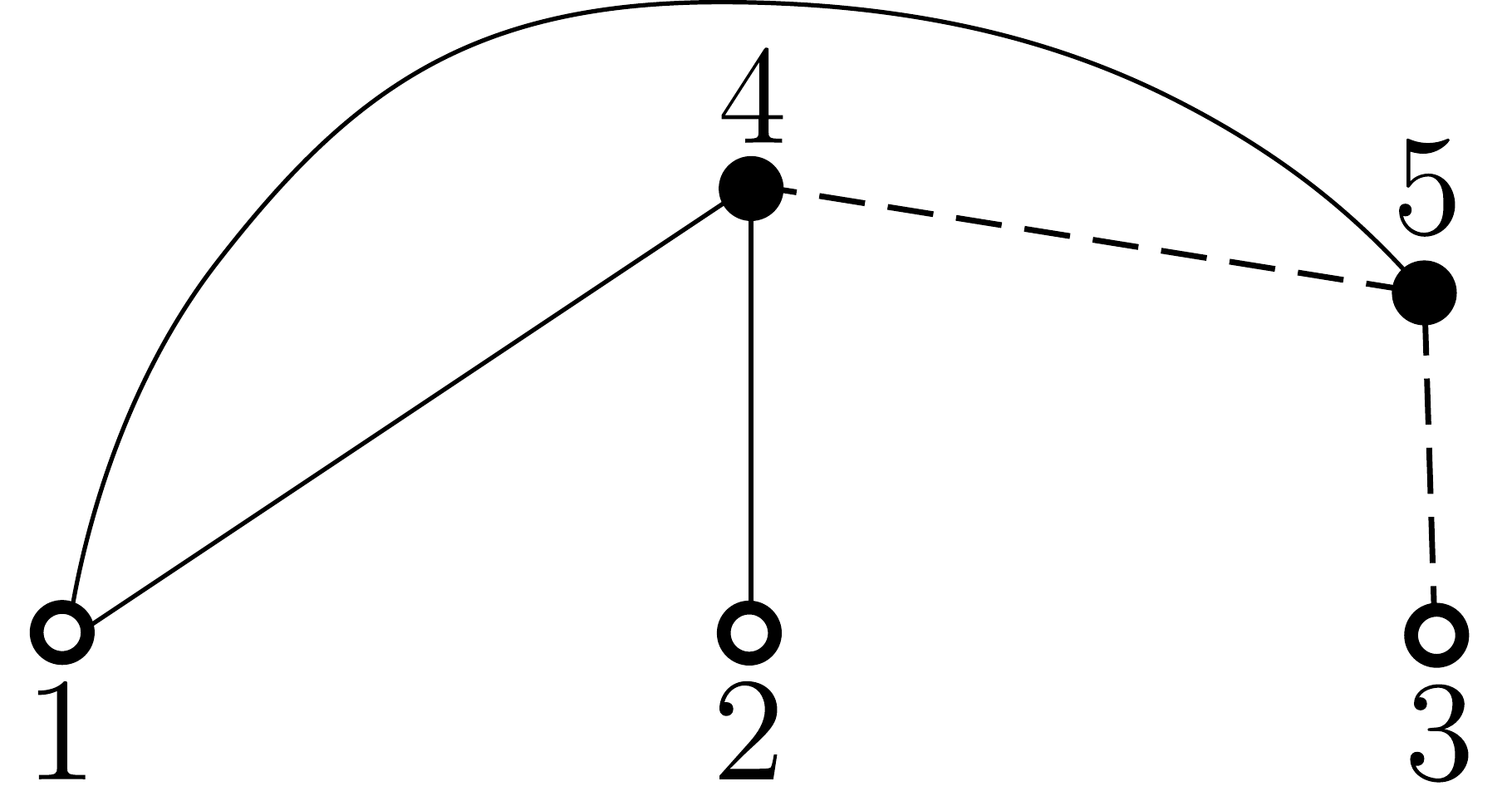}\quad+\quad \pm \vvcenteredinclude{0.15}{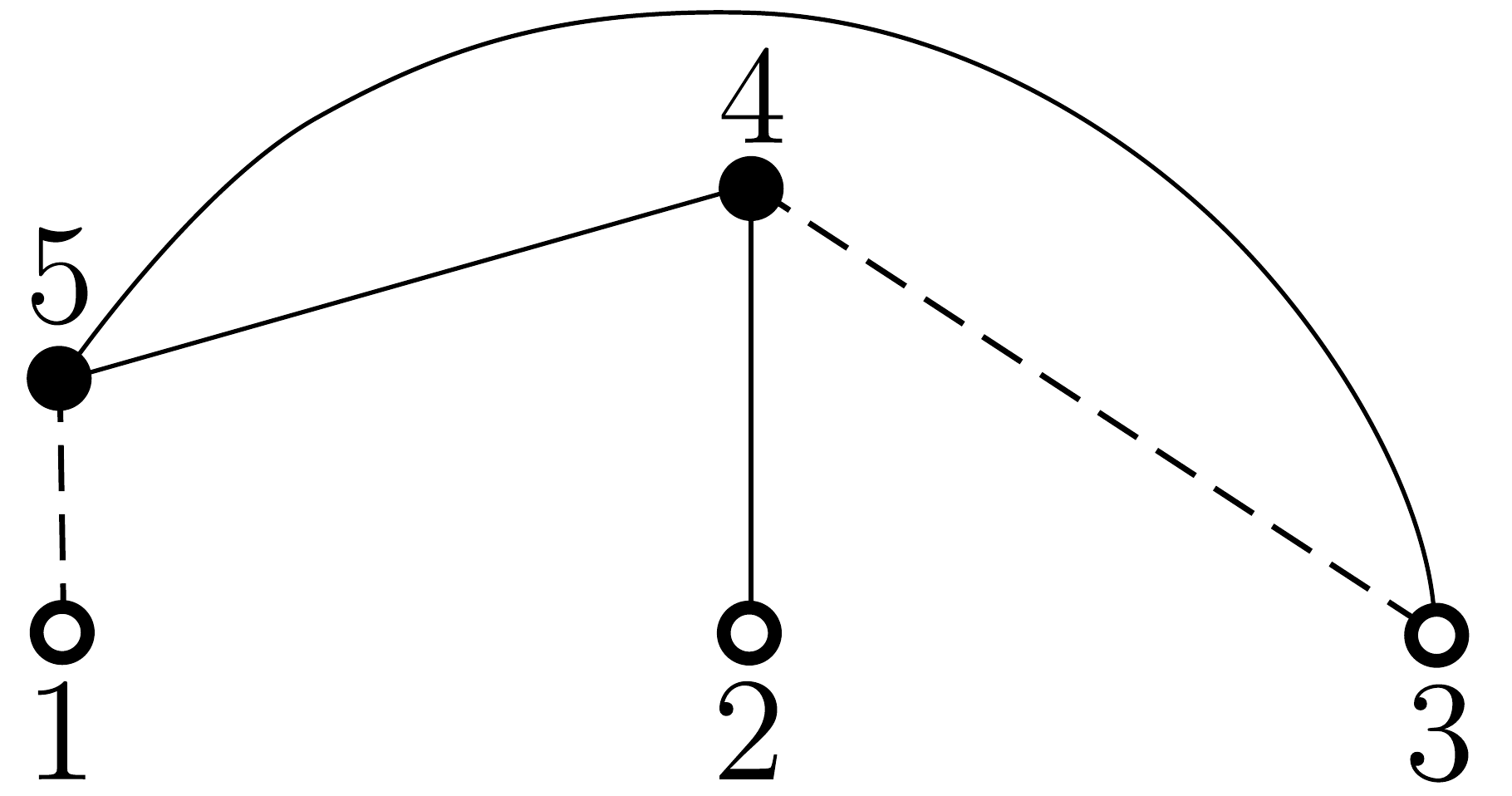} 
\end{split}
\end{equation}
Thus for $B_{3;1,2,1}:=[ B_{3,1}, B_{3,2}, B_{3,1} ]$, we get that $\Theta(B_{3;1,2,1})$ is given by the diagram corresponding to $\pm T(B_{3;1,2,1})$ plus one extra term.  
\end{example}

\begin{example}
\label{Ex:B31222}
Although the second term in the last line of the example above can be obtained from the first term by blowing up segment vertex $1$ and contracting the edge incident to the root $3$, the extra terms become more complicated as the multiplicity of a repeated index increases.  For instance, the successive blow-ups for $\Theta(B_{3;1,2,2,2})$ ultimately yield the six terms shown below after simplifying, not all of which are obtained in this manner.  The labelings and therefore the signs depend on the parity of $n$ and are not shown.
As in the previous example, the dashed decoration on some edges is not part of the data of the diagram, but is simply shown to indicate that those edges result from blow-ups.
\[
\begin{split}
\Theta(B_{3;1,2,2,2}) & =\pm\vvcenteredinclude{0.15}{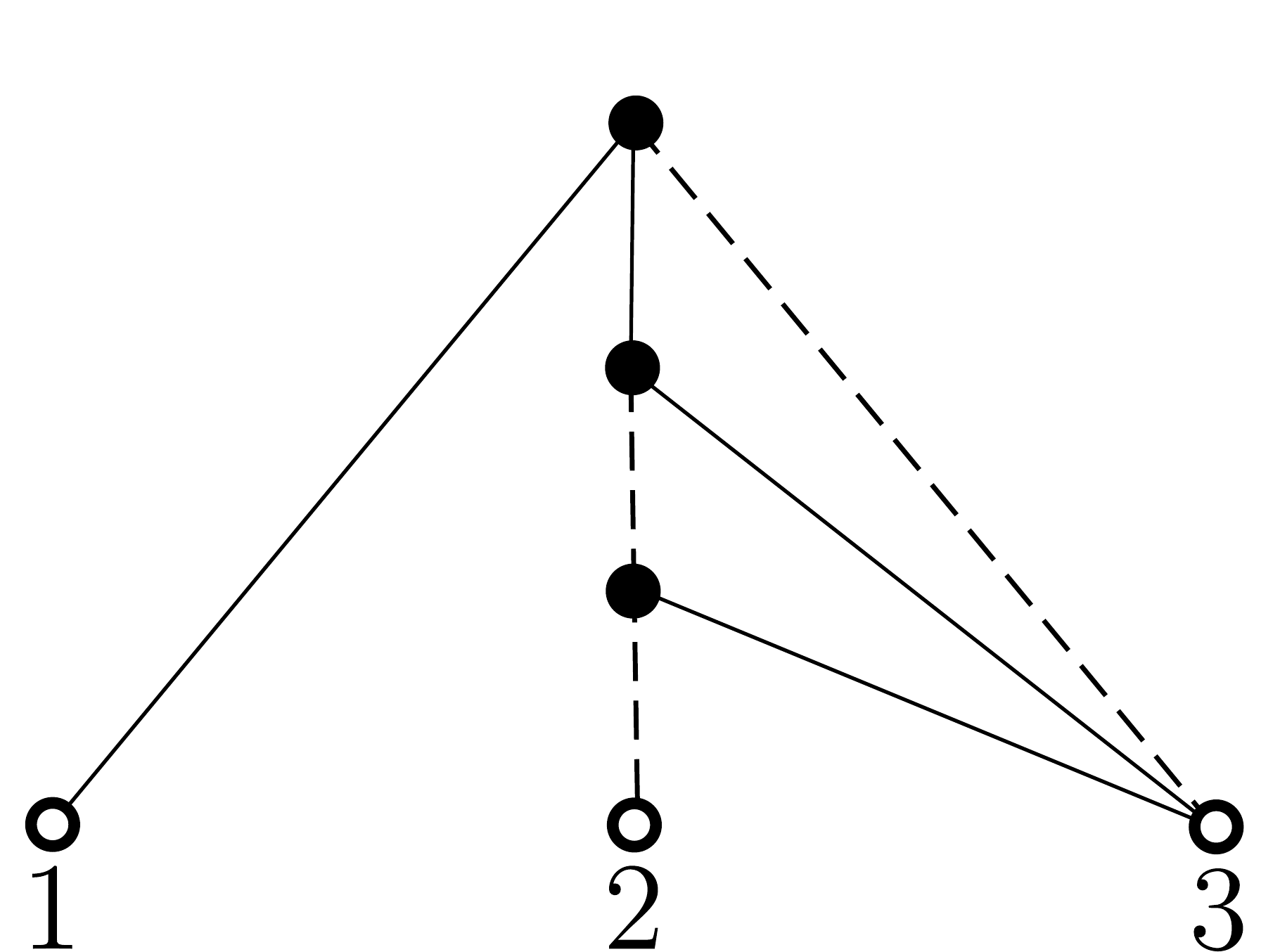}\quad+\quad \pm\vvcenteredinclude{0.15}{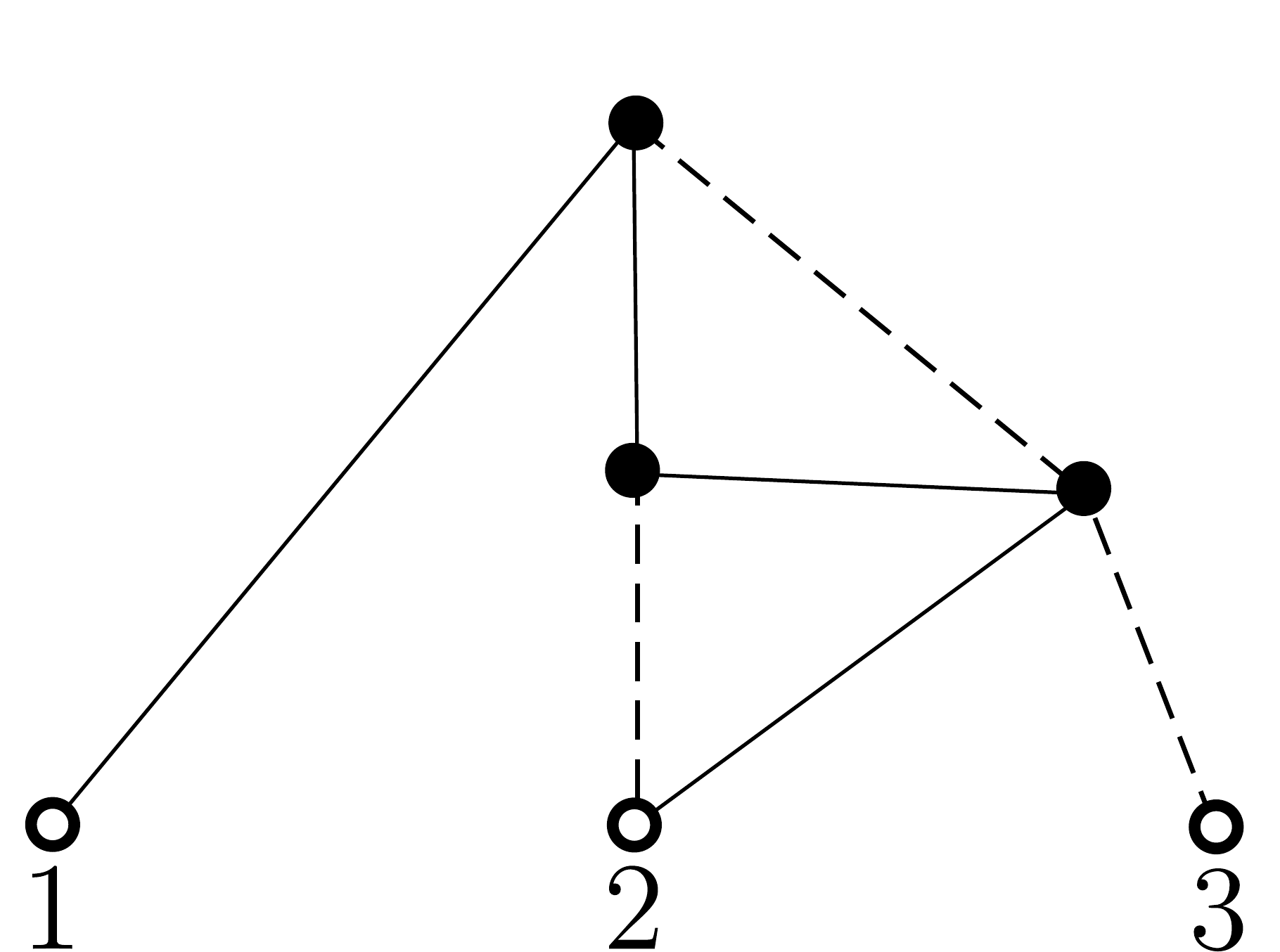}\quad+\quad \pm\vvcenteredinclude{0.15}{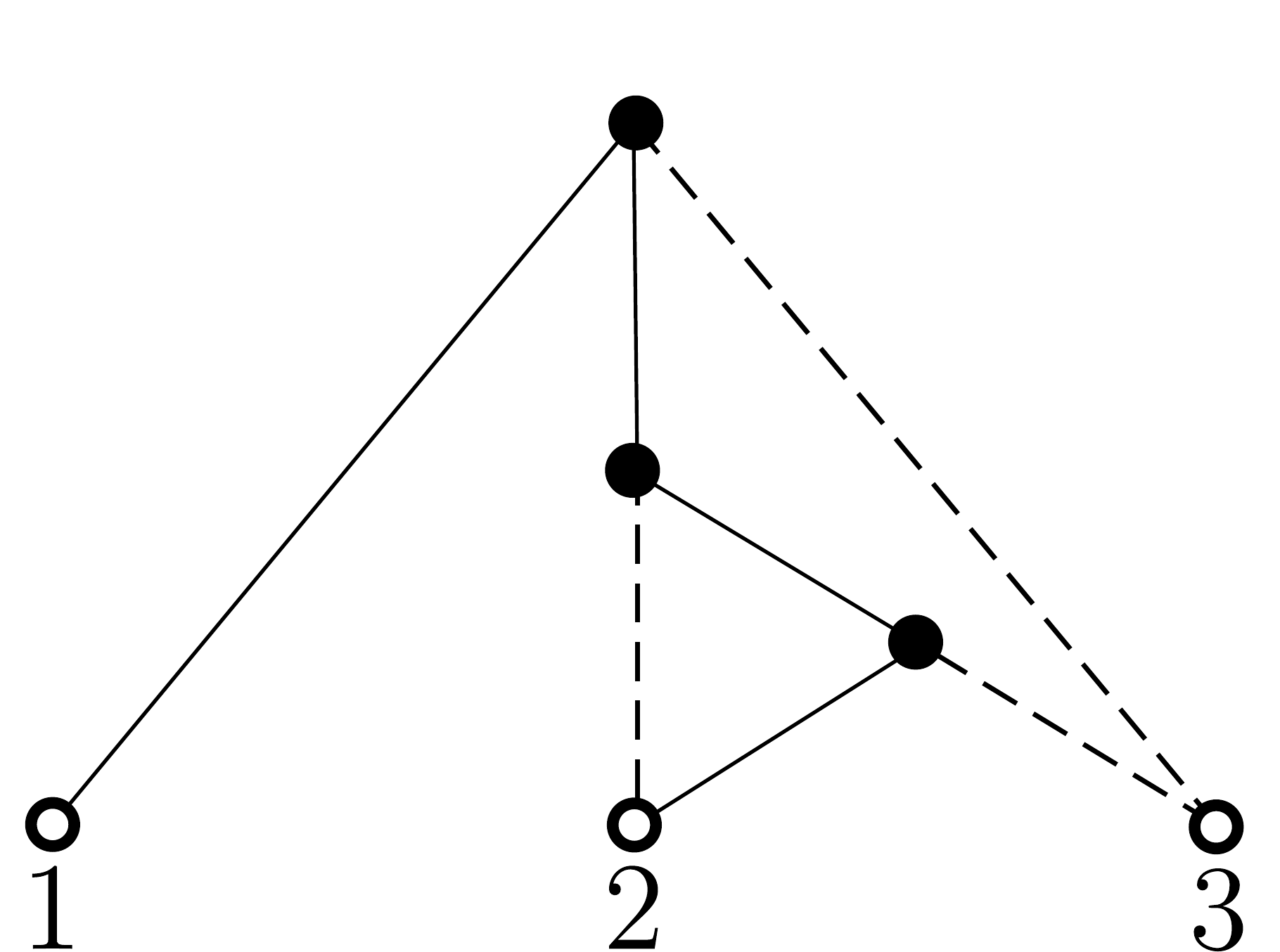}\\
& \quad+\quad \pm\vvcenteredinclude{0.15}{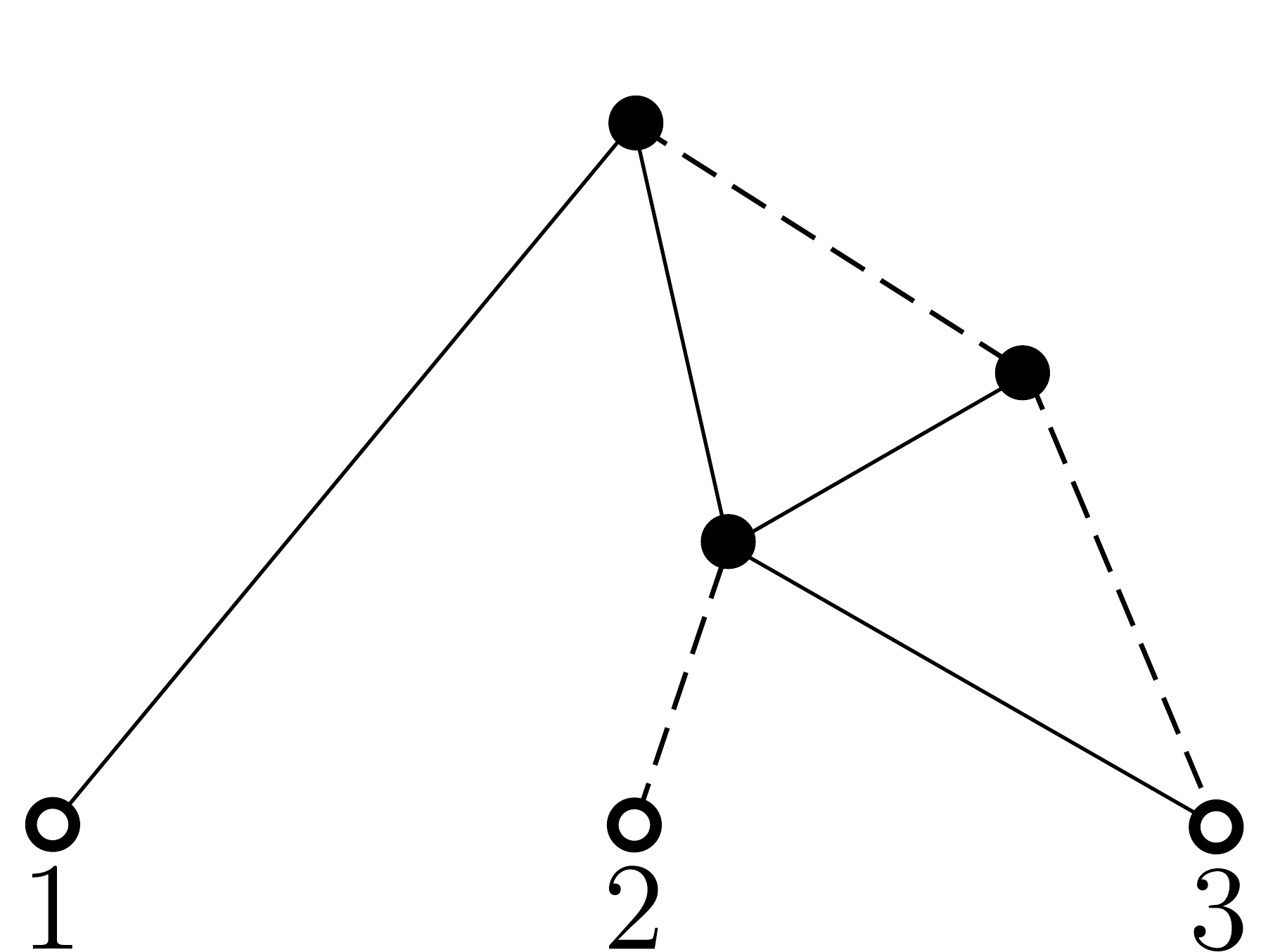}\quad+\quad \pm\vvcenteredinclude{0.15}{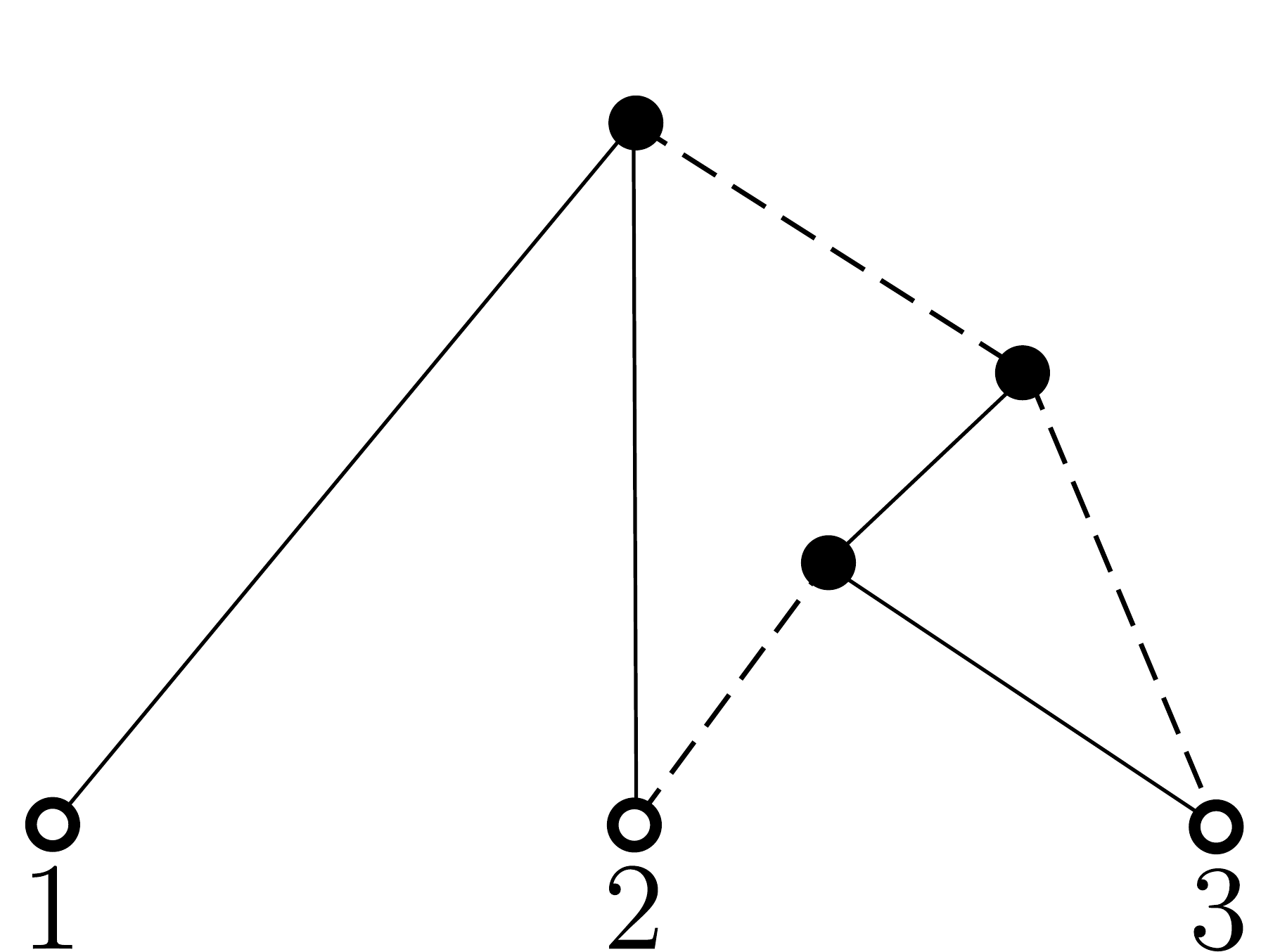}\quad+\quad \pm\vvcenteredinclude{0.15}{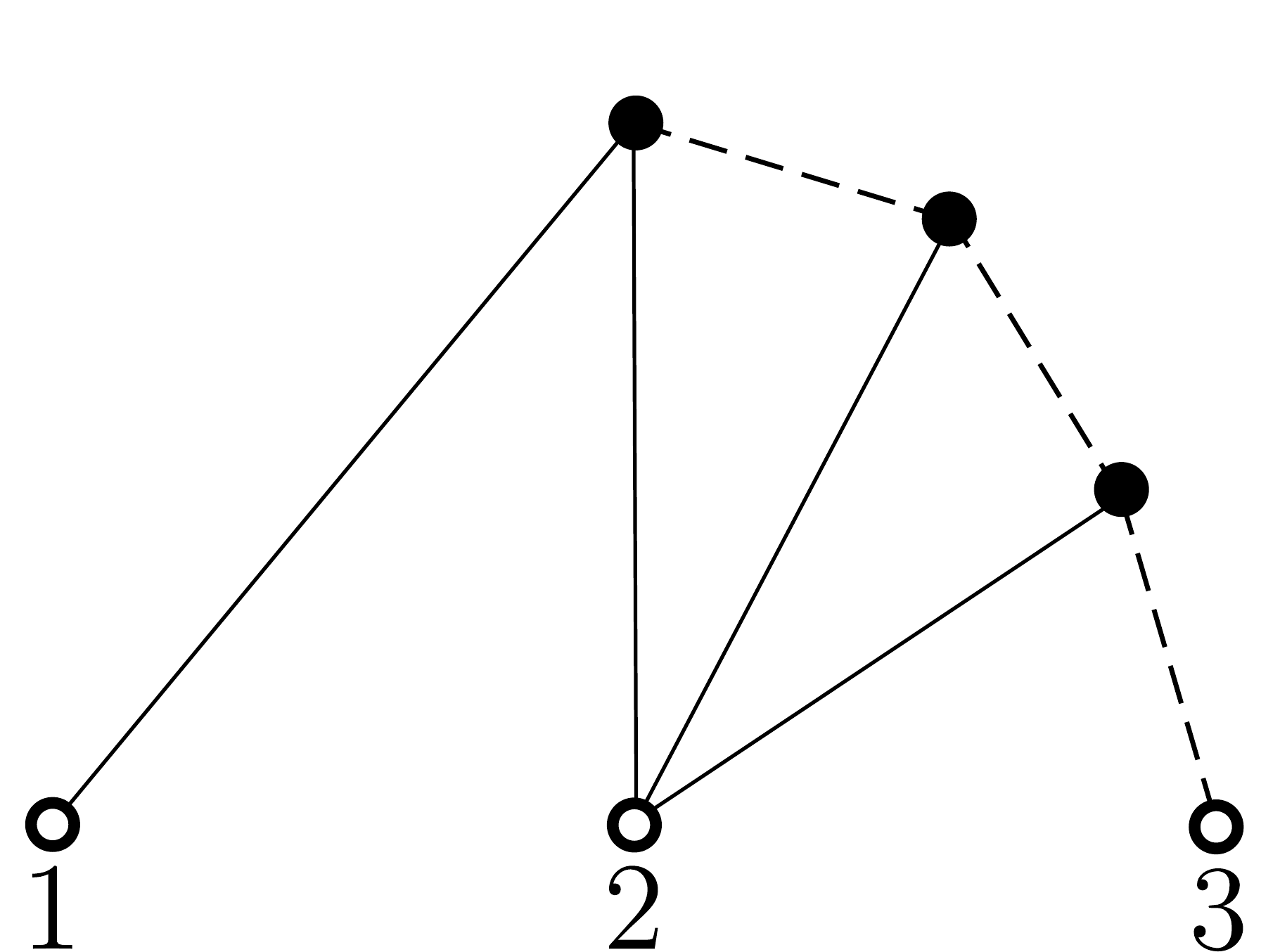}.
\end{split}
\]
The first term corresponds to $\pm T(B_{3;1,2,2,2})$.
The third, fifth, and sixth terms can be viewed as trivalent trees by splitting each multivalent segment vertex into multiple leaves.
The second and fourth terms however are not trees, even after splitting multivalent segment vertices into multiple leaves.
\end{example}

\section{High-dimensional braids as high-dimensional string links}
\label{S:high-dim-braids}
We will now prove Theorem \ref{T:braids-as-links}, which says that via a certain graphing map, any bracket expression without repeated indices in $\pi_*(\Conf(m,\R^n))$ gives rise to a nontrivial class in spaces of $k$-dimensional string links in $\R^{n+k}$ for many values of  $k \geq 1$ and $n\geq 3$.  By Theorem \ref{thm:theta-trees}, we may identify such classes with trivalent trees with distinctly labeled leaves.  The construction of these classes of string links is fairly explicit, via Whitehead products of the generators in Remark \ref{rem:B_ji} and the graphing map.

For $k=1$ and any $n\geq 2$, the result actually holds for all real homotopy classes by classical results \cite{Artin:Braids} and our previous work \cite{KKV:2020} .   
This motivates Conjecture \ref{conj:graphing-injects-homotopy-Q-homology}, which says that Theorem \ref{T:braids-as-links} generalizes to arbitrary bracket expressions in rational homotopy.  That is, we expect that it generalizes to brackets with repeated indices or equivalently, trees with repeated leaf labels.

Section \ref{S:proof-braids-as-links} mainly contains the proof of Theorem \ref{T:braids-as-links}, while Section \ref{S:ex-conj-discussion} contains some examples, Conjecture \ref{conj:graphing-injects-homotopy-Q-homology}, and related discussion.
Throughout these Sections, we will use iterated Whitehead products of the classes $b_{j,i} \in \pi_{n-1}(\Conf(m,\R^n))$ adjoint to $B_{j,i} \in \pi_{n-2}(\Omega \Conf(m,\R^n))$.  We will simply write $[ \ , \ ]$ instead of $[ \ , \ ]_W$ for the Whitehead product, which should cause no confusion, since we will not use the Samelson product in these Sections.
\begin{figure}[ht]
	\centering
	\includegraphics[width=0.4\linewidth]{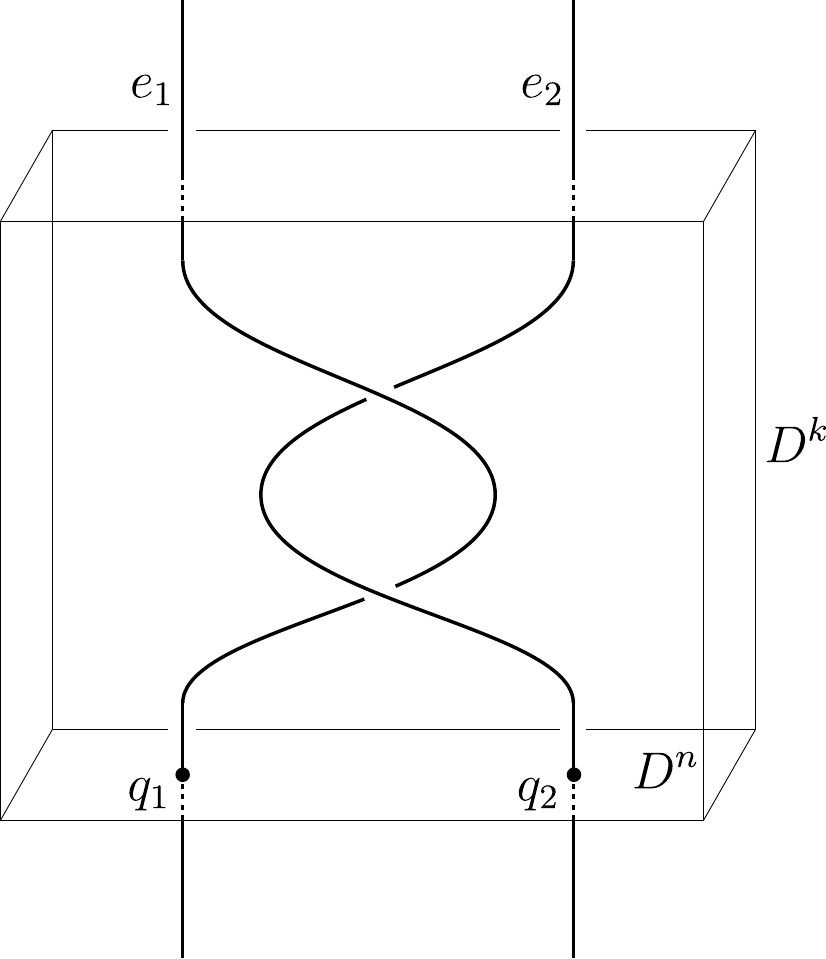}
	\caption{A long embedding in $\Emb_c\left(\coprod_m \R^k, \ \R^{n+k}\right)$ that is the graph $G(f)$ of a braid $f$ in $\Omega^k \Conf(m, \R^n)$, for $m=2$, $k=1$, $n=2$.}
	\label{fig:long-embedding}
\end{figure}
\subsection{Proof of Theorem \ref{T:braids-as-links} (string links by graphing braids)}
\label{S:proof-braids-as-links}
Let $k\geq1$ and $n\geq 2$, and let $\Emb_c\left(\coprod_m \R^k, \ \R^{n+k}\right)$ be the space of string links, namely smooth embeddings $g$ of $m$ disjoint copies of $\R^k$ into $\R^{n+k}$ with fixed behavior outside a compact set.  More precisely, let $D^k$
be the unit disk in $\R^k$.  
Then 
for each $i=1,\dots, m$, 
we require that at any $\vec{t}=(t_1, \dots, t_k)$ outside the interior of $D^k$, the $i$-th component $g_i$ and all its partial derivatives of all orders agree with those of the embedding 
\[
e_i:(t_1, \dots, t_{k}) \longmapsto (q_i,0, \dots,0, t_1, \dots, t_k),
\]
where $q_i = -1 + \frac{2i}{m+1}$ so that $-1, q_1, q_2, \dots, q_m, 1$ are evenly spaced points in $[-1,1]$ (see Figure \ref{fig:long-embedding}).

We will now define a map $\Omega^k \Conf(m,\R^n) \to \Emb_c\left(\coprod_m \R^k, \ \R^{n+k}\right)$ from $k$-dimensional braids in $\R^n$ to $k$-dimensional string links in $\R^{n+k}$.
For each $i=1,\dots, m$, let $p_i \colon \Conf(m,\R^n) \to \R^n$ be the map which remembers only the $i$-th configuration point.
Write an element $f \in \Omega^k \Conf(m,\R^n)$ as $f=(f_1,\dots, f_m)$ where  $f_i = p_i \circ f : D^k \to \R^n$.
Write an element $g \in \Emb_c\left(\coprod_m \R^k,\  \R^{n+k}\right)$ as $g=(g_1, \dots, g_m)$ where $g_i \in \Emb_c(\R^k, \, \R^{n+k})$.  (We are just using the universal properties of the product $(\R^n)^m \supset \Conf(m,\R^n)$ and the coproduct $\coprod_m \R^k$.) 
Define the \emph{graphing map}
\begin{align*}
G: \Omega^k \Conf(m,\R^n) &\longrightarrow \Emb_c \left(\coprod_m \R^k, \ \R^{n+k}\right) \\
f &\longmapsto G(f)
\end{align*}
by defining, for each $i=1, \dots, m$ the component $G(f)_i$ as
\[
(G(f)_i)(t_1, \dots, t_k) := (f_i(t_1, \dots, t_k), t_1, \dots, t_k) \in\R^{n+k}.
\]
\begin{repthm}{T:braids-as-links}
Let $m\geq 2$, $k\geq 1$, and $n\geq 3$.
\begin{itemize}
\item[(a)]
View $Lie(m-1)$ as the submodule of $\pi_{(m-1)(n-2)+1} (\Conf(m,\R^n))$ spanned by length-$(m-1)$ iterated Whitehead brackets in which each generator $b_{m,1}, \dots, b_{m,m-1}$ appears exactly once.  
Let $\ell=mn-2m-n-k+3$.  If $\ell \geq 0$, then the following composition is injective:
\[
Lie(m-1) \hookrightarrow \pi_{\ell+k} \Conf(m,\R^n) \overset{\cong}{\to} \pi_\ell \Omega^k \Conf(m,\R^n) \overset{G_*}{\longrightarrow} \pi_\ell \Emb_c \left(\coprod_m \R^k, \ \R^{n+k}\right)
\]
\item[(b)]
Let $\mathcal{H}_{m}^{n,k}$ be the submodule of $\pi_{*} (\Conf(m,\R^n))$ spanned by iterated Whitehead brackets  
on distinct generators $ b_{j,i_i}, \dots, b_{j,i_p}$ where $2\leq j \leq m$ and 
where $p (n-2) \geq {n+k-3}$.
Then the following composition is injective:
\[
\mathcal{H}_{m}^{n,k} \hookrightarrow \pi_{\ast+k} \Conf(m,\R^n) \overset{\cong}{\to} \pi_{\ast} \Omega^k \Conf(m,\R^n) \overset{G_*}{\longrightarrow} \pi_{\ast} \Emb_c \left(\coprod_m \R^k, \ \R^{n+k}\right).
\]
\end{itemize}
\end{repthm}

Part (b) generalizes and will follow rather quickly from part (a).
 The inequality involving $p,k,$ and $n$ in part (b) ensures precisely that the index $*$ is nonnegative.
If $k=1$ and $n\geq 2$, then that inequality is satisfied for all $p\geq 1$.

In the classical case where $k=1$ and $n=2$, the domain of ${G}_*$ is nonzero only for $\ell=0$, where it is the pure braid group $\mathcal{PB}_m$.  In this case, the map ${G}_*$ is injective on all homotopy classes by work of Artin \cite{Artin:Braids}.  
This setting motivates the notation $\mathcal{H}_{m}^{n,k}$, since $\mathcal{H}_{m}^{2,1}$ is a subspace of the associated graded Lie algebra of $\mathcal{PB}_m$ that is dual to Milnor {homotopy} invariants of string links or equivalently, to additive Vassiliev homotopy invariants of string links; cf.~Remark \ref{R:classical-lie-alg}.  

If $k=1$ and $n\geq 3$, then $G_*$ is injective on all real homotopy classes.  This holds because $G$ is surjective on real cohomology \cite[Corollary 5.21]{KKV:2020}, hence injective on real homology, and because the real homotopy of a loop space is a subspace of its real homology.  

The proof of the theorem uses two key maps, the first of which we now define.

\begin{defin} 
For any $n\geq 2$ and $k\geq 1$, we will construct a \emph{closure} map 
\begin{align*}
\widehat{\cdot} \ : \Emb_c \left(\coprod_m \R^k, \ \R^{n+k}\right) &\longrightarrow 
\Emb_{*} \left(\coprod_m S^k, \ \R^{n+k}\right)\\
g &\longmapsto \widehat{g}
\end{align*}
 where the subscript $*$ on the right-hand side indicates embeddings that take a prescribed value at a basepoint in each component $S^k$.
Since $n \geq 2$, we can first fix an embedding of $D^k$ into $[q_i-\frac{1}{2m}, q_i+\frac{1}{2m}] \x \mathbb{\R}^{n+k-1} - D^n \x D^k$; moreover, we prescribe the behavior on a collar of $\del D^k$ so that this embedding together with $g_i|_{D^k}$ can be glued together to give a smooth embedding $\widehat{g}$ of $S^k=D^k \cup D^k$ for any $g \in \Emb_c \left(\coprod_m \R^k, \ \R^{n+k}\right)$.  
For the basepoint in each component $S^k$, we choose any point in the equator $S^{k-1}$ along which the two copies of $D^k$ are glued.  
\end{defin}

The second key map is 
\begin{align*}
\kappa: \Emb_{*} \left(\coprod_m S^k, \ \R^{n+k}\right) &\longrightarrow \Map_* ((S^k)^{\x m}, \Conf(m, \R^{n+k})) \\
g=(g_1,\dots, g_m) &\longmapsto (\kappa(g):(s_1, \dots, s_m) \mapsto (g_1(s_1), \dots, g_m(s_m))
\end{align*}
previously shown in \eqref{eq:kappa(L)}.
It evaluates $g$ at all configurations of $m$ points in $\coprod_m S^k$ such that one point lies on each component.

\begin{defin}
Define the space of \emph{Brunnian long links} $\BrEmb_c \left(\coprod_m \R^k, \ \R^{n+k}\right)$ 
as the subspace of long links $g$ in $\Emb_c \left(\coprod_m \R^k, \ \R^{n+k}\right)$ 
such that the restriction of $g$ to any $m-1$ of its components is isotopic to a trivial link, meaning a sublink of $e=(e_1,\dots, e_m)$.  
Similarly define the space of \emph{Brunnian links} $\BrEmb_{*} \left(\coprod_m S^k, \ \R^{n+k}\right)$ 
as the subspace of links in $\Emb_{*} \left(\coprod_m S^k, \ \R^{n+k}\right)$ whose restrictions to any $m-1$ components are trivial.
\end{defin}

Thus the Brunnian (long) links form a union of path components of the space of all (long) links.  In many of the cases we consider, the latter space is path-connected, in which case all embeddings are Brunnian.  
The key reason for introducing the subspace of Brunnian links is that the restriction of $\kappa$ to it descends to a map
\begin{equation}
\label{Eq:kappaBrunnian}
\kappa: \mathrm{BrEmb}_* \left(\coprod_1^m S^k, \, \R^{n+k}\right)  \to 
\Map_*\left( S^{mk}, \, \Conf(m, \R^{n+k}) \right).
\end{equation}
Indeed, by the Brunnian property, the restriction of $\kappa(g)$ to each $k(m-1)$-dimensional cell given by fixing the basepoint in one coordinate is nullhomotopic.  
The proof of part (a) of Theorem \ref{T:braids-as-links} requires replacing long links by Brunnian long links, but this is sufficient because the latter are a union of path components of the former.

\begin{proof}[Proof of Theorem \ref{T:braids-as-links}, part (a)]
Let $\widehat{G}$ denote the composition of $G$ followed by the closure map.  That is, $\widehat{G}(f):=G(\widehat{f})$.
%
%
Suppose $b \in \pi_{\ell+k}\Conf(m,\R^n)$ comes from $Lie(m-1)$, 
where we will use the same symbol $b$ for a representative map and its homotopy class.
Then $\ell +k = mn-2m-n+3=(m-1)(n-2)+1$ and $\ell\geq0$ (as in the Theorem statement), so we can write $b \in \pi_\ell\Omega^k\Conf(m,\R^n)$.
Notice that $\widehat{G}(b)$ is Brunnian, i.e., forgetting any of the $m$ components yields the trivial element in $\pi_\ell$.  
Thus $\kappa\circ \widehat{G}(b)$ lies in the codomain of \eqref{Eq:kappaBrunnian}
and represents an element of $\pi_{\ell+mk}\Conf(m, \R^{n+k})$.

We will check that $\kappa\circ \widehat{G}(b)$ is represented by the same bracket expression as $b$, but with the dimensions of the generators shifted up by $k$.  
To distinguish between spherical generators in configuration spaces of points in Euclidean spaces of different dimensions, write $b_{j,i}^n$ for such a generator in $\pi_{n-1}\Conf(m, \R^n)$ (see Remark \ref{rem:B_ji}).
The restriction of $\kappa_* \circ \widehat{G}_*$ to $Lie(m-1)$ induces a homomorphism of abelian groups $Lie(m-1)\to \pi_{\ell+mk}\Conf(m, \R^{n+k})$.
Therefore, it suffices to check that if $b=b_{m;I} = [ b_{m,i_1}^n, \dots, b_{m,i_{m-1}}^n ]$, then
\begin{equation}
\label{eq:brackets-preserved}
\kappa_*\circ \widehat{G}_*(b) =\pm [ b_{m,i_1}^{n+k}, \dots, b_{m,i_{m-1}}^{n+k} ].
\end{equation}  
Since changing signs of basis elements always produces a basis, it suffices to check \eqref{eq:brackets-preserved} up to a sign.

To prove \eqref{eq:brackets-preserved}, we will use Koschorke's generalized Hopf invariants $h_I$ and in particular their relation to Whitehead products.
Recall from Definition \ref{D:hopf-invt} that for any multi-index $I=(1,i_2,\dots, i_{m-1})$, $h_I$ is an invariant of classes in the reduced homotopy groups of a wedge of $(m-1)$ spheres.
We consider these invariants for $\bigvee_{m-1} S^d$, viewed as a subspace of $\Conf(m,\R^{d+1})$ (i.e.~the fiber of the projection $\Conf(m,\R^{d+1})\longrightarrow \Conf(m-1,\R^{d+1})$ forgetting the $m$-th configuration point), for two different values of $d$, namely $d=n-1$ or $d=n+k-1$.
We will therefore sometimes write $h_I^d$ to indicate this value.  

The main point is to establish that each $h_J^{n-k+1}$ of $(\kappa \circ \widehat{G})(b_{m;I})$ coincides with $h_J^{n-1}(b_{m;I})$ for all $J$.  Recall also from Definition \ref{D:hopf-invt} that $h_J(f)$ is a framed bordism class obtained from an $(m-1)$-component link associated to $f$, 
in an iterative manner determined by $J$.
One can use fairly standard link components and bounding manifolds to compute that $h^{n-1}_J (b_{m;I})$ is the Kronecker delta $\pm \delta_{JI}$, where $\pm 1 = \pm[ \{\text{point}\}]$ and $0=[\varnothing]$.  The punchline is that all the manifolds involved in computing $h^{n+k-1}_J$ of $\kappa_*\circ \widehat{G}_*(b_{m;I})$ are essentially products of these manifolds with various numbers of factors of $D^k$, but the iterated intersection is still either a point or empty.  
The lengthier details of explicitly checking this follow.  They are based on the ideas in Koschorke's proof of \cite[Theorem 3.1]{Koschorke:1997}.
We sacrifice some efficiency for the sake of clarity, by first explaining the cases $m=2$ and $m=3$.  We then cover the case of arbitrary $m$, at which point part (a) will be proven.

{\bf The case $m=2$:}  Here $\ell=n-k-1$, and the composition of the last two maps below is the identity:
\begin{equation}
\label{eq:composite,m=2,n}
\xymatrix@R4pt{
D^{\ell} \x D^k \ar@{->>}[r]^-{q} & S^{n-1}  \ar[r]^-{b^n_{2,1}} & \Conf(2,\R^n) \ar[r]^-{\phi_{2,1}^n} & S^{n-1}\\
(s,t) \ar@{|->}[rr]^-{b_{2,1}^n\circ q} & & (b_{2,1}(s,t))
}
\end{equation}
Above, $\phi_{j,i}^n$ is the Gauss map $\phi_{j,i}$ defined in \eqref{eq:phi_ji}, with the superscript $n$ added to indicate the dimension of the Euclidean space, just as for $b_{j,i}^n$. 
As shown above, quotients by the boundary of a disk such as $q$ are sometimes omitted from the formulas.
We want to verify that the composition of the last two maps below has degree $\pm 1$:  
\begin{equation}
\label{eq:composite,m=2,n+k}
\xymatrix@C2pc@R4pt{
D^{n-k-1} \x D^k \x D^k \ar@{->>}[r]^-{q'} &S^{n+k-1}  \ar[r]^-{(\kappa \circ \widehat{G})(b^n_{2,1})} & \Conf(2,\R^{n+k}) \ar[r]^-{\phi_{2,1}^{n+k}} & S^{n+k-1}.\\
(s,t,u) \ar@{|->}[rr]^-{(\kappa \circ \widehat{G})(b^n_{2,1}) \circ q'} && \bigl( ((b_{2,1}^n)_1(s,t), t), \ ((b_{2,1}^n)_2(s,u),u) \bigr) &
}
\end{equation}
Here $(b_{2,1}^n)_i$ denotes the $i$-th configuration point of $b_{2,1}^n$.  
This degree corresponds to the invariant $h_1=h^{n+k-1}_1$.

Let $p^n \in S^{n-1}$ be a regular value of the composite \eqref{eq:composite,m=2,n} such that the basepoint induced by the quotient $q$ is not in its preimage.  
Define $p^{n+k}:=p^n \x (0,\dots,0) \in S^{n+k-1} (\subset \R^{n+k})$.  
From the formula in \eqref{eq:composite,m=2,n+k}, we can see that $p^{n+k}$ is a regular value of this composite and that the basepoint induced by $q'$ does not map to it. 
Moreover, we use that formula to determine the pre-image of $p^{n+k}$ under 
\eqref{eq:composite,m=2,n+k}, 
by solving $\bigl(((b_{2,1}^n)_1(s,t), t)-((b_{2,1}^n)_2(s,u),u))\bigr) 
\parallel
p^{n+k}$.
The result is the single point $(s_0,t_0,t_0)$ such that $\phi_{2,1}^{n} \circ b_{2,1}^{n} \circ q(s_0,t_0) (=q(s_0,t_0))=p^n$.
So the composition of the last two maps has degree $\pm 1$,  $(\kappa \circ \widehat{G})(b^n_{2,1}) = \pm b^{n+k}_{2,1}$, and  $h_1(\phi_{2,1}^{n+k}\circ ((\kappa \circ \widehat{G})(b^n_{2,1})))= \pm 1$.

{\bf The case $m=3$:} Here $\ell=2n-k-3$.  The first relevant diagram is 
\begin{equation}
\label{eq:composite,m=3,n}
\xymatrix@C3pc@R0.5pc{
D^{\ell} \x D^k \ar@{->>}[r]^-q & S^{2n-3} \ar^-{[\iota_1^{n-1}, \,\iota_2^{n-1}]}[r] &  S^{n-1} \vee S^{n-1} \ar[r]^-{b^n_{3,1} \vee b^n_{3,2}} & \Conf(3,\R^n)
\ar[r]^-{\phi^n_{3,1}} \ar[dr]^-{\phi^n_{3,2}} & S^{n-1} \\
(s,t) \ar@{|->}[rrr]^-{(b^n_{3,1} \vee b^n_{3,2})\circ [\iota_1^{n-1}, \,\iota_2^{n-1}]\circ q} &&& \left([b_{3,1}, b_{3,2}]_i(s,t)\right)_{i=1}^3 & S^{n-1}
}
\end{equation}
where $\iota_j^{n-1}$ is the projection $S^{n-1} \vee S^{n-1} \to S^{n-1}$ onto the $j$-th summand and where $[b_{3,1}, b_{3,2}]_i$ denotes the $i$-th configuration point of $[b_{3,2},b_{3,1}]$.
The second relevant diagram is  
\begin{equation}
\label{eq:composite,m=3,n+k}
\xymatrix@C2pc{
D^{\ell} \x (D^k)^{\x3} \ar@{->>}[r]^-{q'} & S^{2n+2k-3} \ar@{..>}_-{f}[dr] \ar[rr]^-{(\kappa \circ \widehat{G})[b^n_{3,1}, \,b^n_{3,2}]} & &\Conf(3,\R^{n+k}) \ar^-{\phi^{n+k}_{3,1}}[r] \ar^-{\phi^{n+k}_{3,2}}[dr] & S^{n+k-1} \\
&& S^{n+k-1} \vee S^{n+k-1} \ar@{^(->}_-{b_{3,1}^{n+k} \vee b_{3,2}^{n+k}}[ur]& & S^{n+k-1}
}
\end{equation}
The composition $((\kappa \circ \widehat{G})[b^n_{3,1},\, b^n_{3,2}]) \circ q'$ of the first two horizontal maps in \eqref{eq:composite,m=3,n+k} is given by
\begin{equation}
\label{eq:kappa-G-formula}
(s,t,u,v)
\longmapsto \left( \
([b_{3,1}^n, b_{3,2}^n]_1(s,t), t), \
([b_{3,1}^n, b_{3,2}^n]_2(s,u), u), \
([b_{3,1}^n, b_{3,2}^n]_3(s,v), v) \
\right).
\end{equation}
Up to homotopy, $(\kappa \circ \widehat{G})[b^n_{3,1},\, b^n_{3,2}]$ factors through a map $f$ as shown in \eqref{eq:composite,m=3,n+k}.  Indeed, it lies in the kernel of the projection that forgets the third configuration point because $[b^n_{3,1},\, b^n_{3,2}]$ does.  
Explicitly, if $(r,s,t) \mapsto (H_i(r,s,t))_{i=1,2}$, is a nullhomotopy of $([b^n_{3,1},\, b^n_{3,2}]_i)_{i=1,2}$ with $r\in[0,1]$, then the homotopy 
\[
(r,s,t,u,v) \mapsto ((H_1(r,s,t), t), \, (H_2(r,s,u),u))
\]
starts at $(\kappa \circ \widehat{G})[b^n_{3,1},\, b^n_{3,2}]$ and ends at a map that is  nullhomotopic because it factors through a sphere of dimension $2k (<\ell + 3k)$.

We want to check that $f$ is homotopic to the Whitehead product $[\iota_1^{n+k-1}, \iota_2^{n+k-1}]$.
We take $f$ to be smooth and use the invariant $h_{12}$.  
To see that the homotopy class of $f$ is determined by $h_{12}(f)$, it suffices by the isomorphism \eqref{eq:hopf-invt-iso} to check that $f$ lies in the reduced homotopy group, i.e., the intersection of the kernels of the projections $\pi_{2n+2k-3} (S^{n+k-1} \vee S^{n+k-1}) \to \pi_{2n+2k-3} (S^{n+k-1})$ onto each summand.  These projections come from the projections $\Conf(3,\R^n) \to \Conf(2,\R^n)$ which forget the first and second configuration points.  
Since $[b_{3,1}^n, b_{3,2}^n]$ maps to zero under either of these two forgetting maps, 
so does $(\kappa \circ \widehat{G})[b^n_{3,1},\, b^n_{3,2}]$, as desired, by a nullhomotopy as in the previous paragraph.

We next describe the calculation of $h_{12}[\iota^{n-1}_1, \iota^{n-1}_2]$ using the framed bordism class of its preimages.  
Since this calculation is independent of the dimension $n-1$, it also applies to $h_{12}[\iota^{n+k-1}_1, \iota^{n+k-1}_2]$, at least up to  sign.  At the same time, this calculation will help us determine the value of $h_{12}(f)$, showing that it equals $\pm h_{12}[\iota^{n+k-1}_1, \iota^{n+k-1}_2]$.
Represent $[\iota_1^{n-1}, \iota_2^{n-1}]$ by a map under which the preimage of a pair of regular values $\{p^n \} \sqcup \{ p^n\}$
is
\[
P:=P_1 \sqcup P_2 := \{0\} \x S^{n-2} \sqcup S^{n-2} \x \{0\} 
\]
which lies 
in 
\begin{equation}
\label{eq:generalized-heegaard}
 D^{n-1} \x S^{n-2} \cup S^{n-2} \x D^{n-1} = S^{2n-3}.
\end{equation}
Each $P_i$ is the preimage in $S^{2n-3}$ of a point under one of the compositions to $S^{n-1}$ in \eqref{eq:composite,m=3,n}.
Then $h_{12}^{n-1}(P) = \pm1$, the linking number of $P_1$ and $P_2$.  It is represented by the intersection of say the second sphere $P_2$ with a disk $Q_1$ bounded by the first sphere $P_1$, which is a single point: $Q_1 \cap P_2 =\pm[\{\text{point}\}]$. 
Let $\infty \in S^{2n-3}$ be the basepoint, i.e.~the pre-image of the wedge-point in $S^{n-1}\vee S^{n-1}$.  We can arrange for it to be the image of the boundary under $q$ and for it to lie in the boundary $S^{n-2} \x S^{n-2}$ of the two summands in \eqref{eq:generalized-heegaard}, 
Then $q^{-1}(P)$ lies in the interior of $D^{2n-k-3} \x D^k$, and so does $q^{-1}(Q_1 \cap P_2)$.   
We thus identify the point $Q_1 \cap P_2$ with some $(s_0, t_0)\in D^{2n-k-3} \x D^k$, and we can view the $P_i$ as submanifolds of $D^{\ell} \x D^k$ in \eqref{eq:composite,m=3,n+k}.

Finally, to calculate $h_{12}(f)$, we use the framed bordism class of the preimage under $f$ of two points, one in each sphere summand.
Since $\phi^{n+k}_{m,i} \circ b^{n+k}_{m,i} = \mathrm{id}_{S^{n+k-1}}$, we can instead consider the preimages of $p^{n+k}:=p^n \x (0,\dots, 0)\in S^{n+k-1}$ under $\phi^{n+k}_{3,i} \circ ((\kappa \circ \widehat{G})[b^n_{3,1}, \,b^n_{3,2}])$ for $i=1,2$.
Define $\Delta: D^{2n-k-3} \x D^k \to D^{2n-k-3} \x D^k \x D^k$ as the (two-fold) diagonal map on $D^k$, namely $\Delta(s,t)=(s,t,t)$.
For $1 \leq i<j \leq 3$, let $pr_{ij}$ be the projection $D^{2n-k-3} \x (D^k)^{\x 3} \to D^k \x D^k$ onto the $i$-th and $j$-th factors of $(D^k)^{\x 3}$.  
Using formula \eqref{eq:kappa-G-formula}, as well as diagram \eqref{eq:composite,m=3,n} together with the fact 
that $\phi^n_{j,i} \circ b^n_{j,i}=\mathrm{id}_{S^{n-1}}$, we see that 
the union of the preimages of $p^{n+k}$ under the compositions across \eqref{eq:composite,m=3,n+k} is

\begin{align*}
\{(s,t,u,t)\colon (s,t) \in P_1 \} \sqcup  \{(s,t,u,u)\colon  (s,u) \in P_2\}= &  \ 
 pr_{13}^{-1}\Delta(P_1) \sqcup pr_{23}^{-1}\Delta(P_2) \\  &  \subset D^{2n-k-3} \x (D^k)^{\x 3}.
\end{align*}
Then  $pr_{13}^{-1}\Delta(Q_1)$ bounds $pr_{13}^{-1}\Delta(P_1)$, and
the intersection $(pr_{13}^{-1}\Delta(Q_1)) \cap (pr_{23}^{-1}\Delta(P_2))$ is the single point $\{(s_0,t_0,t_0,t_0)\}$.  So $h_{12}(f) = \pm 1$, which completes the verification for $m=3$.

\textbf{The case of arbitrary $m\geq 2$:} In general, $\ell=mn-2m-n-k+3$, as in the theorem statement.  
The space of bracket expressions in question has dimension $(m-2)!$.  However, we have a basis of monomials obtained by permuting the indices in $b=[[b_{m,1}, b_{m,2}], \dots, b_{m,m-1}]$, so it suffices to consider only this monomial.
The relevant diagrams are 
\begin{equation}
\label{eq:composite,m,n}
\xymatrix@C2pc@R0.5pc{
& & & & & S^{n-1} \\
D^{\ell} \x D^k \ar@{->>}[r]^-q & S^{\ell + k} \ar[rr]^-{[\iota_1^{n-1}, \dots, \iota_{m-1}^{n-1}]}
& &  \bigvee_{m-1} S^{n-1}  \ar[r]^-{b^n_{m,1} \vee \dots \vee b^n_{m,m-1}} 
& \Conf(m,\R^n)
\ar[ur]^-{\phi^n_{m,1}} \ar@{}[r]|-{\raisebox{1pc}{\vdots}} \ar[dr]^-{\phi^n_{m,m-1}} & \vdots \\
(s,t) \ar@{|->}[rrrr] &&&& \left([b^n_{m,1}, \dots b^n_{m,m-1}]_i(s,t)\right)_{i=1}^m & S^{n-1}
}
\end{equation}
and
\begin{equation}
\label{eq:composite,m,n+k}
\xymatrix@C3pc@R1pc{
 & & & & S^{n+k-1}\\
D^{\ell} \x (D^k)^{\x m} \ar@{->>}[r]^-{q'} & S^{\ell + mk} \ar@{..>}_-{f}[d] 
\ar[rr]^-{(\kappa \circ \widehat{G})[b^n_{m,1}, \dots, b^n_{m,m-1}]} & & 
\Conf(m,\R^{n+k}) \ar[ur]^-{\phi^{n+k}_{m,1}} \ar@{}[r]|-{\raisebox{1pc}{\vdots}} \ar^-{\phi^{n+k}_{m,m-1}}[dr] & \vdots \\
& \bigvee_{m-1} S^{n+k-1} \ar@{^(->}_-{\qquad b_{m,1}^{n+k} \vee \dots \vee b_{m,m-1}^{n+k}}[urr] && & S^{n+k-1}
}
\end{equation}

We claim that the homotopy class of $f$ is determined by the Hopf invariants $h_{I}(f)$ of \eqref{eq:hopf-invt-iso}, where $I$ runs over all permutations of $\{1,\dots, m-1\}$ which fix 1.  
The projection maps 
$\bigvee_{m-1} S^{n+k-1} \to \bigvee_{m-2} S^{n+k-1}$ come from maps which forget one of the first $m-1$ configuration points.  
Since $b$ lies in the kernel of any such projection, so does $\kappa_* \circ \widehat{G}_*(b)$, and therefore $f$ lies in the reduced homotopy groups. Therefore, by the isomorphism \eqref{eq:hopf-invt-iso}, the $(m-2)!$ Hopf invariants $h_{I}(f)$ completely determine $f$ up to homotopy.

We represent $[\iota^{n-1}_1, \dots, \iota^{n-1}_{m-1}] \in \pi_{\ell+k} (\bigvee_{m-1}S^{n-1})$ by a map such that the preimage of a collection of regular values $\{p^n\} \sqcup \dots \sqcup \{p^n\}$ is a certain $(m-1)$-component manifold 
\[
P:=P_1 \sqcup \dots \sqcup P_{m-1} \subset \mathrm{int}(D^\ell \x (D^k)^{\x m}) \subset S^{\ell +mk}
\]
of codimension $n-1$, obtained by a handle decomposition of $S^{\ell+mk}$ similar to \eqref{eq:generalized-heegaard} but iterated $m-2$ times.  
Each $P_i$ is the preimage in $S^{\ell+k}$ of a point under one of the compositions to $S^{n-1}$ in \eqref{eq:composite,m,n}.
Its Hopf invariant $h_{1,2,\dots,m-1}$ is calculated by bounding $P_1$ by $Q_1$, replacing $P_1 \sqcup P_2$ by $Q_1 \cap P_2$, bounding this intersection by $Q_2$, and so on, until we are left with $Q_{m-2} \cap P_{m-1}$, a single point $(s_0,t_0)$.
To calculate $h_J[\iota^{n-1}_1, \dots, \iota^{n-1}_{m-1}]$ for any other $J$, note that if $J \neq (1,2,\dots, m-1)$, then $J=(1, 2 \dots, i-1, i, j, \dots )$ for some $j>i+1$ and some $i\in \{1, \dots, m-3\}$.  Then $Q_i$ intersects only $P_{i+1}$, and $Q_i \cap P_j =\varnothing$.  


\begin{example}[for $m=4$] We represent $[[\iota^{n-1}_1, \, \iota^{n-1}_2],\, \iota^{n-1}_3] \in \pi_{3n-5} (\bigvee_3 S^{n-1})$ by a map under which the preimage of $\{p^n\} \sqcup \{p^n\} \sqcup \{p^n\}$ is 
\begin{align*}
P:= P_1 \sqcup P_2 \sqcup P_3 
&:= \{0\} \x S^{n-2} \x S^{n-2} \ \sqcup \ S^{n-2} \x \{0\} \x S^{n-2} \ \sqcup \ S^{2n-4} \x \{0\} \\
&\subset D^{2n-3} \x S^{n-2} \ \cup \ S^{2n-4} \x D^{n-1} \quad = \quad S^{3n-5} 
\end{align*}
Its Hopf invariant $h_{123}$ can be obtained by taking a manifold $Q_1$ bounded by $P_1$,
 replacing $P_1 \sqcup P_2$ by $Q_1 \cap P_2$, and 
calculating $h_{12}$ of the resulting 2-component submanifold.  Here we can take $Q_1 \cong D^{n-1} \x S^{n-2}$, using a $D^{n-1}$ bounded by $\{0\} \x S^{n-2}$ in $D^{2n-3}$.
Then $Q_1 \cap P_2 \cong S^{n-2}$, which bounds a disk $Q_2$, and $Q_2 \cap P_3$ is a single point.   
Let $\infty \in S^{3n-5}$ be the basepoint, i.e., the image of the boundary under $q$ and the preimage of the wedge-point in $\bigvee_3 S^{n-1}$.
We can take $\infty$ to lie in the $S^{2n-4} \x S^{n-2}$ which bounds the summands in the decomposition of $S^{3n-5}$.
We thus identify $Q_2 \cap P_3$ with a point $(s_0, t_0) \in D^{3n-k-5} \x D^k$.
So $h_{123}[[\iota_1, \iota_2], \iota_3]=\pm 1$.
On the other hand, when we calculate $h_{132}$ of the manifold $P$ in the same way, we find that $Q_1 \cap P_3$ is empty because $Q_1 \subset D^{2n-3} \x S^{n-2}$; that is, $Q_1$ is nested deep enough in the decomposition of $S^{3n-5}$ to miss $P_3$.  Thus $h_{132}([[\iota_1, \iota_2], \iota_3])=0$.
\end{example}
  
We now calculate $h_J(f)$, first for $J=(1,2,\dots, m-1)$.
For a subset $S \subseteq \{1,\dots,m\}$, let $pr_S$ be the projection $D^{\ell} \x (D^k)^{\x m} \to D^{\ell} \x (D^k)^S$ onto the factors of $D^k$ indexed by $S$.  
(We omit braces in the subscript, writing for example $pr_{i,j}$ for $pr_S$ with $S=\{i,j\}$.)
For $i\geq 2$, define $\Delta_i: D^{\ell} \x D^k \to D^{\ell} \x (D^k)^{\x i}$ via the $i$-fold diagonal on $D^k$, that is, $\Delta_i(s,t) = (s,t, \dots, t)$.
The union of preimages of $p^{n+k}:=p^n\x(0,\dots,0)$ under the compositions all the way across diagram 
\eqref{eq:composite,m,n+k} is 
\[
pr_{1,m}^{-1}\Delta_{2}(P_1) \  \sqcup \
pr_{2,m}^{-1}\Delta_{2}(P_1)
 \  \sqcup \ \dots \ \sqcup \ 
pr_{m-1,m}^{-1}\Delta_{2} (P_{m-1}).
\]
Then $pr_{1,m}^{-1}\Delta_{1,m}(Q_1)$ bounds $pr_{1,m}^{-1}\Delta_{1,m}(P_1)$, and 
\[
pr_{1,m}^{-1}\Delta_{2}(Q_1) \ \cap \
pr_{2,m}^{-1}\Delta_{2}(P_2) = 
pr_{1,2,m}^{-1}\Delta_{3} (Q_1 \cap P_2).
\]
The latter term bounds $pr_{1,2,m}^{-1}\Delta_{3} (Q_2)$, and 
\[
pr_{1,2,m}^{-1}\Delta_{3} (Q_2) \ \cap \
pr_{3,m}^{-1}\Delta_{2}(P_3) = 
pr_{1,2,3,m}^{-1}\Delta_{4} (Q_2 \cap P_3).
\]
Continuing, we 
are led to consider
\[
pr_{1,2,\dots, m-2,m}^{-1}\Delta_{m-1} (Q_{m-3} \cap P_{m-2})
\]
which bounds $pr_{1,2,\dots, m-2,m}^{-1}\Delta_{m-1}(Q_{m-2})$.  Finally, we 
are ultimately led to consider 
\begin{align*}
pr_{1,2,\dots, m-2,m}^{-1}\Delta_{m-1}(Q_{m-2}) \ \cap \
pr_{m-1,m}^{-1}\Delta_{2} (P_{m-1}) 
&= \Delta_{m}(Q_{m-2} \cap P_{m-1}) \\
&=\Delta_{m}\{(s_0,t_0)\} \\
&= \{(s_0, t_0, \dots, t_0)\} \subset D^{\ell} \x (D^k)^{\x m}.
\end{align*}
So $h_{1,2,\dots,m-1}(f)=\pm 1$, as desired.

If $J\neq (1,2,\dots,m-1)$, write $J=(1,2,\dots, i-1, i, j, \dots)$ where $1 \leq i \leq m-3$ and $j>i+1$.  Then, calculating as above, we ultimately obtain 
\[
pr_{1,2,\dots,i-1,i,m}^{-1} \Delta_{i+1} (Q_i) \cap pr_{j,m}^{-1} \Delta_{2} (P_j) = 
pr_{1,2,\dots,i-1,i,j,m}^{-1} \Delta_{i+2} (Q_i \cap P_j) = 
pr_{1,2,\dots,i-1,i,j,m}^{-1} \Delta_{i+2} (\varnothing) = 
\varnothing.
\]
Thus for such $J$, $h_J(f)=0$.   This completes the proof of part (a).
\end{proof}



\begin{proof}[Proof of Theorem \ref{T:braids-as-links}, part (b)]
We need only an extension of the proof of part (a) to all (unordered) subsets of $\{1,\dots, m\}$.  
The key idea is that an element in $\mathcal{H}_{m}^{n,k}$ is a sum of monomials each of which is Brunnian (i.e., comes from $Lie(p)$ for some $p<m$) after restricting to the appropriate subset $S \subset \{1,\dots,m\}$.  
For each $S$, restricting to the appropriate submodule of $\mathcal{H}_{m}^{n,k}$ will give a homomorphism into $\pi_*(\Conf(m,\R^n))$.

For any subset $S \subseteq \{1,\dots, m\}$ of cardinality at least 2, define 
\begin{align*}
\kappa_S: \BrEmb_{*} \left(\coprod_m S^k, \ \R^{n+k}\right) &\longrightarrow \Map_* ((S^k)^{S}, \Conf(S, \R^{n+k}))\\
g=(g_1,\dots, g_m) &\longmapsto (\kappa_S(g):(s_i)_{i \in S} \mapsto (g_i(s_i))_{i \in S})
\end{align*}
where $\Conf(S, \R^{n+k})$ is the space of injections $S\to \R^{n+k}$.

If $b$ is a left-normed bracket $b_{j;I}=[ b_{j,i_1}, \dots, b_{j,i_p} ]$, call $S:= S(j;I):=\{j,i_1,\dots,i_p\}$ 
 the \emph{index set} of $b$.
Then 
$\kappa_S \circ \widehat{G}(b)$ factors through the quotient $(S^k)^{S} \to S^{k|S|}$.  
So we may view $\kappa_S \circ \widehat{G}(b)$ as an element of
\[
\mathcal{M}_S := \Map_*(S^{k|S|}, \Conf(S,\R^{n+k})).
\]
Note that
\[
\pi_\ell \mathcal{M}_S \cong \pi_{\ell+ k|S|}\Conf(S,\R^{n+k}) \subset \pi_{\ell + k|S|}\Conf(m,\R^{n+k})
\]
where the above inclusion is induced by the inclusion $S \hookrightarrow \{1, \dots, m\}$.

Now suppose $b$ is any element in $\mathcal{H}_{m}^{n,k}$.   
We use the direct-sum decomposition \eqref{eq:L_m(n-2)}, which also applies over $\Z$ to the non-torsion part of $\pi_{*}(\Conf(m,\R^n))$.  
Fix a basis of left-normed monomials $b_{j;I}$ for each free graded Lie algebra summand; such a basis exists by for example \cite{Walter:2010arXiv}.  Let $\mathcal{B}$ be the union of these bases, with elements corresponding to multi-indices $(j;I)$.
We can then write $b$ as a unique $\Z$-linear combination 
\begin{equation}
\label{eq:lin-comb-basis-monomials}
\begin{split}
b=&\sum_{(j;I) \in \mathcal{B}} \alpha_{j;I} b_{j;I},  
\qquad  b_{j;I} = [ b_{j,i_1}, \dots, b_{j,i_p} ],  
\end{split}
\end{equation}
where $\alpha_{j;I}$ is nonzero only if $I=(i_1,\dots, i_p)$ consists of distinct indices and $p (n-2) \geq {n+k-3}$.
Now $b \in \pi_\ell \Omega^k \Conf(m,\R^n)$ where $\ell = pm - 2p - n - k +3 \geq 0$. 
If $S=S(j;I)$ is the index set of $b_{j;I}$, then $\kappa_S \circ \widehat{G} (b_{j;I})$ can be mapped to $\mathcal{M}_S$.  
On the image $\widehat{G}_*(\mathcal{H}_{m}^{n,k})$, we define the following map, where the sum ranges over all subsets $S\subset \{1,\dots,m\}$:
\begin{align*}
\sum_{S} \kappa_{S}: \widehat{G}_*(\mathcal{H}_{m}^{n,k}) &\longrightarrow \sum_{S} \pi_*(\mathcal{M}_{S}) \subset \pi_*(\Conf(m, \R^{n+k}))\\
\widehat{G}_*\left( \sum_{(j;I)\in \mathcal{B}} \alpha_{j;I}  b_{j;I} \right)& \longmapsto \sum_{(j;I)\in \mathcal{B}} \alpha_{j;I} \, \left(\kappa_{S(j;I)}\right)_* \circ \widehat{G}_*(b_{j;I})
\end{align*}
The sum $\sum_{S} \pi_*(\mathcal{M}_{S})$ is not direct, and the assignment $(j;I) \mapsto S(j;I)$ is not injective because it forgets the order of the indices, but this map suffices to show the desired injectivity.  
Indeed, by a similar argument as for part (a), a Whitehead bracket with index set $S=\{j, i_1, \dots, i_p\}$ is mapped to the corresponding left-normed Whitehead bracket by $(\kappa_S)_* \circ \widehat{G}_*$:
\[
\xymatrix@C4pc{
\pi_{\ell+k}\Conf(m,\R^n) \ni
[ b^n_{j,i_1}, \dots, b^n_{j,i_p} ]
\ar@{|->}[r]^-{(\kappa_S)_* \circ \widehat{G}_*}
&
[ b^{n+k}_{j,i_1}, \dots, b^{n+k}_{j,i_p} ]
\in \pi_{\ell+pk}\Conf(m,\R^{n+k}).
}
\]  
As a result, 
$\left(\sum_{S} \kappa_S\right)_* \circ \widehat{G}_*$ maps
$\mathcal{H}_{m}^{n,k} \subset \pi_*(\Conf(m,\R^n))$  isomorphically onto the corresponding submodule of $\pi_*(\Conf(m,\R^{n+k}))$.  
\end{proof}

\begin{rem}[Variations of Theorem \ref{T:braids-as-links}]
\label{R:variations-of-braids-as-links}
Both parts of Theorem \ref{T:braids-as-links} hold if we replace the space of long links by the corresponding space of based closed links, with very little change to the proofs.  They similarly hold --- in both the long and closed settings --- if we replace embeddings by link maps, meaning smooth maps of the disjoint union such that the images of the components are pairwise disjoint.  
\end{rem}

\subsection{Examples and a conjecture}
\label{S:ex-conj-discussion}
Part (a) of Theorem \ref{T:braids-as-links} shows that graphing yields nontrivial classes in $\pi_0$ of spaces of  $m$-component $k$-dimensional string links in $\R^{n+k}$, for any $n$ and $k$ such that $m=\frac{n+k-3}{n-2}$.
To list all the possible values of $n$ and $k$, we introduce another integer parameter $j\geq 0$ and put $k=j(m-1)+1$ and $n=j+2$.   
\begin{cor}[Isotopy classes of high-dimensional string links]
\label{C:pi0-string-links}
For each $j \geq 0$, there is an injection 
\begin{align*}
Lie(m-1) \hookrightarrow 
\pi_0(\Omega^{j(m-1)+1} \Conf(m, \R^{j+2})) 
\overset{G_*}{\longrightarrow} \pi_0\left(\Emb_c\left( \coprod_m \R^{j(m-1)+1}, \ \R^{jm+3}\right)\right).
\end{align*}
\qed
\end{cor}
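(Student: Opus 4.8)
The plan is to read off Corollary \ref{C:pi0-string-links} as the $\ell = 0$ specialization of part (a) of Theorem \ref{T:braids-as-links}, after a change of parameters. First I would substitute $n = j+2$ and $k = j(m-1)+1$ into the statement of Theorem \ref{T:braids-as-links}(a) and verify that the indices match those named in the Corollary. The key check is that the homotopy degree $\ell = mn - 2m - n - k + 3$ collapses to zero under this substitution:
\[
\ell = m(j+2) - 2m - (j+2) - \bigl(j(m-1)+1\bigr) + 3 = 0,
\]
so that the relevant homotopy group is $\pi_0$, as desired. I would also record that the ambient dimension is $n + k = (j+2) + \bigl(j(m-1)+1\bigr) = jm + 3$ and that the source copies have dimension $k = j(m-1)+1$, so the codomain is $\Emb_c(\coprod_m \R^{j(m-1)+1}, \R^{jm+3})$ and the loop space is $\Omega^{j(m-1)+1}\Conf(m,\R^{j+2})$, exactly as in the Corollary. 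The adjunction isomorphism $\pi_{\ell+k}\Conf(m,\R^n) \cong \pi_\ell \Omega^k\Conf(m,\R^n)$ from Theorem \ref{T:braids-as-links}(a), with $\ell = 0$, then absorbs both the inclusion of $Lie(m-1)$ and the loop-space shift into the single hooked arrow $Lie(m-1) \hookrightarrow \pi_0(\Omega^{j(m-1)+1}\Conf(m,\R^{j+2}))$ appearing in the Corollary.

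Next I would confirm the standing hypotheses of Theorem \ref{T:braids-as-links}(a). We have $m \geq 2$ by assumption; since $j \geq 0$ and $m \geq 2$, the exponent $k = j(m-1)+1 \geq 1$; and $\ell = 0 \geq 0$ by the computation above. The last hypothesis $n \geq 3$ holds exactly when $j \geq 1$, and for every such $j$ the injectivity of the displayed composite is immediate from Theorem \ref{T:braids-as-links}(a).

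The only case needing separate treatment is $j = 0$, where $n = 2$ and $k = 1$ lie outside the range $n \geq 3$ of the Theorem. Here the source is $\coprod_m \R^1$ and the ambient space is $\R^3$, the domain $\pi_0(\Omega\Conf(m,\R^2))$ is the pure braid group $\mathcal{PB}_m$, and $Lie(m-1)$ is the span of length-$(m-1)$ iterated commutators of distinct generators, viewed inside $\mathcal{PB}_m$ (equivalently, inside its associated graded Lie algebra, consistent with the notation $\mathcal{H}_m^{2,1}$). As recalled in the discussion preceding the Corollary, the graphing map $G_*$ is injective on all of $\pi_0 = \mathcal{PB}_m$ in this classical situation by Artin's theorem \cite{Artin:Braids}; restricting to the subspace $Lie(m-1)$ therefore yields an injection as well, which completes this case. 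I expect this $j = 0$ boundary case to be the only genuine point of care in the argument, since for $j \geq 1$ everything is a direct substitution into an already-proven theorem.
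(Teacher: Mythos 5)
Your proposal is correct and matches the paper's own (essentially tacit) argument: the corollary is stated with no separate proof precisely because it is the $\ell=0$ specialization of Theorem \ref{T:braids-as-links}(a) under the substitution $n=j+2$, $k=j(m-1)+1$, which is the computation you carry out. Your separate treatment of $j=0$ via Artin's theorem is also exactly what the paper intends, as signaled by its earlier remark that $G_*$ is injective on all of $\mathcal{PB}_m$ when $k=1$, $n=2$.
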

Setting $j=0$ gives an inclusion of a subspace of pure braids into string links.  For $m \geq 2$ and $j\geq 1$, we get $k$-dimensional string links in $\R^N$ with $k\geq 2$ and $N\geq 5$.

\begin{example} \ 
\begin{itemize}
\item[(a)]  Fix $m=3$.  If $(k,n)=(2j-1,j+1)$), the class $[b_{3,1}, b_{3,2}]$ corresponding to the tripod diagram from Example \ref{Ex:B312} gives rise via ($G$ or) $\widehat{G}$ to elements in $\pi_0$ of spaces of $(2j-1)$-dimensional (string) links in $\R^{3j}$.  Such embeddings are the well known higher-dimensional analogues $S^{2j-1} \sqcup S^{2j-1} \sqcup S^{2j-1} \hookrightarrow \R^{3j}$  of the Borromean rings.  
The smallest possible values of $k$ and $n$ in this case are $k=3$ and $n=3$, for which we get an embedding $S^3 \sqcup S^3 \sqcup S^3 \hookrightarrow \R^6$.
\item[(b)]
Fix $m=4$.  
If $(k,n)=(3j-2,j+1)$, then ($G$ or) $\widehat{G}$ gives rise to a certain 2-dimensional subspace of isotopy classes of $(3j-2)$-dimensional (string) links in $\R^{4j-1}$.  
It is the image of the subspace spanned by say $[[b_{4,1}, b_{4,2}], b_{4,3}]$ and $[[b_{4,1}, b_{4,3}], b_{4,2}]$, which is isomorphic to the subspace of $\mathcal{T}^n(4)$ spanned by trees with 4 distinctly labeled leaves.  It is 2-dimensional for either parity of $n$.
 The smallest possible values of $k$ and $n$ in this case are $k=4$ and $n=3$, for which we get embeddings $\coprod_4 S^{4} \hookrightarrow \R^7$.
\end{itemize}
\end{example}

In this setting of equidimensional string links, we can compare our work to a result of Songhafouo Tsopm\'en\'e and Turchin \cite[Theorem 3.2]{Songhafouo-Turchin:Forum} (further developed by these authors \cite{Songhafouo-Turchin:HHA} and by Fresse, Turchin, and Willwacher \cite[Section 5]{FTW:2020arXiv}).
That result identifies $\pi_0\left(\Emb_c\left(\coprod_m\R^k, \R^{n+k}\right)\right) \otimes \Q$ with the $\Q$-vector space 
of trivalent trees with leaves labeled by $\{1,\dots,m\}$, modulo IHX, defined like $\mathcal{T}^n(m)$ in the Introduction but over $\Q$ instead of $\R$.  
The orientations on graphs in those cited works depend a priori on the parities of $k$ and $n$.  However, one can check that for trees of odd valence in this equidimensional setting, they depend only on the parity of $n$ and agree with the orientations in $\mathcal{T}^n(m)$, by adapting \cite[Section 3.1]{Kuperberg-Thurston} to the setting of two types of vertices.
Theorem \ref{T:braids-as-links} realizes (over $\Z$) the subspace $Lie(m-1)$ of distinctly labeled trees with $m$ leaves in $\mathcal{T}^n(m)$.  We suspect that this identification agrees with the one given by \cite{Songhafouo-Turchin:HHA}, since both use graph complexes associated to configuration spaces.
Some but not all trees with repeated leaf-labels could also arise from graphing braids, as explained in Conjecture \ref{conj:graphing-injects-homotopy-Q-homology} and Remark \ref{R:braids-vs-milnor-invts} below.  

\begin{conj}
\label{conj:graphing-injects-homotopy-Q-homology}
The graphing map $G: \Omega^k \Conf(m,\R^n) \to \Emb_c\left(\coprod_m \R^k, \R^{n+k}\right)$ induces injections on  rational homotopy  $\pi_*(-) \otimes \Q$ and rational homology $H_*( - ; \Q)$.  
\end{conj}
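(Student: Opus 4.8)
The plan is to reduce the conjecture to injectivity on rational homotopy, and then to attack that via graph-complex models of the two spaces.

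\emph{Reduction of homology to homotopy.} Since $\Omega^k \Conf(m,\R^n)$ is a $k$-fold loop space, its rational homology is a connected graded cocommutative Hopf algebra over $\Q$, so by the Milnor--Moore theorem \cite{Milnor-Moore:1965} it is the universal enveloping algebra $U(P)$ on its primitives $P = \pi_*(\Omega^k\Conf(m,\R^n))\otimes\Q$; this is the $\Omega^k$-analogue of Corollary \ref{thm:homology-prim}. Graphing a Pontryagin product of braids yields the stacking of the corresponding string links, so $G$ is a map of H-spaces and $G_*$ is a map of Hopf algebras. A standard argument (the kernel of a Hopf algebra map out of a connected graded Hopf algebra over $\Q$ contains a primitive whenever it is nonzero) then shows that $G_*$ is injective on homology as soon as it is injective on primitives. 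It therefore suffices to prove that $G_*$ is injective on $\pi_*(-)\otimes\Q$.

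\emph{Rational models.} By Theorem \ref{thm:primitive-diagrams} and the shift $\pi_*(\Omega^k\Conf(m,\R^n))\otimes\Q \cong \pi_{*+k}(\Conf(m,\R^n))\otimes\Q$, the source rational homotopy is identified via $\Theta$ with $H_*(P\Dm(m)^*)$, i.e.\ with trivalent trees modulo IHX whose leaf labels are allowed to repeat. For the target, the rational homotopy (and homology) of $\Emb_c(\coprod_m \R^k, \R^{n+k})$ is computed by a hairy graph complex \cite{Songhafouo-Turchin:Forum,Songhafouo-Turchin:HHA,FTW:2020arXiv}. Both models arise from Kontsevich's configuration-space integrals, so $G$ should be realized at the chain level by an explicit map of graph complexes extending the cochain map $\varphi$ of \eqref{E:LD-diagram}, now landing in the \emph{full} link-diagram complex $\LDm^{n+k}(m)$ rather than only its forest part $\LDm^{n+k}_{\mathrm{forest}}(m)$. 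The first task is to construct this map carefully and to check its compatibility with the identifications on both sides and with the orientation conventions of \cite{FTW:2020arXiv}.

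\emph{Injectivity and the main obstacle.} On the subspace of trees with distinct leaf labels, injectivity is precisely Theorem \ref{T:braids-as-links}, detected by Koschorke's generalized Hopf invariants $h_I$ at the $(1,\dots,1)$-stage $\kappa$ of the Taylor tower. \textbf{The main obstacle is the case of repeated leaf labels.} There $\Theta(B_{j;I})$ acquires extra terms that are genuinely non-tree diagrams, as in Examples \ref{Ex:B3121} and \ref{Ex:B31222}; moreover such a bracket is no longer Brunnian, so the argument of Theorem \ref{T:braids-as-links} --- which relies on restrictions to sublinks being trivial and on the map \eqref{Eq:kappaBrunnian} --- breaks down, and $\kappa$ alone cannot detect these classes. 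I would instead detect them by higher stages of the Taylor tower, namely by the summands of the hairy graph complex containing internal loops, and prove injectivity by showing that the non-tree part of $\Theta(B_{j;I})$ maps to linearly independent classes there with no cancellation. Verifying this --- that the configuration-space-integral model of $G$ really induces the expected inclusion of graph complexes and that the higher-genus terms survive in the embedding graph complex --- is the crux of the conjecture and the step I expect to be hardest.
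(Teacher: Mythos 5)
This statement is a conjecture: the paper offers no proof of it, only a discussion of why it is open, so there is no ``paper's proof'' to match. Your proposal is likewise not a proof but a research plan, and you candidly flag its decisive step --- showing that the non-tree terms of $\Theta(B_{j;I})$ for repeated indices survive into higher stages of the Taylor tower / the loop part of the hairy graph complex --- as unverified. That step is exactly where the paper itself stops (``The nontriviality of bracket expressions with repeats may be detected by a more refined analogue of $\kappa$\dots''), so the crux of the conjecture remains untouched. Moreover, you do not engage with the concrete obstruction the paper records in Section \ref{S:ex-conj-discussion}: when $k$ is odd, the $k$-fold degree shift flips the parity of the generators ($b^n_{j,i}$ has degree $n-1$ while $b^{n+k}_{j,i}$ has degree $n+k-1$), so for $n$ and $k$ both odd the free graded Lie algebras on the two sides are genuinely non-isomorphic in certain word-lengths, since $d^{\mathrm{even}}_{4j-2}(m) < d^{\mathrm{odd}}_{4j-2}(m)$. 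For instance $[B_{j,i},B_{j,i}]$ is nonzero in $\pi_*(\Omega\Conf(m,\R^n))\otimes\Q$ for $n$ odd, but the corresponding bracket in the target vanishes when the shifted generators have even degree. Hence no argument of the form ``brackets map to the corresponding brackets'' --- which is the entire detection mechanism of Theorem \ref{T:braids-as-links} via the Hopf invariants $h_I$ --- can possibly extend to this case; genuinely different invariants must be produced, and your proposal does not indicate what they would be.

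Two further points in your reduction also need attention. First, the claim that $G_*$ is a map of Hopf algebras rests on $G$ being an H-map (or $E_k$-map); the paper only asserts that this ``seems likely'' and does not prove it, so you would need to verify compatibility of graphing with loop concatenation versus stacking of string links. Second, the Milnor--Moore argument as you state it requires the connected graded hypothesis, but $\Omega^k\Conf(m,\R^n)$ is only $(n-2-k)$-connected and thus has many path components once $k > n-2$ (and the target embedding space is likewise disconnected in the regime of Corollary \ref{C:pi0-string-links}); the reduction of homology to homotopy must then be reformulated component by component or via the group-like structure on $\pi_0$. These are repairable, but as written the reduction step has gaps in precisely the range of $(k,n)$ where the conjecture is most interesting.
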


It seems likely that $G$ is a map of algebras over the little $k$-cubes operad, in which case an injection in rational homotopy would imply an injection in rational homology by F.~Cohen's thesis results \cite{HoILS}.  
Theorem \ref{T:braids-as-links} essentially uses the fact that the space of ``homotopy string link'' classes $\mathcal{H}_{m}^{n,k}$ survives the evaluation map $ev_{1,1,\dots,1}=\kappa$ to the multi-linear stage $T_{1,1,\dots, 1}\mathrm{Link}\left(\coprod_m \R^k, \R^{n+k}\right)$ of the Taylor tower for the space of link maps.  The latter space records information about configurations where one point lies on each component, though it records no tangential data.  Equivalently, it is the multi-linear stage $T_{1,1,\dots, 1}\overline{\mathrm{Emb}}\left(\coprod_m \R^k, \R^{n+k}\right)$ where $\overline{\mathrm{Emb}}:=\operatorname{hofiber}(\mathrm{Emb} \to \mathrm{Imm})$ is the space of embeddings modulo immersions.
The nontriviality of bracket expressions with repeats may be detected by a more refined analogue of $\kappa$ where in the target, multiple configuration points are allowed on each component.  See for example \cite{BCKS:2017}, \cite{BCSS:2005}, \cite{Sinha:Cosimplicial}.  
Such an investigation may also be informed by the recent work of Kosanovi\'c \cite{Kosanovic:arXiv2020}, who realized certain trees in the Taylor tower for knots in $\R^3$ using grope cobordisms.

If $n$ and $k$ are both odd, a method of proof would have to go beyond the Whitehead bracket expressions used in \eqref{eq:brackets-preserved}.
Indeed, let $d^{\mathrm{even}}_\ell(m)$ (respectively $d^{\mathrm{odd}}_\ell(m)$) be the dimension of the subspace of length-$\ell$ monomials in the free graded Lie algebra over $\Q$ on generators $X_1,\dots, X_m$ which all have even (respectively odd) degree.  By for example the first formula in Corollary 1.1 in \cite{Petrogradsky:2003}, $d^{\mathrm{even}}_{4j-2}(m) < d^{\mathrm{odd}}_{4j-2}(m)$.  For instance, at length $\ell=2$, $[X,X]$ is nontrivial only if $|X|$ is odd.
On the other hand, for $\ell \not\equiv 2 \mod 4$, the same formula shows that $d^{\mathrm{even}}_\ell(m) = d^{\mathrm{odd}}_\ell(m)$.

\begin{rem}[Braids vs.~trivalent trees modulo IHX]
\label{R:braids-vs-milnor-invts}
Even if $G_*$ is injective on all of rational homotopy, we would realize only a proper subspace of the space $\mathcal{T}^n(m)$ of leaf-labeled trivalent trees modulo IHX.  
We explain why in the case where $n$ is even, though we expect this statement to hold also for odd $n$.

For any fixed  $m$, the subspace $\mathcal{M}_r(m)$ of trees in $\mathcal{T}^{\mathrm{even}}(m)$ with $r$ leaves is dual to the space of Milnor invariants of $m$-component string links of finite type $r$ 
\cite{Habegger-Masbaum:2000}.  (See also \cite{Levine:Addendum, Habegger-Pitsch, CST:Tree-Homology} for statements over $\Z$ and at the level of Lie algebras.)  
On the other hand, the subspace $\mathcal{P}_r(m)$ of $\pi_*(\Conf(m, \R^n))$ corresponding to trees in $\mathcal{T}^{\mathrm{even}}(m)$ with $r$ leaves is dual to the space of indecomposable invariants of $m$-component pure braids of finite type $r$.
Over $\Q$ or $\R$, both $\mathcal{P}_r(m)$ and $\mathcal{M}_r(m)$ can be further identified with spaces of type-$r$ concordance invariants of $m$-strand pure braids and $m$-strand string links respectively.
In general, $\mathcal{P}_r(m) \subset \mathcal{M}_r(m)$ \cite[Theorem 16.1]{Habegger-Masbaum:2000}.
More precisely, their respective dimensions
  are \cite[Theorem 15]{Orr:1989}
\[
\dim \mathcal{M}_r(m) = mN_r(m) - N_{r+1}(m),
\qquad
\text{where}
\qquad
N_r(m) = \frac{1}{r}\sum_{d|r}\mu(r/d) m^d
\]
and \cite[Corollary 29]{Willerton:PhD}, \cite[Theorem 5.11]{Magnus-Karrass-Solitar}
\[
\dim \mathcal{P}_r(m) = \frac{1}{r} \sum_{d|r} \mu(r/d) \sum_{i=1}^{m-1} i^d.
\]
Although for fixed $r$, $P_r(m)/M_r(m)\to 1$ as $m\to\infty$, it appears that for fixed $m$, $P_r(m)/M_r(m)\to 0$ as $r\to\infty$ and that $P_r(m)/M_r(m) \approx 1/2$ for large and roughly equal values of $m$ and $r$.  
\end{rem}

\appendix

\section{Graded Lie algebras and Whitehead and Samelson products}
\label{apx:brackets}

Here we review the Whitehead and Samelson products in homotopy of a space and the Pontryagin product on the homology of its based loop space.  We begin with some facts about graded Lie algebras.

First, recall that a \emph{graded Lie algebra}\footnote{Some authors call this a ``graded \emph{quasi}-Lie algebra.''  A \emph{quasi-Lie algebra} (vis-a-vis a Lie algebra) is a structure where anti-symmetry rather than the stronger condition of alternativity is satisfied.  Thus one could say that a quasi-Lie algebra is a graded quasi-Lie algebra concentrated in degree 0.  
However, omitting the prefix ``quasi'' should cause no confusion, especially over $\R$ where anti-symmetry and alternativity are equivalent.}
over $\R$ is an  $\R$-vector space with a bilinear operation 
$[ - ,  - ]$ satisfying the graded anti-symmetry and Jacobi relations, where 
$|a|$, $|b|$, and $|c|$ are the degrees of the elements $a$, $b$, and $c$:
\begin{equation}
\label{eq:Lie-alg-AS-Jacobi}
\begin{split}
[ a,b ] & =-(-1)^{|a||b|} [ b, a ]\\
(-1)^{|a||c|} [ [ a,b ], c ] & +(-1)^{|b||a|}[ [ b,c ], a ] +(-1)^{|c||b|}[ [ c,a], b] =0.
\end{split}
\end{equation}
When all the elements have degree zero, one gets an ordinary Lie algebra.  One can replace ``$\R$-vector space'' by ``$\Z$-module'' to get the definition of a graded Lie algebra (or graded Lie ring) over $\Z$.  Lemma \ref{lem:left-normed-span} applies in that setting too.

\begin{defin}
The \emph{length} of a Lie monomial (i.e.~bracket expression) in the generators of a graded Lie algebra is the number of generators which appear, counted with multiplicity.
A Lie monomial is \emph{left-normed} if it is of the form 
\[
[ X_{i_1}, X_{i_2}, X_{i_3}\dots, X_{i_k} ] :=
[[ \dots [[X_{i_1}, X_{i_2}], X_{i_3}], \dots], X_{i_k}].
\]
\end{defin}

\begin{lem}
\label{lem:left-normed-span}
Let $\mathcal{L}$ be a graded Lie algebra, generated by elements $X_i$, $i\in \mathcal{I}$.  
\begin{enumerate}
\item[(a)]  The left-normed monomials in the $X_i$ span $\mathcal{L}$.  
\item[(b)]  Powers $[ X_i, \dots, X_i ]$ of length greater than 2 vanish. 
\item[(c)]  Squares $[X_i,X_i]$ are nontrivial only on generators in odd grading.
\item[(d)]  If $|X_i|$ and $|X_h|$ are odd, then $[[X_i,X_i],X_h] = - 2 [[X_i,X_h],X_i]$.
\end{enumerate}
\end{lem}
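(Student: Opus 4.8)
The plan is to derive all four parts from graded anti-symmetry and the graded Jacobi identity \eqref{eq:Lie-alg-AS-Jacobi}, working over $\R$ so that I may freely divide by $2$ and $3$. Parts (b), (c), (d) are short sign computations; the real work is part (a).

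For part (a) I would argue by a double induction. The outer induction is on the length $n$ of a Lie monomial, and after expanding by bilinearity it suffices to rewrite $[u,v]$ where $u$ and $v$ are left-normed. The inner induction is on the length $q$ of the right factor $v$. If $q=1$, then $[u,v]$ is already left-normed. If $q\geq 2$, write $v=[v',X_j]$ with $v'$ left-normed of length $q-1$ and $X_j$ a generator, and use the Jacobi identity to expand $[u,[v',X_j]]$ as a signed combination of $[[u,v'],X_j]$ and $[[X_j,u],v']$. In the first term the inner bracket $[u,v']$ has total length $n-1$, so it is a combination of left-normed monomials by the outer induction hypothesis, and bracketing each on the right by the generator $X_j$ keeps them left-normed. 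In the second term the right factor $v'$ has length $q-1$, so the inner induction hypothesis applies. I expect the bookkeeping here — verifying that each of the two resulting terms strictly decreases one of the two induction parameters — to be the main obstacle, although the precise signs produced by Jacobi are irrelevant for a spanning statement.

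For part (c), anti-symmetry applied to $a=b=X_i$ gives $[X_i,X_i]=-(-1)^{|X_i|^2}[X_i,X_i]=-(-1)^{|X_i|}[X_i,X_i]$; when $|X_i|$ is even this reads $2[X_i,X_i]=0$, hence $[X_i,X_i]=0$ over $\R$, while when $|X_i|$ is odd it is vacuous, so the square may be nonzero. For part (b) I would first treat length-three powers: setting $a=b=c=X_i$ in the cyclic Jacobi identity makes all three summands equal to $(-1)^{|X_i|}[[X_i,X_i],X_i]$, so $3[[X_i,X_i],X_i]=0$ and hence $[[X_i,X_i],X_i]=0$ in characteristic $0$. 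Any left-normed power of length $k>3$ contains this length-three power as its innermost sub-bracket and so vanishes as well; and if $|X_i|$ is even the power already vanishes at length two by part (c).

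Finally, for part (d) I would substitute $a=X_i$, $b=X_i$, $c=X_h$ directly into the cyclic Jacobi identity. With both degrees odd, each of the signs $(-1)^{|a||c|}$, $(-1)^{|b||a|}$, $(-1)^{|c||b|}$ equals $-1$, giving $[[X_i,X_i],X_h]+[[X_i,X_h],X_i]+[[X_h,X_i],X_i]=0$. Anti-symmetry then yields $[X_h,X_i]=-(-1)^{|X_h||X_i|}[X_i,X_h]=[X_i,X_h]$, since $|X_h||X_i|$ is odd, so the last two terms coincide and the identity collapses to $[[X_i,X_i],X_h]=-2[[X_i,X_h],X_i]$, as claimed.
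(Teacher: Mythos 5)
Your proposal is correct and takes essentially the same approach as the paper: parts (b)--(d) are the same short sign computations from graded anti-symmetry and the Jacobi identity, and part (a) is the same induction on length combined with a Jacobi rewriting, differing only in that you peel a generator off the \emph{right} factor and descend on its length while the paper peels one off the \emph{left} factor. The one step to make explicit in your inner induction is that the left factor $[X_j,u]$ of the second Jacobi term need not be left-normed, so you should first rewrite it via the outer induction hypothesis (its length is $<n$) before applying the inner hypothesis to its bracket with $v'$.
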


\begin{proof}
We show that any monomial of length $\ell$ is in the span of left-normed monomials by induction on $\ell$. 
The statement is obvious for $\ell=1$.  Suppose it holds for all monomials of length at most $\ell\geq 1$.
Given a monomial of length $\ell+1$, write it as $[V_0,W_0]$, where $V_0$ and $W_0$ are monomials of lengths $\ell(V_0), \ell(W_0) < \ell$. 
If  $\ell(V_0)=1$ or $\ell(W_0)=1$, then $[V_0,W_0]$ can be written as a linear combination of left-normed monomials by applying possibly anti-symmetry and then the induction hypothesis.
If $\ell(V_0), \ell(W_0)\geq 2$, we may assume by induction that both $V_0$ an $W_0$ are left-normed monomials.  So we may write $V_0=[V_1,X_i]$ for some monomial $V_1$ and some generator $X_i$, and by the Jacobi identity 
	\[
	 [V_0,W_0]=[[V_1,X_i],W_0]=\pm [[X_i,W_0],V_1]\pm [[W_0,V_1],X_i].
	\]
	The second term can be written as a linear combination of left-normed monomials.  If $\ell(V_1)=1$, so can the first term.  Otherwise, rewrite it as $\pm [V_1, [X_i,W_0]]=[V_1,W_1]$, where $W_1=[X_i,W_0]$.  Note that $\ell(V_1)=\ell(V_0)-1$ and $\ell(W_1)=\ell(W_0)+1$.  Therefore repeating this calculation $\ell(V_1)-1$ more times yields an expression for $[V_0,W_0]$ as a linear combination of left-normed monomials, proving part (a). 
	
	Part (b) follows from the graded Jacobi identity, since all the signs are the same and $1/3 \in \R$.  Part (c) follows from graded anti-symmetry, since $1/2 \in \R$.  Part (d) follows by using both relations.
\end{proof}

The homotopy groups $\pi_*(X)$ of a space $X$ equipped with the Whitehead product $[ - , - ]_W$ satisfy
the relations below, where the second is the graded Jacobi identity as above, but the first differs from graded anti-symmetry:
\begin{equation}
\label{eq:Whitehead-AS-Jacobi}
\begin{split}
[ A,B]_W & =(-1)^{|A||B|} [ B,A]_W\\
(-1)^{|A||C|} [[ A,B]_W, C]_W & +(-1)^{|B||A|}[[ B,C]_W, A]_W + (-1)^{|C||B|}[[ C,A]_W, B]_W=0.
\end{split}
\end{equation}
The Samelson product $[ - , - ]$ on $\pi_*(\Omega X)$ is defined by 
\[
[ a , b ] = (-1)^{|a| + 1} \del_* [\del_*^{-1} a, \, \del_*^{-1} b]_W
\]
where $\del_*: \pi_{*+1}(X) \to \pi_*(\Omega X)$.
So if $a=\del_*A$, then $|a|+1=|A|$.
The Samelson product makes $\pi_*(\Omega X)$ and $\pi_*(\Omega X)\otimes \R$ into graded Lie algebras over $\Z$ and $\R$ respectively.
In Section \ref{S:brackets-primitives-trees}, we apply Lemma \ref{lem:left-normed-span} to it.

Concatenation of loops induces the Pontryagin product on $H_\ast(\Omega X;\R)$, which makes it a graded algebra, with its graded commutator given by
\begin{equation}\label{eq:bracket-H_ast}
	\begin{split}
		[ a,b ] & = a b - (-1)^{|a||b|} b a\ .
	\end{split}
\end{equation}
The coalgebra structure given by the diagonal map 
\[
\Delta: H_\ast(\Omega X;\R) \longrightarrow H_\ast(\Omega X;\R)\otimes H_\ast(\Omega X;\R).
\]
then makes $H_\ast(\Omega X;\R)$ into a Hopf algebra.
By the Milnor--Moore Theorem, the Hurewicz map $h\colon \pi_{*}(\Omega X)\otimes \R \to H_*(\Omega X; \R)$ maps $\pi_{*}(\Omega X)\otimes \R$ isomorphically onto the subspace of primitive elements.  (This statement holds only over a field of characteristic zero.)   
Restricting the codomain thusly gives an isomorphism $\pi_{*}(\Omega X) \otimes \R \to PH_*(\Omega X; \R)$ of graded Lie algebras.  Explicitly, if $A \in \pi_p(X)$ and $B \in \pi_q(X)$, then  
 \begin{equation}\label{eq:hurewicz-bracket}
 	\begin{split}
	h \partial_* ([A,B])  &= (-1)^{p} \left( h \partial_* (A) h \partial_*(B) - (-1)^{(p-1)(q-1)}h  \partial_* (A) h \partial_*(B) \right).
	\end{split}
\end{equation}

\section{Computations of some cocycles in spaces of pure braids}
\label{A:cocycles}

Our goal here is to construct cocycles in $(\Ba(\Dm(m)),d_{\Ba})$ which detect primitive homology classes in $H_\ast(\Ba^\ast(\Dm(m)))$.  The map $\Phi$ sends these to cohomology classes in $H^\ast_{dR}(\Omega \Conf(m,\R^n))$ which detect classes in $\pi_*(\Omega \Conf(m,\R^n))\otimes \R$.  The general setup here applies when $\Dm(m)$ replaced by $\D(m)$, by Corollary \ref{cor:PDm(m)-to-PD(m)}. In Example \ref{Ex:z3122} below, we thus simplify the calculation by eliminating diagrams with multiple edges from consideration.

We can assign a grading to $\Ba(\Dm(m))$ by $(-p,q)$,  i.e. the total degree $q$ of factors in the monomial and  the monomial's length $p$:
\[
\Ba(\Dm(m))=\bigoplus_{p, q} \Ba(\Dm(m))^{(-p,q)}.
\]
Recall that the differential $d_{\Ba}$ decomposes as $d_{\Ba} =\delta_{\Ba}-D_{\Ba}$  as in formula \eqref{eq:bar-differential}, where
\[
\begin{split}
\delta_{\Ba}: \Ba(\Dm(m))^{(-p,q)}\longrightarrow \Ba(\Dm(m))^{(-p,q+1)},  \\
D_{\Ba}: \Ba(\Dm(m))^{(-p,q)}\longrightarrow \Ba(\Dm(m))^{(-p+1,q)}.
\end{split}
\]
For any cochain $z\in \Ba(\Dm(m))$, we write 
\begin{equation}\label{eq:length-decomp}
z=z^{(1)}+\cdots+ z^{(p)},
\end{equation}
a decomposition with respect to the monomial length, where $p=p(z)$ is the maximal monomial length in $z$. Next $d_{\Ba} z=0$ can be written as the following system of equations for the factors $z^{(i)}$, $i=1,\ldots, p$ 
(cf.~\cite[p.~163]{Bott:1982}):
\begin{equation}\label{eq:zigzag}
\begin{cases}
& \delta_{\Ba} z^{(i)}  =  D_{\Ba} z^{(i+1)},\qquad i=1,\ldots,p-1,\\
& \delta_{\Ba} z^{(p)} = 0.
\end{cases}
\end{equation}

\begin{lem}\label{lem:zigzag}
The map $j^\ast\colon H^\ast(\Ba(\Dm(m)),d_{\Ba})\longrightarrow H^\ast(I\Dm(m),\widetilde{\delta})$, induced by the subspace inclusion $j:P\Dm(m)^\ast\longrightarrow \Ba^\ast(\Dm(m))$ is an epimorphism.
\end{lem}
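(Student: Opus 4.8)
The plan is to deduce this epimorphism from Theorem \ref{thm:primitive-diagrams} by a duality argument, using the zig-zag equations \eqref{eq:zigzag} only to realize a lift explicitly. First I would observe that, by definition, $j^\ast$ is the map on cohomology obtained by dualizing the chain map $j\colon (P\Dm(m)^\ast,\delta^\ast|_{P\Dm(m)^\ast}) \to (\Ba^\ast(\Dm(m)),d^\ast_{\Ba})$. Here $j$ is genuinely a chain map: the primitives of a CDGC form a subcomplex, and a length-$1$ primitive monomial has vanishing homological differential, as recorded just before Section \ref{S:power-series}. Under the graded-duality identifications $I\Dm(m) = (P\Dm(m)^\ast)^\vee$ and $\Ba(\Dm(m)) = (\Ba^\ast(\Dm(m)))^\vee$, and via the universal coefficient theorem over the field $\R$ together with the naturality of the Kronecker pairing, the induced map $j^\ast$ is precisely the graded dual of the homology map $j_\ast\colon H_\ast(P\Dm(m)^\ast) \to H_\ast(\Ba^\ast(\Dm(m)))$. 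Since over a field a linear map is surjective if and only if its dual is injective, it therefore suffices to prove that $j_\ast$ is injective.

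The injectivity of $j_\ast$ is exactly the content of the equality $PH_\ast(\Ba^\ast(\Dm(m))) = H_\ast(P\Dm(m)^\ast)$ from formula \eqref{eq:prim-homology-vs-homology-prim}, established in the proof of Theorem \ref{thm:primitive-diagrams}(a). Indeed, the ``$\supset$'' step there shows that two primitive cycles $\Gamma_1^\ast$ and $\Gamma_2^\ast$ are homologous in $(P\Dm(m)^\ast,\delta^\ast)$ if and only if $[\Gamma_1^\ast]$ and $[\Gamma_2^\ast]$ are homologous in $(\Ba^\ast(\Dm(m)),d^\ast_{\Ba})$, which is precisely injectivity of $j_\ast$ (in fact $j_\ast$ is an isomorphism onto the submodule of primitives of the Hopf algebra $H_\ast(\Ba^\ast(\Dm(m)))$). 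Dualizing this injection yields that $j^\ast$ is an epimorphism, as claimed.

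For the constructive viewpoint of this appendix, I would then indicate how a cocycle realizing a prescribed class is produced: given a $\widetilde{\delta}$-cocycle $\bar w \in I\Dm(m)$, lift it to $z^{(1)} \in \Dm(m)$, so that $\delta z^{(1)}$ is decomposable, and solve the zig-zag \eqref{eq:zigzag} for the higher-length terms $z^{(2)}, z^{(3)}, \dots$. This process terminates in each total degree because every bar factor of a nonempty diagram has internal degree at least $n-1 \geq 2$, so the monomial length is bounded. The only delicate point is the solvability of each stage $\delta_{\Ba} z^{(i)} = D_{\Ba} z^{(i+1)}$, i.e.\ the vanishing of the obstruction to extending the partial cocycle; this is exactly what injectivity of $j_\ast$ guarantees abstractly, so I would use the duality argument as the backbone and regard the explicit construction as a consequence rather than re-deriving the obstruction vanishing by hand. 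Accordingly, the main obstacle is not the cocycle construction but the careful bookkeeping of the graded-duality identifications, which must be checked degree by degree so that the passage between $j_\ast$ and $j^\ast$ is legitimate.
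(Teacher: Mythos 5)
Your proposal is correct and follows essentially the same route as the paper: the paper likewise observes that $j$ induces a monomorphism $j_\ast$ on homology with image the primitives (by the proof of Theorem \ref{thm:primitive-diagrams}), then dualizes via the Universal Coefficient Theorem to obtain the epimorphism. The additional remarks on solving the zig-zag \eqref{eq:zigzag} are not part of the paper's proof of the lemma but are consistent with how the appendix subsequently uses it.
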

\begin{proof}
	Following the proof of Theorem \ref{thm:primitive-diagrams}, we observe that the inclusion $j$ induces a monomorphism 
	\[
	 j_\ast:H_\ast(P\Dm(m)^\ast,\delta^\ast|_{P\Dm(m)^\ast})\longrightarrow H_\ast(\Ba^\ast(\Dm(m)),d^\ast_{\Ba}),
	\]
	whose image is $PH_\ast(\Ba^\ast(\Dm(m)),d^\ast_{\Ba})$. Dualizing and applying the Universal Coefficient Theorem yields the claim. \qedhere
\end{proof}
\no	As a consequence, for any cocycle in $H^\ast(\Ba(\Dm(m)),d_{\Ba})$ represented by $z:\Ba^\ast(\Dm(m))\longrightarrow \R$, the top term $z^{(1)}$ satisfies
	\begin{equation}\label{eq:zigzag-ini}
\widetilde{\delta} z_0=0,\qquad z_0=j^\ast (z)=z^{(1)}|_{P\Dm(m)^\ast}.
\end{equation}
In other words, $z_0$ represents a cocycle in $H^\ast(P\Dm(m),\widetilde{\delta})$. As a result we may attempt to recover $z$ from a representative cocycle $z_0$ by solving the system \eqref{eq:zigzag}. We end this Appendix by performing such a calculation for two examples. The reader may consult \cite{Haefliger:1978}, \cite{Komendarczyk:2010}, and \cite{Sinha:2012} for other approaches to cocycle computations. In the following examples we denote by $z_{j;I}$ the expressions for cocycles dual to the left--normed brackets $B_{j;I}$ defined in \eqref{eq:left-normed-brackets}.

\begin{example}[Computation of $z_{2;1,1}$, a cocycle detecting $B_{2;1,1}$]	
Let $n$ be odd.  A representative of $B_{2;1,1}=[B_{2,1},B_{2,1}]$, $n$ odd, in $H_\ast(P\Dm(2)^\ast)$
	was already computed in Example \ref{Ex:2HopfMap}.
	 Thus we may try $z_0=-\vvcenteredinclude{0.1}{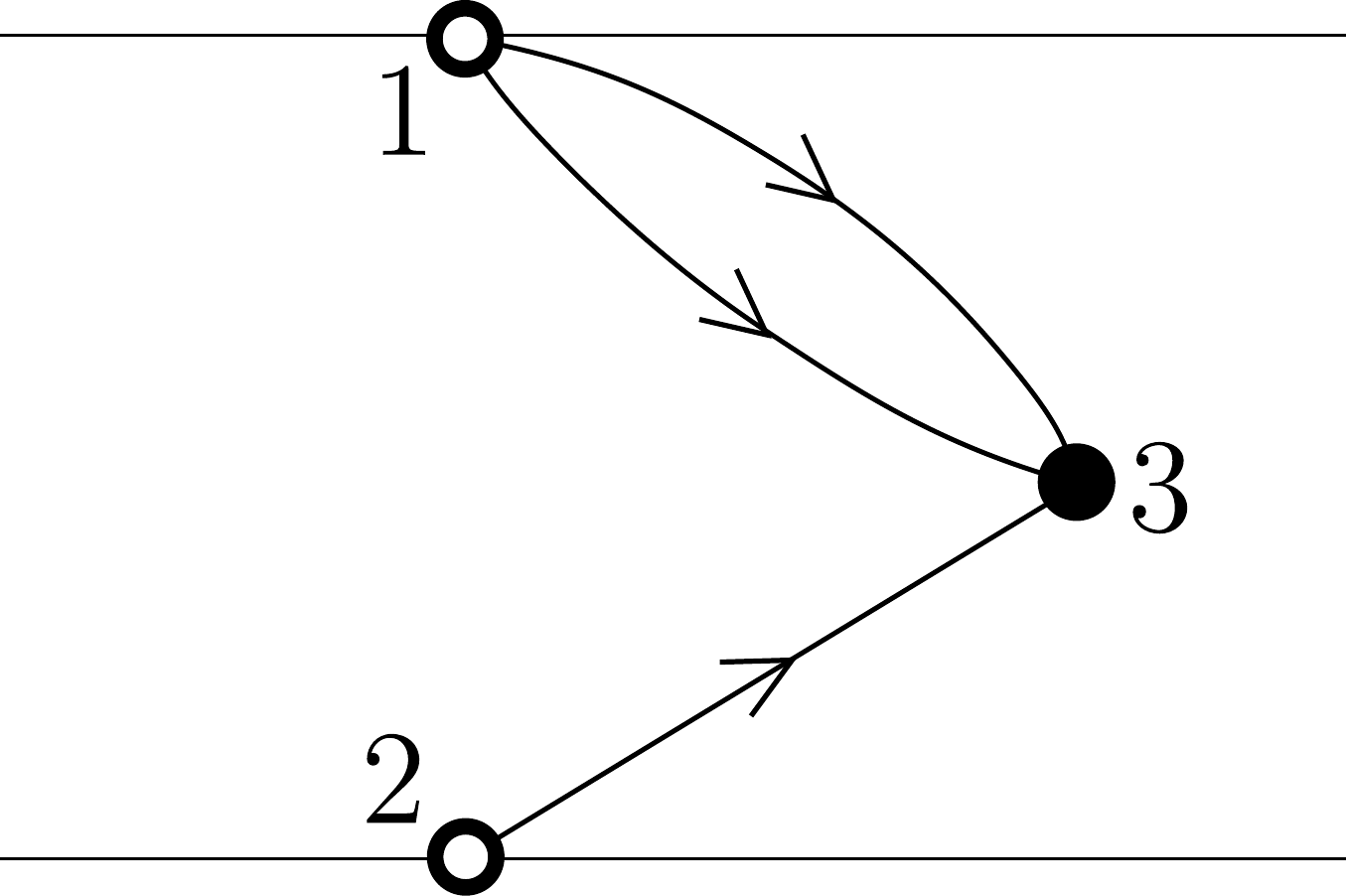}$ as a candidate in \eqref{eq:zigzag-ini}. Since $\delta z_0=\vvcenteredinclude{0.1}{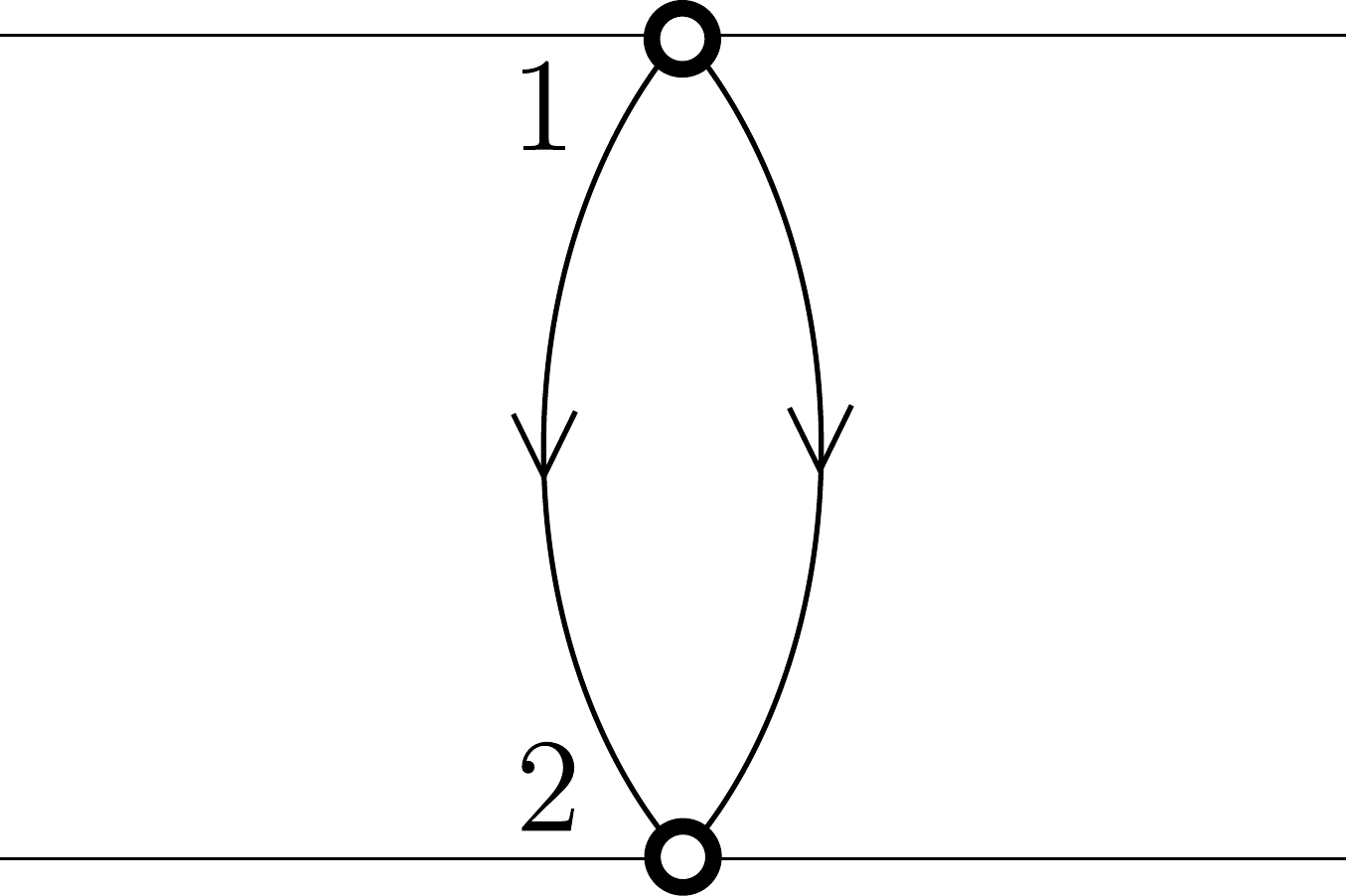}$, we have $\widetilde{\delta} z_0=0$. We may try solving \eqref{eq:zigzag} with $z^{(1)}=z_0$, which yields just a single equation 	
\[
	\delta_{\Ba} z_0=D_{\Ba} z^{(2)},
\]	
with $z^{(2)}$ to be determined. Applying the definitions we have
\[
\delta_{\Ba} \left(-\vvcenteredinclude{0.14}{sqCo-z0.pdf}\right)=-\vvcenteredinclude{0.14}{sqCo-G21-sq.pdf}=D_{\Ba} \left(\vvcenteredinclude{0.14}{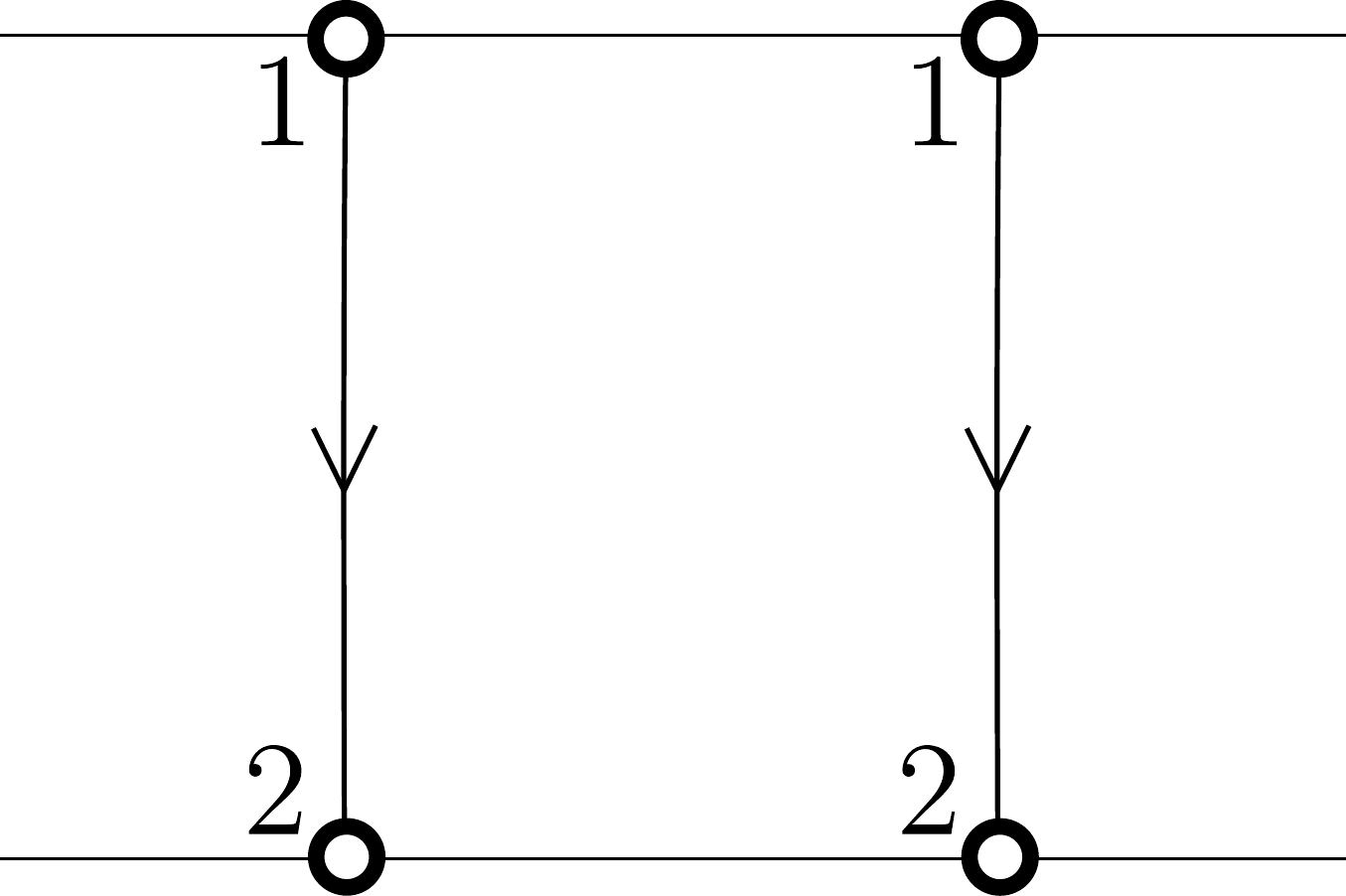}\right),
\]
and therefore 
\[
z_{2;1,1}=z^{(1)}+z^{(2)}=-\vvcenteredinclude{0.14}{sqCo-z0.pdf}+\vvcenteredinclude{0.14}{sqCo-G21-G21.pdf}.
\]
The corresponding Chen iterated integral\footnote{The $z^{(1)}$ term vanishes because the diagram has a double edge and $I$ vanishes on such diagrams.} is
\[
 \Phi(z_{2;1,1})=\varint \alpha_{2,1}\alpha_{2,1}.
\]
One may recognize that, up to a sign, the above integral is just the classical Whitehead integral for the Hopf invariant (cf.~\cite{Hain:1984}), so it yields $\pm 2$ on $[B_{2,1}, B_{2,1}]$.  (If $n-1$ is $2,4,$ or $8$, this cycle is twice the Hopf map, and otherwise it corresponds to a generator of $\pi_{2n-3}(S^{n-1})$.)
Alternatively (and to get the correct sign), 
we can use \eqref{eq:diagram-2hopf} to get $\langle \Theta(B_{2;1,1}), z_{2;1,1} \rangle= 2$ because the first diagram in $z_{2;1,1}$, which corresponds to the first diagram in \eqref{eq:diagram-2hopf}, has 2 automorphisms.
\end{example}

\begin{example}[Computation of $z_{3;1,2,2}$, a cocycle detecting $B_{3;1,2,2}$]
\label{Ex:z3122}
Next we show the computation for a  cocycle detecting $B_{3;1,2,2}=[[B_{3,1},B_{3,2}],B_{3,2}]$.  
The class $B_{3;1,2,2}$ is nontrivial in either parity of $n$, but we show the calculation only for $n$ odd.
Also, to avoid lengthy expressions we calculate in $\D(3)$ rather than in $\Dm(3)$.

As in the previous example, we can deduce that  
$z_0=\vvcenteredinclude{0.1}{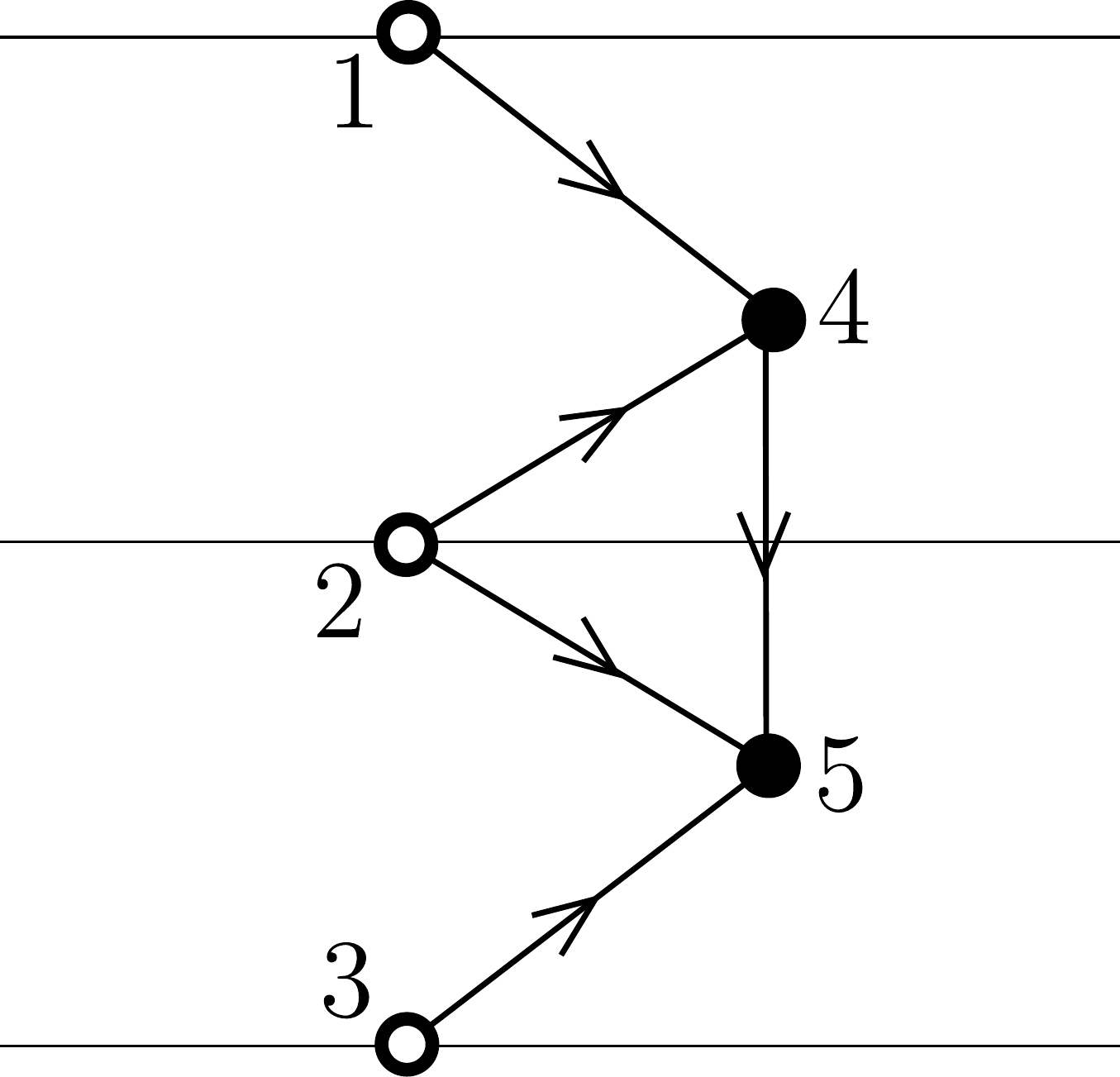}$ represents a cocycle in\\ $H^{5(n-1)-2n}(P\D(3),\widetilde{\delta})$, dual to $\Gamma^\ast_{3;1,2,2}=\Theta(B_{3;1,2,2})$. Next, we solve \eqref{eq:zigzag} with $z^{(1)}=z_0$:
\[ 
\delta_{\Ba}\left(\raisebox{-1.6pc}{\includegraphics[scale=0.12]{Co-W.pdf}}\right)=-\left(-\raisebox{-1.6pc}{\includegraphics[scale=0.12]{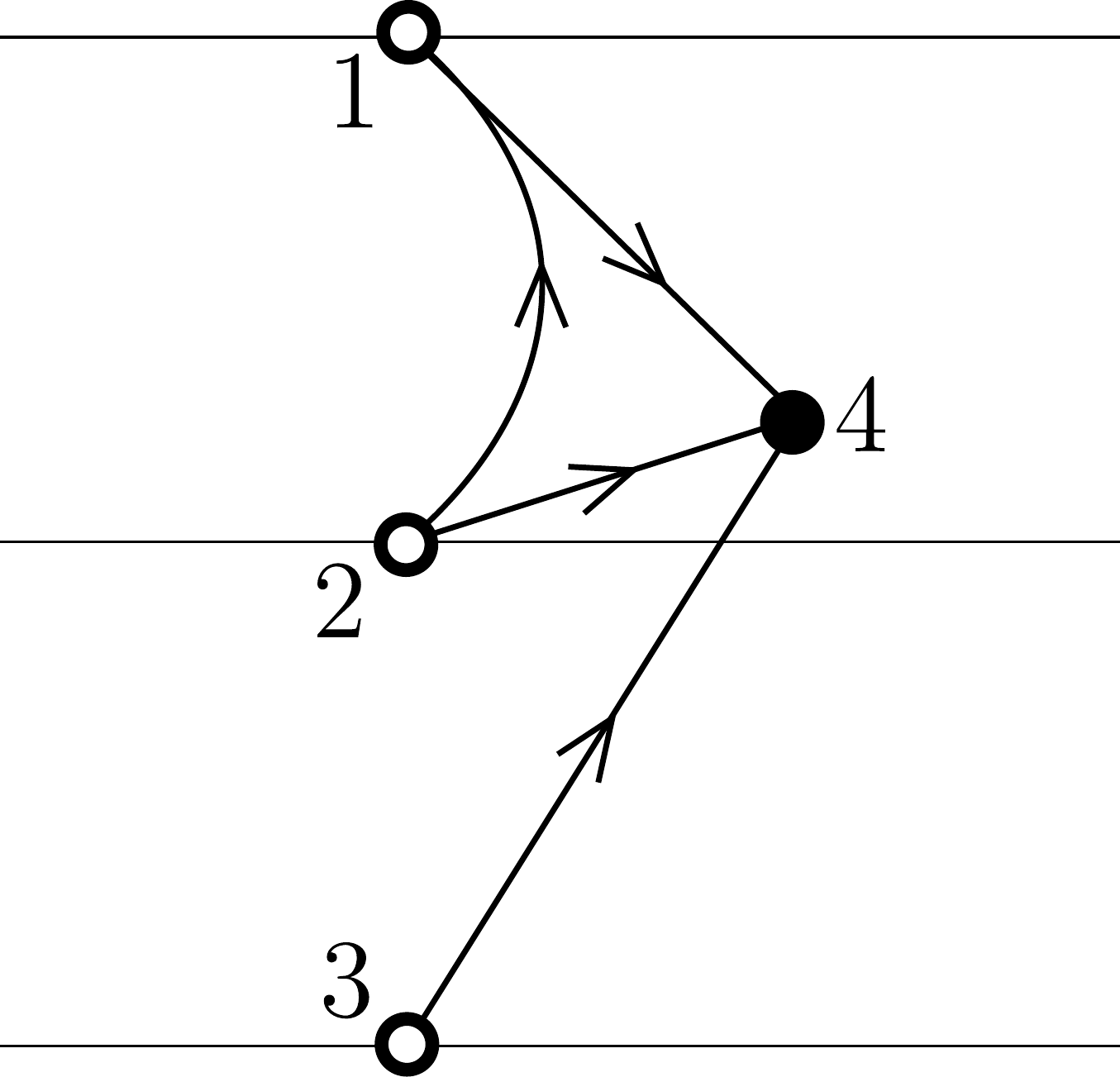}}+\raisebox{-1.6pc}{\includegraphics[scale=0.12]{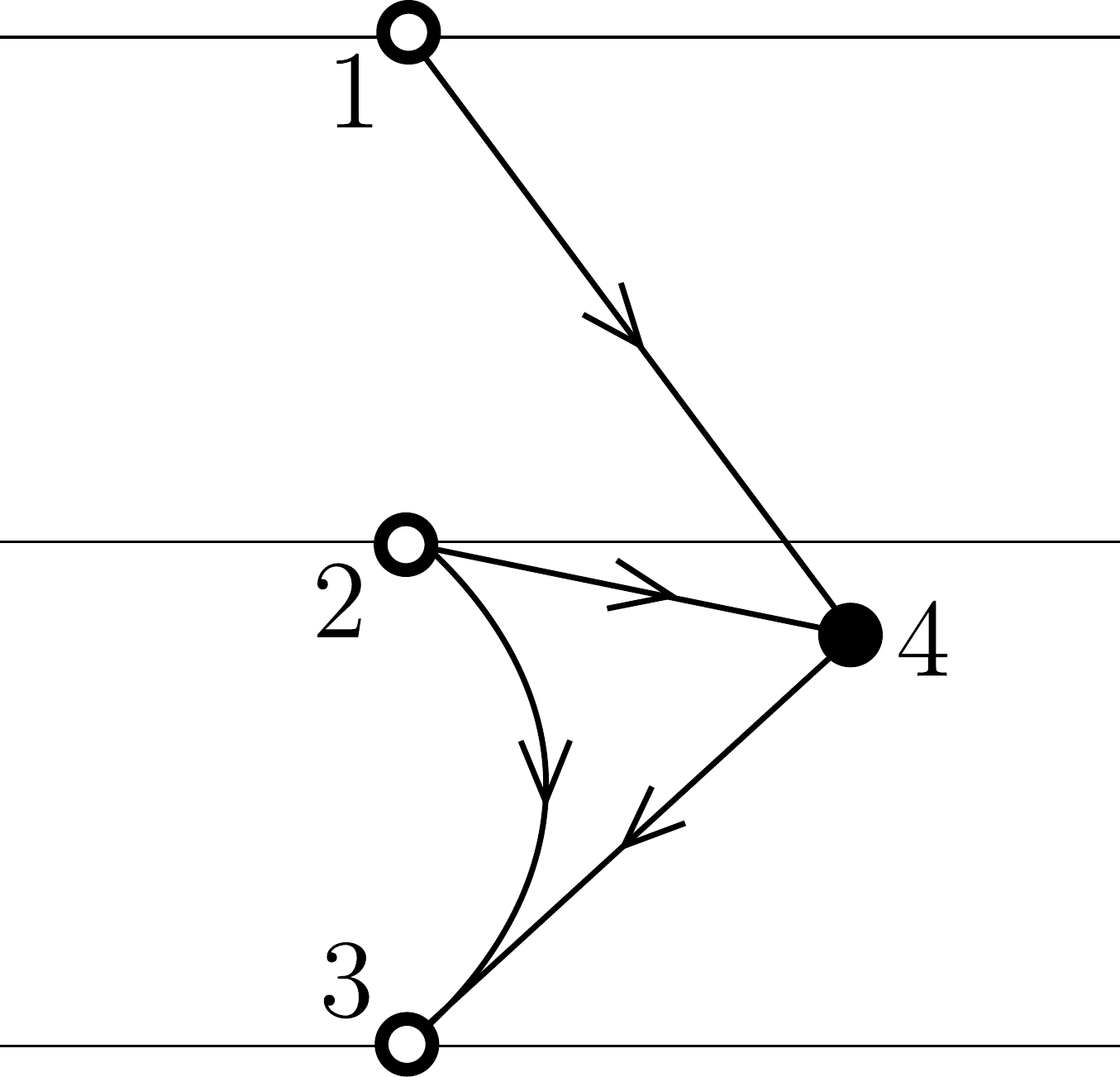}}\right)=\raisebox{-1.6pc}{\includegraphics[scale=0.12]{Co-W1.pdf}}-\raisebox{-1.6pc}{\includegraphics[scale=0.12]{Co-W2.pdf}}.
\]
We match the right side with a value under $D_{\Ba}$:
\[
D_{\Ba}\left(\raisebox{-1.6pc}{\includegraphics[scale=0.12]{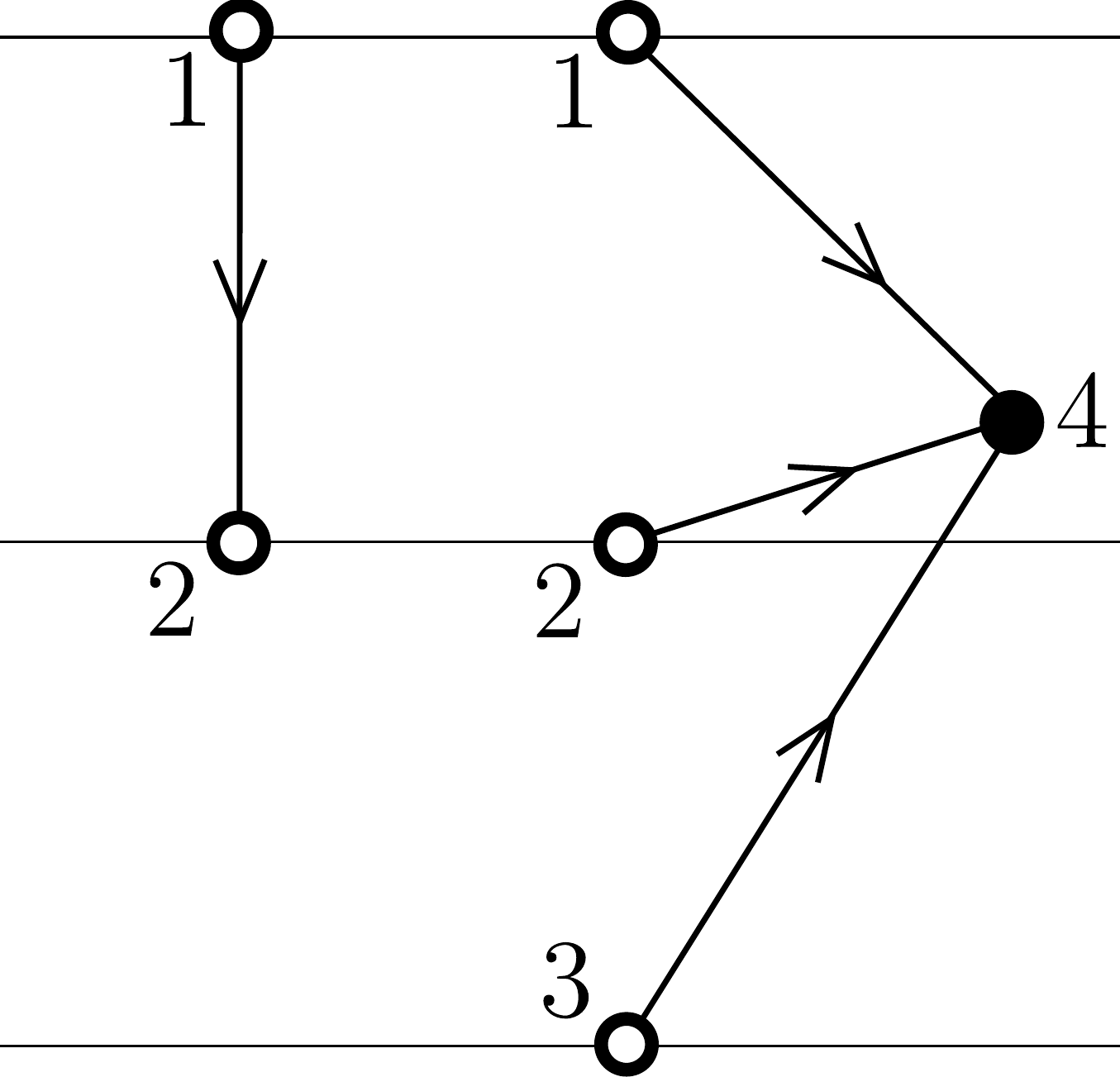}}\right)=-\raisebox{-1.6pc}{\includegraphics[scale=0.12]{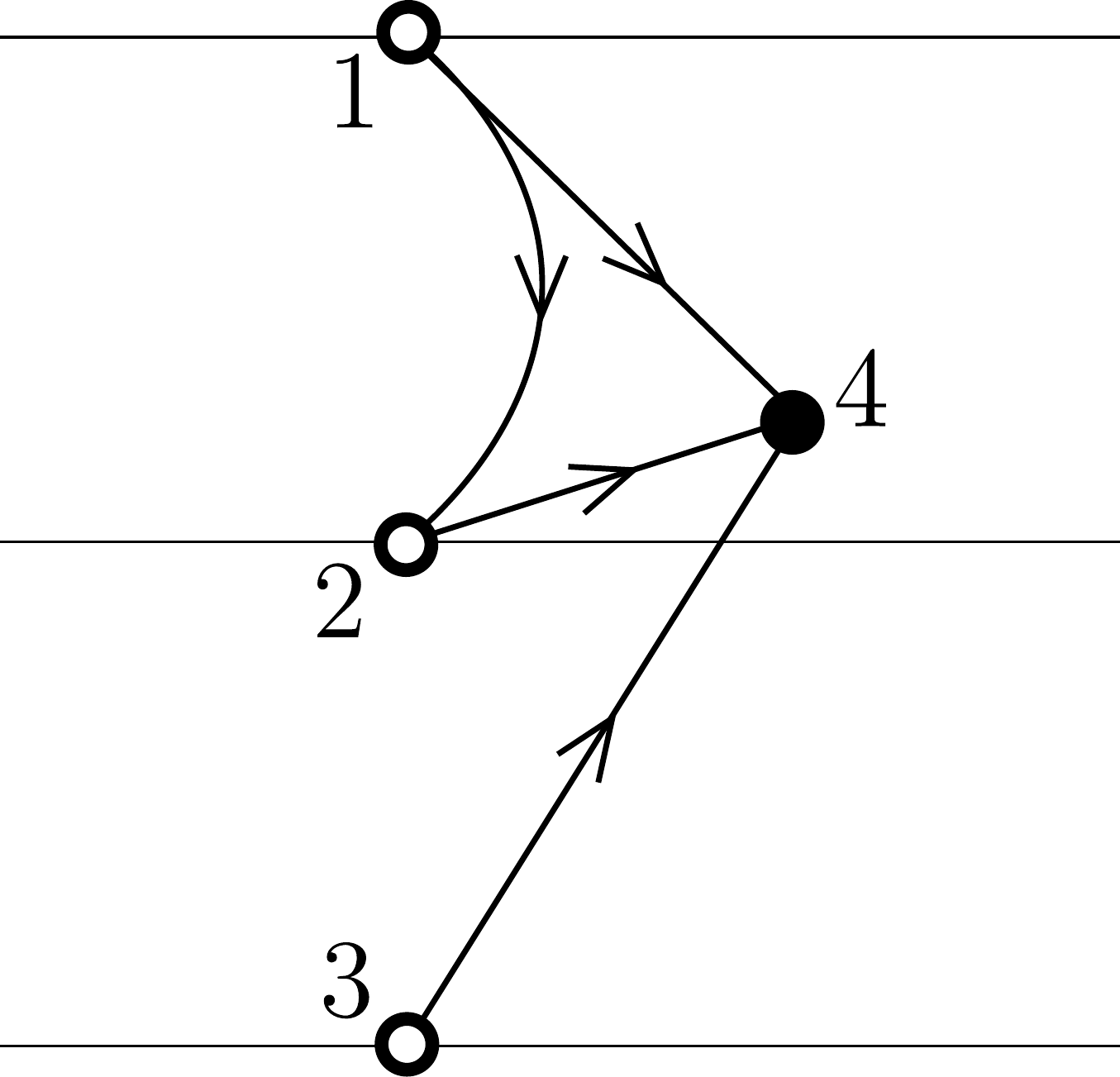}},\qquad D_{\Ba}\left(\raisebox{-1.6pc}{\includegraphics[scale=0.12]{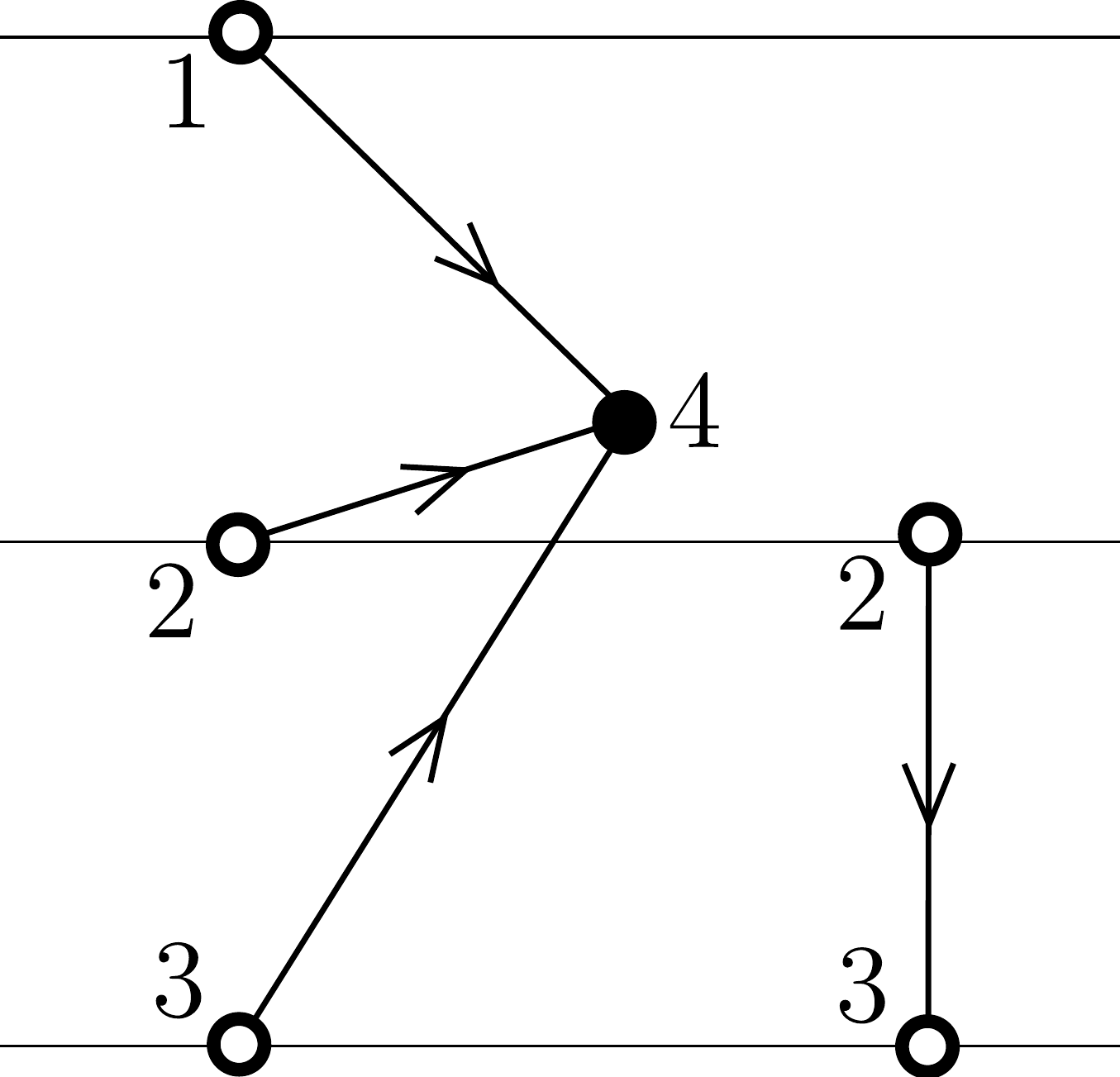}}\right)=\raisebox{-1.6pc}{\includegraphics[scale=0.12]{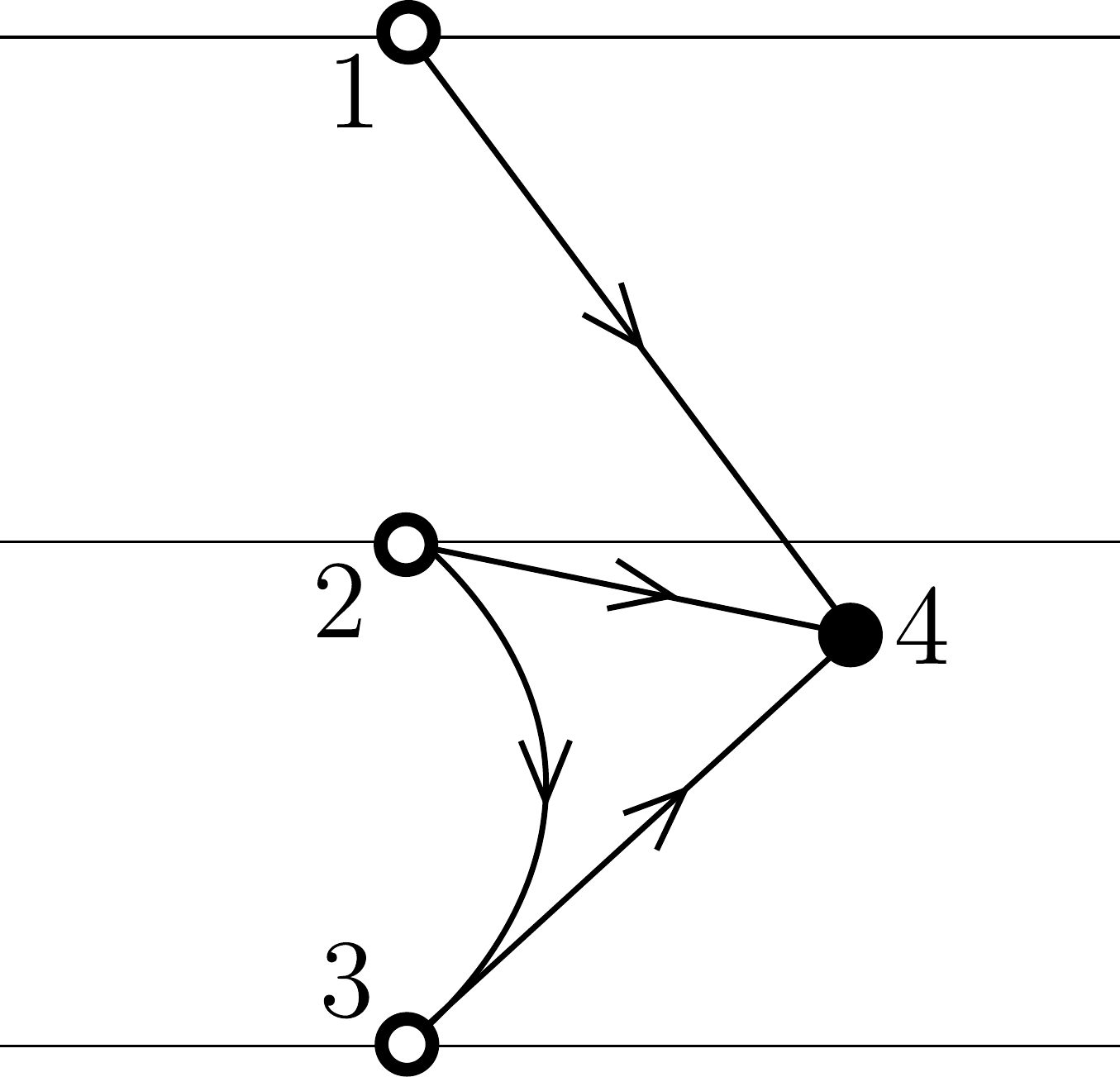}}. 
\]
Further,
\[
\begin{split}
\delta_{\Ba}\left(\raisebox{-1.6pc}{\includegraphics[scale=0.12]{Co-G12-T1234.pdf}}\right) & =\left(-\raisebox{-1.6pc}{\includegraphics[scale=0.12]{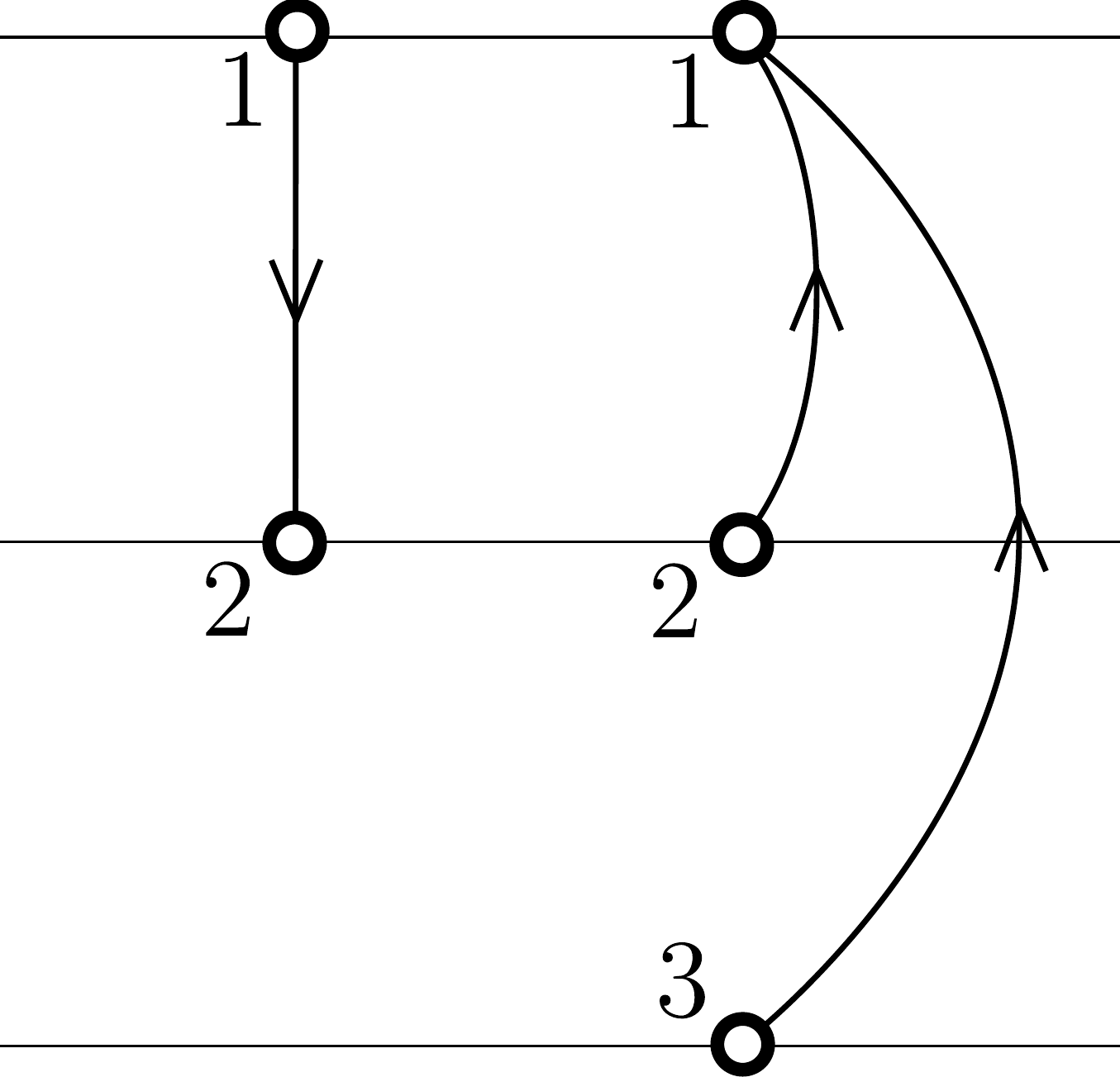}}-\raisebox{-1.6pc}{\includegraphics[scale=0.12]{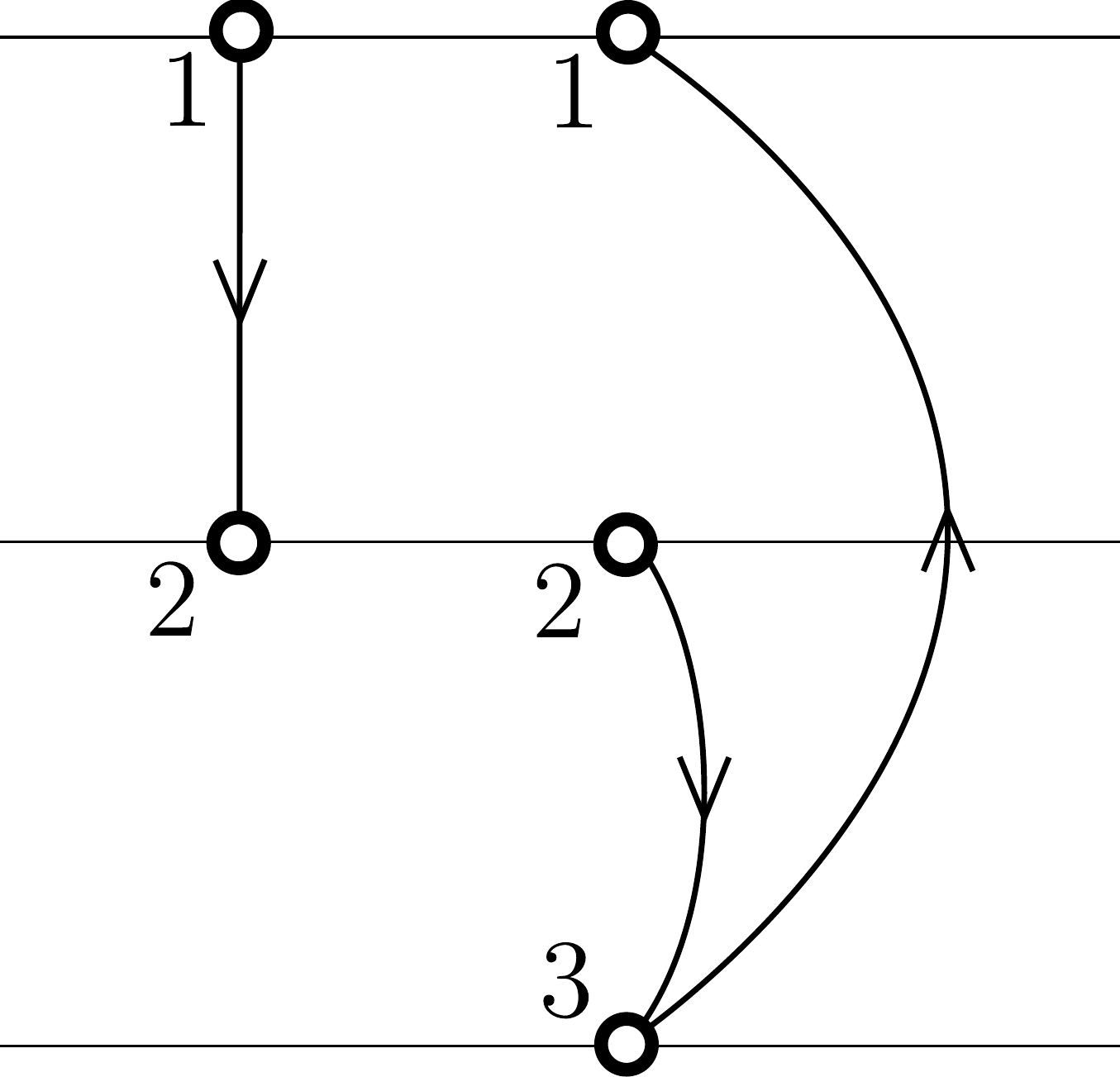}}-\raisebox{-1.6pc}{\includegraphics[scale=0.12]{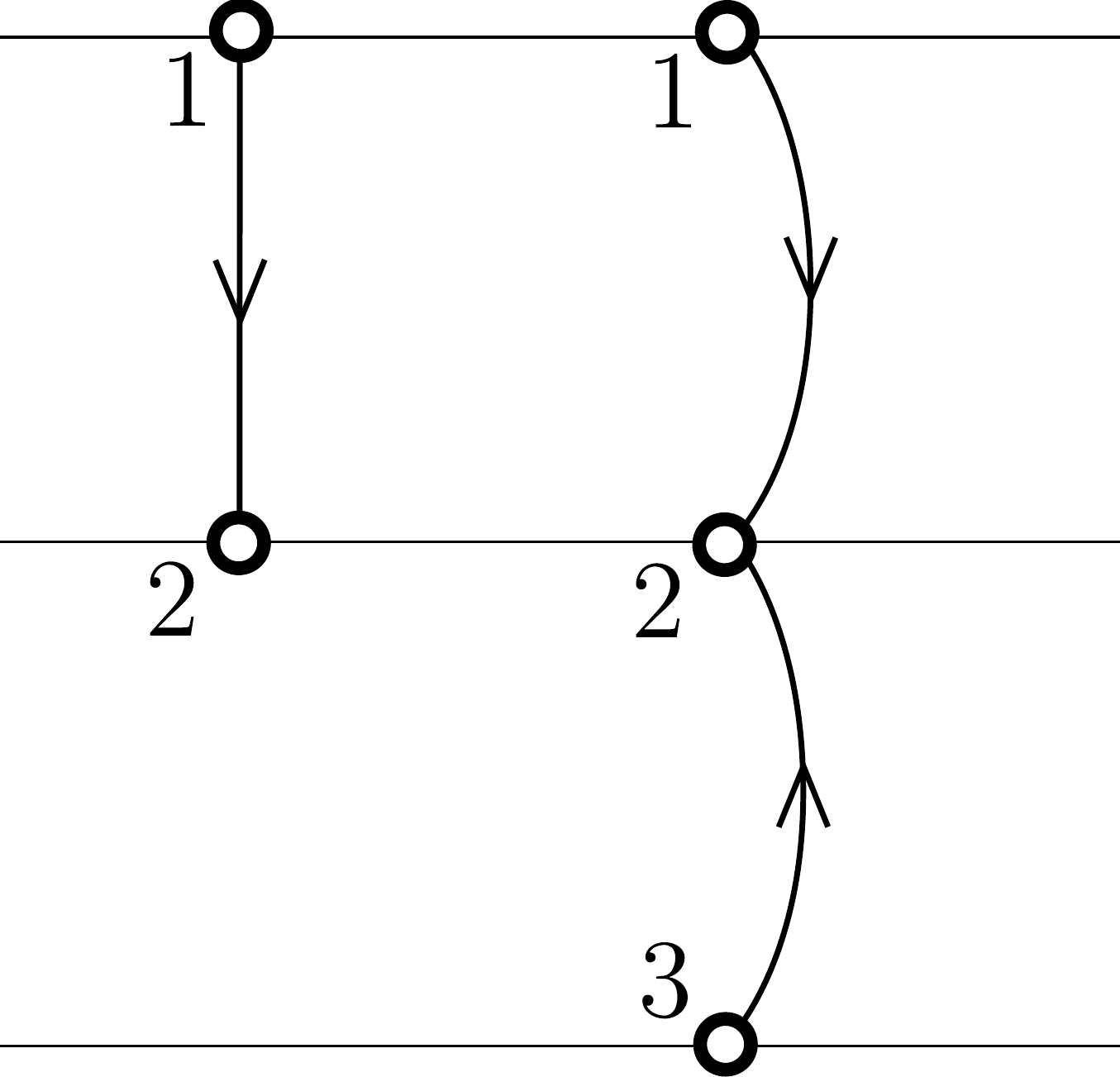}}\right),\\
\delta_{\Ba}\left(\raisebox{-1.6pc}{\includegraphics[scale=0.12]{Co-T1234-G23.pdf}}\right) & =-\left(-\raisebox{-1.6pc}{\includegraphics[scale=0.12]{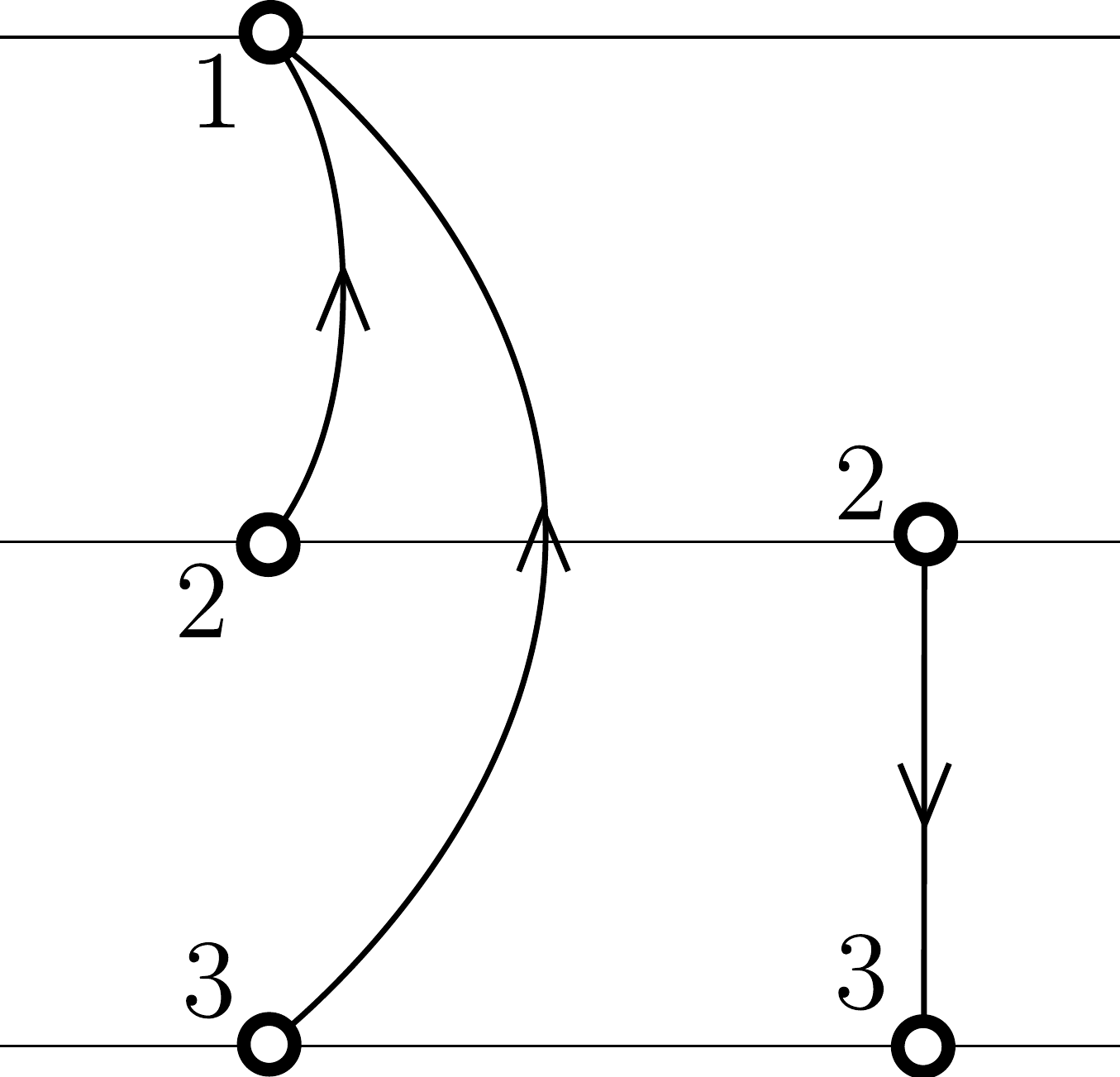}}-\raisebox{-1.6pc}{\includegraphics[scale=0.12]{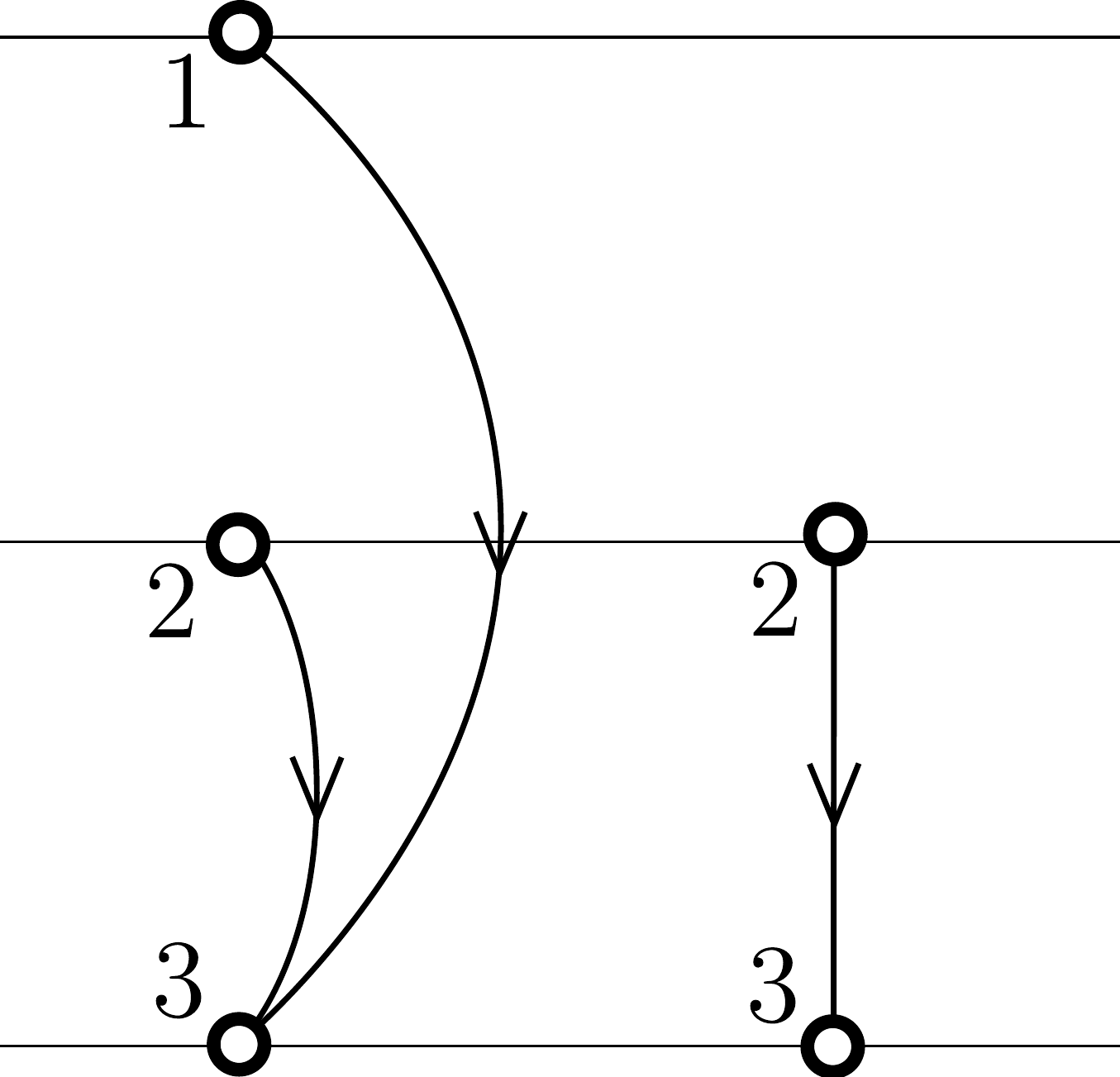}}-\raisebox{-1.6pc}{\includegraphics[scale=0.12]{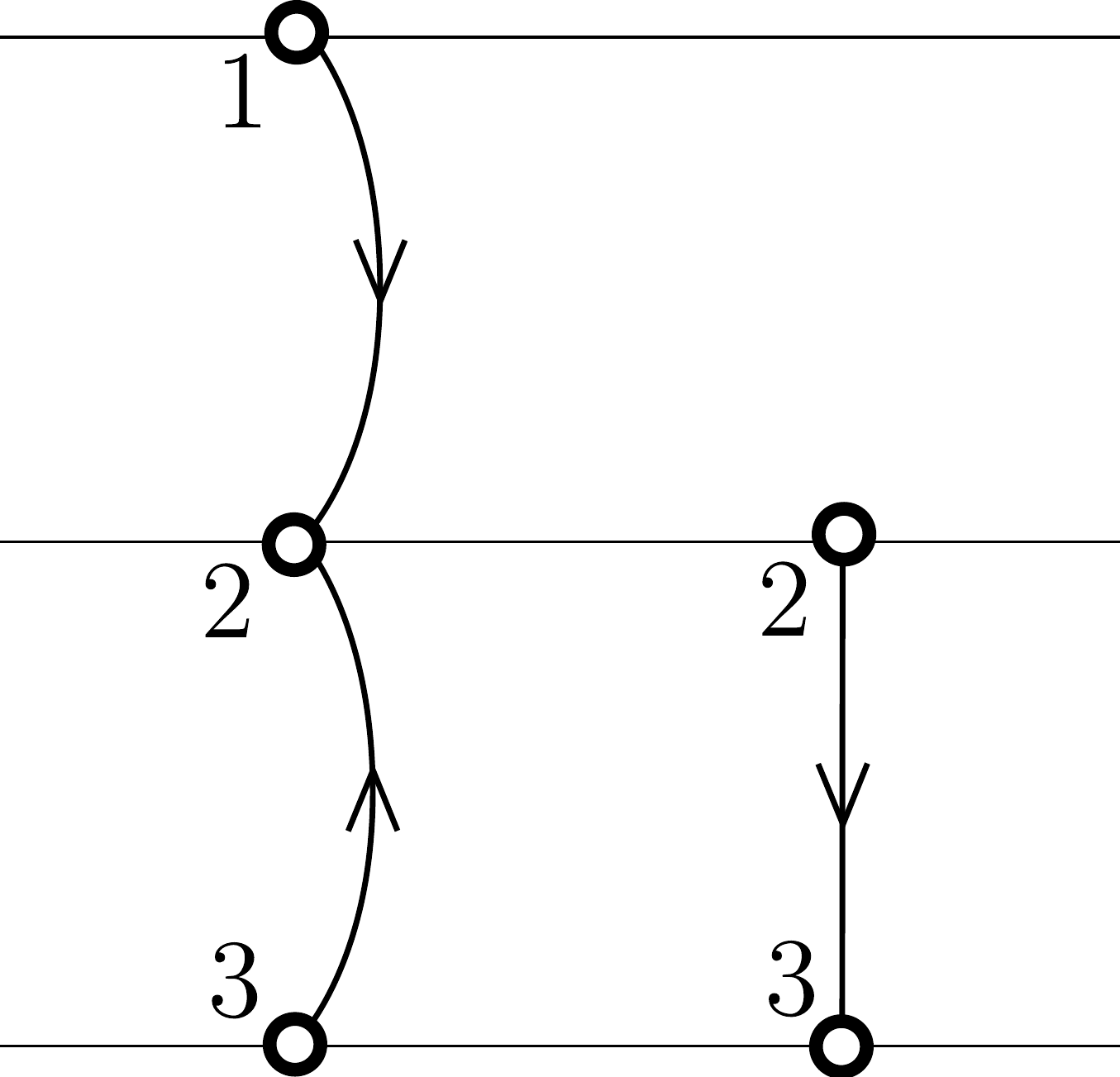}}\right),
\end{split}
\]
and
\[
D_{\Ba}\left(\raisebox{-1.6pc}{\includegraphics[scale=0.12]{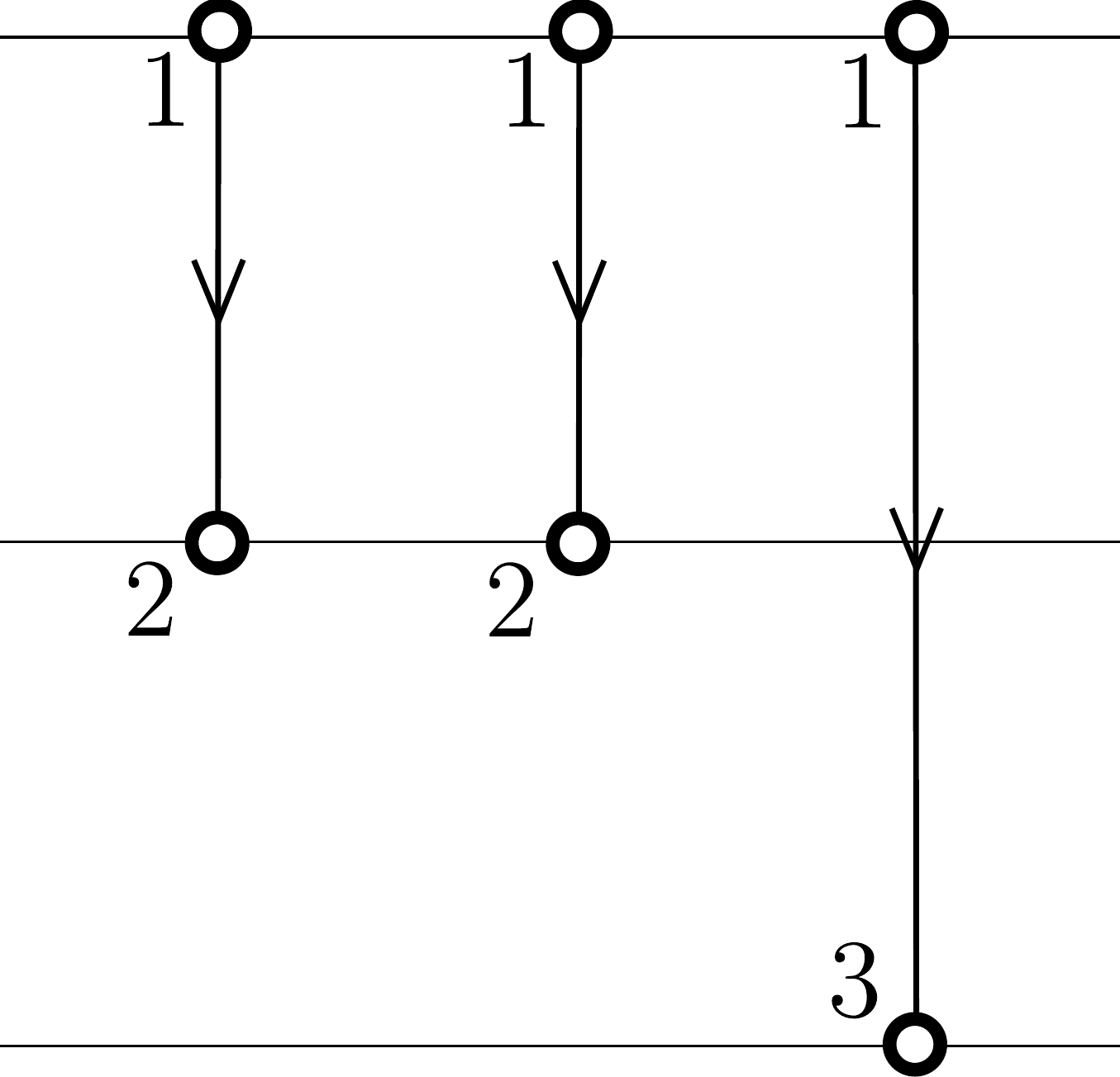}}\right)=\raisebox{-1.6pc}{\includegraphics[scale=0.12]{Co-G12-G12G13.pdf}},\quad D_{\Ba}\left(\raisebox{-1.6pc}{\includegraphics[scale=0.12]{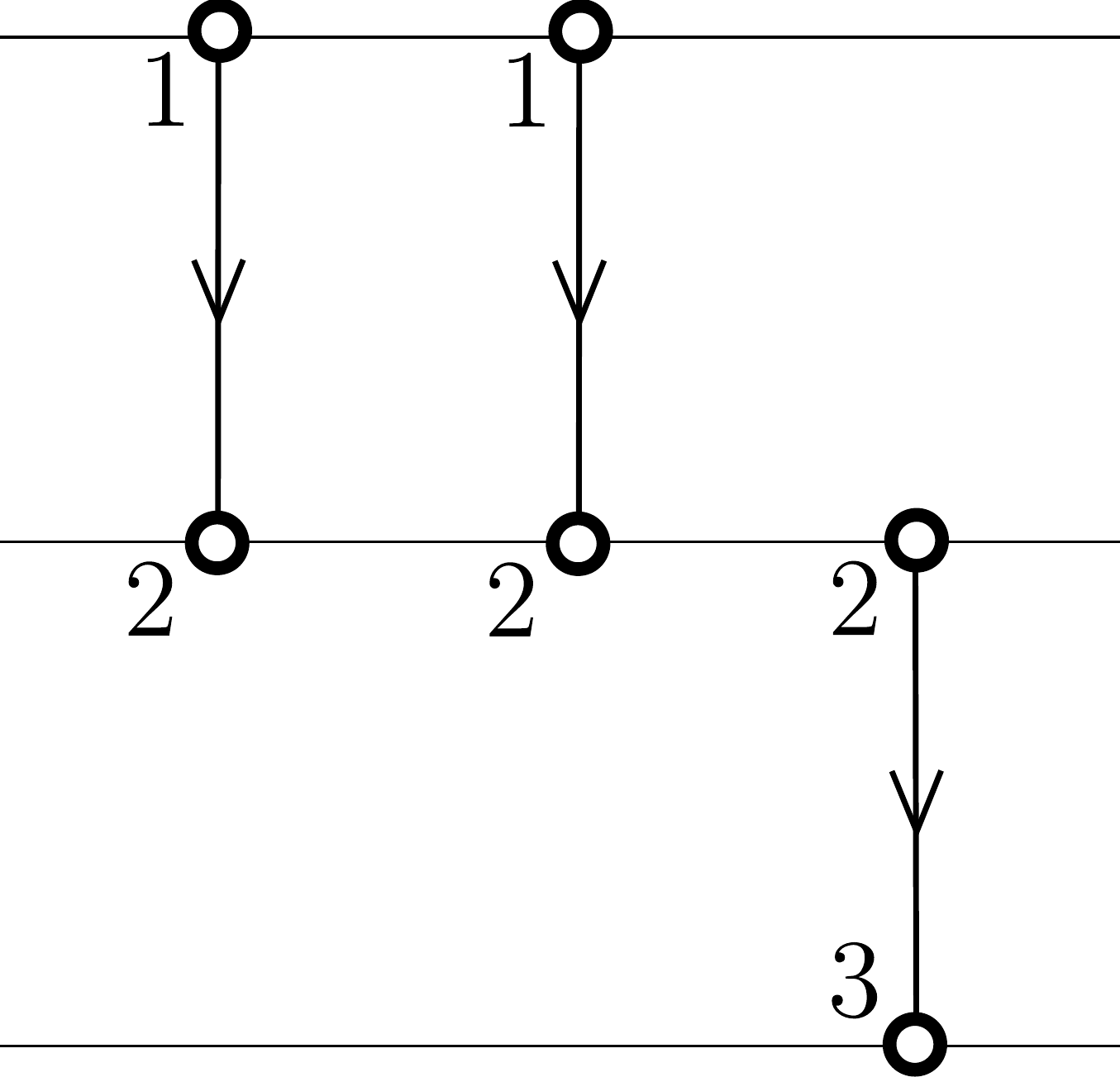}}\right)=-\raisebox{-1.6pc}{\includegraphics[scale=0.12]{Co-G12-G23G12.pdf}},
\]
\[
D_{\Ba}\left(\raisebox{-1.6pc}{\includegraphics[scale=0.12]{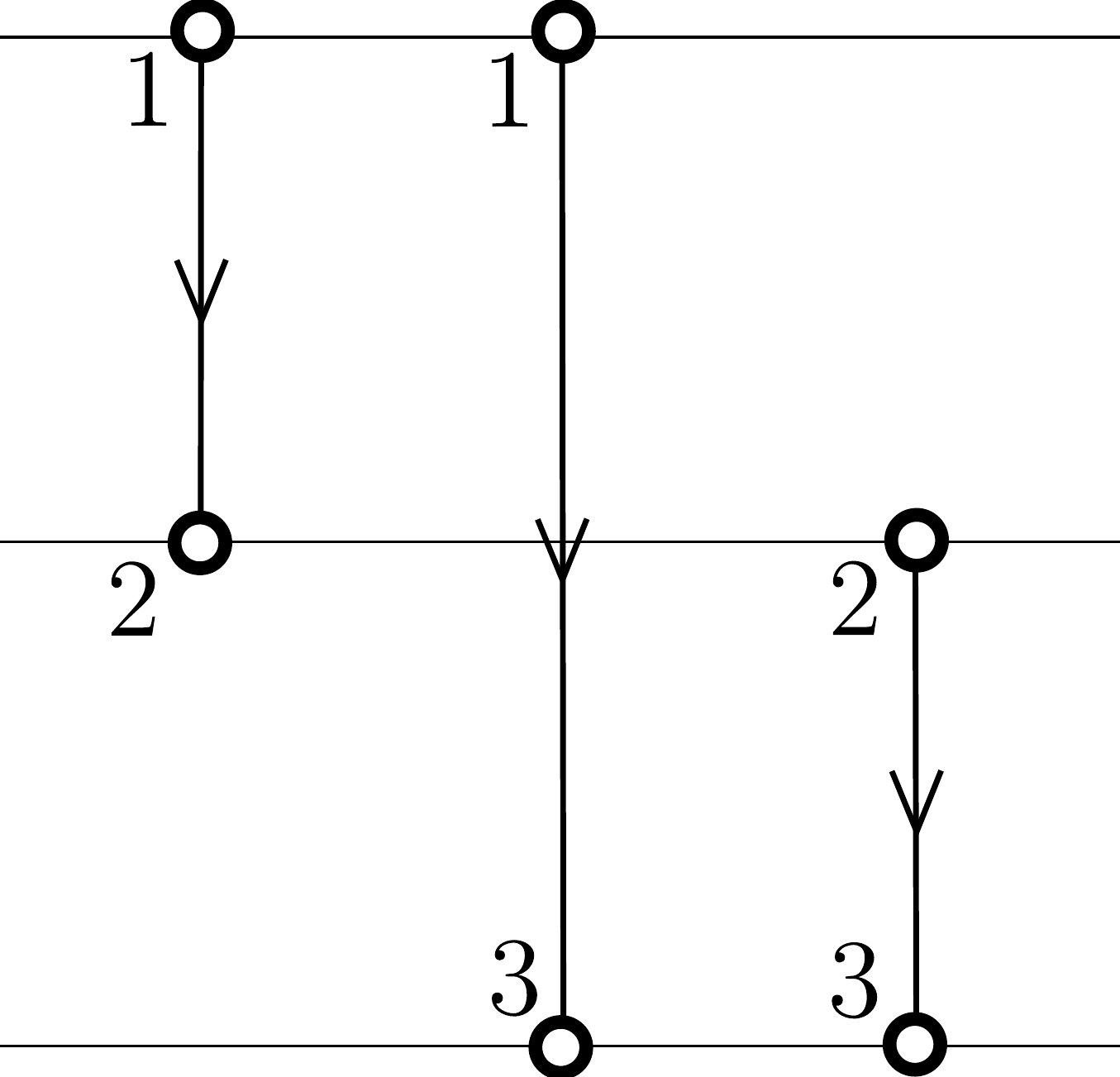}}\right)=-\raisebox{-1.6pc}{\includegraphics[scale=0.12]{Co-G12G13-G23.pdf}}+ \raisebox{-1.6pc}{\includegraphics[scale=0.12]{Co-G12-G23G13.pdf}},
\]
\[
D_{\Ba}\left(\raisebox{-1.6pc}{\includegraphics[scale=0.12]{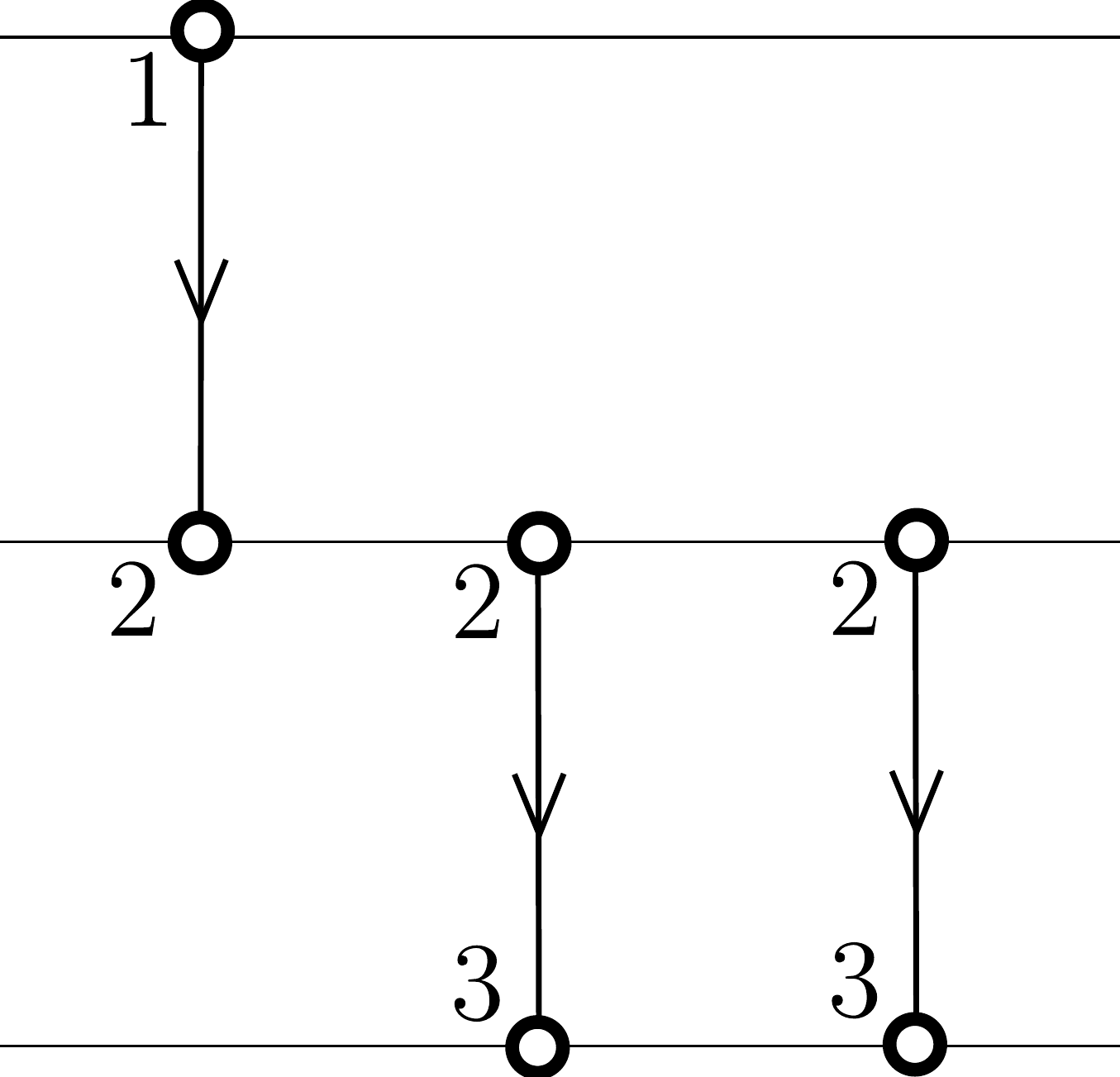}}\right)=\raisebox{-1.6pc}{\includegraphics[scale=0.12]{Co-G23G12-G23.pdf}},
\qquad 
D_{\Ba}\left(\raisebox{-1.6pc}{\includegraphics[scale=0.12]{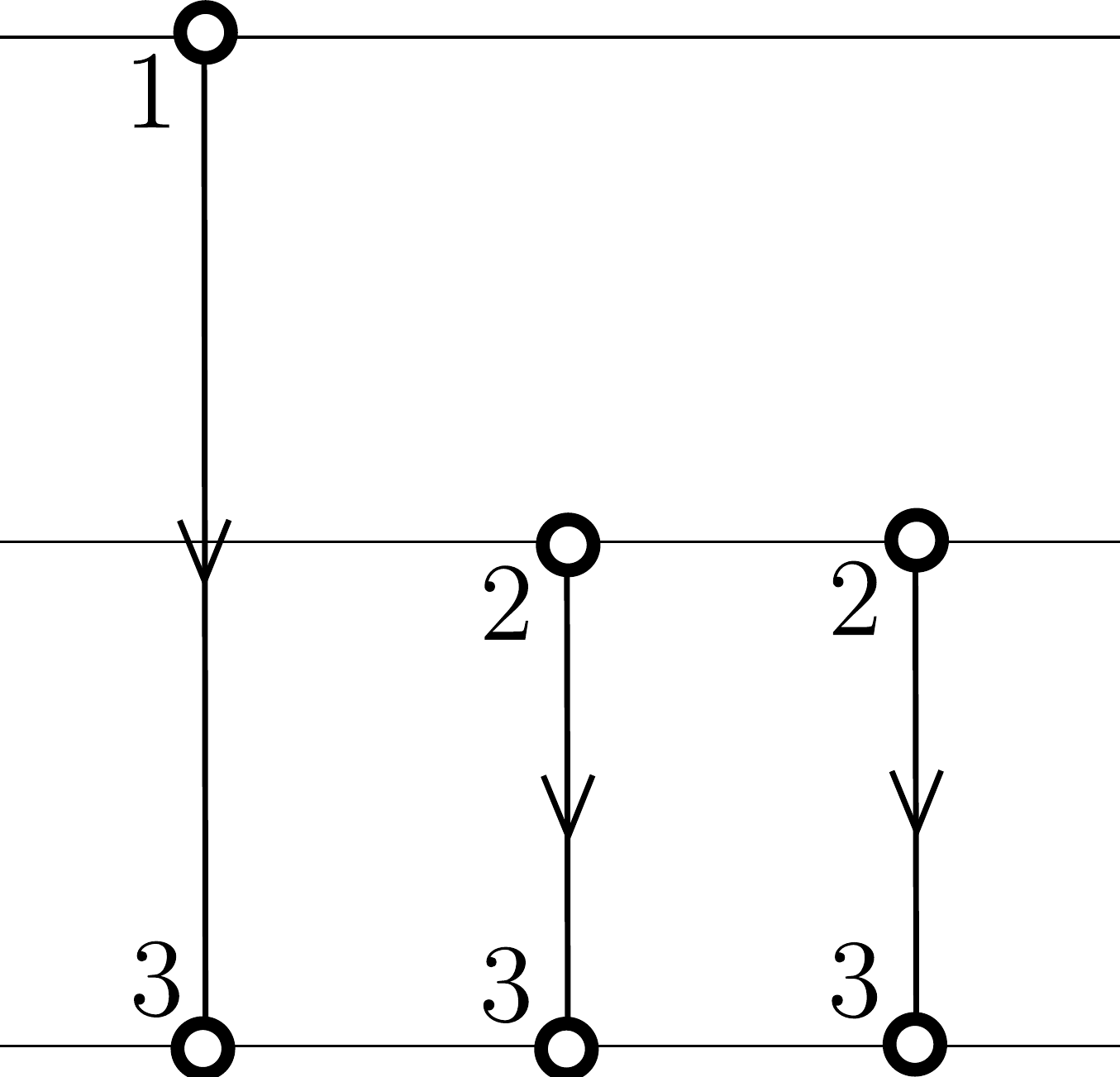}}\right)= 
- \raisebox{-1.6pc}{\includegraphics[scale=0.12]{Co-G23G13-G23.pdf}}.
\]
This yields the cocycle
\[
\begin{split}
\raisebox{-1.6pc}{\includegraphics[scale=0.12]{Co-G12-G12-G13.pdf}} 
- \raisebox{-1.6pc}{\includegraphics[scale=0.12]{Co-G12-G12-G23.pdf}}
+\raisebox{-1.6pc}{\includegraphics[scale=0.12]{Co-G12-G13-G23.pdf}}
-\raisebox{-1.6pc}{\includegraphics[scale=0.12]{Co-G12-G23-G23.pdf}}
+ \raisebox{-1.6pc}{\includegraphics[scale=0.12]{Co-G13-G23-G23.pdf}} 
- \raisebox{-1.6pc}{\includegraphics[scale=0.12]{Co-G12-T1234.pdf}} 
- \raisebox{-1.6pc}{\includegraphics[scale=0.12]{Co-T1234-G23.pdf}}
+\raisebox{-1.6pc}{\includegraphics[scale=0.12]{Co-W.pdf}}.
\end{split}
\]
This cocycle was previously found in Example 5.12 of \cite{KKV:2020}.
The corresponding  Chen's iterated integral reads
\[
\begin{split}
\Phi(z_{3;1,2,2}) =&
\varint \alpha_{2,1}\alpha_{2,1}\alpha_{3,1}
+\varint \alpha_{2,1}\alpha_{2,1}\alpha_{3,2}
+\varint \alpha_{2,1}\alpha_{3,1}\alpha_{3,2}
+\varint \alpha_{2,1}\alpha_{3,2}\alpha_{3,2}
+\varint \alpha_{3,1}\alpha_{3,2}\alpha_{3,2}\\
&-\varint \alpha_{2,1}\pi_{\ast}(\alpha_{4,1}\times\alpha_{4,2}\times\alpha_{4,3})
-\varint \pi_{\ast}(\alpha_{4,1}\times\alpha_{4,2}\times\alpha_{3,4})\alpha_{3,2}\\
&+\varint \pi_{\ast}(\alpha_{4,1}\times\alpha_{4,2}\times\alpha_{5,2}\times\alpha_{5,3}),
\end{split}
\]
where the pushforward $\pi_{\ast}$ was previously defined in \eqref{E:FormalityIntegral} and 
\[
 \pi_{\ast}(\alpha_{4,1}\times\alpha_{4,2}\times\alpha_{5,2}\times\alpha_{5,3})=I\left(\vvcenteredinclude{0.1}{Ex-5--trapez.pdf}\right),\quad
 \pi_{\ast}(\alpha_{4,1}\times\alpha_{4,2}\times\alpha_{4,3})=I\left(\vvcenteredinclude{0.1}{Ex-0-tripod.pdf}\right).
\]
\end{example}

We end this appendix by proposing that all cocycles $z_{j;I}$ and the corresponding iterated integrals should be computable 
via the method outlined here, and one may hope for a more tractable algorithm for computing such cocycle expressions. We aim to investigate this further in the context of geometric problems as studied, for example, in  \cite{Arone-Krushkal, Freedman-Krushkal:2014, Komendarczyk:2009, Elliott:arXiv2020, DeTurck:2013}.

\bibliographystyle{amsplain}
\bibliography{cochains}

\providecommand{\bysame}{\leavevmode\hbox to3em{\hrulefill}\thinspace}
\providecommand{\MR}{\relax\ifhmode\unskip\space\fi MR }
\providecommand{\MRhref}[2]{%
  \href{http://www.ams.org/mathscinet-getitem?mr=#1}{#2}
}
\providecommand{\href}[2]{#2}
\begin{thebibliography}{10}

\bibitem{Arnold:1969}
V.~I. Arnold, \emph{The cohomology ring of the group of dyed braids}, Mat.
  Zametki \textbf{5} (1969), 227--231. \MR{242196}

\bibitem{Arone-Krushkal}
Gregory Arone and Vyacheslav Krushkal, \emph{{Embedding obstructions in
  $\mathbb{R}^d$ from the Goodwillie-Weiss calculus and Whitney disks}},
  arXiv:2101.10995.

\bibitem{Artin:Braids}
E.~Artin, \emph{Theory of braids}, Ann. of Math. (2) \textbf{48} (1947),
  101--126. \MR{19087}

\bibitem{Bar-Natan:1995}
Dror Bar-Natan, \emph{On the {V}assiliev knot invariants}, Topology \textbf{34}
  (1995), no.~2, 423--472. \MR{1318886}

\bibitem{Boardman-Steer:1967}
J.~M. Boardman and B.~Steer, \emph{On {H}opf invariants}, Comment. Math. Helv.
  \textbf{42} (1967), 180--221. \MR{221503}

\bibitem{Bott:1982}
Raoul Bott and Loring~W. Tu, \emph{Differential forms in algebraic topology},
  Graduate Texts in Mathematics, vol.~82, Springer-Verlag, New York-Berlin,
  1982. \MR{658304}

\bibitem{BCKS:2017}
Ryan Budney, James Conant, Robin Koytcheff, and Dev Sinha, \emph{Embedding
  calculus knot invariants are of finite type}, Algebr. Geom. Topol.
  \textbf{17} (2017), no.~3, 1701--1742, See also arXiv:1411.1832.

\bibitem{BCSS:2005}
Ryan Budney, James Conant, Kevin Scannell, and Dev Sinha, \emph{New
  perspectives on self-linking}, Adv. Math. \textbf{191} (2005), no.~1,
  78--113.

\bibitem{Chen-Fox-Lyndon:1958}
K.-T. Chen, R.~H. Fox, and R.~C. Lyndon, \emph{Free differential calculus.
  {IV}. {T}he quotient groups of the lower central series}, Ann. of Math. (2)
  \textbf{68} (1958), 81--95. \MR{102539}

\bibitem{Chen:1973}
Kuo-Tsai Chen, \emph{Iterated integrals of differential forms and loop space
  homology}, Ann. of Math. (2) \textbf{97} (1973), 217--246. \MR{380859}

\bibitem{Chen-2:1977}
\bysame, \emph{Extension of {$C^{\infty }$} function algebra by integrals and
  {M}alcev completion of {$\pi _{1}$}}, Advances in Math. \textbf{23} (1977),
  no.~2, 181--210. \MR{458461}

\bibitem{Chen:1977}
Kuo~Tsai Chen, \emph{Iterated path integrals}, Bull. Amer. Math. Soc.
  \textbf{83} (1977), no.~5, 831--879. \MR{454968}

\bibitem{Chibrikov:2005}
E.~S. Chibrikov, \emph{A right normed basis for free {L}ie (super)algebras},
  Algebra and model theory 5, Novosibirsk State Tech. Univ., Novosibirsk, 2005,
  pp.~28--38, 281. \MR{2327258}

\bibitem{Cohen-Gitler}
F.~R. Cohen and S.~Gitler, \emph{On loop spaces of configuration spaces},
  Trans. Amer. Math. Soc. \textbf{354} (2002), no.~5, 1705--1748. \MR{1881013}

\bibitem{Cohen:2016aa}
Frederick~R. Cohen, David~J. Hemmer, and Daniel~K. Nakano, \emph{The {L}ie
  module and its complexity}, Bull. Lond. Math. Soc. \textbf{48} (2016), no.~1,
  109--114. \MR{3455754}

\bibitem{HoILS}
Frederick~R. Cohen, Thomas~J. Lada, and J.~Peter May, \emph{The homology of
  iterated loop spaces}, Lecture Notes in Math., vol. 533, Springer-Verlag,
  1976.

\bibitem{Conant:AJM}
James Conant, \emph{Homotopy approximations to the space of knots, {F}eynman
  diagrams, and a conjecture of {S}cannell and {S}inha}, Amer. J. Math.
  \textbf{130} (2008), no.~2, 341--357. \MR{2405159}

\bibitem{CST:Tree-Homology}
James Conant, Rob Schneiderman, and Peter Teichner, \emph{Tree homology and a
  conjecture of {L}evine}, Geom. Topol. \textbf{16} (2012), no.~1, 555--600.
  \MR{2916294}

\bibitem{Deligne-Griffiths:1975}
Pierre Deligne, Phillip Griffiths, John Morgan, and Dennis Sullivan, \emph{Real
  homotopy theory of k{\"a}hler manifolds}, Inventiones mathematicae
  \textbf{29} (1975), no.~3, 245--274.

\bibitem{DeTurck:2013}
Dennis DeTurck, Herman Gluck, Rafal Komendarczyk, Paul Melvin, Clayton
  Shonkwiler, and David~Shea Vela-Vick, \emph{{Generalized Gauss maps and
  integrals for three-component links: Toward higher helicities for magnetic
  fields and fluid flows}}, Journal of Mathematical Physics \textbf{54} (2013),
  no.~1.

\bibitem{Elliott:arXiv2020}
Robin Elliott, \emph{Iterated integrals in quantitative topology},
  arXiv:2012.08937.

\bibitem{Fadell-Husseini:2001}
Edward~R. Fadell and Sufian~Y. Husseini, \emph{Geometry and topology of
  configuration spaces}, Springer Monographs in Mathematics, Springer-Verlag,
  Berlin, 2001. \MR{1802644}

\bibitem{Felix-Halperin-Thomas}
Yves F\'{e}lix, Stephen Halperin, and Jean-Claude Thomas, \emph{Rational
  homotopy theory}, Graduate Texts in Mathematics, vol. 205, Springer-Verlag,
  New York, 2001. \MR{1802847}

\bibitem{Freedman-Krushkal:2014}
Michael Freedman and Vyacheslav Krushkal, \emph{Geometric complexity of
  embeddings in {$\Bbb{R}^d$}}, Geom. Funct. Anal. \textbf{24} (2014), no.~5,
  1406--1430. \MR{3261630}

\bibitem{FTW:2020arXiv}
Benoit Fresse, Victor Turchin, and Thomas Willwacher, \emph{{On the rational
  homotopy type of embedding spaces of manifolds in $\mathbb{R}^n$}},
  arXiv:2008.08146.

\bibitem{Habegger-Masbaum:2000}
Nathan Habegger and Gregor Masbaum, \emph{The {K}ontsevich integral and
  {M}ilnor's invariants}, Topology \textbf{39} (2000), no.~6, 1253--1289.
  \MR{1783857}

\bibitem{Habegger-Pitsch}
Nathan Habegger and Wolfgang Pitsch, \emph{Tree level {L}ie algebra structures
  of perturbative invariants}, J. Knot Theory Ramifications \textbf{12} (2003),
  no.~3, 333--345. \MR{1983089}

\bibitem{Haefliger:1978}
Andr\'{e} Haefliger, \emph{Whitehead products and differential forms},
  Differential topology, foliations and {G}elfand-{F}uks cohomology ({P}roc.
  {S}ympos., {P}ontif\'{\i}cia {U}niv. {C}at\'{o}lica, {R}io de {J}aneiro,
  1976), Lecture Notes in Math., vol. 652, Springer, Berlin, 1978, pp.~13--24.
  \MR{505648}

\bibitem{Hain:1984}
Richard~M. Hain, \emph{Iterated integrals and homotopy periods}, Mem. Amer.
  Math. Soc. \textbf{47} (1984), no.~291, iv+98. \MR{727818}

\bibitem{Hatcher-book:2002}
Allen Hatcher, \emph{Algebraic topology}, Cambridge University Press,
  Cambridge, 2002. \MR{1867354}

\bibitem{Kohno:1985}
Toshitake Kohno, \emph{S\'{e}rie de {P}oincar\'{e}-{K}oszul associ\'{e}e aux
  groupes de tresses pures}, Invent. Math. \textbf{82} (1985), no.~1, 57--75.
  \MR{808109}

\bibitem{Kohno:1987}
\bysame, \emph{Monodromy representations of braid groups and {Y}ang-{B}axter
  equations}, Ann. Inst. Fourier (Grenoble) \textbf{37} (1987), no.~4,
  139--160. \MR{927394}

\bibitem{Kohno:2000}
\bysame, \emph{Vassiliev invariants of braids and iterated integrals},
  Arrangements---{T}okyo 1998, Adv. Stud. Pure Math., vol.~27, Kinokuniya,
  Tokyo, 2000, pp.~157--168. \MR{1796898}

\bibitem{Kohno:2002}
\bysame, \emph{Loop spaces of configuration spaces and finite type invariants},
  Invariants of knots and 3-manifolds ({K}yoto, 2001), Geom. Topol. Monogr.,
  vol.~4, Geom. Topol. Publ., Coventry, 2002, pp.~143--160. \MR{2002608}

\bibitem{Kohno:2010}
\bysame, \emph{Bar complex, configuration spaces and finite type invariants for
  braids}, Topology Appl. \textbf{157} (2010), no.~1, 2--9. \MR{2556073}

\bibitem{Komendarczyk:2009}
R.~Komendarczyk, \emph{{The third order helicity of magnetic fields via link
  maps}}, Communications in Mathematical Physics \textbf{292} (2009), no.~2,
  431--456.

\bibitem{Komendarczyk:2010}
\bysame, \emph{{The third order helicity of magnetic fields via link maps.
  II}}, Journal of Mathematical Physics \textbf{51} (2010), no.~12.

\bibitem{KKV:2020}
Rafal Komendarczyk, Robin Koytcheff, and Ismar Voli{\'{c}}, \emph{{Diagram
  Complexes, Formality, and Configuration Space Integrals for Spaces of
  Braids}}, Quarterly Journal of Mathematics \textbf{71} (2020), no.~2,
  729--779.

\bibitem{Kontsevich:1999}
Maxim Kontsevich, \emph{Operads and motives in deformation quantization},
  vol.~48, 1999, Mosh\'{e} Flato (1937--1998), pp.~35--72. \MR{1718044}

\bibitem{Kosanovic:arXiv2020}
Danica Kosanovi\'c, \emph{Embedding calculus and grope cobordism of knots},
  arXiv:2010.05120.

\bibitem{Koschorke:1997}
Ulrich Koschorke, \emph{A generalization of {M}ilnor's {$\mu$}-invariants to
  higher-dimensional link maps}, Topology \textbf{36} (1997), no.~2, 301--324.
  \MR{1415590}

\bibitem{Koytcheff-Volic:2019}
Robin Koytcheff and Ismar Voli\'{c}, \emph{Milnor invariants of string links,
  trivalent trees, and configuration space integrals}, Topology Appl.
  \textbf{251} (2019), 47--69. \MR{3876206}

\bibitem{Kuperberg-Thurston}
Greg Kuperberg and Dylan~P. Thurston, \emph{Perturbative 3-manifold invariants
  by cut-and-paste topology}, arXiv [math/9912167].

\bibitem{Lambrechts-Turchin}
Pascal Lambrechts and Victor Turchin, \emph{Homotopy graph-complex for
  configuration and knot spaces}, Trans. Amer. Math. Soc. \textbf{361} (2009),
  no.~1, 207--222. \MR{2439404}

\bibitem{Lambrechts-Volic:2014}
Pascal Lambrechts and Ismar Voli\'{c}, \emph{Formality of the little
  {$N$}-disks operad}, Mem. Amer. Math. Soc. \textbf{230} (2014), no.~1079,
  viii+116. \MR{3220287}

\bibitem{Levine:Addendum}
Jerome Levine, \emph{Addendum and correction to: ``{H}omology cylinders: an
  enlargement of the mapping class group'' [{A}lgebr. {G}eom. {T}opol. {\bf 1}
  (2001), 243--270; {MR}1823501 (2002m:57020)]}, Algebr. Geom. Topol.
  \textbf{2} (2002), 1197--1204. \MR{1943338}

\bibitem{Lin:PowerSeries}
Xiao-Song Lin, \emph{Power series expansions and invariants of links},
  Geometric topology ({A}thens, {GA}, 1993), AMS/IP Stud. Adv. Math., vol.~2,
  Amer. Math. Soc., Providence, RI, 1997, pp.~184--202. \MR{1470727}

\bibitem{Magnus-Karrass-Solitar}
Wilhelm Magnus, Abraham Karrass, and Donald Solitar, \emph{Combinatorial group
  theory}, second ed., Dover Publications, Inc., Mineola, NY, 2004,
  Presentations of groups in terms of generators and relations. \MR{2109550}

\bibitem{Milnor-Moore:1965}
John~W. Milnor and John~C. Moore, \emph{On the structure of {H}opf algebras},
  Ann. of Math. (2) \textbf{81} (1965), 211--264. \MR{174052}

\bibitem{Orr:1989}
Kent~E. Orr, \emph{Homotopy invariants of links}, Invent. Math. \textbf{95}
  (1989), no.~2, 379--394. \MR{974908}

\bibitem{Petrogradsky:2003}
V.~M. Petrogradsky, \emph{Witt's formula for restricted {L}ie algebras},
  vol.~30, 2003, Formal power series and algebraic combinatorics (Scottsdale,
  AZ, 2001), pp.~219--227. \MR{1979792}

\bibitem{Sinha:2012}
Dev Sinha and Ben Walter, \emph{{Lie coalgebras and rational homotopy theory
  II: Hopf invariants}}, Transactions of the American Mathematical Society
  \textbf{365} (2012), no.~2, 861--883.

\bibitem{Sinha:Graphs-Trees}
Dev~P. Sinha, \emph{A pairing between graphs and trees}, arXiv:math/0502547.

\bibitem{Sinha:Cosimplicial}
Dev~P. Sinha, \emph{The topology of spaces of knots: cosimplicial models},
  Amer. Journal of Math. \textbf{131} (2009), no.~4, 945--980.

\bibitem{Songhafouo-Turchin:HHA}
Paul~Arnaud Songhafouo~Tsopm\'{e}n\'{e} and Victor Turchin, \emph{Euler
  characteristics for spaces of string links and the modular envelope of
  {$\mathcal L_\infty$}}, Homology Homotopy Appl. \textbf{20} (2018), no.~2,
  115--144. \MR{3806571}

\bibitem{Songhafouo-Turchin:Forum}
\bysame, \emph{Rational homology and homotopy of high-dimensional string
  links}, Forum Math. \textbf{30} (2018), no.~5, 1209--1235. \MR{3849642}

\bibitem{Suciu-Wang:Forum2019}
Alexander~I. Suciu and He~Wang, \emph{Formality properties of finitely
  generated groups and {L}ie algebras}, Forum Math. \textbf{31} (2019), no.~4,
  867--905. \MR{3975666}

\bibitem{Suciu-Wang:2020}
\bysame, \emph{Taylor expansions of groups and filtered-formality}, Eur. J.
  Math. \textbf{6} (2020), no.~3, 1073--1096. \MR{4151729}

\bibitem{Sullivan:1977}
Dennis Sullivan, \emph{Infinitesimal computations in topology}, Inst. Hautes
  \'{E}tudes Sci. Publ. Math. (1977), no.~47, 269--331 (1978). \MR{646078}

\bibitem{Walter:2010arXiv}
Ben Walter, \emph{The configuration basis of a lie algebra and its dual},
  arXiv:1010.4765.

\bibitem{Willerton:PhD}
Simon Willerton, \emph{On the vassiliev invariants for knots and for pure
  braids}, Ph.D. thesis, University of Edinburgh, 1997, Available at
  \texttt{http://simonwillerton.staff.shef.ac.uk/ftp/thesis.pdf}.

\end{thebibliography}

\end{document}